\definecolor{pur}{RGB}{255,102,102}
\numberwithin{equation}{section}
\newcommand{\bigslant}[2]{{\left.\raisebox{.2em}{$#1$}\middle/\raisebox{-.2em}{$#2$}\right.}}
\newcommand{\F}[1]{\mathfrak{#1}}
\newcommand{\C}[1]{\mathcal{#1}}
\newcommand{\R}[1]{\mathrm{#1}}
\newcommand{\cinf}{\mathcal{C}^{\infty}}
\newcommand{\dd}{\mathrm{d}}
\newcommand{\Funct}{\cinf( M)}
\newcommand{\funct}{\mathscr{E}}
\newcommand{\functt}{\mathscr{F}}
\newcommand{\Vect}{\F{X}}
\newcommand{\diff}{\mathrm d}
\newtheorem{definition}{Definition}[section]
\newtheorem*{definition*}{Definition}
\newtheorem{theoreme}[definition]{Theorem}
\newtheorem*{theoreme*}{Theorem}
\newtheorem{proposition}[definition]{Proposition}
\newtheorem{lemme}[definition]{Lemma}
\newtheorem*{lemme*}{Lemma}
\newtheorem{corollaire}[definition]{Corollary}
\theoremstyle{remark}
\newtheorem*{remarque}{Remark}
\newtheorem{example}{Example}
\newtheorem*{prooftheo*}{Proof of the Theorem}
\begin{document}

\thispagestyle{empty} 

\newgeometry{lmargin=2.2cm,rmargin=2.2cm,tmargin=2.5cm,bmargin=2.5cm}
\thispagestyle{empty} 
\begin{center}
\begin{minipage}{0.8\textwidth}
\begin{center}
\vskip4cm
\textbf{\LARGE 
\scalebox{1}[1.4]{Lie $\infty$-alg\'ebro\"ides et Feuilletages Singuliers}
\vskip3pt
\scalebox{1}[1.4]{}
}
\vskip6pt
\textbf{\Large\scalebox{1}[1.4]{}}
\end{center}
\end{minipage}
\end{center}

\vfil

\begin{figure}[!h]										 
\begin{center}											 
\includegraphics[width=10cm]{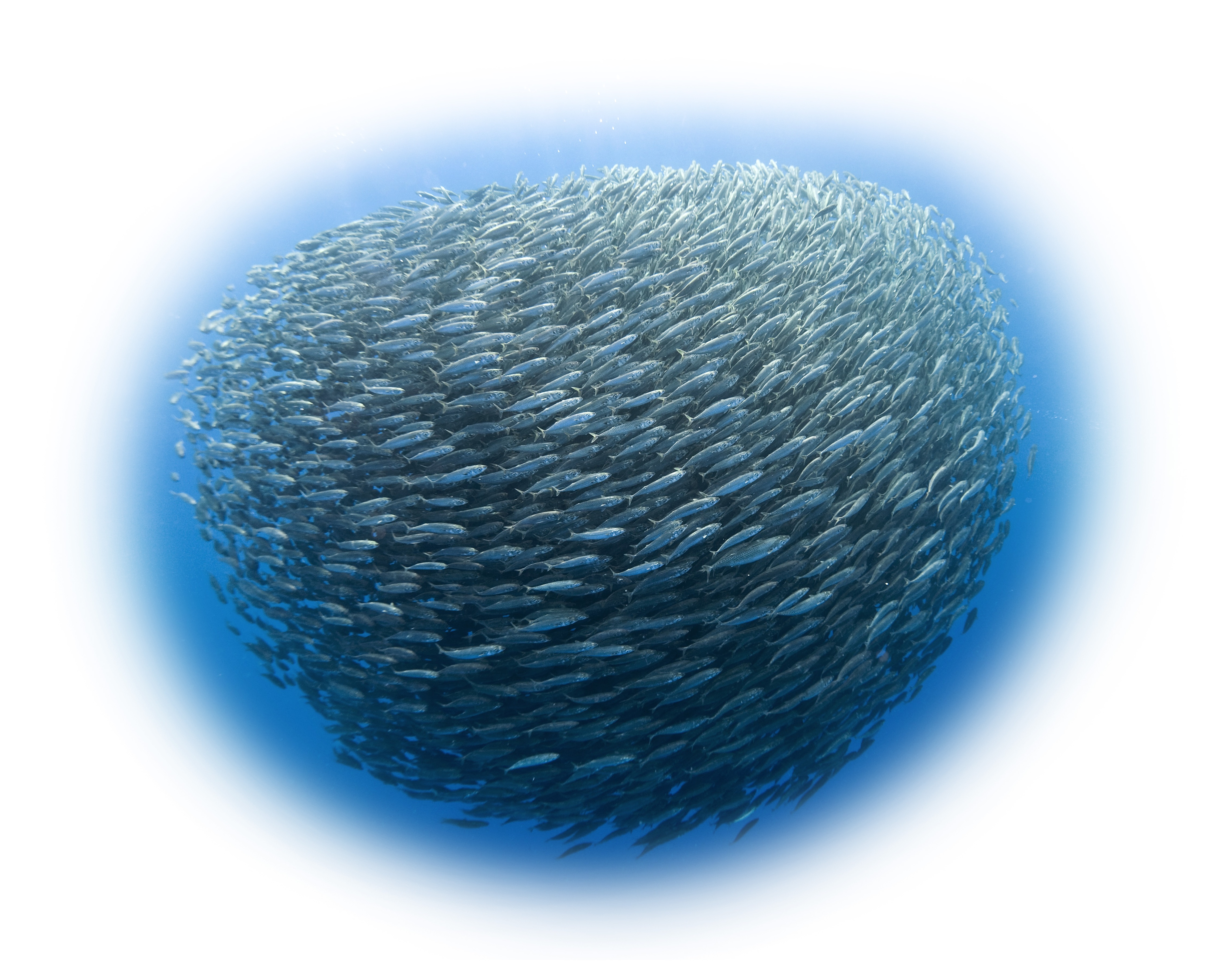}{\centering}  	 		 
\end{center}											 
\end{figure}

\vfil

\begin{center}
\textbf{\LARGE Sylvain Lavau}
\vskip0.4cm
{\LARGE Th\`ese de doctorat}
\end{center}

\vfil

\chapter*{}

\thispagestyle{empty} 




\newgeometry{lmargin=2.2cm,rmargin=2.2cm,tmargin=2.5cm,bmargin=2.5cm}

\thispagestyle{empty} 

\vfil

\begin{center}
\begin{Large}
Universit\'e Claude Bernard Lyon 1\\  
\vskip3pt
\'Ecole doctorale {\bf InfoMath}, ED~512\\ 
\vskip3pt
Sp\'ecialit\'e~: \textbf{Math\'ematiques}\\
\vskip4pt
N.~d'ordre 2016LYSE1215
\end{Large}
\end{center}

\vfil
\vfil

\begin{center}
\textbf{\LARGE
{\scalebox{1}[1.4]{Lie $\infty$-alg\'ebroides et Feuilletages Singuliers}\\
\vskip8pt
\scalebox{1}[1.4]{}}\\
\vskip8pt
\textbf{\Large
{\scalebox{1}[1.4]{}}}
}
\end{center}

\vfil
\vfil

\begin{center}
\begin{LARGE}
\textbf{Th\`ese de doctorat}
\end{LARGE}
\end{center}

\vfil

\begin{center}
{\Large
Soutenue publiquement le 4 Novembre 2016 par}
\end{center}

\vfil

\begin{center}
  \begin{Large}
\textbf{Sylvain Lavau}
  \end{Large}
\end{center}

\vfil

\begin{center}
{\Large devant le Jury compos\'e de:}

\vfil
\begin{large}
\begin{tabular}{lll}
Mme. Yvette Kosmann- &\quad&\\[3pt]
\qquad \qquad Schwarzbach &\quad \'Ecole Polytechnique &
\\[3pt]
M. Anton Alekseev &\quad Universit\'e de Gen\`eve &\qquad {\small Rapporteur}\\[3pt] 
M. Marco Zambon &\quad Katholieke Universiteit Leuven&\qquad {\small Rapporteur}\\[3pt]
Mme. Claire Debord &\quad Universit\'e Blaise Pascal (Clermont-Ferrand) &\\[3pt]
M. Henning Samtleben &\quad Ecole Normale Sup\'erieure de Lyon &\qquad  {\small Codirecteur de th\`ese} \\[3pt]	
M. Thomas Strobl &\quad Universit\'e Claude Bernard (Lyon 1) &\qquad {\small Directeur de th\`ese}\\[3pt]
\end{tabular}
\end{large}
\end{center}


%
%

\chapter*{}
\thispagestyle{empty}

\chapter*{}
\thispagestyle{empty}
\vspace{3cm}

\begin{flushright}
\emph{To Elyssa Beck Schaeffer}\hspace{-1cm}

\emph{}
\end{flushright}

\chapter*{}

\thispagestyle{empty}

\chapter*{}
\thispagestyle{empty}
\vspace{7cm}

\begin{flushright}
\emph{\`A mon fr\`ere}\hspace{-1cm}

\emph{}
\end{flushright}

\chapter*{}

\thispagestyle{empty}




\tableofcontents

\chapter*{}
\thispagestyle{empty}

\chapter*{Introduction}
\addcontentsline{toc}{chapter}{Introduction}

In a broad range of fields in Mathematics and Physics, one often encounters situations where \emph{foliations} $-$ possibly singular $-$ arise, from the geometric representation of the solutions of differential equations and subsequent applications in dynamical systems, to intrinsically topological curiosities such as the Reeb foliation which is a partition of the 3-sphere by tori. A foliation can be seen as a generalization to arbitrary dimensions of the pages of a book. The leaves carry different meanings, depending on the context, they can symbolize the trajectory of a particle (in the case of one-dimensional leaves), or they can be the set of points in the phase space which characterize the same state of a physical system, or even the Universe at a given time. While this notion is very important in physical applications, the theory of foliations has become an independent field of research since the 1940's following the work of C. Ehresmann and G. Reeb.

A smooth manifold $M$ is said to be \emph{foliated} if it admits a partition into immersed and connected submanifolds, called \emph{leaves}. If the dimension of the leaves is constant over $M$ then the foliation is called \emph{regular}, and if the dimension `jumps' from one leaf to another the foliation is called \emph{singular}. This gap, if it exists, can only locally be made toward higher dimensions, since the function evaluating the dimension of the leaf at a point is lower semi-continuous. 
We call regular leaves the leaves of highest dimension, whereas the others are called singular leaves. If a point is on a regular leaf then every point in a small neighborhood is also on a regular leaf. On the contrary, any point on the transversal of a singular leaf belongs either to another adjoining singular leaf, or to a regular leaf.

Since any leaf $S$ of a given foliation is a submanifold of $M$, at every point $x\in S$ it has a canonical tangent bundle $T_{x}S$, which is a subspace of the tangent space $T_{x}M$. For a smooth foliation this subspace should vary smoothly, and moreover stay invariant under the Lie bracket for vector fields. In other words, over the leaf $S$, it forms a subbundle of the tangent bundle restricted to $S$. Since other leaves can have various dimensions, this subspace may not remain of constant rank when we travel on the manifold. In particular on a singular leaf it will have a smaller dimension than on a regular leaf. We are thus naturally led to define the generalization of a vector bundle, what is called a \emph{distribution}: the assignment $\mathcal{D}_{x}$ of a subspace of $T_{x}M$ for each point $x\in M$. Since the tangent spaces of the leaves of a foliation form a distribution, a legitimate problem is to ask what are the conditions for a distribution to induce a (possibly singular) foliation whose tangent spaces form the given distribution or, in other words, to ask whether the distribution is \emph{integrable}.

The answer to this question has been sought for years, from the first advances (in different terms though) in the nineteenth century, to its enlightening outcome in the seventies. As usual in Mathematics, the process took time and was not deemed of interest at some periods, but was finally solved in a totally unexpected way. At first, after earlier results, the claim that a smooth regular distribution is integrable if and only if it is involutive was rigorously proven in the modern sense by F. Frobenius (1877). Unfortunately, it is not straightforward to generalize this elegant result to the singular case, and it took nearly one hundred years to find the correct formulation in that context. Indeed it was only at the beginning of the sixties that R. Hermann noticed that foliations were very efficient to reinterpret problems in Control Theory $-$ which up to then was only meant to study servomecanisms. Then, a quick sequence of moves were performed to successively refine the condition that a smooth singular distribution be integrable. Finally in 1973, P. Stefan and H. Sussmann simultaneously formulated the correct condition and, inspired by techniques from Control Theory, they proved the theorem now bearing their names, hence solving a long standing open question.

However, in most cases, the smooth distributions that one encounters in everyday life are much less specific than the one satisfying the conditions of Stefan and 
Sussmann. Lately, Mathematicians have focused their attention to algebraic objects: locally finitely generated involutive sub-sheaves of the sheaf of vector fields $\mathfrak{X}$. They define singular distributions that satisfy Hermann's integrability condition, and they are the most common and the most convenient objects that we happen to manipulate in various situations (such as in the examples given above). For this reason, we have chosen to call these algebraic objects \emph{Hermann foliations}. This shift from a purely analytico-geometric point of view to an algebraic one raises some questions. First, the fact that a foliation can be described as a submodule of $\mathfrak{X}(M)$ entails that it admits a resolution by $\C{C}^{\infty}(M)$-modules (in the smooth case). How is such a resolution related to the foliation? For example is it unique? Closure of the distribution under the Lie bracket suggests that we may be able to lift this algebraic operation into something more involved on the resolution. In that case, are there many ways to lift it, and how are they related to one another? And finally, what does this algebraic structure say about the foliation?

The results presented in this thesis give precise answers to these questions, when the resolution consists of graded vector bundles over $M$. They involve structure coming from higher algebra, namely, \emph{Lie $\infty$-algebroids} and \emph{Lie ${\infty}$-morphisms up to homotopy}. Lie $\infty$-algebroids are a natural mixing of Lie algebroids and $L_{\infty}$-algebra structures. The notion of Lie algebroid was first introduced by J. Pradines (1967) as the infinitesimal structure associated to Ehresmann's Lie groupoids. A Lie algebroid consists of a vector bundle $A$ over a manifold $M$, whose space of sections is equipped with a Lie bracket (hence the name), and a vector bundle morphism $\rho:A\to TM$ compatible with the bracket. Lie algebroids generalize both Lie algebras and tangent bundles of manifolds. On the other hand, $L_{\infty}$-algebras have their origin in mathematical physics, and were introduced at the beginning of the 90's by T. Lada and J. Stasheff as a natural extension of differential graded Lie algebras. They consist in a graded vector space equipped with a family of multilinear $n$-ary brackets that satisfy generalized Jacobi identities. These structures are also called strongly homotopy Lie algebras because originally the Jacobi identity was weakened and only satisfied \emph{up to homotopy}.

Hence, Lie $\infty$-algebroids consist of a family of (positively) graded vector bundles whose space of sections is a $L_{\infty}$-algebra, together with a vector bundle morphism from the bundle of highest degree to $TM$ satisfying some compatibility condition with the 2-bracket, inspired by the one encountered in the Lie algebroid case. The natural notion of morphism between two Lie $\infty$-algebroids is only defined up to homotopy (in some sense), which is appropriate to define the notion of isomorphism in the category of Lie $\infty$-algebroids. Unfortunately, Lie $\infty$-algebroids are not very convenient to work with in this form, so we have chosen in this thesis to use their characterization in terms of \emph{differential graded manifolds}. These manifolds are graded manifolds equipped with a degree $+1$ homological vector field $Q$. Many geometric objects, such as Lie algebroids and Poisson manifolds can be encoded in this language, and moreover most proofs in this manuscript are based on the property that $[Q,Q]=0$, hence justifying the use of this notion.

The aim of the present thesis is to show how one can build a Lie $\infty$-algebroid structure on a resolution of a Hermann foliation, and to study its uniqueness, and its consequences. In this context, a resolution is defined as a family of graded vector bundles $E=\bigoplus_{i\geq1}E_{-i}$ over $M$ such that the differential is exact at the level of sections. The first result of this thesis is that one can lift the Lie bracket of vector fields to a Lie $\infty$-algebroid structure on the resolution. The construction is canonical and formally consists in building the $n$-ary brackets from the data of all other brackets of lower arities. Moreover we show that this Lie $\infty$-algebroid structure is unique in the following sense: for any other choice of a Lie $\infty$-algebroid structure resolving the distribution, the two structures are isomorphic up to homotopy. Hence we call this Lie $\infty$-algebroid (or any other Lie $\infty$-algebroid structure resolving the Hermann foliation) the \emph{universal Lie $\infty$-algebroid} associated to the distribution. This structure carries information on the foliation which has a deep geometrical meaning, as we shall see. The existence of such a structure is guaranteed in the real analytic and holomorphic cases, at least locally.

The first natural application of this structure is that one can consider the $Q$-cohomology, that we call the \emph{universal foliated cohomology}. The name is justified by the fact that two Lie $\infty$-algebroid structures associated to a given Hermann foliation have isomorphic cohomologies. We show that this cohomology is related in some sense to the foliated de Rham cohomology of the manifold. The next results are obtained when one restricts the study at a point $x\in M$. The universal Lie $\infty$-algebroid then reduces to a mere Lie $\infty$-algebra on the fiber $E_{x}=\bigoplus_{i\geq1}E_{-i\ x}$ of the graded vector bundle over $x$. Since the exactness of the resolution is not necessarily preserved, we may have to consider the cohomology. Hence the universal Lie $\infty$-algebroid structure reduces to a graded Lie algebra structure on the cohomology over $x$, that we call the \emph{holonomy graded Lie algebra of the foliation at $x$}. Several points have to be emphasized: by uniqueness, any other choice of resolution gives a graded Lie algebra over $x$ which is canonically isomorphic to the former holonomy graded Lie algebra; moreover, the various holonomy graded Lie algebras defined at each point of a leaf are isomorphic. Hence one can define a holonomy graded Lie algebroid over each leaf, which restricts $-$ when one only considers the first level of the tower of graded vector bundles $M$ $-$ to the holonomy Lie algebroid defined by I. Androulidakis and G. Skandalis.

Here are possible applications and investigations of our results, one idea would be to answer Androulidakis-Zambon conjecture whish says that not all Hermann foliations come from Lie algebroids (as the image of the anchor map), not even locally. Another idea (due to I. Androulidakis and M. Zambon) may be to investigate to what extent the length of the resolution characterizes the singularity of the foliation. In the same spirit, one could apply this construction to the newly defined singular subalgebroids, which consist of locally finitely generated involutive submodule of the space of sections of some algebroid. If this algebroid is integrable, then it defines a singular foliation on the associated groupoid. Another promising field of application is the Batalin-Vilkoviski formalism and its well-known infinite hierarchy of ghost fields. It appears that the classical action defines a foliation on the space-time manifold, so there may be a link between the ghost hierarchy and the universal Lie $\infty$-algebroid associated to this foliation. And finally, one can apply all these results to Leibniz algebroids. Recently some advances have been made in the link between Leibniz algebras and the Tensor Hierarchy (as defined in supergravity theories) by T. Strobl and H. Samtleben. We observed that a Leibniz algebra can lead to the construction of a Lie $\infty$-algebra structure on an infinite tower of spaces. Passing to the -oid case would introduce a Lie $\infty$-algebroid, and hence from the only data of a Leibniz algebroid whose image by the anchor map is the Hermann foliation, we may be able to define a Lie $\infty$-algebroid structure associated to it over some graded vector bundle. That would be related to one of the final results of this thesis: that the universal Lie $\infty$-algebroid of a Hermann foliation induces a Leibniz algebroid over $M$ whose image by the anchor map is precisely the distribution.


\bigskip

The first part of this manuscript is a review of various results in graded geometry. We recall in particular the notions of graded manifolds and of Lie $\infty$-algebroids and the equivalence of Lie $\infty$-algebroids with $NQ$-manifolds in Section \ref{Qmanifolds}. We then define the concept of homotopy between Lie $\infty$-morphisms, which provides an important result for the second part. In Section \ref{foliations}, we present earlier results in foliation theory. We recall the notion of singular foliations and distributions, and explain how the problem of integrating smooth distributions to (possibly singular) foliations has a long history. We give the proofs of the theorems that can be seen as landmarks in the resolution of this long-standing open problem, solved independently by H. Sussmann and P. Stefan in the seventies.

In the second part, we give the main results of the thesis. We first explore the concept of \emph{arity}, which is at the core of the discussions and the proofs of the thesis. Then we define the notion of the universal Lie $\infty$-algebroid of a Hermann foliation, and we show that this structure is uniquely defined, up to homotopy. The end of the second part is devoted to applications and examples, in particular to the study of the restriction of the universal Lie $\infty$-algebroid structure to a leaf of the foliation. I would like to thank Camille Laurent-Gengoux and Thomas Strobl for their invaluable help during the years of my PhD leading to these results.

\nocite{Yvette}
\nocite{gunningrossi}
\nocite{AndrouZamb4}
\nocite{LadaStasheff}
 \nocite{LS}
 \nocite{piat}

\chapter{Higher geometry and integrability of singular foliations}

\section{Differential Graded manifolds and Lie \texorpdfstring{$\infty$}{infinity}-algebroids}\label{Qmanifolds}

Graded algebra and graded differential geometry are natural generalizations of linear algebra and differential geometry that allow vectors and variables to carry degrees. They emerged at the beginning of the seventies, when advances in theoretical physics $-$ such as supersymmetric models $-$ led to the need for some mathematization of particle physics. In the beginning though, the grading was restricted to $\mathbb{Z}/2\mathbb{Z}$, giving the so-called \emph{supermanifolds}, and in this context mathematicians prefer to use the word parity instead of grading. Some famous names were involved in the developpment and the improvement of these notions, such as F. Berezin and D. Leites \cite{Leites} who are considered as some sort of fathers of the field at the beginning of the seventies, and M. Batchelor who proved a famous theorem identifying graded manifolds and graded vector bundles. The particularity of graded geometry is that it does not put the points of the manifold at the core of the theory, but it rather focuses on the sheaf of functions associated with the manifold. The so called \emph{differential graded manifolds} then appear as a natural generalization of differential graded algebras. They were introduced at the beginning of the eighties in the Russian literature under the influence of V. Shander \cites{Shander1, Shander2}, and found applications in many fields in the nineties, examplified by the groundbreaking advances in topological field theories such as the AKSZ formalism. Essentially, a differential graded manifold encapsulates the data under only one flag: that of a degree $+1$ vector field whose selfcommutator vanishes. The advantage of the differential graded picture is that many different structures (such as Lie algebras, Poisson manifolds, etc.) can be analyzed in this framework, which simplifies and unifies the results.

At the same time, advances in theoretical physics led to the discovery of a new notion, as a generalization of differential graded Lie algebras: the so called \emph{$L_{\infty}$-algebras}. The original definition dates back to 1992 by J. Stasheff and T. Lada in their seminar \cite{LadaStasheff}, following considerations by B. Zwiebach and others physicists. The idea is to weaken the condition imposing that the Lie bracket satisfies the Jacobi identity. One asks that it be satisfied \emph{up to homotopy}, hence providing the necessity of adding a 3-bracket so that the Jacobiator becomes a coboundary with respect to the differential. Going further and further, one has to introduce a (possibly infinite) set of $n$-ary brackets compatible with one another. This notion shed a new light on various fields of mathematical physics, in particular in the Batalin-Vilkoviski quantization scheme. Surprisingly, one of the most striking properties is that any $L_{\infty}$-algebra can be encoded as a differential graded vector space. All the brackets are reunited into only one object: the homological vector field $Q$. Finally, in the same spirit, one could generalize the not so recent notion of Lie algebroid (a vector bundle equipped with a bracket and a compatible anchor map, a notion introduced by J. Pradines in 1967) by following a similar path. That was indeed formulated by T. Voronov \cite{voronov} who showed that one could identify $L_{\infty}$-algebroids with differential graded manifolds. That is precisely the point of view that we want to use in the present thesis.

\subsection{Graded vector spaces and higher structures}\label{gradvectspace}

The present manuscript presents notions and results most of which are valid not only in the smooth case, but also in the real-analytic and the holomorphic case. For  clarity however we will not recall at every step that the results are valid in the real analytic or holomorphic context. Then we claim and prove some results in the smooth case, indicating as a remark the validity for the other cases. By convention we use the letter $\mathscr{O}$ to symbolize the sheaf of smooth/real analytic/holomorphic functions. Then either $\mathscr{O}=\cinf(M)$ in the smooth case, or $\mathscr{O}=\mathcal{C}^{\omega}$ in the real analytic case, and $\mathscr{O}=\mathscr{H}$ in the holomorphic case.The only section that makes a crucial distinction between the smooth and the real analytic context is the historical section \ref{foliations} on singular foliations.

A differential graded manifold is a natural generalization of a smooth manifold which can be seen intuitively as a topological space locally homeomorphic to a graded vector space. A \emph{graded vector space} is a family of finite-dimensional vector spaces $E=\bigoplus_{i\in\mathbb{Z}}E_{i}$ indexed by $\mathbb{Z}$. An element $u$ belonging to some $E_{i}$ is said to be \emph{homogeneous of degree $i$}, and we write $|u|$ for the degree of $u$. Elements of the dual space $E_{i}^{\ast}$ send a degree $i$ element to a real number, and since $\mathbb{R}$ is considered to have degree 0, we deduce that the elements of $E_{i}^{\ast}$ have degree $-i$. The set of all linear forms on $E$ is a graded vector space $E^{\ast}$ consisting of the direct sum of the dual spaces of all degrees: $E^{\ast}=\bigoplus_{i\in \mathbb{Z}}E_{i}^{\ast}$.

The set of smooth functions on $E$ is not defined as for classical vector spaces where smoothness is the property of being infinitely differentiable. In the graded case, the convention is to define the set of functions on $E$ to be the graded symmetric algebra (with respect to the degree) of $E^{\ast}$: $S(E^{\ast})$. The degree has a profound influence here, given that for any two homogeneous elements $\xi_{1},\xi_{2}\in E^\ast$, the symmetric product of these elements is:
\begin{equation}
\xi_{1}\odot\xi_{2}=(-1)^{|\xi_{1}||\xi_{2}|}\xi_{2}\odot\xi_{1}
\end{equation}
The set of functions on $E$ equipped with the (graded) symmetric product becomes a graded algebra, in the sense that if $f,g\in S(E^{\ast})$ are homogeneous functions of respective degree $|f|$ and $|g|$, then their product is $f\odot g$ and it is a function of degree $|f|+|g|$. A \emph{morphism} between two graded vector spaces $E$ and $F$ is considered dually as a morphism $\Phi:S(F^{\ast})\to S(E^{\ast})$ of the corresponding graded algebras of functions. Morphisms of graded vector spaces can carry a degree, and those of degree zero are called \emph{strict morphisms}.

\begin{example} Let $V$ be a vector space of dimension $n$, and let $E=\bigwedge^{\bullet} V$ be its exterior algebra. By convention, we say that a $k$-vector $\alpha_{1}\wedge\ldots\wedge\alpha_{k}$ is of degree $-k$, making $E$ a graded vector space. It has an additional structure, given that the wedge product is graded skew-commutative, turning $E$ into a \emph{graded algebra}, that is: a graded vector space equipped with a product $\cdot$ (or here the wedge product) such that $x\cdot y\in E_{i+j}$ for every $x\in E_{i}$ and $y\in E_{j}$.
\end{example}

As in the previous example, graded vector spaces may have more intricate graded structures obtained by adding inner operations such as brackets or differentials. By introducing brackets which satisfy a graded generalization of the Jacobi identity, we now define \emph{graded Lie algebras}:

\begin{definition}
A \emph{graded Lie algebra} is a graded vector space $E$ together with a bilinear operation $[\ .\ ,\,.\ ]$ called the \emph{bracket} satisfying the following conditions:
\begin{itemize}
\item skewsymmetry \hfill \makebox[0pt][r]{%
            \begin{minipage}[b]{\textwidth}
              \begin{equation}
                 \hspace{0.7cm}[x,y]=-(-1)^{|x||y|}[y,x]
              \end{equation}
          \end{minipage}}
\item Jacobi identity \hfill \makebox[0pt][r]{%
            \begin{minipage}[b]{\textwidth}
              \begin{equation}
                 \hspace{2.32cm}\big[x,[y,z]\big]=\big[[x,y],z\big]+(-1)^{|x||y|}\big[y,[x,z]\big]
              \end{equation}
          \end{minipage}}
\end{itemize}
for all homogeneous elements $x,y,z\in E$.
\end{definition}

\begin{example}
A natural example of a graded Lie algebra is based on a well-known fact: that any associative algebra $A$ induces a Lie algebra structure on the space of derivations of $A$. Here we consider a graded algebra $(E=\bigoplus_{i\in\mathbb{Z}}E_{i},\cdot)$. A \emph{derivation of degree $k$} on $E$ is a linear application $\delta$ which satisfies the following two conditions:
\begin{itemize}
\item $\delta:E_{i}\to E_{i+k}$
\item $\delta(x\cdot y)=\delta(x)\cdot y+(-1)^{k|x|}x\cdot\delta(y)$
\end{itemize}
for every homogeneous elements $x,y\in E$. The set $\mathrm{Der}(E)$ of all derivations on $E$ is a graded vector space, graded by the degree of the derivations: $\mathrm{Der}(E)=\bigoplus_{l\in\mathbb{Z}}\mathrm{Der}_{l}(E)$. Then one defines a graded Lie bracket on $\mathrm{Der}(E)$ using the commutator:
\begin{equation}
[\delta_{1},\delta_{2}]=\delta_{1}\circ\delta_{2}-(-1)^{kl}\delta_{2}\circ\delta_{1}
\end{equation}
for any derivations $\delta_{1}\in \mathrm{Der}_{k}(E)$ and $\delta_{2}\in \mathrm{Der}_{l}(E)$. This turns $\big(\mathrm{Der}(E), [\ .\ ,\, .\ ]\big)$ into a graded Lie algebra.
\end{example}

A graded vector space $E=\bigoplus_{i\in\mathbb{Z}}E_{i}$ may be equipped with a differential $\dd:E_{i}\to E_{i+1}$ which turns $E$ into a chain complex. If, moreover, $E$ is a graded Lie algebra and the differential is compatible with the bracket in the sense that:
\begin{equation}\label{eq:dgla}
\dd\big([x,y]\big)=[\dd x,y]+(-1)^{|x|}[x,\dd y]
\end{equation}
for every $x,y\in E$, then we speak of a \emph{differential graded Lie algebra} $-$ or dgLa for short. Morphisms in the category of chain complexes are degree 0 linear maps that are called \emph{chain maps}, whose main property is to commute with the respective differentials. A chain map $f:E\to F$ between two chain complexes $E$ and $F$ induces a strict morphism of the respective graded vector spaces by taking its dual and extending it as a morphism of algebras on all of $S(F^{\ast})$. However, the fundamental notion arising in such a context is the notion of \emph{homotopy}, which in that case crucially comes from the fact that the differential squares to zero. More precisely, we say that two morphisms of chain complexes $f,g:E\to F$ are \emph{homotopic} if there exists a  map $h:E\to F$ of degree $-1$ such that
\begin{equation}
f-g=\dd' \circ h+h\circ\dd
\end{equation}
A diagram may be helpful to visualize the situation (see \cite{Weibel} for details):
\begin{center}
\begin{tikzcd}[column sep=1.5cm,row sep=0.4cm]
\ldots\ar[r]&\ar[dd,shift left =0.5ex]\ar[dd,shift right =0.5ex]E_{i-1}\ar[r,]&\ar[ddl,dashed,"h" above]\ar[dd,shift left =0.5ex,"f"]\ar[dd,shift right =0.5ex,"g" left]E_{i}\ar[r,"\dd"]&\ar[ddl,dashed,"h" above]\ar[dd,shift left =0.5ex]\ar[dd,shift right =0.5ex]E_{i+1}\ar[r]&\ldots\\
&&&&\\
\ldots\ar[r]&F_{i-1}\ar[r,"\dd'"]&F_{i}\ar[r]&F_{i+1}\ar[r]&\ldots\\
\end{tikzcd}
\end{center}

In the nineties, in order to solve problems coming from theoretical physics, J.Stasheff and T.Lada developed a generalization of dgLa \cite{LadaStasheff}, in which the Jacobi identity is no longer satisfied. It is only satisfied \emph{up to homotopy}, in the sense that the (graded) Jacobiator is a $[\dd,\,.\ ]$-coboundary, i.e. there exists a graded skew-symmetric map $l_{3}:\bigwedge^{3}E\to E$ which satisfies:
\begin{equation}
(-1)^{|x||z|}\big[x,[y,z]\big]+(-1)^{|y||x|}\big[y,[z,x]\big]+(-1)^{|z||y|}\big[z,[x,y]\big]=\dd\circ l_{3}(x,y,z) +l_{3}\circ\dd(x\wedge y\wedge z)
\end{equation}
where $\dd$ acts as a derivation on $\bigwedge^{3}E$. This equation can be rewritten as:
\begin{equation}
l_{2}\circ l_{2}=[\dd,l_{3}]
\end{equation}
where $l_{2}$ stands for the 2-bracket $[\ .\ ,\,.\ ]$, and where the bracket on the right-hand side is a notation for the commutator of $\dd$ and $l_{3}$. For degree reasons, the 3-bracket is a degree $-1$ operation. For consistency of the structure, the 3-bracket has to satisfy some sort of Jacobi identity, which can be symbolically written as:
\begin{equation}
l_{2}\circ l_{3}+l_{3}\circ l_{2}=0
\end{equation}
In full generality, we can assume that this equation is satisfied only \emph{up to homotopy}, that is: there is a graded skew-symmetric map $l_{4}:\bigwedge^{4}E\to E$ such that
\begin{equation}
l_{2}\circ l_{3}+l_{3}\circ l_{2}=[\dd,l_{4}]
\end{equation}
Again, the 4-bracket $l_{4}$ has to satisfy some sort of Jacobi identity as above, but if it is only satisfied up to homotopy, a 5-bracket has to be added to the content of the structure. Going further and further, we arrive at the concept of what was historically called  \emph{strongly homotopy Lie algebra} \cites{Marklada,LadaStasheff} (or \emph{sh-Lie algebra}).


In the literature, strongly homotopy Lie algebras have been known under several conventions $-$ all equivalent. The classical formulation found in the literature relies on graded skew-symmetric brackets satisfying so called \emph{higher Jacobi identities}. It has the advantage that it contains the Lie algebra case, but it may not be the more efficient to work with. In the present work we will use a different formulation which expresses the notion of sh-Lie algebras in terms of a codifferential on a graded manifold. The two notions are related by the fact that the skew-symmetric definition of a sh-Lie algebra structure on a graded vector space $E$ has an equivalent formulation in terms of symmetric brackets on the \emph{suspended version of $E$}, which is denoted by $E[1]$. The codifferential is then the dual operator corresponding to this family of brackets, and the fact that it squares to zero is equivalent to saying that the brackets satisfy the higher Jacobi identities.

Given a graded vector space $E=\bigoplus_{i\in \mathbb{Z}}E_{i}$, the suspended space $E[1]$ is a graded vector space whose subspace of degree $i$ is the vector space $E_{i+1}$, that is: $E[1]_{i}=E_{i+1}$.  Since the degrees are shifted by one, the degree and the symmetries of  morphisms are modified as well. Moreover, there is an isomorphism between the following spaces, for each $i\in\mathbb{Z}$:
\begin{equation*}
\mathrm{Hom}^{i}\Big(\bigwedge^{n}E,E\Big)\simeq \mathrm{Hom}^{i+n-1}\Big(S^{n}E[1],E[1]\Big)
\end{equation*}
so that one can turn to either one definition or the other, depending on one's preference (for more informations on this isomorphism, see \eqref{eq:formula1}-\eqref{eq:formula2} and \cite{Fiorenza}). We now  turn our attention to the symmetric formulation of what we will call in the present text \emph{$L_{\infty}$-algebras} (following Voronov's convention \cite{voronov2}).

\begin{definition} \label{def:Linftyalgebra}
 An \emph{$L_{\infty}$-algebra} is a graded vector space $E$ equipped with a family of graded symmetric $k$-multilinear maps $ \big(\{ \ldots \}_k\big)_{k \geq 1} $ of  degree $+1$, for all $k \geq 1$, such that they satisfy the \emph{generalized Jacobi identities}.
That is, for every homogeneous elements $x_{1},\ldots,x_{n}\in E$:
\begin{equation}\label{superjacobi}
\sum_{i=1}^{n}\ \sum_{\sigma\in Un(i,n-i)}\epsilon(\sigma)\,\big\{\{x_{\sigma(1)},\ldots,x_{\sigma(i)}\}_{i},x_{\sigma(i+1)},\ldots,x_{\sigma(n)}\big\}_{n-i+1}=0
\end{equation}
where $Un(i,n-i)$ is the set of $(i,n-i)$-\emph{unshuffles}, i.e. the permutations $\sigma$ of $n$ elements which preserve the order of the first $i$ elements and the last $n-i$ elements:
\begin{equation*}
\sigma(1)<\ldots<\sigma(i)\hspace{2cm}\sigma(i+1)<\ldots<\sigma(n)
\end{equation*}
and $\epsilon(\sigma)$ is the sign induced by the permutation of elements in the symmetric algebra of $E$.
\end{definition}

Writing $\dd$ for $\{\, . \,\}_{1}$ and $m_{n}$ for the $n$-bracket $\{\ \ldots \}_{n}$, the first higher Jacobi identities are:
\begin{itemize}
\item $\dd\circ\dd=0$ and $[\dd,m_{2}]=0$ imply that the differential $\dd$ is chain map and a derivation for the 2-bracket
\item $m_{2}\circ m_{2}+[\dd,m_{3}]=0$ means that the classical Jacobi identity for $m_2$ is satisfied \emph{up to homotopy}
\item the relation $\sum_{i=1}^{k}m_{i}\circ m_{k-i}=0$, valid for every $k\geq4$ are the 'new' Jacobi identities
\end{itemize}
These higher Jacobi identities can be seen as consistency conditions for the brackets involved in the $L_{\infty}$-algebra structure.

In the literature \cites{Mehta, Schatz}, $L_{\infty}$-algebras as defined above are usually called $L_{\infty}[1]$-algebras, in order not to confuse them with the original definition given by J. Stasheff, T. Lada and M. Markl in \cites{Marklada,LadaStasheff}. It was indeed problematic at some point because mathematicians had to navigate between the original definition with graded skew-symmetric brackets, and the above one (due to Voronov) which uses graded symmetric brackets. Both definitions are equivalent, up to shifting the degree and changing some signs. The advantage of the original convention is that it encapsulates automatically Lie algebras as a special case, whereas Voronov's convention does not. In some cases though, it is more efficient to use the symmetric version of the definition.

In this thesis we have chosen Voronov's convention and used graded symmetric brackets and we call $L_{\infty}$-algebra what is usually called an $L_{\infty}[1]$-algebra because it brings fluency and clarity to the text. Since the two definitions (symmetric and skew-symmetric) are related by an isomorphism, there is some confusion about names. The main point is that when a sh-Lie algebra structure is defined on a graded vector space $E$, there is a canonical $L_{\infty}$-algebra structure (in our sense) equipping $E[1]$, and conversely. For example take $n$ homogeneous elements $x_{1},\ldots,x_{n}$ of $E$, and denote by $y_{1},\ldots,y_{n}$ their respective representatives in $E[1]$. Then the isomorphism between the graded skew-symmetric bracket on $E$ and the graded symmetric bracket on $E[1]$ is given by:
\begin{equation}\label{eq:formula1}
\big[x_{1},\ldots,x_{n}\big]_{n}=(-1)^{n(2-n)+\sum_{i=1}^{n}(n-i)(|y_{i}|+1)}\,\big\{y_{1},\ldots,y_{n}\big\}_{n}
\end{equation}
where $|y_{i}|=|x_{i}|-1$ for every $k=1,\ldots,n$. The graded skew-symmetry on the left-hand side indeed translates as a graded symmetry on the right-hand side, because all degrees have been shifted by one.  The reverse formula can be obtained by:
\begin{equation}\label{eq:formula2}
\big\{y_{1},\ldots,y_{n}\big\}_{n}=(-1)^{n(2-n)+\sum_{i=1}^{n}(n-i)|x_{i}|}\,\big[x_{1},\ldots,x_{n}\big]_{n}
\end{equation}
See \cite{Fiorenza} for further details.

\begin{example}
In this context a graded Lie algebra is nothing but a graded vector space $E$, equipped with a graded symmetric bracket of degree $+1$ satisfying the Jacobi identity:
\begin{equation}\label{eq:jacobisym}
\big\{x,\{y,z\}\big\}=\big\{\{x,y\},z\big\}+(-1)^{|x||y|}\big\{y,\{x,z\}\big\}
\end{equation}
for every homogeneous elements $x,y,z\in E$. The relation with the former definition is given by shifting the degree of all elements by $1$, and then using a graded skew-symmetric bracket of degree 0 given by:
\begin{equation}
[\widetilde{x},\widetilde{y}]=(-1)^{|x|}\{x,y\}
\end{equation}
where $\widetilde{x}$ is the element $x$ whose degree has been shifted by $+1$.
\end{example}

\begin{example}
Using this isomorphism, a differential graded Lie algebra can be seen as a graded vector space $E$ equipped with a symmetric bracket satisfying the symmetric Jacobi identity \eqref{eq:jacobisym} and with a differential $\dd$ acting as:
\begin{equation}
\dd\{x,y\}=-\{\dd x,y\}-(-1)^{|x|}\{x,\dd y\}
\end{equation}
which is the Leibniz identity for the differential in the symmetric formulation (see formula \eqref{eq:formula1} to check that it gives back Equation \eqref{eq:dgla}). The above identity is equivalent to writing that $[\dd,m_{2}]=0$, hence a dgLa is a $L_{\infty}$-algebra with all $n$-brackets vanishing for $n\geq3$.
\end{example}

The degrees of the vector space underlying an $L_{\infty}$-algebra are not constrained, but in most cases of interest (coming mostly from physics), only the non positive integers are considered. That is why we call a \emph{Lie $n$-algebra} (for $n\in\mathbb{N}^{\ast}\cup\infty$) an $L_{\infty}$-algebra structure on some graded vector space $E=\bigoplus_{1\leq i\leq n} E_{-i}$ restricted to non positive integers (recall that we work with Voronov's convention, hence the counter starts at $-1$). Moreover, if $n<\infty$ then the $k$-ary brackets vanish at level $n+2$ and higher. In the present text, we will mostly deal with Lie $n$-algebras and more particularly their natural generalization: the so called Lie $n$-algebroids or more generally Lie $\infty$-algebroids. Moreover a morphism of $L_{\infty}$-algebras is a family of $k$-ary applications $(f_{k})_{k\geq0}$ of degree 0 commuting with the brackets, as in the Lie algebra case. However, this compatibility condition is such a complicated expression that it is more useful to write it in terms of morphisms of coalgebras on the space of functions on $E$ (see Definition \ref{def:morphism}), which justifies the use of differential graded manifolds in this context.

\subsection{Differential Graded Manifolds}\label{section:dgmanifold}

The natural geometric generalization of graded vector spaces are \emph{graded manifolds}. In differential geometry, an $n$-dimensional smooth manifold is defined as a topological space which is locally homeomorphic to $\mathbb{R}^{n}$, and such that two coordinate charts are smoothly compatible. In graded geometry, there is a similar idea (developed by B. DeWitt \cite{DeWitt}): a graded manifold is in some sense locally homeomorphic to some product $\mathbb{R}^{n}\times E$, where $E$ is a graded vector space.

However in the literature, graded manifolds are usually and were originally defined from the dual point of view: from the \emph{sheaf of functions} rather than from a set of coordinate charts, even if the two definitions are equivalent. This dual conception goes back to Berezin and Leites \cite{Leites} for supermanifolds, and has naturally been extended to graded manifolds by Kostant \cite{Kostant}. The two notions happen to be equivalent, as was shown by M. Batchelor \cite{Batchelor}.

The space of functions on a smooth manifold and its restrictions to open sets can be seen as a sheaf $\cinf$, that is an application from the topology of $M$ taking values in the category of commutative algebras, and satisfying some compatibility conditions over open sets (see \cite{maclane} for further details). A topological space $X$ together with a sheaf of rings $\mathscr{O}$ is called a \emph{locally ringed space}, thus any smooth manifold $M$ in the usual sense is a locally ringed space, with structure sheaf $\cinf$.
\begin{definition}
A \emph{graded manifold} is a locally ringed space $\mathcal{M}=(M,\mathscr{O}_{\mathcal{M}})$, where $M$ is a smooth/real analytic/holomorphic manifold called the \emph{base manifold} (or \emph{body}), such that the structure sheaf $\mathscr{O}_{\mathcal{M}}$ is locally of the form $\mathscr{O}(U)\otimes S(E^{\ast})$ for some open set $U\subset M$ and some graded vector space $E$.
\end{definition}
In other words, the space of functions on $\mathcal{M}$ is locally isomorphic to the space of functions on some open set of $M$ tensored with the functions on the graded vector space $E$, as defined in the former section. Hence $\mathscr{O}_{\mathcal{M}}$ is a sheaf of $\mathbb{Z}$-graded algebras. When $U=M$, we find that the structure sheaf $\mathscr{O}_{\mathcal{M}}(M)$ would be isomorphic to the space of smooth/real analytic/holomorphic functions taking values in $S(E^{\ast})$. It is thus tempting to identify $E$ with a graded vector bundle over $M$, that is: a vector bundle, whose fiber is a graded vector space. That would be helpful because it would enable to work in local coordinates or to consider only graded vector bundles. And indeed this result is a famous theorem of M. Batchelor \cite{Batchelor2} which ensures that we can realize (non canonically) a positively graded manifold as a graded vector bundle over a smooth manifold:

\begin{theoreme}
\textbf{Batchelor (1979)} Let $(M,\mathscr{O}_{\mathcal{M}})$ be a positively graded manifold, then there exists a graded vector bundle $E\to M$ such that the structure sheaf $\mathscr{O}_{\mathcal{M}}$ is isomorphic to the sheaf of sections $\Gamma\big(S(E^{\ast})\big)$.
\end{theoreme}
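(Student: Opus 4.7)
The strategy is to extract a graded vector bundle $E$ intrinsically from the sheaf $\mathscr{O}_{\mathcal{M}}$, and then build an isomorphism $\Gamma\big(S(E^{\ast})\big)\simeq\mathscr{O}_{\mathcal{M}}$ by a patching argument that crucially uses a partition of unity. Throughout I will exploit the positivity of the grading, which forces the natural filtration to truncate in each fixed total degree.

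First I would decompose $\mathscr{O}_{\mathcal{M}}=\bigoplus_{k\geq 0}\mathscr{O}_{\mathcal{M}}^{k}$ into its degree components, and introduce the sheaf of ideals $\mathscr{J}\subset\mathscr{O}_{\mathcal{M}}$ generated by elements of strictly positive degree. Then $\mathscr{O}_{\mathcal{M}}/\mathscr{J}\simeq\mathscr{O}(M)$ recovers the body, and from the local model $\mathscr{O}(U)\otimes S(E^{\ast})$ one reads off that $\mathscr{J}/\mathscr{J}^{2}$ is a locally free graded $\mathscr{O}(M)$-module. This defines a graded vector bundle on $M$; I take $E^{\ast}$ to be this bundle (with the appropriate shift of degrees) and $E$ its graded dual. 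A direct computation in a trivializing chart gives the canonical isomorphism of sheaves of graded algebras $\Gamma\big(S(E^{\ast})\big)\simeq\mathrm{gr}_{\mathscr{J}}\,\mathscr{O}_{\mathcal{M}}=\bigoplus_{k\geq 0}\mathscr{J}^{k}/\mathscr{J}^{k+1}$.

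Next I would lift this identification of associated graded sheaves to an isomorphism between $\Gamma\big(S(E^{\ast})\big)$ and $\mathscr{O}_{\mathcal{M}}$ themselves. Choose an open cover $\{U_{\alpha}\}$ of $M$ trivializing both $\mathscr{O}_{\mathcal{M}}$ and $E$, together with local isomorphisms $\Phi_{\alpha}:\Gamma\big(S(E^{\ast})\big)\big|_{U_{\alpha}}\xrightarrow{\sim}\mathscr{O}_{\mathcal{M}}\big|_{U_{\alpha}}$ witnessing the trivialization. On overlaps $U_{\alpha}\cap U_{\beta}$, the comparison $\Phi_{\alpha}^{-1}\circ\Phi_{\beta}$ is an automorphism of $\mathscr{O}(U_{\alpha\beta})\otimes S(E^{\ast})$ that reduces to the identity modulo $\mathscr{J}$. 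Picking a partition of unity $\{\chi_{\alpha}\}$ subordinate to the cover and averaging the $\Phi_{\alpha}$ against it produces a global sheaf morphism whose induced map on $\mathrm{gr}_{\mathscr{J}}$ is the identity; since in each fixed degree only finitely many powers of $\mathscr{J}$ contribute (positivity), a filtration-preserving map that is the identity on the associated graded is automatically an isomorphism.

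The main obstacle, and the reason the statement is made in the smooth or real-analytic category rather than the holomorphic one, is the existence of the partition of unity used in the gluing. In the holomorphic setting the gluing cocycle lives in the sheaf cohomology of nilpotent automorphisms of $S(E^{\ast})$ and need not vanish, so Batchelor's theorem is only a local statement there. A secondary technical point worth isolating is the averaging itself: one must verify that the convex combination $\sum_{\alpha}\chi_{\alpha}\,\Phi_{\alpha}$ still defines an algebra morphism; this can be arranged either by averaging the underlying section-splittings of $\mathscr{J}\to\mathscr{J}/\mathscr{J}^{2}$ and extending multiplicatively, or by an inductive degree-by-degree correction of the identity map, at each step modifying by an $\mathscr{O}$-linear term supported in the right graded piece. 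Both routes rely only on partitions of unity and on the local finiteness granted by the positive grading.
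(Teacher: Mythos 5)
The paper does not prove this theorem at all: Batchelor's result is stated as a classical fact and referred to the original article, after which the text immediately moves on to morphisms of graded manifolds. There is therefore no in-paper argument to compare yours against, and your proposal has to be judged on its own merits.

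On its own merits it is correct, and it is essentially the standard proof (Batchelor's, transposed from the $\mathbb{Z}/2\mathbb{Z}$-graded to the positively graded setting). The skeleton is right: filter $\mathscr{O}_{\mathcal{M}}$ by the ideal $\mathscr{J}$ of positive-degree elements, observe that $\mathscr{O}_{\mathcal{M}}/\mathscr{J}\simeq\mathscr{O}(M)$ (here positivity even spares you the splitting of the body that the supermanifold case requires, since the degree-$0$ subalgebra is already $\mathscr{O}(M)$), read off from the local model that $\mathscr{J}/\mathscr{J}^{2}$ is locally free, hence of the form $\Gamma(E^{\ast})$, and that the canonical map $S(\mathscr{J}/\mathscr{J}^{2})\to\mathrm{gr}_{\mathscr{J}}\,\mathscr{O}_{\mathcal{M}}$ is an isomorphism. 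You also correctly isolate the one genuine pitfall in the gluing step: a convex combination $\sum_{\alpha}\chi_{\alpha}\Phi_{\alpha}$ of algebra morphisms is not an algebra morphism, so one cannot literally average the local trivializations. Your repair is the right one: average instead the $\mathscr{O}(M)$-linear splittings $\mathscr{J}/\mathscr{J}^{2}\to\mathscr{J}$ of the projection (these form an affine space over a fine sheaf of modules, so partitions of unity apply and convex combinations remain splittings), then extend the resulting map multiplicatively to an algebra morphism $\Gamma\big(S(E^{\ast})\big)\to\mathscr{O}_{\mathcal{M}}$; since the positive grading makes the $\mathscr{J}$-adic filtration finite in each fixed degree, a filtered morphism inducing the identity on the associated graded is automatically an isomorphism. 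Your closing remark is also accurate: the only place smoothness (or real analyticity with a smooth partition-of-unity argument on the underlying smooth structure) enters is this gluing, which is why the statement fails globally in the holomorphic category, where the obstruction survives in the sheaf cohomology of the relevant fine-no-longer sheaf of nilpotent automorphisms.
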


\noindent This important result allows us to talk of graded manifolds in terms of graded vector bundles, which is simpler and more systematic. Using this identification, we will now rather use the notation $E\to M$ or just $E$, instead of $\mathcal{M}$, to refer to a graded manifold. In this context the structure sheaf $\mathscr{O}_{\mathcal{M}}$ will be denoted by $\mathscr{E}$, and it is isomorphic to the sheaf of sections of the graded vector bundle $S(E^{\ast})$.

\begin{definition}
A \emph{morphism of graded manifolds} from $E$ to $F$ (with respective base manifolds $M$ and $N$) is the data of a smooth/real analytic/holomorphic map $\phi:M\to N$ that we call the \emph{base map} together with a morphism of sheaves $\Phi:\mathscr{F}\to\mathscr{E}$ over $\phi^{\ast}$:
\begin{equation}
\Phi(f G)=\phi^{\ast}(f)\Phi(G)
\end{equation}
for every $f\in\mathscr{O}(V)$ and $G\in\mathscr{F}(V)$, for any open set $V\subset N$. We say that the morphism $\Phi$ \emph{covers} the base map $\phi$.
\end{definition}
\noindent For convenience and clarity, we will write $\Phi$ for a morphism of graded manifold, without further mention of the base map.

\begin{example}
A famous example of graded manifold is the tangent space $TM$ whose fiber degree has been shifted by one: it is denoted by $T[1]M$. Coordinate functions on $T[1]M$ are 1-forms on $M$ and since the degrees of the fibers have been shifted, any polynomial of such coordinate functions is a differential form, hence the sheaf of functions is $\mathscr{O}_{T[1]M}=\Omega^{\bullet}$.
\end{example}

The notion of graded manifold allows us to define a generalization of a Lie $\infty$-algebra in this context:

\begin{definition} \label{def:Linftyoids}
 Let  $M$ be a smooth/real analytic/complex manifold whose sheaf of functions we denote by ${\mathscr O}$. Let $E$ be a sequence $E=(E_{-i})_{1\leq i\leq \infty}$ of vector bundles over $M$, then a \emph{Lie $\infty$-algebroid structure on $E$}, is defined by:
\begin{enumerate}
\item a degree 1 vector bundle morphism $\rho: E_{-1} \to TM$ called the \emph{anchor} of the Lie $\infty$-algebroid,
\item and a family, for all $k \geq 1$, of graded symmetric $k$-multilinear maps $ \big(\{ \ldots \}_k\big)_{k \geq 1} $ of  degree $+1$  on the sheaf of graded vector spaces $\Gamma(E)$,
\end{enumerate}
satisfying the following constraints. The first ones are the \emph{Leibniz conditions}:
\begin{enumerate}
\item the unary bracket $\dd:=\{\, . \,\}_1: \Gamma(E) \to \Gamma(E)$ is $\mathscr{O}$-linear,
 i.e. forms a family $\dd^{(i)}:E_{-i} \to E_{-i+1}$ of vector bundle morphisms, where we assume that $\dd^{(1)}=0$;
  \item the binary bracket obeys different rules, depending on its arguments: for all $x \in \Gamma(E_{-1})$ and $y \in \Gamma(E)$ it satisfies
 \begin{equation}\label{robinson}
 \{x,fy\}_2=f\{x,y\}_2+ \rho(x)[f] \, y 
 \end{equation}
whereas $ \{x,fy\}_2=f\{x,y\}_2$ for all $x \in \Gamma(E_{-i})$ with $i \geq 2$;
\item for all  $n  \geq 3$, each of the maps $\{\ldots\}_n $ is $\mathscr{O} $-linear.
\end{enumerate}
and the second ones are the \emph{higher Jacobi identities}:
\begin{enumerate}
\item $\rho \circ \dd^{(2)}=0$
\item $ \dd^{(i-1)} \circ {\dd}^{(i)}=0$\quad for all $ i \geq 3$
\item $ \rho\big(\{x,y\}\big)=\big[\rho(x),\rho(y)\big]$\quad for all $x,y \in \Gamma(E_{-1})$,
\item for all $n\geq2$, and for every homogeneous elements $x_{1},\ldots,x_{n}\in \Gamma(E)$:
\begin{equation}\label{superjacobi}
\sum_{i=1}^{n}\sum_{\sigma\in Un(i,n-i)}\epsilon(\sigma)\big\{\{x_{\sigma(1)},\ldots,x_{\sigma(i)}\}_{i},x_{\sigma(i+1)},\ldots,x_{n}\big\}_{n-i+1}=0
\end{equation}
\end{enumerate}
A Lie $\infty$-algebroid structure over $M$ is said to be 
a \emph{Lie $n$-algebroid} when $E_{-i}=0$ for all $i \geq n+1$.
\end{definition}
We observe that for degree reasons, if $n<\infty$ there is no bracket of arity higher than or equal to $n+2$. Moreover, we notice that the fact that the differential $\dd$ is $\mathscr{O}$-linear implies that the graded vector bundles $(E_{-i})_{1\leq i\leq n}$ form a chain complex of vector bundles:
%
%
%
For every Lie $\infty$-algebroid over $M$, it follows from items 1) and 2) in the higher Jacobi identities that the following is a complex of vector bundles
that we call its \emph{linear part}:
 \begin{center}
\begin{tikzcd}[column sep=0.9cm,row sep=0.6cm]
\ldots\ar[r,"\dd"]& E_{-3} \ar[r,"\dd"]& E_{-2}\ar[r,"\dd"]&  E_{-1}\ar[r,"\rho"]& TM
\end{tikzcd}
\end{center}
The first examples of Lie $n$-algebroids are for $n=1$, the so-called \emph{Lie algebroids}. In the literature, the definition is given as follows:

\begin{definition}
A \emph{Lie algebroid} over ${M}$ is a vector bundle $A \to  M$, equipped with a vector bundle morphism $\rho: A \to T M$ called the \emph{anchor map}, and a Lie bracket $[\ .\ ,\, .\ ]_A $ on $\Gamma (A) $, satisfying the \emph{Leibniz identity}:
\begin{align}\label{algebroid}
\forall\ x,y\in\Gamma(A),\ f\in\cinf({ M})\hspace{1cm}&[x,fy]_{A}=f[x,y]_{A}+\rho(x)[f] \, y\ ,
\end{align}
as well as the \emph{Lie algebra homomorphism condition}:
\begin{equation}\label{algebroid2}
\forall\  x,y\in\Gamma(A)\hspace{1cm}\rho\big([x,y]_{A}\big)=[\rho(x),\rho(y)]\ .
\end{equation}
\end{definition}
\begin{remarque}
The second axiom is already implied by the Leibniz identity and the Jacobi identity satisfied by the Lie bracket on $\Gamma(A)$, but we want to keep it for clarity.
\end{remarque}

The tangent bundle of a smooth manifold is a Lie algebroid with anchor map the identity, whereas any Lie algebra can be seen as a Lie algebroid over a point. Thus the notion of Lie algebroid offers a natural generalization of both Lie algebras and tangent bundles. In section \ref{gradvectspace} we saw that in the language of $L_{\infty}$-algebras, a Lie algebra is a Lie $1$-algebra with all $k$-brackets vanishing except for $k=2$, or more precisely: a vector space of degree $-1$ equipped with a symmetric bracket satisfying the Jacobi identity. We obtain the same result here for a Lie algebroid: it is a Lie $1$-algebroid with vanishing $k$-brackets except for $k=2$, and then the anchor map becomes a degree 1 application.

The algebraic structure of a Lie $n$-algebroid is more intricate than that of a mere Lie algebroid, and the rest of this thesis is devoted to finding a Lie $\infty$-algebroid structure on a resolution of a Hermann distribution. For this purpose, we will mostly work with the dual language, on the space of functions on graded manifolds, in which the brackets and the morphisms take a very simple and straightforward meaning.

The functions over a graded manifold $E$ $-$ when it is seen as a graded vector bundle $E\to M$ over a smooth manifold $M$ $-$ are the global sections of $S(E^{\ast})$, the graded symmetric algebra of the dual bundle. The sheaf of sections of $S(E^{\ast})$ is denoted by $\funct$. We define vector fields on $E$ to be the derivations of this sheaf of algebras. Since they are defined on a graded algebra, they are also naturally graded. We say that a vector field is \emph{vertical} if it is $\mathscr{O}$-linear; in other words, if it is tangent to the fibers. The space of vector fields can be equipped with a (graded) Lie bracket, as in the smooth ungraded case. Then we can define \emph{differential graded manifolds}:

\begin{definition}
A \emph{differential graded manifold} (or \emph{$Q$-manifold}) is a graded manifold equipped with a degree $+1$ vector field $Q$ which commutes with itself: $[Q,Q]=0$.
\end{definition}
\begin{remarque}
Note that for an odd vector field, it is a priori not automatic that the self-commutator vanish because $[Q,Q]=2Q^{2}$. Vector fields which have this property are said to be \emph{homological}.
\end{remarque}

\begin{example}
The first example is the Lie algebroid, whose reformulation as a $Q$-manifold is due to A. Vaintrob \cite{Vaintrob}. It eventually led T. Voronov to  generalize this notion and define the possibly higher Lie algebroids to be $Q$-manifolds \cite{voronov}, leading to the precise idea of Lie $\infty$-algebroid. Using formula \eqref{eq:formula2}, Lie algebroids can be defined with symmetric brackets on $\Gamma(A[1])$ instead of skew-symmetric ones on $\Gamma(A)$. On $A[1]$, sections have degree $-1$ whereas they have degree 0 when seen as sections of $A$. Thus, given a Lie algebroid $A$ over $M$, we define on the sections of the suspended vector bundle $A[1]$ the following symmetric bracket:
\begin{equation}
\{x,y\}=[\widetilde{x},\widetilde{y}]
\end{equation}
for any sections $x,y$ of $A[1]$, and where $\widetilde{x}$ is the representative of the section $x$ in $\Gamma(A)$ (i.e. whose degree has been shifted by $+1$).

The space of functions on $A[1]$ is isomorphic to $\Gamma(S(A[1]^{\ast}))$, then it is sufficient to define the vector field $Q$ on the smooth functions on $M$ and on the sections of $A[1]^{\ast}$, and then extend to all of $\Gamma(S(A[1]^{\ast}))$ by derivation. However since $Q$ is of degree one, then it sends smooth functions to sections of $A[1]^{\ast}$ and sections of $A[1]^{\ast}$ to sections of $S^{2}(A[1]^{\ast})$. Hence we define:
\begin{align}
\big\langle Q[f],x\big\rangle&=\rho(x)[f]\\
\big\langle Q[\alpha],x\odot y\big\rangle&=\rho(x)\langle\alpha,y\rangle-\rho(y)\langle\alpha,x\rangle-\big\langle\alpha,\{x,y\}\big\rangle
\end{align}
for every $f\in\cinf(M)$, $\alpha\in\Gamma(A[1]^{\ast})$, and for any $x,y\in\Gamma(A[1])$. We extend $Q$ to all of $\Gamma(S(A[1]^{\ast}))$ by derivation, so that the homological property comes from the morphism property and from the Jacobi identity:
\begin{align}
\big\langle Q^{2}[f],x\odot y\big\rangle
&=\Big(\big[\rho(x),\rho(y)\big]-\rho\big(\{x,y\}\big)\Big)[f]\\
\big\langle Q^{2}[\alpha],x\odot y\odot z\big\rangle&=\Big\langle\alpha,\big\{\{x,y\},z\big\}+\big\{\{y,z\},x\big\}+\big\{\{z,x\},y\big\}\Big\rangle
\end{align}
for every $f\in\cinf(M)$, $x,y,z\in\Gamma(A[1])$ and $\alpha\in\Gamma(A[1]^{\ast})$. This proves that the vector field $Q$ squares to zero, if and only if Equations \eqref{algebroid2} and \eqref{superjacobi} are satisfied. This explains the one-to-one correspondence between Lie algebroids and $Q$-manifold structures of degree 1 on $A[1]$.
\end{example}

\begin{example}\label{examplePoisson}
Poisson manifolds are examples also. What we call a \emph{Poisson manifold} is a smooth manifold $M$ equipped with a Lie bracket $\{\ .\ ,\,.\ \}$ on the algebra of functions $\cinf(M)$ satisfying the derivation property $\{f,gh\}=\{f,g\}h+g\{f,h\}$ for every smooth functions $f,g,h\in\cinf(M)$. There is an equivalent formulation in terms of a self-commuting bivector $\Pi\in\Gamma(\bigwedge^{2}TM)$ since the correspondence can be made explicit:
\begin{equation}
\{f,g\}=\Pi\big(\dd f,\dd g\big)
\end{equation}
for any $f,g\in\cinf(M)$. The Jacobi identity is translated as the vanishing  of the self-commutator $[\Pi,\Pi]=0$, where the bracket is the Schouten-Nijenhuis bracket on multi-vector fields. Then it is known that $T^{\ast}M$ is a Lie algebroid: the anchor map is the bundle map $\Pi^{\#}:T^{\ast}M\to TM$ defined by:
\begin{equation}
\omega\big(\Pi^{\#}(\alpha)\big)=\Pi(\omega,\alpha)
\end{equation}
for every $\alpha,\omega\in\Omega^{1}(M)$. The Lie bracket $[\ .\ ,\, .\ ]_{T^{\ast}M}$ on sections of $T^{\ast}M$ is defined by the following equation:
\begin{equation}
[\omega,\alpha]_{T^{\ast}M}(u)=\omega\big([\Pi,\alpha(u)]\big)-\alpha\big([\Pi,\omega(u)]\big)-[\Pi,u](\omega,\alpha)
\end{equation}
where $u\in \mathfrak{X}(M)$ and where the bracket on the right-hand side is the Schouten-Nijenhuis bracket. One readily checks that the consistency condition \eqref{algebroid} comes from the definition of $\Pi^{\#}$ and some of the properties of the Schouten-Nijenhuis bracket, and that \eqref{algebroid2}) as well as the Jacobi identity come from the fact that $[\Pi,\Pi]=0$. Then if $M$ is a Poisson manifold $T^{\ast}M$ is a Lie algebroid, and this implies that we can associate a $Q$-manifold $T^{\ast}M[1]$ to every Poisson manifold. The story goes even further, since it has been shown that there is a one-to-one correspondence between Poisson manifolds and symplectic $Q$-manifolds of degree 1. In some sort of generalization D. Roytenberg has shown that symplectic $NQ$-manifolds of degree 2 are in one-to-one correspondence with Courant algebroids \cites{Roytenberg2, Roytenberg}.
\end{example}

We speak of an \emph{$NQ$-manifold} (or \emph{positively graded dg manifold}) when the underlying graded manifold $-$ seen as a graded vector bundle $-$ involves only coordinate functions of positive degrees, i.e. when $E_{i}=0$ for all $i\geq0$. Coordinates on the base manifold have degree 0, whereas the fibers admit coordinate functions which are sections of their respective dual spaces, which are then supposed to be of degree greater than or equal to 1. We thus have a concept which would encode the aforementioned Lie $\infty$-algebroids, from a dual perspective however. We have more: they are actually equivalent notions, but to show this an additional object has to be introduced. It is a fundamental concept at the core of the present thesis: the concept of \emph{arity}. It describes how a map behaves as a graded operator, with respect to the symmetric powers of $E^{\ast}$ and not with respect to the classical grading that we have discussed until now.

\begin{definition}
A  function $f\in\funct$ is of \emph{degree $k$} and of \emph{arity $n$} if it belongs to $\Gamma(S^{n}(E^{\ast})_{k})$, that is, if it is a section of $\sum_{i_1+ \dots + i_n = k } E^*_{-i_1} \odot \dots \odot E_{-i_n}^*$, where $\odot$ is the symmetric product. A vector field $X$ is said to be of \emph{arity  $m\in\mathbb{Z}$}  when, for every function $f\in\funct$ of arity $n$, the function $X[f]$ has arity $n+m$.
\end{definition}

We have the following immediate result which is of interest to us now:

\begin{proposition}\label{prop:arity}
Let $E \to  M $ be a positively graded manifold. The allowed values of arity for a function go from $0$ to $+\infty$, and that of a vector field go from $-1$ to $ +\infty$.
\end{proposition}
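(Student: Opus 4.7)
The plan is to verify both assertions by a short local-coordinate argument, relying on the fact that the arity grading on $\funct = \Gamma(S(E^{\ast}))$ is, by definition, the fiberwise symmetric-power degree, hence is intrinsic and independent of any chosen local trivialisation. Since $E$ is positively graded in the sense of this section, meaning $E = \bigoplus_{i\geq 1} E_{-i}$, the dual bundle $E^{\ast}$ is concentrated in positive degrees and $S^n(E^{\ast})$ only makes sense for $n \geq 0$.

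For the claim on functions, the statement is immediate from the definitions: a function of arity $n$ is a section of $S^n(E^{\ast})$ with $n \in \mathbb{N}$. All non-negative values are realised: pullbacks of elements of $\mathscr{O}(M)$ give arity $0$, and $n$-fold symmetric products of generators of $E^{\ast}$ give arity $n$ for any $n \geq 1$.

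For the claim on vector fields I would fix, on an open set $U \subset M$, a local trivialisation in which one has base coordinates $x^i$ of arity $0$ and fiber coordinates $\xi^a$ of arity $1$. Any vector field $X$ then decomposes uniquely as
\begin{equation*}
X|_U = \sum_i f^i \frac{\partial}{\partial x^i} + \sum_a g^a \frac{\partial}{\partial \xi^a},
\end{equation*}
with $f^i, g^a \in \funct(U)$. A direct check on generators shows that $\partial/\partial x^i$ has arity $0$ while $\partial/\partial \xi^a$ has arity $-1$. Expanding each coefficient into arity-homogeneous pieces and regrouping, the arity-$m$ component of $X$ is built from $f^i$ of arity $m$ and $g^a$ of arity $m+1$. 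Since every coefficient function has arity $\geq 0$ by the first part, one needs $m \geq 0$ in the first term and $m \geq -1$ in the second; the overall lower bound is therefore $m \geq -1$. Conversely every integer $m \geq -1$ does occur: $\partial/\partial \xi^a$ realises $m = -1$, $\partial/\partial x^i$ realises $m = 0$, and multiplying $\partial/\partial x^i$ by a monomial in the $\xi^a$ of arity $k$ gives arity $k$.

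The only (minor) difficulty I anticipate is the coherence of this local analysis under change of trivialisation, but this is automatic: the arity grading on $\funct$ is globally defined, so the splitting of $X$ into arity-homogeneous summands is intrinsic, and the local bounds thus propagate to the global statement.
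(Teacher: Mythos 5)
Your proof is correct and follows essentially the same route as the paper: functions are sections of $S(E^{\ast})$ and hence have non-negative arity, while vector fields, being derivations, decompose locally into function coefficients (arity $\geq 0$) times coordinate derivations, where $\partial/\partial \xi^a$ (the contraction operators of Equation \eqref{innerder}) has arity $-1$ and $\partial/\partial x^i$ has arity $0$, giving the lower bound $-1$. Your version is merely more explicit than the paper's, adding the verification that all allowed values are realised and the (correct) remark that the arity grading is intrinsic, so nothing further is needed.
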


\begin{proof}Since a function is formally identified with a section of $S(E^{\ast})$, it can only accept positive values of arity. Natural examples of vector fields of arity $-1$ are the vector fields on $E$ defined as the contractions by sections of $E_{-1}$, see Equation \eqref{innerder} below. These are the only operators which can lower the arity of a function. By definition, since vector fields are derivations of the sheaf of functions $\funct$, they can locally be formally written as a (possibly infinite) sum of components which carry such partial derivatives times a function on $E$. In other words, vector fields cannot be of arity stricly less than $-1$.
\end{proof}

Given these results, we can proceed to the formulation and the proof of the identification between $NQ$-manifolds and Lie $\infty$-algebroids, originally given by T. Voronov \cite{voronov}:

\begin{theoreme}\label{theo:equivoQue}
Let $E=(E_{-i})_{i\geq 1}$ be a sequence of positively graded vector bundles over a manifold $M$. Then there is a one-to-one correspondance between $NQ$-manifold structures on $E$ and Lie $\infty$-algebroid structures over $E$.
\end{theoreme}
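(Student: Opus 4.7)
The plan is to use the arity decomposition of Proposition~\ref{prop:arity} to break the single datum of a homological vector field $Q$ into a family of operators of fixed arity, and then to identify each such operator with one of the structure maps of a Lie $\infty$-algebroid. Since $E$ is positively graded and $Q$ has degree $+1$, I would begin by writing $Q=Q_{(0)}+Q_{(1)}+Q_{(2)}+\ldots$, where $Q_{(k)}$ is the component of arity $k$. A quick degree count on the local coordinate generators of $\funct$ rules out an arity $-1$ piece (there are no nonzero functions on $M$ of positive degree), so the sum starts at $k=0$. Each $Q_{(k)}$ is a derivation of $\funct$, hence uniquely determined by its values on the generators $\mathscr{O}(M)$ and $\Gamma(E_{-i}^{\ast})$ for $i\geq 1$.

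Next, I would read off the Lie $\infty$-algebroid structure by dualization, following the pattern already illustrated in the Lie algebroid example of Section~\ref{section:dgmanifold}: the anchor $\rho:E_{-1}\to TM$ is extracted from $Q_{(1)}$ on $\mathscr{O}(M)$ via $\langle Q_{(1)}[f],x\rangle=\rho(x)[f]$ for $x\in\Gamma(E_{-1})$, $f\in\mathscr{O}(M)$; the unary bracket $\dd^{(i)}:E_{-i}\to E_{-i+1}$ is the $\mathscr{O}$-linear operator dual to $Q_{(0)}:\Gamma(E_{-i+1}^{\ast})\to\Gamma(E_{-i}^{\ast})$; and for each $n\geq 2$, the $n$-ary bracket $\{\ldots\}_{n}$ is defined, with appropriate Koszul signs, as the dual of the restriction of $Q_{(n-1)}$ to $\Gamma(E^{\ast})$, valued in sections of $S^{n}(E^{\ast})$. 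The Leibniz identities of Definition~\ref{def:Linftyoids} then come directly from the fact that each $Q_{(k)}$ is a derivation of the graded symmetric product on $\funct$: the $\mathscr{O}$-linearity of $\dd$ and of the brackets of arity $\geq 3$ is automatic by arity count, whereas the anchor-derivation formula~\eqref{robinson} is exactly the statement that $Q_{(1)}$ is a derivation on the subspace $\mathscr{O}(M)\cdot\Gamma(E^{\ast})$.

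For the higher Jacobi identities, I would expand $[Q,Q]=0$ by arity: the arity $m$ component yields $\sum_{n+k=m}[Q_{(n)},Q_{(k)}]=0$ for every $m\geq 0$. Translating each such equation back through the dualization above and reordering the dual factors of $E^{\ast}$ by $(i,n-i)$-unshuffles produces precisely equation~\eqref{superjacobi} for the $(m+1)$-ary brackets; the first instances ($m=0,1,2$) reproduce $\dd\circ\dd=0$, the compatibility of $\dd$ with $\rho$ and the 2-bracket, and the homotopy Jacobi identity $l_{2}\circ l_{2}=[\dd,l_{3}]$ already discussed in Section~\ref{gradvectspace}. The converse direction is obtained by running the construction backwards: assemble the $Q_{(k)}$ from the brackets by the dual formulas, set $Q=\sum_{k\geq 0}Q_{(k)}$, and check that the Leibniz identities guarantee that each $Q_{(k)}$ extends to a well-defined derivation of $\funct$, while~\eqref{superjacobi} guarantees $[Q,Q]=0$. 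I expect the main obstacle to be the sign bookkeeping in the dualization between graded symmetric brackets on $\Gamma(E)$ and operations on sections of $S(E^{\ast})$: matching the Koszul signs on both sides requires exactly the careful conversion illustrated by formulas~\eqref{eq:formula1}--\eqref{eq:formula2}, and it is this combinatorial step that naturally produces the unshuffle sum appearing in~\eqref{superjacobi}.
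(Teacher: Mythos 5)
Your proposal is correct and follows essentially the same route as the paper: decompose $Q$ into components of homogeneous arity, extract the anchor from the arity-$1$ component acting on $\mathscr{O}(M)$, the differential from the arity-$0$ component, and the $n$-ary brackets by dualizing the arity-$(n-1)$ components, with the Leibniz rules coming from the derivation property of each component and the higher Jacobi identities from the arity-by-arity expansion of $[Q,Q]=0$. The only cosmetic difference is that the paper packages the forward direction through Voronov's derived-bracket formula $\{x_{1},\ldots,x_{n}\}_{n}=\Pi\big(\big[\ldots[[Q,\partial_{x_{1}}],\partial_{x_{2}}],\ldots,\partial_{x_{n}}\big]\big)$ — iterated commutators with the arity $-1$ inner derivations followed by the projection $\Pi$ onto fiberwise-constant vector fields, which extracts exactly your dualized $Q^{(n-1)}$ — and it delegates the verification of the generalized Jacobi identities to Voronov's work rather than carrying out the unshuffle bookkeeping you sketch.
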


\begin{proof}
We will use the identification between sections of $E$ and their canonically associated vector fields. That is to say, given any $x\in \Gamma(E)$, we define the constant vector field $\partial_{x}$ over $E$ to be:
\begin{equation}\label{innerder}
\partial_{x}[\xi]=\big<\xi,x\big>
\end{equation}
for every $\xi\in\Gamma(E^{\ast})$ and extend it as a $\mathscr{O}$-derivation on all of $\funct$. In other words, the coordinates of the constant vector field $\partial_{x}$ in $TE$ are the same as those of $x$ in $E$, so that they can be identified. For example, let $E=\bigoplus_{p\geq1}E_{-p}$ be a graded manifold that admits fiberwise coordinates $-$ say $\{q^{\alpha_{p}}\}$, of degree $p$ $-$ where $\alpha_{p}$ is an index ranging from 1 to $\mathrm{dim}(E_{-p})$. There exist respective homogeneous dual elements $q_{\alpha_{p}}$ in $E_{-p}^{\ast}$, so that the inner derivations $\partial_{q_{\alpha_{p}}}$ are given by the derivative with respect to the coordinate $q^{\alpha_{p}}$, that is to say:
\begin{equation}\partial_{q_{\alpha_{p}}}=\frac{\partial}{\partial q^{\alpha_{p}}}\end{equation}
Vector fields on $E$ that are constant on each fiber form an abelian subalgebra of the infinite dimensional Lie algebra of vector fields on $E$. Over a small open set $U$, they can locally be formally expressed as:
\begin{equation}
X=\sum_{i=1}^{n}\ v^{i}\frac{\partial}{\partial x^{i}}\quad+\quad\sum_{p=1}^{\infty}\quad\sum_{\alpha_{p}=1}^{\mathrm{dim}(E_{-p})}f^{\alpha_{p}}(x)\partial_{q_{\alpha_{p}}}
\end{equation}
for some functions $v^{i},f^{\alpha_{p}}\in\mathscr{O}(U)$. Let $\Pi$ be the operator that sends any vector field $X$ to the vector field $X_{0}$ on $E$ constant on each fiber that coincides with $X$ on the zero locus of the fibers. Therefore, $\Pi$ isolates the components of $X$ which do not depend on the fiber coordinates.

Given a $NQ$-structure on $E$, we can define the anchor map as:
\begin{equation}
\big<Q[f],x\big>=\rho(x)[f]
\end{equation}
for any $x\in E_{-1}$. The commutator $[Q,\partial_{x}]$ gives the following equalities, when acting on functions on $M$:
\begin{equation}
\big[Q,\partial_{x}\big][f]=\partial_{x}\circ Q[f]=\big<Q[f],x\big>=\rho(x)[f]
\end{equation}
In other words, the constant vector field $\Pi\big([Q,\partial_{x}]\big)$ gives the anchor map. In the same spirit, if one takes a homogeneous element $y\in E$, one can define the differential $\dd=\{\, . \,\}_{1}$ using the identification between $\dd y$ and $\partial _{\dd y}$, so that we can write symbolically:
\begin{equation}\label{rienavoir}
\dd y = \Pi\big( [Q,\partial_{y}]\big)
\end{equation}
The component of arity zero in the relation $[Q,Q]=0$ gives $[Q^{(0)},Q^{(0)}]=0$,
 which, in turn, proves that $\dd\circ \dd =0$. In short:
 \begin{lemme}\label{dualQ0}
  Let $E =(E_{-i})_{i\geq 1}$ be a positively graded manifold over a manifold $M$. There is a one-to-one correspondence between homological vertical vector fields of arity $0$ 
  and collections of maps  $ \dd=\big({\mathrm d}^{(i)} : E_{-i} \to E_{-i+1}\big)_{i \geq 2} $ making $E$ a complex.
 \end{lemme}
We define the $n$-ary brackets by Voronov's formula \cite{voronov2}:
\begin{equation}\label{eq:correspondence}
\big\{x_{1},\ldots,x_{n}\big\}_{n}=\Pi\Big(\big[\ldots[[Q,\partial_{x_{1}}],\partial_{x_{2}}],\ldots,\partial_{x_{n}}\big]\Big)
\end{equation}
where on the left-hand side we have used the identification between elements of $E$ and their associated inner derivations, see Equation \eqref{innerder}. For $n=1$ we recover Equation \eqref{rienavoir}. The above brackets are $\cinf(M)$ linear for $n\neq3$, and for $n=2$ they naturally satisfy the Leibniz identities. The Jacobi identities are satisfied due to the homological property of the vector field $Q$, see \cite{voronov2}. Hence we obtain a Lie $\infty$-algebroid structure on $E$.

Conversely, given such a Lie $\infty$-algebroid structure on the graded vector space $E$, we construct a homological vector field $Q$ turning $E$ into an $NQ$-manifold. The homological vector field $Q$ has degree $1$, hence it does not even have any component of arity $-1$. Moreover the component of arity 0 (that we call the \emph{linear part of $Q$}) is $\Funct$-linear for degree reasons. Hence we can formally decompose $Q$ as an infinite sum of components of homogeneous arities:
\begin{equation}
 Q= \sum_{i=0}^\infty \ Q^{(i)}
 \end{equation}
 First, the dual of the differential $\dd$ gives the component of arity 0 by the formula:
 \begin{equation}
\big\langle Q^{(0)}[\alpha],x\big\rangle=(-1)^{i-1}\big\langle \alpha,\dd^{(i)}(x)\big\rangle
\end{equation}
for any $x\in \Gamma(E_{-i})$ and $\alpha\in\Gamma(E^{\ast}_{-i+1})$.

The component of arity 1 is the only component which is not $\cinf(M)$-linear because it involves the anchor map. Given the 2-bracket $\{\ .\ ,\, .\ \}_{2}$ and the anchor map $\rho$, we define the component of arity $1$ by the following two identities:
\begin{align}
\big\langle Q^{(1)}[f],x\big\rangle&=\rho(x)[f]\label{anchor1}\\
\big\langle Q^{(1)}[\alpha],x\odot y\big\rangle&=\rho(x)\big[\langle\alpha,y\rangle\big]-\rho(y)\big[\langle\alpha,x\rangle\big]-\big\langle \alpha, \{x,y\}_{2}\big\rangle\label{anchor2}
\end{align}
for every $f\in\cinf(M)$, $\alpha\in\Gamma(E^{\ast})$ and for every homogeneous elements $x,y\in E$, where it is understood that the anchor map vanishes for elements of degree lower than or equal to $-2$. We then extend $Q^{(1)}$ to all the functions of $\funct$ by derivation using the above two definitions.

\begin{remarque}
In other words, the component of arity 1 acts on smooth functions through the following composition:
\begin{equation*}\label{anchorproperty}
\rho^{\ast}\circ\dd_{\mathrm{dR}}:\cinf(M)\longmapsto\Gamma(E_{-1}^{\ast})
\end{equation*}
since the dual of the anchor map sends differential forms on $M$ to sections of $E_{-1}^{\ast}$. The conditions $\rho\circ\dd=0$ and $[Q^{(0)},Q^{(0)}]=0$ imply that the linear part $Q^{(0)}$ of the homological vector field $Q$ defines a  differential complex:
\begin{center}
\begin{tikzcd}[column sep=0.9cm,row sep=0.6cm]
\ldots&\ar[l,"Q^{(0)}" above]\Gamma(E^{\ast}_{-2})&\ar[l,"Q^{(0)}" above]\Gamma(E^{\ast}_{-1})&\ar[l,"\rho^{\ast}\circ\dd_{\mathrm{dR}}" above]\cinf(M)
\end{tikzcd}
\end{center}
\end{remarque}

Now turn to the higher arities: the $n$-ary brackets $\big\{\ldots\big\}_{n}$ are $\cinf(M)$-linear hence they can be directly dualized:
\begin{equation*}
\big\{\ldots\big\}^{\ast}_{n}:E^{\ast}\longmapsto S^{n}(E^{\ast})
\end{equation*}
and can be extended to all of $\funct$ as $\cinf(M)$ linear derivations. We define the component of arity $n$ to be the dual of the $n$-bracket:
\begin{equation}
Q^{(n)}=\big\{\ldots\big\}^{\ast}_{n}
\end{equation}
It naturally has degree $+1$ because the original brackets have degree $+1$. Then define the vector field $Q$ by:
\begin{equation}
Q=\sum_{i=0}^{\infty}\ Q^{(i)}
\end{equation}
This degree $+1$ vector field squares to zero because the brackets and the anchor map satisfy the Jacobi identities, hence it defines an $NQ$-manifold structure on $E$.
\end{proof}

All along the present thesis, we will use this identification, and the proofs will rely exclusively on the dual picture ($NQ$-manifolds) so that it seems that it is not only a practical concept, but it is rather necessary for the whole process. In the following then, we will use the terms Lie $\infty$-algebroids or $NQ$-manifolds indifferently, but we shall generally describe them as a pair $(E,Q)$, with $Q$ the homological vector field of the associated $Q$-manifold while keeping in mind the one-to-one correspondence between the two notions.


\subsection{Lie \texorpdfstring{$\infty$}{infinity}-morphisms and homotopies}

When dealing with morphisms, $Q$-manifolds are much more practical than Lie $\infty$-algebroids. By a \emph{morphism} between two $N$-manifolds 
$E \to { M}$ and  $F \to { N}$, we mean a degree $0$ morphism $\Phi$ of sheaves of graded commutative algebras from the graded commutative algebra ${\mathscr F} $ 
of functions on $F \to N$ to the graded commutative algebra ${\mathscr E} $ of functions on $E \to M$.  

\begin{definition}
\label{def:morphism}
A \emph{Lie $\infty$-algebroid morphism} (or \emph{Lie $\infty$-morphism}) 
from a Lie $\infty$-algebroid $ ( E,Q_{E})$ to a Lie $\infty$-algebroid $ (F,Q_F)$ 
with sheaves of functions ${\mathscr E}$ and $ {\mathscr F}$ respectively is an algebra morphism $\Phi$ of degree $0 $
from ${\mathscr F}$ to $ {\mathscr E}$  which intertwines $Q_{E}$ and $Q_{F}$:
\begin{equation}\label{Qmorphism}
\Phi\circ Q_{F}=Q_{E}\circ\Phi
\end{equation}
When $\Phi$ is an algebra isomorphism, we shall speak of a \emph{strict isomorphism}.  
\end{definition}

Every Lie $\infty$-algebroid morphism $\Phi$ induces a smooth map $\phi$ from $ M$ to $ N$ that we call the \emph{base morphism}, and, 
for each $i \geq 1$, vector bundle morphisms $\phi_i:E_{-i} \to F_{-i}$ over $\phi$. 
 When $M$  = $N$, we say that a morphism $\Phi$ is \emph{over $M$} if $\Phi$ is ${\mathscr O}$-linear,
i.e. if the base morphism is the identity map.

A Lie ${\infty}$-algebroid morphism $\Phi$ from $ (E,Q_E)$ to $(F,Q_{F})$ is said to be of \emph{arity $k$} if it sends functions of arity $l$ in $\mathscr{F}$ to functions of arity $l+k$ in $\mathscr{E}$. The arity of such a morphism is necessarily positive, for it has to send smooth functions (of arity zero) to elements of $\Gamma(S(E^{\ast}))$ (of positive arity).
Any Lie $\infty $-algebroid morphism $\Phi $ over $M$ from  $ (E,Q_{E})$ to $(F,Q_{F})$ can be decomposed into components according to their arity:
since  $\Phi$ is ${\mathscr O}$-linear, and since it is determined by its restriction to functions of arity $1$,
i.e. sections of $E^*$, the component of arity $k$, namely $ \Phi^{(k)}$, can be identified with an element in $ \Gamma(S^{k+1}(E^*) \otimes F) $ that we denote by $\widehat{\Phi}^{(k)}$.
This allows to consider $\Phi$ as a formal sum:
\begin{equation}
\Phi=\sum_{k \geq 1}\ \Phi^{(k)}
\end{equation}
Taking arity into account, the morphism condition $\Phi(fg)=\Phi(f)\Phi(g)$ for any $f,g\in\functt$ becomes:
\begin{align}
\Phi^{(0)}(fg)&=\widehat{\Phi}^{(0)}(f)\widehat{\Phi}^{(0)}(g)\label{baboum}\\
\Phi^{(1)}(fg)&=\widehat{\Phi}^{(1)}(f)\widehat{\Phi}^{(0)}(g)+\widehat{\Phi}^{(0)}(f)\widehat{\Phi}^{(1)}(g)\label{boum}\\
\forall\ k\geq2\hspace{1cm}\Phi^{(k)}(fg)&=\underset{i+j=k}{\sum_{0\leq i,j\leq k}}\widehat{\Phi}^{(i)}(f)\widehat{\Phi}^{(j)}(g)\label{biboum}
\end{align}
The component $\Phi^{(0)}$ of arity $0$ is thus a graded algebra morphism that we call the \emph{linear part} of $\Phi$. It sends sections of $F^{\ast}_{-i}$ to sections of $E_{-i}^{\ast}$, for every $i\geq1$. We emphasize that $\Phi^{(1)}$ is not an algebra morphism but rather a $\Phi^{(0)}$-derivation. Being of arity 1, it sends  elements of $\Gamma(F^{\ast}_{-i})$ to the space $\sum_{1\leq k\leq i-1}\Gamma(E_{-i+k}^{\ast}\odot E_{-k}^{\ast})$ for any $i\geq1$ $-$ and so on for components of higher arities. In a similar fashion, any map $ \delta:\mathscr{F}\to\mathscr{E}$ can be decomposed into components of homogeneous arity, and then be considered as a family  $\big(\widehat{\delta}^{(k)}\big)_{k \geq 0}  $ with $ \widehat{\delta}^{(k)} \in \Gamma(S^{k+1}(E^*) \otimes F)$.

\begin{remarque}
 \label{rmk:linearparts}
Equation (\ref{Qmorphism}), 
restricted to terms of arity $0$, implies that the induced graded vector bundle morphism is a chain map between the respective linear parts of $(E,Q_{E})$ and $(F,Q_{F})$:
 \begin{center}
\begin{tikzcd}[column sep=0.9cm,row sep=0.6cm]
\ldots\ar[r,"\dd"] & E_{-3} \ar[r,"\dd"]\ar[dd,"\phi_3"]& E_{-2} \ar[dd,"\phi_2"]\ar[r,"\dd"]&  E_{-1} \ar[dd,"\phi_1"]\ar[r,"\rho"]& TM \ar[dd,"\phi_{\ast}"]\\
&\\
\ldots\ar[r,"\dd'"]& F_{-3} \ar[r,"\dd'"]& F_{-2}\ar[r,"\dd'"]&  F_{-1}\ar[r,"\rho'"]& TN
\end{tikzcd}
\end{center}
where $\dd$ and $\dd'$ are the linear parts of $Q_{E}$ and $Q_{F}$, respectively. We call this chain map the \emph{linear part of  $\Phi$}. The dual application is $\widehat{\Phi}^{(0)}$, so it defines a chain map as well, but on the cochain complex:
\begin{center}
\begin{tikzcd}[column sep=0.9cm,row sep=0.6cm]
\ldots&\ar[l,"Q_{E}^{(0)}"]\Gamma(E^{\ast}_{-3})&\ar[l,"Q_{E}^{(0)}"]\Gamma(E^{\ast}_{-2})&\ar[l,"Q_{E}^{(0)}"]\Gamma(E^{\ast}_{-1})&\ar[l,"\rho^{\ast}"]\Omega^{1}(M)\\
&\\
\ldots&\ar[l,"Q_{F}^{(0)}"]\Gamma(F^{\ast}_{-3})\ar[uu,"\widehat{\Phi}^{(0)}"]&\ar[l,"Q_{F}^{(0)}"]\Gamma(F^{\ast}_{-2})\ar[uu,"\widehat{\Phi}^{(0)}"]&\ar[l,"Q_{F}^{(0)}"]\Gamma(F^{\ast}_{-1})\ar[uu,"\widehat{\Phi}^{(0)}"]&\ar[l,"\rho'^{\ast}"]\Omega^{1}(N)\ar[uu,"\phi^{\ast}"]\\
\end{tikzcd}
\end{center}
\end{remarque}

Beyond strict isomorphisms (see Definition \ref{def:morphism}) between Lie $\infty$-algebroids, there is a larger notion of morphisms `invertible up to homotopy'.
That is the correct notion of invertible morphisms in the category of Lie $\infty$-algebroids. To begin with, it is not easy to define
what is a homotopy of Lie $\infty$-algebroid morphisms. There have been several attempts to define them \cite{Baez}.
We claim that the one that we propose now is both new and relevant.

We proceed step by step, starting by justifying the concept. From now on until the end of this section, we will assume that we work with a smooth manifold $M$, over which several Lie $\infty$-algebroid structures may coexist. It means that for any two graded manifolds $E$ and $F$, it is implied that in both cases the base manifold is $M$. Moreover we will assume $-$ unless otherwise stated $-$ that the base map associated to any Lie $\infty$-morphism is the identity $\mathrm{id}_{M}$. Now assume that we are given two Lie $\infty$-algebroids $(E,Q_{E})$ and $(F,Q_{F})$ over $M$, and two Lie $\infty$-morphisms $\Phi:\functt\to\funct$ and $\Psi:\funct\to\functt$. We are interested in investigating the possibility that the two Lie $\infty$-algebroid structures  be 'isomorphic', in some sense.  Assume for example that $\Phi$ and $\Psi$ are inverse to one another as morphisms of algebras:
\begin{equation*}
\Phi\circ\Psi=\mathrm{id}_{\funct}\hspace{2cm}\text{and}\hspace{2cm}\Psi\circ\Phi=\mathrm{id}_{\functt}
\end{equation*}
One can decompose both the left-hand side and the right-hand side in terms of components of homogeneous arity. Since the identity map is of arity 0, we find that
\begin{align*}
\Phi^{(0)}\circ\Psi^{(0)}&=\mathrm{id}_{\funct},&\phantom{and}\hspace{2cm}\Psi^{(0)}\circ\Phi^{(0)}&=\mathrm{id}_{\functt},\\
(\Phi\circ\Psi)^{(k)}&=0&\text{and}\hspace{2.7cm}(\Psi\circ\Phi)^{(k)}&=0
\end{align*}
for all $k\geq1$. But it is very inconvenient and way too stringent to define isomorphisms of Lie $\infty$-algebroids as Lie $\infty$-morphisms of algebras of arity 0 which are invertible. That is why we prefer to weaken the condition and treat equivalence of Lie $\infty$-algebroid structures only \emph{up to homotopy}, in the sense that the only condition that both left-hand sides above have to satisfy is that they are only homotopic (in some sense defined below) to the right-hand sides, that is: 
\begin{equation*}
\Phi\circ\Psi\sim\mathrm{id}_{\funct}\hspace{2cm}\text{and}\hspace{2cm}\Psi\circ\Phi\sim\mathrm{id}_{\functt}
\end{equation*}
A \emph{homotopy} between two Lie $\infty$-morphisms $\Phi$ and $\Psi$ will be a piecewise-$C^{1}$ path in the space of Lie $\infty$-algebroid morphisms, satisfying some consistency conditions.

If $(E,Q_{E})$ and $(F,Q_{F})$ are Lie $\infty$-algebroids, we define an operator $[Q,\,.\ ]$ on the space of maps $\mathrm{Map}(\mathscr{F},\mathscr{E})$ from $\functt$ to $\funct$ (not necessarily algebra morphisms) by:
\begin{align}\label{morphismdiff}
[Q,\,.\ ]:\hspace{0.3cm}\mathrm{Map}(\mathscr{F},\mathscr{E})\hspace{0.3cm}&\xrightarrow{\hspace*{2cm}} \hspace{1cm}\mathrm{Map}(\mathscr{F},\mathscr{E})\\
\alpha\hspace{1.1cm}&\xmapsto{\hspace*{2cm}}Q_{E}\circ\alpha-(-1)^{|\alpha|}\alpha\circ Q_{F}\nonumber
\end{align}
for every map of graded manifolds $\alpha:\mathscr{F}\to\mathscr{E}$ of homogeneous degree, and we extend it by derivation. It squares to zero because both vector fields are homological. Then it defines a differential on the space of maps between the graded manifolds $E$ and $F$. In this context, Lie $\infty$-morphisms are degree 0 cocycles of $[Q,\,.\ ]$. For $\Phi$ a Lie $\infty$-morphism from $(E,Q_E)$ to $(F,Q_F)$, let us call \emph{$\Phi $-derivations} ${\mathscr{O}}$-linear homogenous maps $\delta$ from $ {\mathscr F}$ to $ {\mathscr E}$ satisfying:
\begin{equation}\label{derivation}
\delta^{(n)}(fg) = \sum_{i=0}^{n}\,\widehat{\delta}^{(i)}(f) \widehat{\Phi}^{(n-i)}(g) + (-1)^{|\delta||f|} \widehat{\Phi}^{(n-i)}(f) \widehat{\delta}^{(i)}(g)
\end{equation}
for all functions $f,g \in {\mathscr F}$, and all arities $n\geq 0$, or, more symbolically:
\begin{equation}
\delta(fg)=\delta(f)\Phi(g)+(-1)^{|\delta||f|}\Phi(f)\delta(g)
\end{equation}
where $|\delta|$ is the degree of $\delta$ as an element of $\mathrm{Map}(\functt,\funct)$. It is easy to check that for every $\Phi$-derivation $\delta$, the quantity $[Q,\delta]$ is a ${\Phi}$-derivation again, of degree $|\delta|+1$. Of course, the relation $ \big[Q,[Q,\delta]\big]=0$ holds true, so that:

\begin{lemme}
\label{lem:complexPhideriv}
 For every Lie $\infty$-algebroid morphism $\Phi$ over $M$ from $(E,Q_{E})$ to $(F,Q_{F})$, $\Phi$-derivations form a complex when equipped with the differential 
 $\delta \mapsto [Q,\delta] $ defined in \eqref{morphismdiff}
\end{lemme}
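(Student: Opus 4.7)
The plan is to prove two things: first, that if $\delta$ is a $\Phi$-derivation of degree $|\delta|$, then $[Q,\delta] := Q_{E}\circ\delta - (-1)^{|\delta|}\delta\circ Q_{F}$ is again a $\Phi$-derivation (of degree $|\delta|+1$); and second, that $[Q,[Q,\delta]] = 0$, so that we really do get a complex.

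I would dispatch the second point first, since it is essentially formal. Expanding
\[
[Q,[Q,\delta]] = Q_{E}\circ[Q,\delta] - (-1)^{|\delta|+1}[Q,\delta]\circ Q_{F},
\]
and substituting the definition of $[Q,\delta]$, the four resulting terms pair up: the two $Q_{E}^{2}\circ\delta$ and $\delta\circ Q_{F}^{2}$ contributions vanish by the homological property $[Q_{E},Q_{E}]=[Q_{F},Q_{F}]=0$, while the two mixed terms $Q_{E}\circ\delta\circ Q_{F}$ appear with opposite signs and cancel. This is exactly the standard fact that a graded commutator with a square-zero odd element squares to zero.

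The substance of the lemma is the first point. Set $D := [Q,\delta]$, so that $|D| = |\delta|+1$. I want to check the identity
\[
D(fg) \;=\; D(f)\,\Phi(g) \;+\; (-1)^{|D||f|}\,\Phi(f)\,D(g)
\]
for all homogeneous $f,g\in\mathscr{F}$. Starting from $D(fg)=Q_{E}\bigl(\delta(fg)\bigr)-(-1)^{|\delta|}\delta\bigl(Q_{F}(fg)\bigr)$, I expand $\delta(fg)$ using the $\Phi$-derivation property \eqref{derivation}, then apply the fact that $Q_{E}$ is a derivation of $\mathscr{E}$, and symmetrically expand $Q_{F}(fg)$ using that $Q_{F}$ is a derivation of $\mathscr{F}$ before applying $\delta$ again via \eqref{derivation}. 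This yields eight terms. I then use the intertwining relation $\Phi\circ Q_{F}=Q_{E}\circ\Phi$ to rewrite every $Q_{E}\circ\Phi$ as $\Phi\circ Q_{F}$ (or vice-versa), which allows the cross-terms to pair up.

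The key bookkeeping is the following: the two terms involving $\delta(f)\,\Phi(Q_{F}(g))$ cancel identically, and similarly the two terms involving $\Phi(Q_{F}(f))\,\delta(g)$ cancel (using that $2|\delta|$ is even to reconcile signs). The remaining terms organize themselves into $\bigl(Q_{E}\circ\delta - (-1)^{|\delta|}\delta\circ Q_{F}\bigr)(f)\,\Phi(g)$ on the one side, and $(-1)^{|\delta||f|+|f|}\Phi(f)\,\bigl(Q_{E}\circ\delta - (-1)^{|\delta|}\delta\circ Q_{F}\bigr)(g)$ on the other. Since $|\delta||f|+|f| = (|\delta|+1)|f| = |D||f|$, this is exactly $D(f)\Phi(g)+(-1)^{|D||f|}\Phi(f)D(g)$, as desired. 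I expect the main obstacle to be purely bookkeeping: tracking the Koszul signs coming from commuting $Q_{E}$ past $\delta(f)$ (picking up $(-1)^{|\delta|+|f|}$) and commuting $\delta$ past $Q_{F}(f)$ (picking up $(-1)^{|\delta|(|f|+1)}$). Once these are lined up, the cancellations and the identification $|D|=|\delta|+1$ fall out automatically.
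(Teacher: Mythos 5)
Your proposal is correct and follows essentially the same route as the paper, whose entire proof consists of asserting that $[Q,\delta]$ is again a $\Phi$-derivation of degree $|\delta|+1$ ("easy to check") and that $\big[Q,[Q,\delta]\big]=0$ holds because $Q_{E}$ and $Q_{F}$ are homological — exactly the two points you verify. Your detailed bookkeeping is accurate: the cross-terms $\delta(f)\,\Phi(Q_{F}(g))$ and $\Phi(Q_{F}(f))\,\delta(g)$ do cancel once the intertwining relation $\Phi\circ Q_{F}=Q_{E}\circ\Phi$ is used, and the remaining terms reassemble with the sign $(-1)^{(|\delta|+1)|f|}=(-1)^{|D||f|}$ as you claim.
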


Now, let us define what we mean by piecewise-$C^1$ paths valued in Lie $\infty$-morphisms from $(E,Q_E)$ to $(F,Q_F)$. Recall that \emph{a piecewise-$C^1$ path valued in a manifold $N$} is a continuous map from a compact interval $I$ of ${\mathbb R}$ to the manifold $N$ such that there exists a subdivision $a=x_0 < \dots < x_k=b$ of $I=[a,b]$ such that the path is $C^1$ on each $]x_{i},x_{i+1}[$. An important feature of these functions is that their derivatives are piecewise continuous and satisfy $ f(b)-f(a)  = \int_a^b f'(t)dt$.

\begin{definition}
\label{def:pickyAboutPaths}
Let $(E,Q_E)$ and $(F,Q_F)$ be Lie $\infty $-algebroids over $M$. By a \emph{piecewise-$C^1$ path valued in Lie ${\infty}$-morphisms from $E$ to $F$}, we mean a path $t\mapsto  \Phi_t$ valued in this set such that for all $k \in {\mathbb N}$, the  path $ t \mapsto \widehat{\Phi}_t^{(k)}$ of arity $k$ is piecewise-$C^1$ in the usual sense.
\end{definition}

\begin{remarque}
We assume that for every  $t$, the base map associated to $\Phi_{t}$ is the identity morphism $\mathrm{id}_{M}$. The subtle point in this definition is that the subdivision of $ I$ for which $\widehat{\Phi}^{(k)}_{t}$ is $ C^1$ may depend on $k$. Moreover, without loss of generality we can always set $I=[0,1]$.
\end{remarque}

Notice that for a fixed arity $k$, the derivative $\frac{d}{dt} \widehat{\Phi}^{(k)}_t $ is piecewise continuous in the usual sense because the number of points of the subdivision of $[0,1]$ is finite.  For each arity $l\leq k$, the number of singularities of the piecewise-$C^1$ path $\widehat{\Phi}^{(l)}_{t}$ is finite. Thus, there exists a subdivision $0=x_{0}<\ldots<x_{n}=1$ of the segment $[0,1]$ on which the piecewise-$C^{1}$ paths $\widehat{\Phi}_{t}^{(l)}$ are simultaneously well-defined for all arities $0\leq l\leq k$. Using Equation \ref{derivation}, one can define a $\Phi_{t}$-derivation:
\begin{equation}
\dot{\Phi}^{(k)}_{t}= \frac{\dd }{\dd t} \Phi^{(k)}_t
\end{equation}
at each point $t$ which does not belong to the set of points $x_{0}, \ldots, x_{n}$. Hence formally summing all the components of various arities we observe that the derivative $\frac{d}{dt} \Phi_t $ is a well-defined derivation for all $ t \in I$  which is not in the countable set of the points of the subdivisions. It is immediate to conclude that wherever it is defined, $\dot{\Phi}_{t}$ is a $\Phi_t$-derivation of degree $0$. Hence a piecewise-$C^{1}$ path $t\mapsto\Phi_{t}$ valued in Lie ${\infty}$-morphisms induces to piecewise continuous path valued in $\Phi_{t}$-derivations or, more precisely:

\begin{definition}
For a piecewise-$C^1$ path $t \mapsto  \Phi_t$ valued on Lie $\infty $-algebroids morphisms from $(E,Q_{E})$ to $(F,Q_{F})$,
we call \emph{ piecewise continuous path valued in $ \Phi_t$-derivations} a path $t \mapsto \delta_t $, with $\delta_t$ a $\Phi_t$-derivation,
such that the path $ t \mapsto \widehat{\delta}_t^{(k)}$, valued in $ \Gamma(S^{(k+1)}(E^*) \otimes F)$ for all $k  \in {\mathbb N}$,
is piecewise-continuous in the usual sense.
\end{definition}

One can also check that for any arity $k$, $ \dot{\Phi}_t$ satisfies $[Q,\dot{\Phi}_t]^{(k)}=0$ for each value of $t$ where $\dot{\Phi}_{t}^{(k)}$ is defined. Then the $\Phi_{t}$-derivation $\dot{\Phi}_{t}$ is a cocycle in the complex of Lemma \ref{lem:complexPhideriv} for every $t$ for which it makes sense.
 Thus any piecewise-$C^{1}$ path $t\mapsto\Phi_{t}$ valued in Lie $\infty$-morphisms induces a piecewise continuous path $t\mapsto\delta_{t}$ valued in $\Phi_{t}$-derivations which coincides with $\dot{\Phi}_{t}$ wherever it is defined, and which is a $[Q,\, .\ ]$-cocycle. But of course, it is not necessarily a $[Q,\, .\ ]$-coboundary. A \emph{homotopy} between two Lie $\infty$-morphisms is precisely a piecewise-$C^{1}$ path between the two morphisms, such that its derivative is a $[Q,\, .\ ]$-coboundary. More precisely:

\begin{definition}\label{def:homotopy}
Let $\Phi$ and $\Psi$ be two Lie $\infty$-morphisms from $(E,Q_{E})$ to $(F,Q_{F})$ covering the identity morphism. We say that $\Phi$ and $\Psi$ are \emph{homotopic} if there exist
\begin{enumerate}
\item a piecewise-$C^{1}$ path $t\mapsto\Phi_t$ valued in Lie $\infty$-morphisms between $E$ and $F$ such that:
\begin{equation*}
\Phi_0=\Phi\hspace{2cm}\text{and}\hspace{2cm}\Phi_1=\Psi
\end{equation*}
\item a piecewise continuous path $t\mapsto H_t $ valued in $\Phi_t$-derivations of degree $-1$, such that the following equation:
\begin{equation}\label{Hortense}
\frac{\dd\Phi^{(k)}_t}{\dd t}=[Q, H_{t}]^{(k)}
\end{equation}
holds for every $k$ and every $t\in\,]0,1[$ where it is defined.
\end{enumerate}
The previous data is called a \emph{homotopy between $\Phi$ and $\Psi$}, and shall be denoted by $(\Phi_{t},H_{t})$.
\end{definition}

Beyond the natural definition, we can deduce this immediate but very important result:
\begin{proposition}
Homotopy is an equivalence relation denoted by $\sim$ on Lie $\infty$-morphisms, which is compatible with composition.
\end{proposition}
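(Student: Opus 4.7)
The plan is to check the three axioms of an equivalence relation and then compatibility with composition. In each case, the essential point is to exhibit explicit data $(\Phi_{t},H_{t})$ satisfying Definition \ref{def:homotopy}, and to verify that the path is piecewise-$C^{1}$ arity-by-arity and that $H_{t}$ is a $\Phi_{t}$-derivation.

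\textbf{Reflexivity.} Take $\Phi_{t}\equiv\Phi$ and $H_{t}\equiv 0$. Both sides of \eqref{Hortense} vanish trivially, and constant paths are automatically piecewise-$C^{1}$.

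\textbf{Symmetry.} Given a homotopy $(\Phi_{t},H_{t})$ from $\Phi$ to $\Psi$, I would set $\widetilde{\Phi}_{t}:=\Phi_{1-t}$ and $\widetilde{H}_{t}:=-H_{1-t}$. The chain rule gives
$\frac{\dd}{\dd t}\widetilde{\Phi}_{t}=-\dot{\Phi}_{1-t}=-[Q,H_{1-t}]=[Q,\widetilde{H}_{t}]$
wherever the right-hand side is defined, and the subdivision of $[0,1]$ is reversed correspondingly, so piecewise-$C^{1}$ regularity at each arity is preserved. Moreover, $\widetilde{H}_{t}$ is a $\widetilde{\Phi}_{t}$-derivation because $H_{1-t}$ is a $\Phi_{1-t}$-derivation.

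\textbf{Transitivity.} For homotopies $\Phi\sim\Psi$ via $(\Phi_{t},H_{t})$ and $\Psi\sim\Xi$ via $(\Psi_{t},K_{t})$, I would reparametrize the two paths onto $[0,\tfrac12]$ and $[\tfrac12,1]$ respectively and glue them at $t=\tfrac12$; the matching value $\Psi$ ensures continuity of each $\widehat{\Phi}_{t}^{(k)}$, while the union of the two subdivisions yields a valid subdivision on $[0,1]$ on which the glued path remains piecewise-$C^{1}$ at every arity. The glued derivation homotopy is defined piecewise by $2H_{2t}$ on $[0,\tfrac12]$ and $2K_{2t-1}$ on $[\tfrac12,1]$, with the factor $2$ absorbing the reparametrization, and it is piecewise continuous by construction.

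\textbf{Compatibility with composition.} Suppose $\Phi\sim\Phi'$ via $(\Phi_{t},H_{t})$ from $(E,Q_{E})$ to $(F,Q_{F})$ and $\Psi\sim\Psi'$ via $(\Psi_{t},K_{t})$ from $(F,Q_{F})$ to $(G,Q_{G})$. I would build a homotopy between $\Psi\circ\Phi$ and $\Psi'\circ\Phi'$ by composing the paths (at the level of sheaves, the path is $t\mapsto\Phi_{t}\circ\Psi_{t}$) and by proposing as derivation homotopy the combination
\begin{equation*}
L_{t}\;:=\;H_{t}\circ\Psi_{t}\;+\;\Phi_{t}\circ K_{t}.
\end{equation*}
The verification that $L_{t}$ is a $(\Phi_{t}\circ\Psi_{t})$-derivation of degree $-1$ is a direct application of the derivation rule \eqref{derivation} together with the algebra morphism property of $\Phi_{t}$. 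The key computation is
\begin{equation*}
\frac{\dd}{\dd t}\bigl(\Phi_{t}\circ\Psi_{t}\bigr)\;=\;\dot{\Phi}_{t}\circ\Psi_{t}\;+\;\Phi_{t}\circ\dot{\Psi}_{t}\;=\;[Q,H_{t}]\circ\Psi_{t}\;+\;\Phi_{t}\circ[Q,K_{t}],
\end{equation*}
which I would reduce to $[Q,L_{t}]$ using the intertwining relations $\Psi_{t}\circ Q_{G}=Q_{F}\circ\Psi_{t}$ and $\Phi_{t}\circ Q_{F}=Q_{E}\circ\Phi_{t}$; these give $[Q,H_{t}]\circ\Psi_{t}=[Q,H_{t}\circ\Psi_{t}]$ and $\Phi_{t}\circ[Q,K_{t}]=[Q,\Phi_{t}\circ K_{t}]$. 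The regularity claim follows because the arity-$k$ component of $\Phi_{t}\circ\Psi_{t}$ depends only on finitely many arity components of $\Phi_{t}$ and $\Psi_{t}$, each of which is piecewise-$C^{1}$.

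The only step I expect to require real care is the compatibility with composition, specifically the sign bookkeeping when expanding $[Q,H_{t}\circ\Psi_{t}]$ and $[Q,\Phi_{t}\circ K_{t}]$: one must keep track of the degree of $H_{t}$ and $K_{t}$ and use that $\Psi_{t}$ has degree $0$ and is a genuine Lie $\infty$-morphism (not merely a derivation) in order for the intertwining to absorb the signs cleanly. Once this is checked, all four items follow and $\sim$ is established as an equivalence relation compatible with composition.
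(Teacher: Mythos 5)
Your proof is correct and takes essentially the same route as the paper's: reflexivity via the constant pair $(\Phi,0)$, symmetry by time reversal, transitivity by gluing and rescaling the two paths, and compatibility with composition via the combined derivation homotopy $H_{t}\circ\Psi_{t}+\Phi_{t}\circ K_{t}$, which is exactly the paper's formula $(\Phi_{t}\circ\alpha_{t},\Phi_{t}\circ\Xi_{t}+H_{t}\circ\alpha_{t})$. If anything, you are slightly more careful than the paper: the sign in $\widetilde{H}_{t}=-H_{1-t}$ and the factor $2$ from reparametrization are genuinely needed for Equation \eqref{Hortense} to hold, and the paper's statement of symmetry as $(\Phi_{1-t},H_{1-t})$ silently omits this sign.
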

\begin{proof}
Let us show that homotopy defines an equivalence relation $\sim$ between Lie $\infty$-morphisms:
\begin{itemize}
\item \emph{reflexivity}: $\Phi\sim\Phi$, as can be seen by choosing $\Phi_{t}=\Phi$ and $H_{t}=0$ for every $t\in[0,1]$.
\item \emph{symmetry}: $\Phi\sim\Psi$ implies that $\Psi\sim\Phi$ by reversing the flow of time, i.e. by considering the homotopy $(\Phi_{1-t},H_{1-t})$.
\item \emph{transitivity}: if $\Phi\sim\Psi$ and $\Psi\sim\chi$ then there exists a homotopy $(\Theta_{1\, t},H_{1\, t})$ between $\Phi$ and $\Psi$ and a homotopy $(\Theta_{2\, t},H_{2\, t})$ joining $\Psi$ and $\chi$. It is then sufficient to glue $\Theta_{1}$ and $\Theta_{2}$ and rescale the time variables, so that the new time variable takes values in the closed interval $[0,1]$. The resulting map will be continuous at the junction, but not differentiable in general at that point.
\end{itemize}
Now assume that $\Phi,\Psi:\mathscr{F}\to\mathscr{E}$ are homotopic Lie $\infty$-morphisms between $E$ and $F$, and that $\alpha,\beta:\mathscr{G}\to\mathscr{F}$ are homotopic Lie $\infty$-morphisms between $F$ and $H$. Let us denote by $(\Phi_{t},H_{t})$ the homotopy between $\Phi$ and $\Psi$, and $(\alpha_{s},\Xi_{s})$ the homotopy between $\alpha$ and $\beta$. Then $\Phi\circ\alpha$ and $\Psi\circ\beta$ are homotopic via $(\Phi_{t}\circ\alpha_{t},\Phi_{t}\circ\Xi_{t}+H_{t}\circ\alpha_{t})$.
\end{proof}

We can obviously use this new notion of homotopy to improve and make sense of the notion of isomorphism between Lie $\infty$-algebroid structures over $M$. This enables us to give an adequate formulation of equivalence of Lie $\infty$-algebroid structures:
\begin{definition}
Let $(E,Q_{E})$ and $(F,Q_{F})$ be two Lie $\infty$-algebroids over $M$, and let $\Phi:\mathscr{F}\to\mathscr{E}$ and $\Psi:\mathscr{E}\to\mathscr{F}$ be Lie $\infty$-morphisms between $E$ and $F$. We say that $\Phi$ and $\Psi$ are \emph{isomorphisms up to homotopy} if:
\begin{equation*}
\Phi\circ\Psi\sim\mathrm{id}_{\funct}\hspace{2cm}\text{and}\hspace{2cm}\Psi\circ\Phi\sim\mathrm{id}_{\functt}
\end{equation*}
In that case, we say that the Lie $\infty$-algebroids $(E,Q_{E})$ and $(F,Q_{F})$ are \emph{isomorphic up to homotopy}.
\end{definition}
The main improvement coming from this definition is that now $\Phi$ and $\Psi$ can carry components of arity stricly higher than $0$. We thus have built an equivalence relation for Lie $\infty$-algebroid structures over $M$. The importance of such a definition relies on the following result, which says that two homotopic Lie $\infty$-morphisms are related by a $[Q,\,.\ ]$-exact term:
\begin{proposition}
\label{prop:HomotopyMeansHomotopy}
Let $(E,Q_E)$ and $(F,Q_F)$ be Lie $\infty$-algebroids over $M$.
For any two homotopic  $L_\infty $-morphisms $\Phi,\Psi$ from $E$ to $F$, there exists a $\cinf(M)$-linear map
$ H : {\mathscr F } \to {\mathscr E}$ of degree $-1 $  such that:
 \begin{equation}\label{tolley} \Psi - \Phi = [Q, H] \end{equation}
\end{proposition}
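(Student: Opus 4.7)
The plan is to define $H$ as the time-integral of the infinitesimal homotopy $H_t$ and then apply the fundamental theorem of calculus to the piecewise-$C^1$ path $\Phi_t$. Concretely, given a homotopy $(\Phi_t, H_t)$ between $\Phi$ and $\Psi$ in the sense of Definition \ref{def:homotopy}, I would set, arity by arity,
\begin{equation*}
\widehat{H}^{(k)} \;=\; \int_0^1 \widehat{H}_t^{(k)}\, dt \;\in\; \Gamma\bigl(S^{k+1}(E^{\ast}) \otimes F\bigr),
\end{equation*}
which makes sense because by Definition \ref{def:pickyAboutPaths} the path $t \mapsto \widehat{H}_t^{(k)}$ is piecewise continuous in a fixed topological vector space, hence Riemann-integrable. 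Summing over $k$ formally yields a degree $-1$ map $H:\mathscr{F} \to \mathscr{E}$. Since each $H_t$ is $\mathscr{O}$-linear (being a $\Phi_t$-derivation in the sense of Lemma \ref{lem:complexPhideriv}) and the integral is a $\mathscr{O}$-linear combination of $\mathscr{O}$-linear maps, $H$ is automatically $\cinf(M)$-linear. Note that $H$ is no longer a $\Phi$-derivation or a $\Psi$-derivation, because the underlying morphism $\Phi_t$ changes with $t$; the statement only asks for $\mathscr{O}$-linearity, which is precisely what survives integration.

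The next step is to compute $[Q,H]$ and compare with $\Psi - \Phi$. Since $[Q, \cdot]$ was defined in \eqref{morphismdiff} by $[Q,\alpha] = Q_E \circ \alpha - (-1)^{|\alpha|} \alpha \circ Q_F$, which is $\mathbb{R}$-linear in $\alpha$, and since $Q_E$, $Q_F$ do not depend on the parameter $t$, one can pull the integral out:
\begin{equation*}
[Q,H] \;=\; Q_E \circ \!\int_0^1 H_t\, dt \;+\; \int_0^1 H_t\, dt \circ Q_F \;=\; \int_0^1 [Q,H_t]\, dt.
\end{equation*}
This exchange has to be justified component by component: for fixed arity $k$, the formula for $\widehat{[Q,H]}^{(k)}$ is a finite sum of terms involving compositions of the components $\widehat{H}^{(l)}$ (for $l\leq k$) with components of $Q_E$ or $Q_F$, and each such composition is a continuous linear operation on $\Gamma(S^{l+1}(E^{\ast}) \otimes F)$, so it commutes with the Bochner/Riemann integral.

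Finally, using Equation \eqref{Hortense}, we recognize $[Q, H_t]^{(k)} = \frac{d}{dt}\widehat{\Phi}_t^{(k)}$ wherever defined, and apply the fundamental theorem of calculus for piecewise-$C^1$ functions to each arity separately:
\begin{equation*}
\widehat{[Q,H]}^{(k)} \;=\; \int_0^1 \frac{d}{dt}\widehat{\Phi}_t^{(k)}\, dt \;=\; \widehat{\Phi}_1^{(k)} - \widehat{\Phi}_0^{(k)} \;=\; \widehat{\Psi}^{(k)} - \widehat{\Phi}^{(k)}.
\end{equation*}
Summing over $k$ yields the desired identity $\Psi - \Phi = [Q,H]$.

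The main (mild) obstacle is bookkeeping: one has to argue everything arity by arity, because the subdivision of $[0,1]$ on which $\widehat{\Phi}_t^{(k)}$ is $C^1$ depends on $k$, so there is no single subdivision that works globally. However, for each fixed $k$ only finitely many arities $l \leq k$ contribute to $\widehat{[Q,H]}^{(k)}$, so one can choose a common refinement of the relevant subdivisions and apply the classical fundamental theorem of calculus on each subinterval, then telescope. Once this is done arity by arity, the conclusion assembles into the single formula \eqref{tolley}.
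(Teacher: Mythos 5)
Your proof is correct and follows essentially the same route as the paper's: define $H=\int_0^1 H_t\,dt$, use the $\mathbb{R}$-linearity of $[Q,\,.\ ]$ to exchange the bracket with the integral, and apply the fundamental theorem of calculus for piecewise-$C^1$ paths together with Equation \eqref{Hortense}. Your extra arity-by-arity bookkeeping (choosing a common refinement of the subdivisions for the finitely many arities $l\leq k$ contributing to a fixed component) is a legitimate sharpening of a point the paper leaves implicit, but it does not change the argument.
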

\begin{proof}
From $  \frac{\dd }{\dd t} \Phi_t = [Q,H_{t}] $ and the fact that the path is piecewise-$C^{1}$, we  obtain:
\begin{eqnarray}  \Psi - \Phi &=& \int_0^1[Q,H_{t}] dt \\
&=& \int_0^1 (Q_E \circ H_t + H_t \circ Q_F ) dt\nonumber \\ 
&=&  Q_E \circ \left( \int_0^1 H_t dt \right)  + \left(\int_0^1 H_t dt \right)\circ Q_F \nonumber  \end{eqnarray}
Hence $H= \int_0^1 H_t dt $ satisfies Condition \eqref{tolley}. It is $\cinf(M)$-linear since so is $H_{t}$ for all $t\in]0,1[$.
\end{proof}

\begin{remarque}
The homotopy $H$ is neither an algebra morphism, nor a derivation of any sort (except if the left-hand sides vanish \cite{Lada}). At least it is a $\cinf(M)$-linear operation. If we isolate the components of arity $0$ in Equation \eqref{tolley}, we find that the linear parts $\phi,\psi$ induced by the homotopic Lie $\infty$-algebroid morphisms $\Phi,\Psi$ from $(E,Q_{E})$ to $(F,Q_{F})$ are homotopic in the usual sense, relatively to the chain complexes:
 \begin{center}
\begin{tikzcd}[column sep=1.3cm,row sep=0.7cm]
\ldots\ar[r,"\dd"] & E_{-3} \ar[r,"\dd"]\ar[dd,shift left =0.5ex,"\phi"]\ar[dd,shift right =0.5ex,"\psi" left]& E_{-2} \ar[ddl,dashed, "h\ " above] \ar[dd,shift left =0.5ex,"\phi"]\ar[dd,shift right =0.5ex,"\psi" left]\ar[r,"\dd"]& E_{-1} \ar[ddl,dashed,"h\ " above] \ar[dd,shift left =0.5ex,"\phi"]\ar[dd,shift right =0.5ex,"\psi" left]\\
&\\
\ldots\ar[r,"\dd'"]& F_{-3} \ar[r,"\dd'"]& F_{-2}\ar[r,"\dd'"]&  F_{-1}
\end{tikzcd}
\end{center}
where $h$ is the dual map of the component of arity $0$ of $H$.
\end{remarque}


\section{Singular Foliations}\label{foliations}

At the turn of the fifties, numerous investigations in the field of control theory $-$ which studies the behaviour of servomechanisms $-$ appeared and have developed in the following years. The pionneer work of R. Kalman aimed at translating the language of Laplace transform into the language of dynamical systems, that is: the study of the solvability of first-order differential equations under the influence of one or more external parameters. The dynamics can be modelled by the flow of a family of vector fields defined on the phase space $P=M\times E$ where $M$ is the configuration manifold and $E$ is the space of external parameters. To each value of the external parameters corresponds a set of first-order differential equations which are encoded by vector fields tangent to $M$. Thus, solving the equations of the flow, one finds the set of points of the configuration manifold which are \emph{accessible} in a finite time. The problem is then to understand what does the set of all accessible points looks like for a fixed value of the parameter, and under which assumptions on the differential equations can any point of $M$ be accessible? Moreover one can ask whether some kind of stability condition is met: that is, given an initial point $x_{0}$ and an accessible point $x_{1}$ joined by a path $\gamma$, can any point in the neighborhood of $x_{1}$ be linked to a point in the neighborhood of $x_{0}$ by a path close to $\gamma$?

Many mathematicians, for instance geometers \cite{Jurdjevic}, worked in the field of control theory and its developments in dynamical systems. In this context, R. Hermann was the first to relate dynamical systems to the theory of foliations \cite{Hermann1963}, drawing attention to the then unsolved problem of integrating a family of vector fields into a singular foliation. Mathematicians have naturally considered a generalization of control theory by turning the phase space into a phase manifold, in which the external parameters are coordinates. Mathematicians observed that when a family of vector fields is given, under natural assumptions, the set of accessible points from any fixed initial point with fixed external parameter is an immersed submanifold of the phase manifold. Moreover they realized that changing the external parameters would give another submanifold in some sense \emph{parallel} to the first one, hence providing a partition of the phase manifold into disjoint submanifolds: that is, a (possibly singular) \emph{foliation}. Merging these ideas from control theory together with the well-known result of F. Frobenius on the integration of regular distributions on a manifold, mathematicians tried to understand what are the conditions for a singular distribution on the space of vector fields to be integrable into a singular foliation.

The first result of this kind was conjectured by R. Hermann \cite{Hermann1963}, and later proved by T. Nagano \cite{Nagano}: an analytic distribution is integrable if and only if it is involutive. In the smooth case, it was observed long ago that an integrable distribution was necessarily involutive, but that the converse was not true. However Hermann found that if the distribution is also locally finitely generated then it is integrable \cite{Hermann1962}, but that assumption might be too strong. Then Lobry weakened the condition \cite{Lobry70}, and at the beginning of the seventies, P. Stefan and H. Sussmann independently published groundbreaking results \cites{Stefanofficiel, Sussmannofficiel}, using the same methods, inspired by the techniques used in control theory. They gave the necessary and sufficient conditions under which a singular smooth distribution is integrable.

\subsection{Foliations and distributions}\label{foliationsanddistributions}

In the sixties and early seventies, the concept of foliations was well established, but their denomination did not make any consensus yet. We have thus chosen the most obvious notations and denominations which are close enough to the historical definitions. Given a topological manifold $M$, we define a \emph{foliation} $\mathcal{F}=\{\mathcal{F}_{\alpha}\}$ to be a partition of $M$ into a disjoint union of immersed connected submanifolds $\mathcal{F}_{\alpha}$. Elements of the foliation are called \emph{leaves} and we distinguish between \emph{regular foliations} $-$ in which the leaves have the same dimensions $-$ and \emph{singular foliations}, for which their dimensions can drop from one point to another.

Regular foliations are characterized by \emph{foliated atlases}, that is: for any given $x\in M$, there is a chart $(U,\phi)$ containing $x$ and a neighborhood $V\times W$ of $(0,0)$ in $\mathbb{R}^{p}\times\mathbb{R}^{n-p}$ (where $p$ is the dimension of the leaves) such that:
\begin{itemize}
\item$\phi(U)=V\times W$
\item $\phi(x)=(0,0)$
\item transition functions from $V\times W$ to $V'\times W'$ read as
\begin{equation*}
\psi(v,w)=\big(\theta(v,w),h(w)\big)
\end{equation*}
for every $v\in V$ and $w\in W$, and some smooth functions $\theta:V\times W\to V'$ and $h:W\to W'$
\item for any leaf $\mathcal{F}_{\alpha}$ intersecting $U$ and for any connected component $\mathcal{G}\subset\mathcal{F}_{\alpha}\,\cap\, U$, there exists $y\in W$ such that $\mathcal{G}=\phi^{-1}(V\times\{y\})$
\end{itemize}
The last condition means that any chart of the foliated atlas is a saturated set: it is the union of disjoint connected submanifolds of the form $\phi^{-1}(V\times \{y\})$ that we call \emph{plaques}. In the singular case, there is no such thing as a foliated atlas: transition functions cannot be defined as in the regular case. We only observe that $\phi$ is a submersion from $U$ to a foliated open set $W\subset\mathbb{R}^{n-p}$ such that for every $y\in W$ there exists a leaf $\mathcal{F}_{\alpha}$ satisfying the inclusion: $\phi^{-1}(y)\subset\mathcal{F}_{\alpha}$.

As a notation we define $\mathcal{F}_{x}$ to be the leaf through $x$, hence $M=\bigcup_{x\in M} \mathcal{F}_{x}$. Since the map $x\mapsto\mathrm{dim}(\mathcal{F}_{x})$ which associates to any point $x$ the dimension of its leaf is lower semi-continuous, the leaves of the foliation near $\mathcal{F}_{x}$ have a dimension higher of equal to $\mathrm{dim}(\mathcal{F}_{x})$. 
 A point $x\in M$ is called a \emph{regular point} if the dimension of the leaves is constant in some neighborhood of $x$. In such a case, the leaf $\mathcal{F}_x$ is said \emph{regular}, and it is said \emph{singular} otherwise. The set of regular points is open and dense in $M$. 
From now on, $M$ is taken to be a smooth or analytic manifold. A foliation is said to be \emph{smooth} (resp. \emph{analytic}) if for every $x\in M$, any tangent vector to the leaf $\mathcal{F}_{x}$ at $x$ can be extended into a smooth (resp. analytic) vector field which is tangent to $\mathcal{F}_{y}$ for all $y\in M$. From now on until the end of this part, foliations will be considered smooth or analytic, depending on the context. Here are a few examples:

\begin{example}
The set of integral curves of a given non vanishing vector field on $M$ forms a regular foliation of dimension 1 of the underlying manifold.
\end{example}

\begin{example}
The action of $\mathfrak{so}(2)$ on $\mathbb{R}^{2}$ gives a singular foliation consisting in concentric circles, except at the origin where the circle collapses to a mere point.
\end{example}

At a point $x\in M$, the tangent space of the leaf $\mathcal{F}_{x}$ is denoted by $T_{x}\mathcal{F}_{x}$. The union $\bigcup_{x\in M}T_{x}\mathcal{F}_{x}$ of all the tangent spaces of a regular foliation is a subbundle of the global tangent bundle. However, if the foliation is singular, the dimension of the leaves can jump from one point to another, and so does the dimension of their tangent spaces. Hence there is no hope to get a subbundle of the tangent bundle, and we rather have what is called a distribution. More precisely, a \emph{distribution} $\mathcal{D}$ on a smooth manifold $M$ is the assignment, for each $x\in M$, of a subspace of $T_{x}M$. Even though the tangent space of a regular foliation is a particular case of a distribution, in general the dimension of the subspace can vary from one point to another. The smooth sections $\Gamma(\mathcal{D})$ of the distribution $\mathcal{D}$ form a sub-sheaf of the sheaf of vector fields $\mathfrak{X}(M)$: for every open set $U$, the $\cinf(U)$-module $\Gamma_{U}(\mathcal{D})$ consists in elements of $\mathfrak{X}(U)$ that take values in $\mathcal{D}$.

A distribution is \emph{smooth} if for every point $x$, any tangent vector $X(x)\in\mathcal{D}_{x}$ can be extended in some neighborhood $U$ of $x$ to some vector field $X$ which takes values in $\mathcal{D}$ for every point $y\in U$. An equivalent formulation is that  there exists a (possibly infinite) family of vector fields $\{X_{a}\}$ such that $\mathcal{D}_{y}=\mathrm{span}\big(\{X_{a}(y)\}\big)$ for all points $y\in U$. Notice that this does not necessarily mean that this family of vector fields generates the $\cinf(U)$-module $\Gamma_{U}(\mathcal{D})$ \cite{Texas}, nor that $X_{a}(y)$ belongs to $\mathcal{D}_{y}$ for any other $y\in M$ outside this neighborhood. Then if the distribution is smooth, by definition, there exists a (possibly infinite) family of vector fields that span the distribution at each point. It has been shown that this family can be taken to be finite \cite{Texas}, even if the space of sections of a distribution is not necessarily finitely generated as a module (see Example \ref{ex:smoothdist}). The distribution is \emph{locally finitely generated} if for every point $x$, there exists a neighborhood $U$ such that the $\cinf(U)$-module $\Gamma_{U}(\mathcal{D})$ is finitely generated. Finally, we say that a smooth distribution is \emph{involutive} if it is stable under the Lie bracket: for every two sections $X,Y\in \Gamma(\mathcal{D})$, the bracket $[X,Y]$ is also a section of $\mathcal{D}$.

We saw that a distribution can be seen as a generalization of the tangent bundle of a regular foliation to the singular case. It is then legitimate to wonder if a given distribution on $M$ arises from a foliation. That is, if there exists a (possibly singular) foliation $\mathcal{F}$ such that $\mathcal{D}_{x}=T_{x}\mathcal{F}_{x}$ for every $x\in M$. If this is the case, we say that the distribution is \emph{integrable}. In the same spirit we define an \emph{integral manifold} of the distribution $\mathcal{D}$ as an immersed submanifold $N$ such that $T_{x}N=\mathcal{D}_{x}$ for every $x\in N$. When the distribution is regular, the answer to the integration problem is given by the well-known theorem of F. Frobenius \cite{Frobenius}. He was the first to give a rigorous proof (with respect to the standards of the ninetieth century) of the statement, though it was originally proven by F. Deahna \cite{Deahna} and made more explicit by A. Clebsch \cite{Clebsch}:
\begin{theoreme}\emph{\textbf{Frobenius (1877)}}\label{theo:frobenius}
A smooth distribution $\mathcal{D}$ of constant rank on a smooth manifold $M$ is integrable into a regular foliation if and only if it is involutive.
\end{theoreme}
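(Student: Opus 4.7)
The proof splits into two directions. The easy implication $-$ integrability implies involutivity $-$ follows from a standard fact: if $\mathcal{F}$ is a foliation integrating $\mathcal{D}$ and $X, Y \in \Gamma(\mathcal{D})$, then $X, Y$ are tangent to each leaf, and the Lie bracket of two vector fields tangent to an immersed submanifold is tangent to that submanifold; hence $[X,Y] \in \Gamma(\mathcal{D})$. So the substance of the theorem lies in the converse.

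For involutive $\mathcal{D}$, the plan is to establish the local normal form: near every $x_0 \in M$ there exist coordinates $(s^1,\ldots,s^n)$ in which $\mathcal{D}$ is spanned by $\partial_{s^1},\ldots,\partial_{s^p}$. The key reduction is to replace the original local frame of $\mathcal{D}$ by a pairwise commuting frame. Starting from local coordinates $(x^1,\ldots,x^n)$ near $x_0$ and a smooth frame $X_1,\ldots,X_p$ of $\mathcal{D}$, after renumbering coordinates the matrix $\big(X_i(x^j)\big)_{1\leq i,j\leq p}$ is invertible at $x_0$ and hence on a neighborhood; taking $\cinf$-linear combinations produces a new frame
\begin{equation*}
Y_i = \frac{\partial}{\partial x^i} + \sum_{k=p+1}^{n} b_i^{k}\,\frac{\partial}{\partial x^k}, \qquad i=1,\ldots,p.
\end{equation*}
A direct computation shows that $[Y_i,Y_j]$ has vanishing components along $\partial_{x^1},\ldots,\partial_{x^p}$. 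By involutivity, $[Y_i,Y_j] = \sum_k c_{ij}^k Y_k$; comparing first-$p$ components forces $c_{ij}^k=0$, so the $Y_i$ commute.

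With pairwise commuting $Y_1,\ldots,Y_p$ in hand, their flows $\phi^i_{t}$ commute wherever defined. Choose an $(n-p)$-dimensional slice $N$ through $x_0$ complementary to $\mathcal{D}_{x_0}$ and define
\begin{equation*}
F(t^1,\ldots,t^p, y) \,=\, \phi^1_{t^1}\circ\cdots\circ\phi^p_{t^p}(y), \qquad y \in N.
\end{equation*}
The differential of $F$ at $(0,x_0)$ is invertible (it sends $\partial_{t^i}$ to $Y_i(x_0)$ and restricts to the identity on $T_{x_0}N$), so by the inverse function theorem $F$ is a local diffeomorphism. In the induced coordinates $(s^1,\ldots,s^n)=F^{-1}$ one has $Y_i = \partial_{s^i}$, giving the required normal form, and the slices $\{s^{p+1}=\mathrm{const},\ldots,s^n=\mathrm{const}\}$ are local integral submanifolds of $\mathcal{D}$.

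To pass from local to global, define the leaves as the equivalence classes of the relation $x \sim y$ iff $x$ and $y$ are joined by a piecewise-$C^1$ path tangent to $\mathcal{D}$; the straightening charts above endow each class with the structure of an immersed connected submanifold whose tangent space at each point equals $\mathcal{D}$, producing the desired foliation. The main obstacle is the commuting-frame step: this is the unique place where involutivity enters, and it is precisely what fails for non-involutive $\mathcal{D}$; everything downstream is a routine application of the inverse function theorem and commuting-flow technology.
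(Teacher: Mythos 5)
Your proof is correct, and it is the classical commuting-frame proof of Frobenius: the easy direction is standard, the replacement of an arbitrary local frame by a commuting one $Y_i=\partial_{x^i}+\sum_{k>p}b_i^{k}\partial_{x^k}$ is exactly where involutivity is spent, and simultaneous straightening by the inverse function theorem plus the preleaf construction finish the argument. Be aware, however, that the paper never proves Theorem \ref{theo:frobenius}: it is quoted as a historical landmark, and what the paper actually proves are the strictly more general integrability theorems of Hermann, Lobry and Stefan--Sussmann (Theorems \ref{theo:Hermann1962}, \ref{theo:Lobry70}, \ref{theo:stefsuss}), of which Frobenius is a corollary, since a smooth constant-rank distribution is a subbundle of $TM$, hence an involutive and locally finitely generated module of sections to which Hermann's theorem applies, constancy of the rank then making the resulting foliation regular. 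The methodological difference is worth noting: your argument hinges on the existence of a frame, which is precisely what fails at singular points, but in exchange it yields the strong normal form $\mathcal{D}=\langle\partial_{s^1},\dots,\partial_{s^p}\rangle$, i.e.\ honest foliated charts. The paper's proofs avoid frames altogether; involutivity is used instead to show that the rank of $\mathcal{D}$ is constant along integral curves, by writing the coefficient matrix of local generators as the solution $A(t)=\exp(\mathfrak{F}(t))\times A(0)$ of a linear ODE (proof of Theorem \ref{theo:Hermann1962}), and charts on a preleaf $\mathcal{L}_{x}$ are built from composed flows $\phi_{X_{p}}^{t_{p}}\circ\cdots\circ\phi_{X_{1}}^{t_{1}}(x)$ without ever requiring the flows to commute; this route gives no canonical straightening in the regular case but survives in the singular one. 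Finally, your globalization via piecewise-$C^{1}$ paths tangent to $\mathcal{D}$ is precisely the paper's preleaf construction, and the chart-compatibility and countability details you leave as routine are treated with the same brevity in the paper's own proofs.
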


In geometric control theory, a set of parametrized first-order differential equations is described with the help of a distribution on some phase manifold $M$. Knowing if this distribution is integrable is crucial to solve the problem of controlling the solutions of the equations. In particular, a leaf corresponds to a set of solutions for a fixed parameter. Changing the parameter will give $-$ smoothly $-$ another leaf. Thus at the beginning of the sixties mathematicians wanted to understand under which condition a distribution would be integrable.

\subsection{Analytic distributions}\label{Nagano}

One of the first mathematicians who provided important results on the problem of integrating singular distributions was R. Hermann. In a sequence of seminal articles \cites{Hermann1962, Hermann1963}, he introduced the main ideas that one would need to approach the problem. In particular in \cite{Hermann1963} in 1963, he conjectured that an \emph{analytic distribution} would be integrable. Here, by analytic distribution, we mean a distribution $\mathcal{D}$ on a real analytic manifold $M$ such that there exist local generators that are analytic. Even though the number of local sections can be infinite in principle, the fact that the ring of germs of analytic functions is Noetherian implies that there is actually only a finite number of generators. Then in 1966, T. Nagano proved the following result \cite{Nagano}:


\begin{theoreme}\label{theo:Nagano66}
\emph{\textbf{Nagano (1966)}} Let $\mathcal{D}$ be an analytic distribution on a real analytic manifold $M$. Then $\mathcal{D}$ is integrable if and only if it is involutive.
\end{theoreme}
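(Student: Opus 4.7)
The $(\Leftarrow)$ direction is the easy one and already holds in the smooth category: if $\C{D}=T\C{F}$ for some foliation $\C{F}$ and $X,Y\in\Gamma(\C{D})$, then both are tangent to each leaf at each point, hence so is their Lie bracket, giving $[X,Y]\in\Gamma(\C{D})$.

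The plan for the nontrivial direction $(\Rightarrow)$ is to construct, through each $x_{0}\in M$, an analytic integral submanifold of $\C{D}$. First I would invoke the Noetherianity of the ring of germs of real analytic functions (cf.\ \cite{gunningrossi}) to conclude that $\Gamma(\C{D})$ is locally finitely generated near $x_{0}$; in particular, with $r=\dim\C{D}_{x_{0}}$, I may pick analytic $X_{1},\dots,X_{r}\in\Gamma(\C{D})$ such that $X_{1}(x_{0}),\dots,X_{r}(x_{0})$ are linearly independent. Then the map
\begin{equation*}
\phi(t_{1},\dots,t_{r})=\R{Fl}^{X_{1}}_{t_{1}}\circ\cdots\circ\R{Fl}^{X_{r}}_{t_{r}}(x_{0})
\end{equation*}
is an analytic immersion from a neighborhood of $0\in\B{R}^{r}$ into $M$, whose image $S$ is an $r$-dimensional analytic submanifold through $x_{0}$ and the candidate leaf.

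The central step $-$ and the main obstacle $-$ is to show that $T_{y}S=\C{D}_{y}$ for every $y\in S$. My strategy is to first establish the auxiliary statement that for any $X,Y\in\Gamma(\C{D})$, the pushforward $(\R{Fl}^{X}_{t})_{*}Y$ remains a local section of $\C{D}$. The argument rests on the pointwise Taylor identity
\begin{equation*}
\big((\R{Fl}^{X}_{t})^{*}Y\big)(p)=\sum_{n\geq 0}\frac{t^{n}}{n!}\,\R{ad}_{X}^{n}(Y)(p),
\end{equation*}
whose convergence for small $t$ is guaranteed precisely by the real analyticity of $X$ and $Y$; involutivity forces every term $\R{ad}_{X}^{n}(Y)(p)$ to lie in the finite-dimensional (hence closed) subspace $\C{D}_{p}\subset T_{p}M$, so the limit does too. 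Running the flow for $\pm t$ then yields the invariance $(d\R{Fl}^{X}_{t})_{p}(\C{D}_{p})=\C{D}_{\R{Fl}^{X}_{t}(p)}$, and iterating this over the $r$ flows composing $\phi$ transports $\C{D}_{x_{0}}=T_{x_{0}}S$ onto $\C{D}_{y}=T_{y}S$ for all $y\in S$.

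Finally, letting $x_{0}$ vary yields a partition of $M$ into analytic integral submanifolds, i.e.\ the sought analytic foliation. The decisive use of real analyticity occurs in the Taylor-convergence step; this is precisely what fails in the $\cinf$ category and is the reason Stefan and Sussmann later had to impose additional hypotheses in the smooth case. A second distinctively analytic ingredient is the Noetherian property, used to extract finitely many generators, which has no direct $\cinf$ counterpart.
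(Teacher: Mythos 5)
Your proposal is correct in substance but follows a genuinely different route from the one in the text, which reproduces Nagano's original argument. Your key tool is a flow-invariance lemma: for analytic sections $X,Y$ of an involutive $\mathcal{D}$, the Lie series $\sum_{n}\frac{t^{n}}{n!}\,\mathrm{ad}_{X}^{n}(Y)$ converges for small $t$ to $(\mathrm{Fl}^{X}_{t})^{*}Y$, each term lies in the closed subspace $\mathcal{D}_{p}$, hence so does the limit, giving $(d\mathrm{Fl}^{X}_{t})_{p}(\mathcal{D}_{p})=\mathcal{D}_{\mathrm{Fl}^{X}_{t}(p)}$; you then realize the candidate leaf as the image of a composition of flows. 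The text instead works in adapted coordinates, splits $\mathcal{D}=\mathcal{J}+\mathcal{K}$ with $\mathcal{J}$ the subbundle spanned by $X_{1},\dots,X_{d}$ and $\mathcal{K}$ a transversal part satisfying $[\mathcal{L},\mathcal{L}]\subset\mathcal{K}$, builds $N$ as the image of $X\mapsto\phi^{1}_{X}(x)$ for constant $X\in\mathcal{J}_{x}$, uses analyticity (a Taylor argument like yours, but aimed differently) to show that every field of $\mathcal{K}$ vanishes identically on $N$, and finishes with an ODE-uniqueness argument to obtain $\mathcal{J}_{y}=T_{y}N$. Your route is the orbit-theorem style argument standard in geometric control theory; it buys a cleaner, reusable key lemma (invariance of $\mathcal{D}$ under flows of its sections), whereas the paper's splitting stays closer to Nagano's original presentation. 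Both proofs use analyticity in exactly two places, and in matching roles: Noetherianity for finite generation, and a convergent Lie/Taylor series for the geometric step.

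Two points in your write-up need tightening, though neither is a gap in the idea. First, the composed diffeomorphism $\mathrm{Fl}^{X_{1}}_{t_{1}}\circ\cdots\circ\mathrm{Fl}^{X_{r}}_{t_{r}}$ does not map $S$ into $S$, so it cannot literally ``transport $T_{x_{0}}S$ onto $T_{y}S$''; the correct completion, using only your lemma, is to compute $d\phi$ directly: each $\partial\phi/\partial t_{i}$ is a composition of flow-differentials applied to a value of $X_{i}$, hence lies in $\mathcal{D}_{\phi(t)}$ by iterated invariance, and since $d\phi$ has rank $r$ near $0$ (lower semicontinuity of rank) while $\dim\mathcal{D}_{\phi(t)}=r$ by invariance, one gets $T_{y}S=\mathcal{D}_{y}$. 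Second, your last sentence glosses the globalization: one must still check that the local integral manifolds patch into a partition of $M$; the text handles this by showing that any two integral manifolds through a common point contain a common integral manifold near it, so that the unions $\mathcal{F}_{x}=\bigcup_{N\in\mathfrak{N}_{x}}N$ are pairwise equal or disjoint.
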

\begin{proof}
Assuming that the distribution $\mathcal{D}$ is integrable, there exists a (possibly singular) foliation $\mathcal{F}$ such that for every $x$, $T_{x}\mathcal{F}_{x}=\mathcal{D}_{x}$ where $\mathcal{F}_{x}$ is the leaf through $x$. The sections of $\mathcal{D}$ are tangent at any point to the leaves, and then so does their Lie bracket, hence the distribution $\mathcal{D}$ is involutive.

In the reverse direction, the sketch of the proof is the following: first, we show that there exists an integral manifold of $\mathcal{D}$ passing through every point $x$, and then using this result, we show that $M$ can be foliated by integral manifolds of $\mathcal{D}$. Thus we first want to prove the following lemma:
\begin{lemme}
Let $\mathfrak{N}_{x}$ be the set of all integral manifolds of $\mathcal{D}$ which contain the point $x$ of $M$. Then $\mathfrak{N}_{x}$ is not empty.
\end{lemme}
\begin{proof}
Let $x\in M$ and $d$ be the dimension of $\mathcal{D}_{x}$. Since $\mathcal{D}$ is locally finitely generated by analyticity, there exist a neighborhood $U$ of $x$, and a family $X_{1},\ldots,X_{d}$ of sections of $\mathcal{D}$ on $U$ such that $X_{1}(x),\ldots,X_{d}(x)$ span $\mathcal{D}_{x}$. At the price of shrinking $U$, we can assume that the family of vector fields $X_{1},\ldots, X_{d}$ is free on $U$. Thus there exists a system of coordinates associated to the leaf, that is: the first $d$ coordinates $x^{i}$ are defined by the flow of the vector fields $X_{i}$, and we add $n-d$ coordinates $x^{\lambda}$ which label the coordinates adapted to a transversal of the leaf. Then we can write the vector fields $X_{i}$ as:
\begin{equation}
X_{i}=\frac{\partial}{\partial x^{i}}+\sum_{\lambda=1}^{n-d} u_{i}^{\lambda}\frac{\partial}{\partial x^{\lambda}}
\end{equation}
where the functions $u_{i}^{\lambda}$ are real analytic functions of the coordinates $x^{i}, x^{\lambda}$ and vanish at the point $x$.

We shall split the distribution $\mathcal{D}$ over $U$ into two parts. Since the family of vector fields $X_{1},\ldots,X_{d}$ is locally free on the open set $U$, and at the price of shrinking $U$ once more, we define $\mathcal{J}$ to be the trivial vector subbundle of $TM$ over $U$ spanned by the vector fields $X_{1},\ldots,X_{d}$. Let $\mathcal{K}$ be the distribution defined on $U$ as the subdistribution of $\mathcal{D}$ generated by the vector fields $\frac{\partial}{\partial x^{\lambda}}$:
\begin{equation*}
\mathcal{K}_{y}=\left\{Y\in\mathcal{D}_{y}\hspace{0.1cm}\Big|\hspace{0.1cm}\exists\,\,X\in\Gamma_{U}(\mathcal{D})\quad\text{such that}\quad\begin{cases}X=\sum_{\lambda=1}^{n-d}f^{\lambda}\frac{\partial}{\partial x^{\lambda}}\,\,\text{for some}\,\, f^{\lambda}\in\mathcal{C}^{\omega}(U)\\
X(y)=Y
\end{cases} \right\}
\end{equation*}
for every $y\in U$ or, in others words:
\begin{equation*}
\mathcal{K}_{y}\simeq\bigslant{\mathcal{D}_{y}}{\mathcal{J}_{y}}
\end{equation*}
Letting $\mathcal{L}=\mathcal{J}+\mathcal{K}$ (the sum being done pointwise in $TM$), we observe that the space $\mathcal{L}$ coincides with the distribution $\mathcal{D}$ over $U$. Since $\mathcal{D}$ is involutive, we deduce that $[\mathcal{L}_{y},\mathcal{L}_{y}]\subset\mathcal{L}_{y}$ for all $y\in U$. However, the splitting $\mathcal{L}=\mathcal{J}+\mathcal{K}$ enables us to check that
\begin{equation}\label{eq:Nagano0}
[\mathcal{L}_{y},\mathcal{L}_{y}]\subset\mathcal{K}_{y}
\end{equation}
Since $\mathcal{L}_{x}$ coincides with $\mathcal{D}_{x}$ at the point $x$, the Lemma is proven when we show the following claim:
\begin{center}
\emph{$\mathcal{L}$ has an integral manifold $N$ which contains $x$}
\end{center}

The idea of this proof is as follows: first we build a submanifold $N\subset U$ of $M$ which contains $x$, and then we show that $\mathcal{J}_{y}=\mathcal{L}_{y}$ for every $y\in N$. And finally, it is sufficient to show that $T_{y}N=\mathcal{J}_{y}$ by showing that any element of $\mathcal{J}_{x}$ can be transported to $T_{y}N$ by staying in the tangent bundle $TN$, and then for dimensional reasons, they should coincide. We denote by $t\mapsto\phi_{X}^{t}$ the flow of the vector field $X\in\mathcal{J}$. The map $\phi_{X}^{1}$ sends any element $x\in M$ to a point in its neighborhood, in a one-to-one fashion. Let $\mathcal{U}$ be a neighborhood of $0$ in $\mathcal{J}_{x}$, then the map $X\mapsto \phi_{X}^{1}(x)$ (where $X$ is a constant vector field) is an embedding of $\mathcal{U}$ into $M$, whose image we call $N$. Is it an embedded submanifold of dimension $d$.

We now want to show that $\mathcal{L}_{y}=\mathcal{J}_{y}$ for every $y\in N$ or, in other words that:
\begin{center}
\emph{The vector fields in $\mathcal{K}$ vanish at all points of $N$}
\end{center}
We know that any vector field $Y\in\mathcal{K}$ vanishes at $x$, thus it is sufficient to show that it vanishes on the curve $t\mapsto\phi_{X}^{t}(x)$ for any constant vector field $X\in\mathcal{J}$, because of the isomorphism between $N$ and $\mathcal{U}$. To do this we will use the analyticity of $Y$ and show that every coefficient of its Taylor series is zero, hence $Y=0$. The derivative of $Y$ with respect to time $t$ is obtained by taking the bracket of $X$ and $Y$:
\begin{equation}
\frac{\dd}{\dd t}Y=[X,Y]
\end{equation}
Thus, we obtain the successive derivatives as $\frac{\dd^{k} Y}{\dd t^{k}}=[\mathrm{ad}(X)^{(k)},Y]$, for every $k\geq1$. But this bracket, taking values in $\mathcal{K}$ by Condition \eqref{eq:Nagano0}, vanishes at the point $x$, and since all the derivatives $\frac{\dd^{k} Y}{\dd t^{k}}$ vanish at $x$ as well, the bracket vanishes at every point on the curve. The vector field $Y$ is then zero on the curve, hence we have proven that $\mathcal{L}_{y}=\mathcal{J}_{y}$ on the submanifold $N$.

We will now show that $\mathcal{J}_{y}=T_{y}N$, that is: $T_{y}N$ is spanned by the tangent vectors $X_{i}(y)$ for every $y\in N$. The only thing we know is that any constant vector field $X\in\mathcal{U}$ lies in $T_{\phi^{1}_{X}(x)}N$, but we do not have any information on the other directions at $\phi^{1}_{X}(x)$. However we claim that:
\begin{center}
\emph{$\mathcal{J}_{y}$ is included in $T_{y}N$ for every $y\in N$}
\end{center}
The idea is to show that every constant tangent vector $Z\in\mathcal{U}$ belongs to $T_{\phi^{1}_{X}(x)}N$ for every $X\in\mathcal{U}$ so that, by the equality of dimensions between the former and the latter, we finally obtain that $T_{y}N=\mathcal{J}_{y}$. To do this we take a non zero tangent vector $X\in\mathcal{U}$ and we observe that the path
\begin{equation}
x(s,t)=\phi^{1}_{t(sZ+X)}(x)
\end{equation}
takes values in $N$ whenever this makes sense for $(s,t)\in\mathbb{R}^{2}$. Fixing the $t$ variable and moving along the $s$ curve, we can differentiate with respect to $s$ to obtain the tangent vector to that curve. We claim that:
\begin{equation}\label{eq:Nagano1}
\left.\frac{\partial}{\partial s}x(s,t)\right|_{s=0}=t\, Z\big(x(s,t)\big)
\end{equation}
The left-hand side is a vector field $Z'(t)$ on the curve $t\mapsto x(0,t)$ and vanishes at $t=0$. We show that it satisfies a first-order linear differential equation, which is also satisfied by the vector field $t\, Z$ with the same initial value condition, so they coincide. We indeed have:
\begin{equation}
\frac{\partial^{2}}{\partial s\,\partial t}\big(x(s,t)\big)=\frac{\partial}{\partial s}(sZ+X)=Z+\sum_{i=1}^{d}\frac{\partial (sZ+X)}{\partial x^{i}}\frac{\partial x^{i}(s,t)}{\partial s}
\end{equation}
where the last equality comes from the fact that for a fixed time $t$, differentiating along $s$ is equivalent to taking the directional derivative along the $s$-curve $s\mapsto x(s,t)$. Taking this equation at $s=0$, we obtain a linear differential equation on $Z'$:
\begin{equation}\label{eq:Nagano2}
\frac{\dd}{\dd t}Z'=Z+Z'\circ X
\end{equation}
On the other hand, the vector field $t\, Z$ satisfies at $s=0$:
\begin{align}\label{eq:Nagano2bis}
\frac{\dd}{\dd t}(t\,Z)
&=Z+\sum_{i=1}^{d}\frac{\partial (t\, Z)}{\partial x^{i}}\frac{\partial x^{i}(s,t)}{\partial t}\nonumber\\
&=Z+t\,\sum_{i=1}^{d}\frac{\partial Z}{\partial x^{i}}X^{i}\nonumber\\
&=Z+t\,[X,Z]+(t\,Z)\circ X
\end{align}
The only difference with Equation \eqref{eq:Nagano2} is the Lie bracket $[X,Z]$ which belongs to $\mathcal{K}$ by \eqref{eq:Nagano0}. But it should vanish on $N$, then Equations \eqref{eq:Nagano2} and \eqref{eq:Nagano2bis} are thus identical. In other words, the vector fields $Z'(t)$ and $tZ$ are both solutions of the following first-order linear differential equation:
\begin{equation}
\begin{cases}\frac{\dd u}{\dd t}(t)=Z+u(t)\circ X\\
u(0)=0
\end{cases}
\end{equation}
Hence the result: $Z'=t\,Z$, that is, Equation \eqref{eq:Nagano1} is satisfied, which means that the vector field $Z$ is in $T_{x(s,t)}N$. Being randomly chosen in $\mathcal{U}$, we conclude that $\mathcal{J}_{x(s,t)}\subset T_{x(s,t)}N$, and by reasons of dimensions, we finally have the equality. To summarize, we have proven that $\mathcal{J}_{y}=T_{y}N$ for every $y\in N$. Thus we actually have $\mathcal{L}_{y}=T_{y}N$ for every $y\in N$ or, by definition of $\mathcal{L}$, we obtain $T_{y}N=\mathcal{D}_{y}$, hence proving the Lemma.\end{proof}

Let us conclude the proof of Theorem \ref{theo:Nagano66}. Set
\begin{equation*}
\mathcal{F}_{x}=\bigcup_{N\in\,\mathfrak{N}_{x}}N
\end{equation*}
then it is a submanifold of $M$. Indeed, the topology and the analytic structure on $\mathcal{F}_x$ are given by the charts of all integral 
manifolds $N$ in $\mathfrak{N}_{x}$, and the fact that for every $y\in M$, the intersection of two integral manifolds through $y$, is an embedded integral manifold in some neighborhood of $y$, see \cite{Nagano}. Let us define $\mathcal{F}=\{\mathcal{F}_{x}\,|\,x\in M\}$ and we show that it is a partition of $M$. Obviously, the union of all the integral manifolds $\mathcal{F}_{x}$ is all of $M$. Moreover to show that the leaves are disjoint, we have to show that for any $x,y\in M$, either $\mathcal{F}_{x}=\mathcal{F}_{y}$ or $\mathcal{F}_{x}\cap\mathcal{F}_{y}=\varnothing$. Suppose that there exists $z\in\mathcal{F}_{x}\cap\mathcal{F}_{y}$, then both $\mathcal{F}_{x}$ and $\mathcal{F}_{y}$ are contained in $\mathfrak{N}_{z}$, and then in $ \mathcal{F}_{z}$ as well, which implies conversely that both $x$ and $y$ are contained in $\mathcal{F}_{z}$. Then this integral manifold belongs to both $\mathfrak{N}_{x}$ and $\mathfrak{N}_{y}$, and thus to $\mathcal{F}_{x}$ and $\mathcal{F}_{y}$ as well. Hence the result: $\mathcal{F}_{z}=\mathcal{F}_{x}=\mathcal{F}_{y}$. We thus have shown that an involutive analytic distribution is integrable.\end{proof}

The proof has twice used the analycity of the objects: the first time to deduce that the distribution is locally finitely generated, and the second time to prove that the vector fields of $\mathcal{K}$ vanish on $N$. We have shown that an analytic distribution which is involutive is integrable. Here is now a well-known counter example of an involutive smooth distribution which is not integrable:

\begin{example}\label{example1}
Let $\mathcal{D}$ be the smooth distribution on $\mathbb{R}^{2}$ defined by:
\begin{equation*}
\mathcal{D}_{(x,y)}=
\begin{cases}T_{(x,y)}\mathbb{R}^{2} \quad \text{for}\quad 0> x\\
\langle\frac{\partial}{\partial x}\rangle \hspace{1cm} \text{for}\quad x\leq 0
\end{cases}
\end{equation*}
where we understand $\langle\frac{\partial}{\partial x}\rangle$ as the subspace of $T_{(x,y)}\mathbb{R}^{2}$ spanned by the tangent vector $\frac{\partial}{\partial x}$. Sections of this distribution consists of sums of horizontal vector fields and vertical vectors fields which vanish for $x\leq0$. The bracket will preserve this property and therefore the distribution is involutive.

We now show that though this smooth distribution is involutive, it cannot be integrated into a singular foliation. On the right half-plane (for $x>0$), the leaf associated to this distribution is all of the open half-plane. On the contrary, on the left half-plane (for $x<0$) the vertical vector field vanishes hence the leaves are horizontal (since at each point the vector field $\frac{\partial}{\partial x}$ generates the tangent space to the leaf). This is still true for $x=0$, thus the horizontal leaves can be extended to the vertical axis. But each one of these half-lines $-$ when considered as a subset of $\mathbb{R}$ $-$ does not form a submanifold because it is not open on its right end, hence these half-lines are not submanifolds of $\mathbb{R}^{2}$. The distribution is not integrable.
\end{example}

This counter-example illustrates the fact that in the smooth case there exist smooth functions which can have singularities on an open set, although they are not identically zero $-$ which is not the case for analytic functions.

\subsection{Hermann's Theorem}\label{Hermann}

We saw that Theorem \ref{theo:Nagano66} is the analog of the Theorem of Frobenius, in the context of analytic manifolds. A first step toward a reformulation of these theorems in the smooth case would be to drop the analycity condition, though keeping the assumption of involutivity. The analycity implied that the distribution was locally finitely generated, and this appeared as the starting point of the proof of Theorem \ref{theo:Nagano66}. That would lead to the idea of keeping this assumption in the smooth case, which leads to the famous result of R. Hermann \cite{Hermann1962}, which was proved before Nagano's result:

\begin{theoreme}
\emph{\textbf{Hermann (1962)}}\label{theo:Hermann1962} Let $\mathcal{D}$ be a locally finitely generated smooth distribution on a smooth manifold $M$. Then $\mathcal{D}$ is integrable if and only if it is involutive.
\end{theoreme}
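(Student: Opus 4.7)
The plan is to adapt Nagano's proof of Theorem \ref{theo:Nagano66} to the smooth setting, replacing the use of power series (which is the only analytic ingredient) by a linear ODE argument that crucially exploits local finite generation. As in the analytic case, one direction is trivial: the involutivity of an integrable distribution follows because the Lie bracket of two vector fields tangent to the leaves is again tangent to the leaves. The content is in the converse.

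Let $x\in M$ and $d=\dim\mathcal{D}_{x}$. By local finite generation, there is a neighbourhood $U$ of $x$ and a finite family of generators $Y_{1},\ldots,Y_{N}\in\Gamma_{U}(\mathcal{D})$ of the $\cinf(U)$-module $\Gamma_{U}(\mathcal{D})$. After relabelling, one may assume that $Y_{1}(x),\ldots,Y_{d}(x)$ form a basis of $\mathcal{D}_{x}$; then for each $j>d$ one can subtract an appropriate $\cinf$-combination of $Y_{1},\ldots,Y_{d}$ to ensure $Y_{j}(x)=0$. This plays the role that the Noetherian property of germs of analytic functions played at the start of Nagano's proof. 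Shrinking $U$, the family $Y_{1},\ldots,Y_{d}$ remains pointwise linearly independent, and one constructs a $d$-dimensional embedded submanifold $N$ through $x$ by composing flows of $Y_{1},\ldots,Y_{d}$ exactly as in Nagano's proof, with $T_{x}N=\mathrm{span}\big(Y_{1}(x),\ldots,Y_{d}(x)\big)=\mathcal{D}_{x}$.

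The heart of the matter is then to show that every generator $Y_{j}$ is tangent to $N$ at every point of $N$. Involutivity together with local finite generation gives smooth functions $c_{ij}^{k}$ on $U$ such that
\begin{equation}
[Y_{i},Y_{j}]=\sum_{k=1}^{N} c_{ij}^{k}\,Y_{k}.
\end{equation}
Fixing $i\in\{1,\ldots,d\}$, denote by $\phi^{t}_{Y_{i}}$ the flow of $Y_{i}$ and set $\widetilde{Y}_{j}(t)=\big(\phi^{-t}_{Y_{i}}\big)_{\ast}Y_{j}\in T_{x}M$. A standard computation yields the linear ODE system
\begin{equation}
\frac{\mathrm{d}}{\mathrm{d}t}\,\widetilde{Y}_{j}(t)=\sum_{k=1}^{N}\big(c_{ij}^{k}\circ\phi^{t}_{Y_{i}}(x)\big)\,\widetilde{Y}_{k}(t).
\end{equation}
By uniqueness of solutions to linear ODEs, the subspace $V(t)\subset T_{x}M$ spanned by $\widetilde{Y}_{1}(t),\ldots,\widetilde{Y}_{N}(t)$ has constant dimension; since $Y_{j}(x)=0$ for $j>d$, one has $V(0)=T_{x}N$, and the monodromy of the linear system preserves $T_{x}N$ (any invariant subspace of the initial span). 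Pushing forward by $\phi^{t}_{Y_{i}}$, this gives $Y_{j}\big(\phi^{t}_{Y_{i}}(x)\big)\in T_{\phi^{t}_{Y_{i}}(x)}N$ for every $j$. Iterating over concatenations of flows of $Y_{1},\ldots,Y_{d}$, which by construction generate $N$, one obtains $\mathcal{D}_{y}\subseteq T_{y}N$ for every $y\in N$. A dimension count then forces equality, so that $N$ is an integral manifold of $\mathcal{D}$ at $x$.

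Once the existence of an integral manifold through every point is established, the construction of the foliation $\mathcal{F}=\{\mathcal{F}_{x}\}_{x\in M}$ by setting $\mathcal{F}_{x}=\bigcup_{N\in\mathfrak{N}_{x}}N$ and the verification that this is indeed a (possibly singular) partition of $M$ by immersed submanifolds goes through verbatim as at the end of the proof of Theorem \ref{theo:Nagano66}. The principal obstacle compared with Nagano's argument is that in the smooth case, a vector field vanishing at $x$ need not vanish along a flow line, so the Taylor expansion trick is unavailable; the replacement is precisely the linear structure of the ODE, whose closedness rests on the fact that only finitely many generators appear, which is exactly the extra hypothesis of Hermann's theorem over the Frobenius-type result in the analytic case.
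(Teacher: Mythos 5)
Your central idea---replacing Nagano's Taylor-series argument by a linear ODE system, which closes precisely because involutivity together with local finite generation gives $[Y_i,Y_j]=\sum_k c^k_{ij}Y_k$ with smooth coefficients---is exactly the engine of the paper's own proof, so the heart of your proposal is sound. The architecture, however, is genuinely different. The paper does not build a Nagano-style local integral manifold $N$ and then glue: it works directly with the preleaf $\mathcal{L}_x$ of points reachable by integral paths, expands the generators in a fixed coordinate system, derives the ODE $\frac{\dd}{\dd t}A(t)=F(t)\,A(t)$ for the coefficient matrix $A$, concludes that $\mathrm{rank}\,\mathcal{D}$ is constant along integral curves (hence, by compactness and concatenation, on all of $\mathcal{L}_x$), and then equips the whole preleaf with charts $Y\mapsto\phi^1_Y(y)$. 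Your ODE---on the flow-pullbacks $\widetilde{Y}_j(t)$ rather than on coordinate coefficients---is the variant the paper uses later, in the proofs of Lobry's and Sussmann's theorems; it yields the stronger invariance statement $(\phi^t_{Y_i})_*\mathcal{D}_x=\mathcal{D}_{\phi^t_{Y_i}(x)}$, which is Sussmann's condition. Both routes are legitimate: yours localizes the work (an integral manifold through each point, then Nagano's gluing), the paper's globalizes it (charts on the preleaf at once).

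One step, as written, is circular. From $\mathrm{span}\{\widetilde{Y}_j(t)\}=\mathcal{D}_x=T_xN$, pushing forward by $\phi^t_{Y_i}$ gives only $Y_j\big(\phi^t_{Y_i}(x)\big)\in(\phi^t_{Y_i})_*T_xN$; to convert this into $Y_j\big(\phi^t_{Y_i}(x)\big)\in T_{\phi^t_{Y_i}(x)}N$ you need $(\phi^t_{Y_i})_*T_xN=T_{\phi^t_{Y_i}(x)}N$, i.e.\ that the flow of $Y_i$ carries $N$ into $N$ near $x$---which is essentially the tangency you are trying to prove. (Also, ``the monodromy preserves $T_xN$'' conflates the fundamental solution $G(t)$, which acts on the coefficient space $\mathbb{R}^N$, with a map of $T_xM$; the correct and sufficient fact is that $G(t)$ is invertible, so the column span of $V(t)=V(0)\,G(t)$ is literally constant, not merely of constant dimension.) The repair is standard and is what the paper carries out for Lobry and Sussmann: run the ODE argument at every nearby point, so that $(\phi^t_{Y})_*\mathcal{D}_z=\mathcal{D}_{\phi^t_{Y}(z)}$ for all $z$ in the chart, and then differentiate the parametrization $\Psi(t_1,\ldots,t_d)=\phi^{t_d}_{Y_d}\circ\cdots\circ\phi^{t_1}_{Y_1}(x)$ directly: each partial derivative $\partial\Psi/\partial t_j$ is the pushforward, by the later flows, of $Y_j$ evaluated at an intermediate point, hence lies in $\mathcal{D}_{\Psi(t)}$ by iterated invariance; and since the pullbacks of these partials to $x$ stay close to the basis $Y_1(x),\ldots,Y_d(x)$, the map $\Psi$ is an immersion, so $\mathrm{Im}\,\dd\Psi=\mathcal{D}_{\Psi(t)}$ by a dimension count. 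This gives $T_yN=\mathcal{D}_y$ at every $y\in N$ in one stroke, after which your concluding gluing step goes through.
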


\begin{remarque}
Actually, Hermann proved a more general result. That any submodule of the module of vector fields that is locally finitely generated defines an integrable distribution if and only if it is involutive.\end{remarque}

From now on until the end of this section, the proofs will heavily rely on local arguments. The key concept is that of \emph{integral path} of the distribution. Indeed one realizes that the leaf of the foliation passing through some point $x\in M$ corresponds in fact to the submanifold consisting of all points reachable from $x$ by integral paths. We define an \emph{integral path of the distribution $\mathcal{D}$} to be a piecewise smooth path $\gamma:[a,b]\to M$ such that for every open interval $I\subset[a,b]$ where $\gamma$ is differentiable, there exists a vector field $X\in\Gamma(\mathcal{D})$ such that:
\begin{equation}
\frac{\dd}{\dd t}\gamma (t)=X\big(\gamma(t)\big)\label{integralpath}
\end{equation}
The set of all points reachable from a point $x$ by an integral path of $\mathcal{D}$ forms a subset $\mathcal{L}_{x}$ of $M$ that we call a \emph{preleaf of $\mathcal{D}$}. The aim of the proof is to show that $\mathcal{L}_{x}$ can be endowed with a smooth manifold structure such that it appears as an immersed submanifold of $M$, that is $\mathcal{L}_{x}$ is an integral submanifold of $\mathcal{D}$ passing through $x$.

\begin{proof}
The proof relies on the idea that given $x\in M$, the distribution has constant dimension over the preleaf $\mathcal{L}_{x}$. From this fact, we will build smoothly compatible coordinate charts, equipping $\mathcal{L}_{x}$ with an atlas. Since every point on $\mathcal{L}_{x}$ can be reached by an integral path, it is sufficient to show that the rank of $\mathcal{D}$ is constant on an integral curve passing through $x$.

Thus, let $x\in M$, let $X\in\Gamma(\mathcal{D})$ be a vector field which does not vanish at the point $x$ and let $t\mapsto\phi_{X}^{t}$ be its flow. Since $X$ is not zero at $x$, there exists a neighborhood $U$ of $x$ with local coordinates $\{x_{1},\ldots,x_{n}\}$ such that $X(x)=\frac{\partial}{\partial x_{1}}$. The distribution being locally finitely generated, we can moreover assume that the $p$ vector fields that locally span $\mathcal{D}$ around $x$ are defined over $U$ so that we can write for every $1\leq i\leq p$:
\begin{equation}
X_{i}(y)=\sum_{k=1}^{n}a_{i,k}(y)\frac{\partial}{\partial x^k}
\end{equation}
for some smooth functions $y\mapsto a_{i,k}(y)$, $y\in U$. Then the dimension of $\mathcal{D}_{y}$ is nothing but the rank of the matrix $A(y)=\big(a_{i,k}(y)\big)$.

We would like to determine if the rank of the matrix changes when one runs over the path $t\mapsto\phi_{X}^{t}(x)$. This can be performed by first applying $X$ to the functions $a_{i,k}$:
\begin{equation}
X(a_{i,k})=\frac{\partial a_{i,k}}{\partial x^{1}}
\end{equation}
More generally, we find these derivatives in the bracket $[X,X_{i}]=\sum_{1\,\leq\, k\,\leq\, n}\frac{\partial a_{i,k}}{\partial x^{1}}\frac{\partial}{\partial x^{k}}$. But the distribution is involutive and locally finitely generated, so there exist smooth functions $f_{i,j}$ on $U$ such that $[X,X_{i}]=\sum_{1\,\leq\, j\,\leq\, p}f_{i,j}X_{j}$. Thus we obtain the following system of linear differential equations on the matrix coefficients $\{a_{i,k}\}$:
\begin{equation}
\frac{\partial a_{i,k}}{\partial x^{1}}=\sum_{1\,\leq\, j\,\leq\, p}f_{i,j}a_{j,k}
\end{equation}
Letting all other coordinates fixed and allowing only $x^{1}$ to vary, and gathering all functions $f_{i,j}$ in a $p\times p$ matrix, it is equivalent to write the above equation as a first-order linear differential equations on matrices:
\begin{equation}
\frac{\dd}{\dd t}A(t)=F(t)\times A(t)
\end{equation}
which is solved by $A(t)=\exp(\mathfrak{F}(t))\times A(0)$, where $\mathfrak{F}(t)$ is a primitive of $F(t)$. Then the rank of $A(t)$ is the same as the rank of $A(0)$, which is the dimension of $\mathcal{D}_{x}$. The rank of $\mathcal{D}$ is thus constant on the integral curve of $X$ in the open set $U$.

Now let fix $t_0>0$ and let us show that $\mathcal{D}_{\phi^X_{t_0}(x)}$ has same dimension as $\mathcal{D}_x$.
We proceed along the same lines as above for any point on the integral curve of $X$ between $0$ and $t_0$, so that we obtain a covering of $[0,t_{0}]$ by open intervals such that the distribution $\mathcal{D}$ has constant rank on the interval curve of $X$ parametrized by these open intervals. By compactness, we can choose a finite number of such open sets so that they necessarily overlap. Since the rank of $\mathcal{D}$ is locally constant on the integral curve of $X$, we conclude that it is constant all over the path $t\mapsto\phi^X_t(x)$ between 0 and $t_0$. Hence, we obtain that $\mathrm{dim}\big(\mathcal{D}_{\phi^X_{t_0}(x)}\big)=\mathrm{dim}\big(\mathcal{D}_x\big)$. By concatenation of integral curves of sections of $\mathcal{D}$, it implies that the rank of $\mathcal{D}$ is constant over any integral path, hence over the preleaf $\mathcal{L}_{x}$.

Now let us define a coordinate chart for $\mathcal{L}_{x}$ in a neighborhood $U$ of some point $y$. Locally, the tangent bundle of $M$ is trivializable, and more precisely here: $T_{U}M\simeq U\times \mathbb{R}^{n}$. Let $V$ be a small neighborhood of $0$ in $\mathbb{R}^{p}\simeq\mathcal{D}_{y}$ and let $Y\in V$. We can associate to $Y$ a point in $\mathcal{L}_{x}$ defined by $w=\phi_{Y}^{1}(y)$, that is: the point of the integral curve of $Y$ that is reached at time $t=1$. It is an homeomorphism between $V$ and $\mathcal{L}_{x}\cap U$, hence the inverse map $-$ that we call $\psi_{y}$ $-$ is a very convenient coordinate chart for $\mathcal{L}_{x}$ in a neighborhood of $y$.
Two such charts are smoothly compatible, hence
the set of such coordinate charts defines a smooth atlas for $\mathcal{L}_{x}$, turning it into an immersed submanifold of $M$. By definition, the tangent space at each point of the preleaf $\mathcal{L}_{x}$ coincides with the distribution evaluated at the same point, then $\mathcal{L}_{x}$ is an integral submanifold of $\mathcal{D}$ through $x$. The family $\mathcal{L}=\{\mathcal{L}_{x}\,|\,x\in M\}$ forms a partition of $M$, since for any two $x,y\in M$, either $\mathcal{L}_{x}=\mathcal{L}_{y}$, or $\mathcal{L}_{x}\cap\mathcal{L}_{y}=\varnothing$. We have shown that the distribution $\mathcal{D}$ can be integrated to a (singular) foliation of $M$.\end{proof}

Let us illustrate the Theorem with a concrete example:

\begin{example}
As an example (taken from \cite{Miranda}), take the 2-sphere $\mathbb{S}^{2}$ embedded in $\mathbb{R}^{3}$ with the cylindrical coordinates $(h,\theta)$, and define the following Poisson bivector $\Pi=h\frac{\partial}{\partial \widetilde{h}}\wedge\frac{\partial}{\partial \theta}$, where $\frac{\partial}{\partial \widetilde{h}}$ is the projection of $\frac{\partial}{\partial h}$ to the tangent space of the 2-sphere. The bivector $\Pi$ is tangent to the sphere and vanishes on the equator and at both poles. Poisson manifolds have been defined in Example \ref{examplePoisson}, and we know that the cotangent bundle $T^{\ast}\mathbb{S}^{2}$ is a Lie algebroid with anchor the  linear map $\Pi^{\sharp}:T^{\ast}\mathbb{S}^{2}\to T\mathbb{S}^{2}$. Then the image of $\Pi^{\sharp}$ forms a smooth distribution. It is involutive since the anchor map is compatible with the bracket, and it is locally finitely generated because the fiber of the cotangent bundle is finitely generated. This distribution is thus integrable into a singular foliation. In the present case, the bivector defines two kinds of symplectic leaves: 0-dimensional points on the equator and at both poles on the one hand, and  the punctunred 2-dimensional hemispheres on the other hand. 
More generally, it has been shown by A. Weinstein \cite{Weinstein1983} that every Poisson manifold admits singular foliation by symplectic leaves. 
\end{example}

Obviously the aim of the mathematicians would be to drop the assumption that the distribution is locally finitely generated. Since Hermann's condition stating that the sections of the distribution $\mathcal{D}$ are locally finitely generated is a condition involving the module of sections $\Gamma(\mathcal{D})$, and not on the distribution itself, it may be weakened. The successive developments of C. Lobry, P. Stefan and H. Sussmann have provided answers to this problem. 

Notice that in most cases, today, and most notably in the work of I.~Androulidakis, G.~Skandalis and M.~Zambon \cites{AndrouSkandal,AndrouZamb}, the kind of distributions which are considered are those coming from locally finitely generated involutive $\C{C}^{\infty}(M)$-submodules of compactly supported vector fields $\mathfrak{X}_c(M)$. They name it \emph{Stefan-Sussmann foliation}, in honor of P. Stefan and H. Sussmann. We chose to use a slightly different definition, which is to us more convenient: 


\begin{definition}\label{hermannfoliation}
A \emph{Hermann foliation} is a sub-sheaf ${\mathcal D}: U \mapsto  {\mathcal D}(U)$ of ${\mathfrak X}$, which is locally finitely generated as a ${\mathscr \cinf}$-submodule and closed under the Lie bracket of vector fields.
\end{definition}


In this precise context $\mathcal{D}$ symbolizes a sheaf, and not a distribution as is the norm in this section. It will not lead to any confusion since we can restrict the sheaf $\mathcal{D}$ to a point $x$ by taking the sub-space of $T_{x}M$ consisting of all the evaluations at $x$ of the sections of $\mathcal{D}$ on open sets containing $x$. Then a Hermann foliation defines an honest smooth distribution. 

\begin{remarque}
A Hermann foliation does not necessarily defines a distribution satisfying Hermann's conditions, but the proof of Hermann's theorem applies to the Hermann foliation by using the sheaf $\mathcal{D}$ instead of the section of the distribution. Then the induced distribution is integrable. This was the true idea behind Hermann Theorem in his original paper \cite{Hermann1962}. For example, on $M=\mathbb{R}$, take the submodule of $\mathfrak{X}(\mathbb{R})$ generated by $\psi(x)\frac{\partial}{\partial x}$, where $\psi\in\mathcal{C}^{\infty}(\mathbb{R})$ is a smooth function vanishing on $\mathbb{R}_{-}=\{x\in\mathbb{R}\ |\ x\leq0\}$. It is involutive and finitely generated, and it defines an integrable distribution: the leaves are points on $\mathbb{R}_{-}$ and the open line $\mathbb{R}_{+}^*$. However, as Example \ref{ex:smoothdist} shows, it is not locally finitely generated at $x=0$.
\end{remarque}


Note that involutivity for modules is slightly more stringent than for distributions: it implies for example that if some free module is given with a set of generators, the bracket of any two elements of the module should be expressed in terms of a sum of products of smooth functions (living in $\cinf(M)$) with those generators. To clarify this and illustrate Hermann's theorem, we now give an example of a smooth distribution which is not locally finitely generated, though involutive. We naturally show that it is not integrable, showing that in the smooth case, involutivity is not indeed a sufficient condition for integrability.

\begin{example}\label{ex:smoothdist}
On $\mathbb{R}^{2}$, take the submodule $\mathfrak{V}\subset\mathfrak{X}(M)$ generated by $\frac{\partial}{\partial x}$ and $\phi(x) \frac{\partial}{\partial y}$, where $\phi$ is a smooth function of $x$ such that $\phi$ and all its derivative identically vanish on the half-line $]-\infty,0]$. For example one could take
\begin{equation*}
\phi=
\begin{cases}e^{-\frac{1}{x}} \quad \text{for}\quad 0< x\\
0 \hspace{1cm} \text{for}\quad x\leq 0
\end{cases}
\end{equation*}
Then the bracket of the two vector fields $\frac{\partial}{\partial x}$ and $\phi(x)\frac{\partial}{\partial y}$ is:
\begin{equation}
\big[\frac{\partial}{\partial x},\phi(x)\frac{\partial}{\partial y}\big]=\frac{1}{x^{2}}\phi(x)\frac{\partial}{\partial y}
\end{equation}
which is everywhere defined on $\mathbb{R}^{2}$. However, since the function $x\mapsto\frac{1}{x^{2}}$ is not smooth on $\mathbb{R}$, the module $\mathfrak{V}$ is not involutive. On the other hand, the underlying distribution is the one defined in Example \ref{example1}, which is involutive in the sense of the first section.

Should this distribution be locally finitely generated, Hermann's Theorem would apply and it would be integrable. However, it has been shown that it is not integrable, hence it cannot be locally finitely generated: in some neighborhood of $(0,0)$ for example, one cannot find any finite set of vector fields which generates the sections of $\mathcal{D}$. The proof is mainly taken from \cite{Texas}: assume that there exists vector fields $X_{1},\ldots, X_{p}\in\Gamma(\mathcal{D})$ generating the module of sections of $\mathcal{D}$ on some neighborhood $U$ of $(0,0)$. Then any section $Y$ of $\mathcal{D}$ can be written as:
\begin{equation}
Y=\sum_{i=1}^{p}\, f_{i}X_{i}
\end{equation}
for some functions $f_{i}\in\cinf(U)$.

Let us call $U_{>0}$ the intersection of $U$ with the open right half-space: $U_{>0}=\big\{(x,y)\in U\, |\, x>0\big\}$. We can legitimately assume that each of the vector fields $X_{i}$ can be decomposed on $U$ as:
\begin{equation}
X_{i}=g_{i}\frac{\partial}{\partial x}+h_{i}\frac{\partial}{\partial y}
\end{equation}
for some functions $g_{i},h_{i}\in \cinf(U)$ such that $h_{i}=0$ on $U\backslash U_{>0}$. Now there is no point $(x,y)\in U_{>0}$ such that all $h_{i}$ vanish, for otherwise any vertical section of $\mathcal{D}$ (parallel to $\frac{\partial}{\partial y}$) would vanish at that point, but the above vector field $e^{-\frac{1}{x}}\frac{\partial}{\partial y}$ does not vanish for $x>0$.

Let us define $h=\sum_{i=1}^{p}h_{i}^{2}$, then it is strictly positive on $U_{>0}$. The vector field $Y=h^{\frac{1}{4}}\frac{\partial}{\partial y}$ is a section of the distribution $\mathcal{D}$ vanishing on $U\backslash U_{>0}$, hence there are some functions $f_{i}\in\cinf(U)$ such that $Y=\sum_{i=1}^{p}f_{i}X_{i}$ or, in other words:
\begin{equation}
h^{\frac{1}{4}}=\sum_{i=1}^{p}\, f_{i}h_{i}
\end{equation}
Using the Cauchy-Schwarz identity we obtain:
\begin{equation}
h^{\frac{1}{4}}=\left|\sum_{i=1}^{p}\,f_{i}h_{i}\right|\leq\sqrt{\sum_{i=1}^{p}f_{i}^{2}}\sqrt{\sum_{i=1}^{p}h_{i}^{2}}=h^{\frac{1}{2}}\sqrt{\sum_{i=1}^{p}\,f_{i}^{2}}
\end{equation}
If we restrict to $U_{>0}$ we see that $h$ does not vanish then we can divide by $\sqrt{h}$ and we obtain:
\begin{equation}
\frac{1}{h^{\frac{1}{4}}}\leq\sqrt{\sum_{i=1}^{p}\, f_{i}^{2}}
\end{equation}
but the left-hand side diverges as we approach the vertical axis from the right, whereas the right-hand side is supposedly continuous on $U$ by hypothesis, hence it should rather converge. This contradiction shows that the distribution is not locally finitely generated.
\end{example}

To conclude, we have shown that involutivity alone is not sufficient to guarantee integrability. Although Hermann proved that this condition becomes sufficient if the smooth distribution is locally finitely generated, it appears that this property of the foliation can be weakened.

\subsection{Improvement by Lobry}\label{Lobry}

In 1970, C. Lobry challenged the view that local finiteness and involutivity were adequate sufficient conditions to guarantee that a smooth distribution is integrable \cite{Lobry70}. Instead he tried to introduce a notion which merges both properties into one assumption. Unfortunately, this turned out to be not sufficient, as remarked by Stefan in \cite{Stefanofficiel, stefan1980}. Thus we slightly modify Lobry's assumption to match both requirements: a generalization of Hermann's statement, as well as staying close to Lobry's original statement. Thus, we say that a distribution $\mathcal{D}$ is \emph{locally of finite type} if for every point $x\in M$ there exist vector fields $X_{1},\ldots,X_{p}\in\Gamma(\mathcal{D})$ defined on an open neighborhood $\Omega$ of $x$ such that:
\begin{itemize}
\item $X_{1}(y),\ldots,X_{p}(y)$ span $\mathcal{D}_{y}$ for every $y\in \Omega$
\item for every $X\in \Gamma(\mathcal{D})$ there exists a neighborhood $U$ of $x$, and functions $f_{i,j}\in\mathcal{C}^{\infty}(U)$ such that:
\begin{equation}\label{eq:locfintype}
[X,X_{i}](x')=\sum_{j=1}^{p}\,f_{i,j}(x')X_{j}(x')
\end{equation}
for every $1\leq i\leq p$ and $x'\in U$.
\end{itemize}
We see that the distribution is not necessarily locally finitely generated nor involutive, but it is `nearly' both. Originally, Lobry did not assume the first item, it was Stefan who showed that Lobry's conditions were not sufficient for integrability, hence one should consider that `locally of finite type' here means something different than in Lobry's original article. The result of Lobry is the following:
\begin{theoreme}\emph{\textbf{Lobry (1970)}}\label{theo:Lobry70}
Let $\mathcal{D}$ be as smooth distribution on a smooth manifold $M$. If $\mathcal{D}$ is locally of finite type, then the distribution $\mathcal{D}$ is integrable.
\end{theoreme}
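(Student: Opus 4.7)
The plan is to mimic the structure of the proof of Hermann's Theorem \ref{theo:Hermann1962}, replacing the use of involutivity plus local finite generation of $\Gamma(\mathcal{D})$ by the single condition of being \emph{locally of finite type}. As before, for $x\in M$ I define the preleaf $\mathcal{L}_{x}$ as the set of points reachable from $x$ by integral paths of $\mathcal{D}$, and the goal is to equip $\mathcal{L}_{x}$ with the structure of an immersed submanifold of $M$ whose tangent space at each point $y$ coincides with $\mathcal{D}_{y}$. The key intermediate statement, from which everything else follows exactly as in Hermann's proof, is that the function $y\mapsto \dim \mathcal{D}_{y}$ is constant along integral paths, and therefore constant on each preleaf $\mathcal{L}_{x}$.

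To prove rank constancy, I fix $x\in M$ and a non-vanishing $X\in\Gamma(\mathcal{D})$. The first item in the definition of `locally of finite type' provides an open $\Omega\ni x$ and vector fields $X_{1},\dots,X_{p}\in\Gamma(\mathcal{D})$ which span $\mathcal{D}_{y}$ for every $y\in\Omega$. Shrinking $\Omega$, I choose coordinates $(x^{1},\dots,x^{n})$ in which $X=\frac{\partial}{\partial x^{1}}$ and write $X_{i}=\sum_{k}a_{i,k}\,\frac{\partial}{\partial x^{k}}$; the dimension of $\mathcal{D}_{y}$ is then the rank of the matrix $A(y)=\bigl(a_{i,k}(y)\bigr)$. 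A direct computation gives
\begin{equation}
[X,X_{i}]=\sum_{k=1}^{n}\frac{\partial a_{i,k}}{\partial x^{1}}\,\frac{\partial}{\partial x^{k}}.
\end{equation}
Now the second item in the definition of `locally of finite type', applied to this particular $X$, provides smooth functions $f_{i,j}$ on some neighborhood $U\subset\Omega$ of $x$ such that $[X,X_{i}]=\sum_{j}f_{i,j}X_{j}$. Comparing coefficients yields $\frac{\partial a_{i,k}}{\partial x^{1}}=\sum_{j}f_{i,j}\,a_{j,k}$, which in matrix form is a first-order linear ODE $\frac{\dd}{\dd t}A(t)=F(t)A(t)$ along the integral curve $t\mapsto\phi_{X}^{t}(x)$. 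Its solution $A(t)=\exp\bigl(\mathfrak{F}(t)\bigr)\cdot A(0)$ has the same rank as $A(0)$, so $\dim\mathcal{D}_{\phi_{X}^{t}(x)}=\dim\mathcal{D}_{x}$ for $t$ small. A standard compactness argument on a closed interval covered by such local neighborhoods extends this to all times for which the flow is defined, and concatenation of integral curves of elements of $\Gamma(\mathcal{D})$ yields rank constancy throughout $\mathcal{L}_{x}$.

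Once rank constancy is established, the chart construction is identical to the one in the proof of Hermann's theorem: for $y\in\mathcal{L}_{x}$, a small neighborhood $V$ of $0$ in $\mathcal{D}_{y}$ is mapped by $Y\mapsto\phi_{Y}^{1}(y)$ (where any $Y\in\mathcal{D}_{y}$ is extended to a section of $\mathcal{D}$ using the local spanning family $X_{1},\dots,X_{p}$) to a neighborhood of $y$ in $\mathcal{L}_{x}$; inverting this map yields a coordinate chart $\psi_{y}$. Two such charts are smoothly compatible by the flow properties, so $\mathcal{L}_{x}$ acquires an atlas making it an immersed submanifold of $M$ with $T_{y}\mathcal{L}_{x}=\mathcal{D}_{y}$. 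The family $\{\mathcal{L}_{x}\}_{x\in M}$ partitions $M$ for the usual reason (two preleaves either coincide or are disjoint), giving the desired foliation integrating $\mathcal{D}$.

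The only genuinely new point is the second paragraph: Hermann's original argument needed the identity $[X,X_{i}]=\sum_{j}f_{i,j}X_{j}$ for the \emph{specific} vector field $X$ generating the flow, and Lobry's condition is engineered precisely to supply this identity for every $X\in\Gamma(\mathcal{D})$, even when $\Gamma(\mathcal{D})$ itself fails to be finitely generated and the functions $f_{i,j}$ may depend on the chosen $X$. This flexibility is exactly what makes the matrix ODE argument survive without assuming either involutivity of the module or local finite generation, and it is the main conceptual obstacle to watch for while writing out the details.
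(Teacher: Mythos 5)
Your rank-constancy computation is correct, and it is the same linear-ODE mechanism the paper uses: where you straighten $X$ and differentiate the coefficient matrix $A$ of the spanning family, the paper instead transports the spanning fields back along the flow, setting $V_{i}(t)=\phi_{X\ast}^{-t}\bigl(X_{i}(\phi_{X}^{t}(x))\bigr)$, and derives $\frac{\dd}{\dd t}V(t)=V(t)F(t)$ from the same Lobry identity. (In both versions the solution should be written with the fundamental solution of the linear system rather than $\exp(\mathfrak{F}(t))$, since $F(t)$ need not commute with its primitive; the paper commits the same abuse, so this is cosmetic.) The genuine gap is in what you extract from the computation. You declare that the key intermediate statement, from which "everything else follows exactly as in Hermann's proof," is that $y\mapsto\dim\mathcal{D}_{y}$ is constant along integral paths. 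That is strictly weaker than what the chart step needs. The paper's transport argument is set up precisely so that, beyond equality of dimensions, it yields the invariance $\phi_{X\ast}^{t}(\mathcal{D}_{x})=\mathcal{D}_{\phi_{X}^{t}(x)}$: the column space of $V(t)=V(0)R(t)$ equals $\mathcal{D}_{x}$ for all $t$, and pushing forward by $\phi_{X\ast}^{t}$ identifies it with $\mathcal{D}_{\phi_{X}^{t}(x)}$. This invariance is then used essentially in the second half of the paper's proof: the differential of the chart $\phi_{y}(t_{1},\ldots,t_{p})=\phi_{X_{p}}^{t_{p}}\circ\ldots\circ\phi_{X_{1}}^{t_{1}}(y)$ at a general $p$-tuple is a composition of pushforwards $\phi_{X_{j}\ast}^{t_{j}}$ applied to vectors of $\mathcal{D}$, and one must know that these pushforwards stay in $\mathcal{D}$ to conclude that $\mathrm{Im}(\phi_{y})$ is an integral manifold; the chart-compatibility argument relies on this as well. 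Mere constancy of $\dim\mathcal{D}$ cannot deliver this: a constant-rank non-integrable distribution (a contact structure, say) has constant dimension everywhere, yet the submanifolds swept out by flows of its sections are not integral manifolds — dimension says nothing about tangency away from the base point. The same objection applies verbatim to your chart $Y\mapsto\phi_{Y}^{1}(y)$, for which rank constancy gives tangency only at $Y=0$; note that the Hermann proof you defer to never actually proves this tangency either, it only asserts it.

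The gap is fixable from inside your own computation, which is why the comparison with the paper is instructive. The relation $A(t)=R(t)A(0)$ with $R(t)$ invertible says that the row space of $A$ is constant along the curve; since in your straightening coordinates $\phi_{X}^{t}$ is a translation whose differential is the identity on the coordinate frame, constancy of the row space is exactly the statement $\phi_{X\ast}^{t}(\mathcal{D}_{x})=\mathcal{D}_{\phi_{X}^{t}(x)}$. Had you recorded this invariance as the key intermediate statement — as the paper does explicitly before turning to the charts — and then proved tangency and compatibility of the charts from it, your argument would close. As written, the reduction to dimension constancy leaves the main geometric point of the theorem unproved.
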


\noindent Notice that the reverse assumption is not true: an integrable distribution is obviously not necessarily locally of finite type. 

\begin{proof}The proof of the Theorem follows the same reasoning as the one of Hermann's, but in a more subtle way since Lobry's assumption is weaker that Hermann's. The proof hence relies on one important lemma which contains the same ideas as the proof of Hermann, but the proof is more subtle because one cannot rely on the fact that the distribution is locally finitely generated:
%

Let $x\in M$, $X\in\Gamma(\mathcal{D})$ and $\phi_{X}^{t}$ be the flow of the vector field $X$. We want to show that the rank of $\mathcal{D}_{\phi_X^t}$ is constant for sufficiently small $t$.
To show that there is an isomorphism between $\mathcal{D}_{x}$ and $\mathcal{D}_{\phi_{X}^{t}(x)}$, we will work entirely in $\mathcal{D}_{x}$ by transporting the vector field $X_{1},\ldots,X_{p}$ back along the integral curve of $X$. We have to do this because the distribution might not be locally finitely generated. Then, we will ensure that the rank of $\mathcal{D}_{\phi_X^t(x)}$ is constant on this integral curve. 

The vector field is defined in a neighborhood of $x$ so there exists $\epsilon>0$ such that the flow is well defined for every $|t|<\epsilon$. The distribution is locally of finite type, thus there exists a family of vector fields $X_{1},\ldots,X_{p}$ which span $\mathcal{D}$ in a neighborhood $\Omega$ of $x$, and they satisfy Equation \eqref{eq:locfintype}. Assuming that $\epsilon$ is smaller if necessary, let $V_{i}(t)$ the element of $T_{x}M$ defined by:
\begin{equation}
V_{i}(t)=\phi_{X\ast}^{-t}\Big(X_{i}(\phi_{X}^{t}(x))\Big)
\end{equation}
for every $|t|<\epsilon$. Then the map $t\mapsto V_{i}(t)$ defines a path in $T_{x}M$ and, by definition of the Lie bracket:
\begin{equation}\label{eq:julia}
\frac{\dd}{\dd t}V_{i}(t)=\phi_{X\ast}^{-t}\Big([X,X_{i}](\phi_{X}^{t}(x))\Big)
\end{equation}
Let $f_{i,j}$ be the family of smooth functions which satisfy the local involutivity condition \eqref{eq:locfintype}. Then Equation \eqref{eq:julia} turns into:
\begin{equation}\label{eq:julia2}
\frac{\dd}{\dd t}V_{i}(t)=\sum_{j=1}^{p}\ f_{i,j}\big(\phi_{X}^{t}(x)\big)V_{j}(t)
\end{equation}
for every $i=1,\ldots, p$. This is a set of $p$ first-order differential equations, which can be translated into matrix notation:
\begin{equation}
\frac{\dd}{\dd t}V(t)=V(t)\times F(t)
\end{equation}
where $V(t)$ is the $n\times p$ matrix whose columns are the vectors $V_{i}(t)$, and $F(t)$ is the $p\times p$ matrix collecting the functions $f_{i,j}$ at the point $\phi_{X}^{t}(x)$. The initial condition is $V(0)=(X_{1},\ldots,X_{p})$ and it is a rank $m\leq p$ matrix. The general solution is of the type $V(t)=V(0)\exp(\mathfrak{F}(t))$, where $\mathfrak{F}$ is a primitive of $F$. Then, the rank of the matrix $V(t)$ is $m$, for every $|t|<\epsilon$. Since $\phi_{X}^{t}$ is a local diffeomorphism, its pushforward is injective, then the vector fields:
\begin{equation}
X_i(\phi_{X}^{t})=\phi_{X\ast}^{t}\big(V(t)\big)
\end{equation}
form a family of rank $m$, for every $|t|<\epsilon$. Since we know that $X_1,\ldots, X_p$ span $\mathcal{D}$ in a neighborhood $\Omega$ of $x$, we deduce that $\mathcal{D}_{\phi_X^t(x)}$ is of rank $m$ on the integral curve $\phi_X^t$. In particular $\phi_{X\ast}^t(\mathcal{D}_x)=\mathcal{D}_{\phi_X^t(x)}$
As in Hermann's proof, by a compactness argument we conclude that, for any $t_0$ for which the flow of $X$ is defined, the ranks of $\mathcal{D}_{x}$ and $\mathcal{D}_{\phi_{X}^{t_{0}}(x)}$ are equal and that the two vector spaces are isomorphic.

At this point one can go back to the proof of Hermann's theorem, where it is shown that if the rank of the distribution is constant over an integral path, then it is constant over the preleaf $\mathcal{L}_{x}$. On the other hand, Lobry presented an enhancement of Hermann's proof by providing an explicit description of the coordinate charts on the preleaf. Let $X_{1},\ldots,X_{p}$ be a family of vector fields of $\Gamma(\mathcal{D})$ forming a basis of $\mathcal{D}_{x}$. Let the map $\phi_{x}$ be defined by:
\begin{align}
\phi_{x}:\hspace{0.6cm}\mathbb{R}^{p}\hspace{0.7cm}&\xrightarrow{\hspace*{2cm}} \hspace{1.1cm} M\\
(t_{1},\ldots,t_{p})&\xmapsto{\hspace*{2cm}} \phi_{X_{p}}^{t_{p}}\circ\ldots\circ\phi_{X_{1}}^{t_{1}}(x)\nonumber
\end{align}
for every $p$-tuple such that the right-hand side makes sense. We propose that the inverse map is a candidate for a coordinate chart based at $x$. Notice that at any point $y\in\mathcal{L}_{x}$ a corresponding map $\phi_{y}$ can be defined (possibly with a different number of vector fields). Thus we will prove the following facts:
\begin{itemize}
\item for every $y\in\mathcal{L}_{x}$, $\mathrm{Im}(\phi_{y})$ is an integral manifold of $\mathcal{D}$  of dimension $p$ contained in $\mathcal{L}_{x}$
\item the set of coordinate charts $(\phi_{y}^{-1})_{y\in\mathcal{L}_{x}}$ defines a smooth atlas for $\mathcal{L}_{x}$
\end{itemize}
so that we can finally conclude that $\mathcal{L}_{x}$ is an immersed submanifold of $M$ of dimension $p$. The end of the proof will then follow.

\begin{center}
\emph{For every $y\in\mathcal{L}_{x}$, $\mathrm{Im}(\phi_{y})$ is an integral manifold of $\mathcal{D}$ of dimension $p$ contained in $\mathcal{L}_{x}$}
\end{center}

We will show that $\phi_{x}$ defines an integral manifold of $\mathcal{D}$ through $x$, and then apply the same reasoning for any $y\in\mathcal{L}_{x}$. The smoothness of $\phi_{x}$ is automatic. Showing that $\mathrm{Im}(\phi_{x})$ is an immersed submanifold of $M$ is equivalent to showing that $\phi_{x}$ is an injective immersion. The first point is obvious because every map $\phi_{X_{i}}^{t_{i}}$ being bijective, the map $\phi_{x}$ is naturally injective in a sufficiently small neighborhood of $x$ (in any case, an immersion is locally injective). Then we will prove that $\phi_{x \ast}$ is an injection taking values in $\mathcal{D}$. Since the rank of $\mathcal{D}$ is constant over $\mathrm{Im}(\phi_{x})$ and equal to the dimension of $\mathbb{R}^{p}$, we will conclude that it is an integral manifold of $\mathcal{D}$ passing through $x$.

The map $\phi_{x}$ is smooth and its push-forward at the origin of $\mathbb{R}^{p}$ is:
\begin{equation}
\phi_{x\ast}\big(\frac{\partial}{\partial t_{i}}\big)=\left.\frac{\partial}{\partial t_{i}}\phi_{x}\right|_{(0,\ldots,0)}=X_{i}(x)
\end{equation}
then $\phi_{x\ast}:T_{(0,\ldots,0)}\mathbb{R}^{p}\to \mathcal{D}_{x}$ is of rank $p$, hence it is bijective at the origin. Now for any small $t_{1}$, we know that the vector spaces $\mathcal{D}_{x}$ and $\mathcal{D}_{\phi_{X_{1}}^{t_{1}}(x)}$ are isomorphic. In particular it means that $\mathcal{D}_{\phi_{X_{1}}^{t_{1}}(x)}$ is of rank $p$ with basis $\phi_{X_{1}\ast}^{t_{1}}\big(X_{i}(x)\big)$. And since at the point $(t_{1},0,\ldots,0)\in\mathbb{R}^{p}$ we have:
\begin{equation}
\phi_{x\ast}\big(\frac{\partial}{\partial t_{i}}\big)=\left.\frac{\partial}{\partial t_{i}}\phi_{x}\right|_{(t_{1},0,\ldots,0)}=\phi_{X_{1}\ast}^{t_{1}}\big(X_{i}(x)\big)
\end{equation}
we conclude that $\phi_{x\ast}:T_{(t_{1},0,\ldots,0)}\mathbb{R}^{p}\to \mathcal{D}_{\phi_{X_{1}}^{t_{1}}(x)}$ is bijective. We could have done the same for the other vector fields $X_{i}$, as well as for the concatenation of two flows. Reproducing the same argument for any $p$-tuple $(t_{1},\ldots,t_{p})$, we obtain that $\phi_{x}$ is an immersion taking values in $\mathcal{D}$, but the dimension of $\mathbb{R}^{p}$ and the rank of $\mathcal{D}_{x}$ are the same, then it is a bijection, hence $\mathrm{Im}(\phi_{x})$ is an integral manifold of $\mathcal{D}$.

Then we notice that for any $p$-tuple $(t_{1},\ldots,t_{p})$, the path $\gamma$ defined by applying first $\phi_{X_{1}}$ on $x$ from 0 to $t_{1}$, then $\phi_{X_{2}}$  on $\phi_{X_{1}}^{t_{1}}(x)$ from 0 to $t_{2}$, and so on...is actually an integral path of $\mathcal{D}$. Moreover, being true for any $p$-tuple $(t_1,\ldots,t_p)$ in a neighborhood of $0$ in $\mathbb{R}^{p}$, the image of $\phi_{x}$ is included in the integral leaf $\mathcal{L}_{x}$. We apply the same reasoning for any $y\in\mathcal{L}_{x}$ and we indeed obtain the first item:
\begin{equation*}
\mathrm{Im}(\phi_{y})\subset \mathcal{L}_{x}
\end{equation*}

Now, let $y\in \mathcal{L}_{x}$, and let $p(y)=\mathrm{dim}(\mathcal{D}_{y})$, obviously the dimension $p(y)$ of the vector space may depend of $y$. Let $X_{1},\ldots,X_{p(y)}\in\Gamma(\mathcal{D})$ forming a basis of $\mathcal{D}_{y}$, then there exists a map $\phi_{y}$ defined as:
\begin{align*}
\phi_{y}:\hspace{0.6cm}\mathbb{R}^{p(y)}\hspace{0.7cm}&\xrightarrow{\hspace*{2cm}} \hspace{1.1cm} M\\
(t_{1},\ldots,t_{p(y)})&\xmapsto{\hspace*{2cm}} \phi_{X_{p(y)}}^{t_{p(y)}}\circ\ldots\circ\phi_{X_{1}}^{t_{1}}(y)
\end{align*}
By the above discussion, we deduce that $\mathrm{Im}(\phi_{y})$ is an integral manifold of $\mathcal{D}$ passing through $y$. Moreover, the dimension $p(y)$ is constant on any integral path of $\mathcal{D}$ starting from $y$. And since any two points of $\mathcal{L}_{x}$ can be linked by such a path,  compactness argument implies that $p(y)=p$ (the rank of $\mathcal{D}_{x}$) on all of $\mathcal{L}_{x}$. Now let us turn to the second item:

\begin{center}
\emph{The set of coordinate charts $(\phi_{y}^{-1})_{y\in\mathcal{L}_{x}}$ defines a smooth atlas for $\mathcal{L}_{x}$}
\end{center}

We expect that this preleaf may be equipped with a differentiable structure such that it is an integral submanifold of $\mathcal{D}$ of dimension $p$. One may naturally take the set of maps $(\phi_{y}^{-1})_{y\in\mathcal{L}_{x}}$ as an atlas for $\mathcal{L}_{x}$.
We have to verify the compatibility of the coordinates functions $\phi_{y}$ and $\phi_{x}$, for any point $y\in\mathcal{L}_{x}$ close to $x$. In other words, we have to show that $\phi_{y}^{-1}\circ\phi_{x}:\mathbb{R}^{p}\to\mathbb{R}^{p}$ is a diffeomorphism. It is  actually sufficient to show that for any  open set $U\in\mathbb{R}^{p}$ such that $y\in\phi_{x}(U)$, the set $\phi_{y}^{-1}\circ\phi_{x}(U)$ is a neighborhood of 0, because we can always smoothly map it to $U$ again. To show that $\phi_{y}^{-1}\circ\phi_{x}(U)$ is open, we merely have to show that for any point belonging to this preimage, a neighborhood of this point is contained in it as well.

Let $y\in\mathrm{Im}(\phi_{x})$ and let $U\subset\mathbb{R}^{p}$ be an open set such that $\phi_{y}^{-1}\circ\phi_{x}(U)$. The goal is to show that every point in  $\phi_{y}^{-1}\circ\phi_{x}(U)$ has a neighborhood contained in $\phi_{y}^{-1}\circ\phi_{x}(U)$. The preimage of $y$ is the origin by definition, then let $T=(t_{1},\ldots,t_{p})$ be a point in $\phi_{y}^{-1}\circ\phi_{x}(U)$. Every point in the neighborhood of $T$ can be reached by a concatenation of paths of the form:
\begin{equation}
\gamma_{i}|_{(s_{1},\ldots,s_{p})}:t\longmapsto(s_{1},\ldots,s_{i}+t,\ldots,s_{p})
\end{equation}
for any $s_{i}$ in the vicinity of $t_{i}$, thus it is sufficient to show that every such piecewise smooth path is contained in $\phi_{y}^{-1}\circ\phi_{x}(U)$ to obtain the result. In the following, to simplify the discussion, we impose to act first on the first coordinate $t_{1}$, and then on the second coordinate $t_{2}$, and so on, that is, for any $p$-tuple $S=(s_{1},\ldots,s_{p})$, we can define a path from $T$ to $S$ by the following algorithm: from $T$, one first goes to $(s_{1},t_{2},\ldots,t_{p})$ on the path $\gamma_{1}$. Then one uses the path $\gamma_{2}$ to go from $(s_{1},t_{2},\ldots,t_{p})$ to $(s_{1},s_{2},t_{3},\ldots, t_{p})$, etc. until the last path $\gamma_{p}$ links the point of coordinates $(s_{1},s_{2},\ldots,s_{p-1},t_{p})$ to the final point $S$.

Assuming that $S$ is in the domain of definition of $\phi_{y}$, the image of the path $\gamma_{i}|_{S}$ by $\phi_{y}$ is an integral curve of some vector field $X_{i}|_S\in\Gamma(\mathcal{D})$ because $\mathrm{Im}(\phi_{y})$ is an integral manifold of $\mathcal{D}$. For the same reason, it follows that for any $S$ near $T$, if the image by $\phi_{y}$ of $\gamma_{i}|_S(t_{0})$ is contained in $\phi_{x}(U)$ for some $t_{0}$, then so does $\phi_{y}(\gamma_{i}|_S(t))$ for all $t$ in a neighborhood of $t_{0}$, and so does $\phi_{y}(\gamma_{j}|_{\gamma_{i}|_{S}(t_{0})}(t))$ for $j\neq i$ and small $t$. The same argument is true for longer compositions of the paths $\gamma_{k}$. Since any point $S=(s_{1},\ldots,s_{p})$ sufficiently near $T$ can be reached by a concatenation of the $\gamma_{i}$'s,   the above arguments imply that the image $\phi_{y}(s_{1},\ldots,s_{p})$ belongs to $\phi_{x}(U)$. Hence $T$ has a neighborhood contained in $\phi_{y}^{-1}\circ\phi_{x}(U)$, which is then an open set of $\mathbb{R}^{p}$.

Hence it means that $\phi_{y}$ and $\phi_{x}$ are smoothly compatible and more generally so are two coordinate charts on $\mathcal{L}_{x}$. They form together a smooth atlas and turn $\mathcal{L}_{x}$ into an immersed submanifold of $M$ of dimension $p$. Following the discussion of the first item, its tangent bundle coincides with the distribution $\mathcal{D}$, that is, $\mathcal{L}_{x}$ is an integral manifold of $\mathcal{D}$ passing through $x$. We conclude the proof of the Theorem by noticing that the family $\mathcal{L}=\{\mathcal{L}_{x}\,|\,x\in M\}$ forms a partition of $M$, thus it is indeed a foliation of $M$.\end{proof}

\begin{remarque}
The vector fields $X_{1},\ldots,X_{p}$ span the distribution $\mathcal{D}$ pointwise in a neighborhood of $x$ in $M$, but that does not mean that they span the space of sections $\Gamma(\mathcal{D})$. It means that $\mathcal{D}$ is not necessarily locally finitely generated, as in Hermann's proposition. This is precisely here that the two approaches differ.
\end{remarque}


\subsection{The results of Stefan and Sussmann}\label{StefanSussmann}

Even if various results in foliation theory were obtained in the sixties, there was no precise theorem stating a necessary and sufficient condition for integrating smooth distributions into singular foliations, as was the theorem of Frobenius for regular ones. Step by step however, the assumptions have been weakened, until finally one is left with the minimal assumptions allowing a smooth singular distribution to be integrable. Interestingly, they do not involve neither involutivity nor finiteness, but are a refinement of both, and summon techniques which come directly from topics in control theory. The simultaneous work of P. Stefan in his PhD thesis \cites{Stefanphd, Stefanofficiel}, and of H. Sussmann \cites{Sussmanntrailer, Sussmannofficiel} provided a profound understanding of sufficient and necessary conditions for a smooth distribution to be integrable.

These conditions rely on a weakening of Lobry's hypothesis, in the sense that Condition \ref{eq:locfintype} of 'partial involutivity' which is valid on a neighborhood of a point  $x\in M$, is now taken to be valid only on integral curve passing through the point $x$. Indeed, we observe that the proof of Lobry's theorem does not completely rely on the fact that the distribution is locally of finite type: it rather requires the fact that this condition is satisfied only on the integral curves of chosen sections of $\mathcal{D}$. In other words, given a point $x$ and some vector fields $X_{1},\ldots,X_{p}\in\Gamma(\mathcal{D})$ spanning $\mathcal{D}$ on a neighborhood of $x$, Lobry's condition implied that the $\cinf(M)$-module generated by those vector fields is stable under the Lie bracket with any vector field $X\in\Gamma(\mathcal{D})$, in some neighborhood of the point $x$ depending on the section $X$. But it seems that the adequate condition is to require this module to be stable under the bracket operation on the integral curve of $X$: so instead of having stability on a ball around $x$, we only need stability on any integral path passing through $x$. A refinement of Lobry's conditions have been found by Stefan in \cite{Stefanofficiel}. The condition presented by Sussmann in his seminal paper \cite{Sussmannofficiel} is not sufficient, because it was inspired by Lobry's original idea which was flawed.

 More precisely we say that the distribution $\mathcal{D}$ satisfies the \emph{Stefan-Sussmann conditions} if for every point $x\in M$ and every $X\in \Gamma(\mathcal{D})$ which is defined in $x$, there exist vector fields $X_{1},\ldots,X_{p}\in\Gamma(\mathcal{D})$, some $\epsilon>0$, and functions $f_{i,j}\in\mathcal{C}^{\infty}\big(]-\epsilon,\epsilon[\big)$ such that for every $t\in ]-\epsilon,\epsilon[$,
 $X_{1}\big(\phi_X^t(x)\big),\ldots,X_{p}\big(\phi_X^t(x)\big)$ span $\mathcal{D}_{\big(\phi_X^t(x)\big)}$
and \begin{equation}\label{eq:locfintype2}
[X,X_{i}]\big(\phi^{t}_{X}(x)\big)=\sum_{j=1}^{p}\,f_{i,j}(t)X_{j}\big(\phi^{t}_{X}(x)\big)
\end{equation}
for every $1\leq i\leq p$.
These are the conditions that both P. Stefan and H. Sussmann found to be necessary and sufficient for a smooth distribution to be integrable. Their well-known theorem answers a century old question \cites{Stefanofficiel,Sussmannofficiel}:

\begin{theoreme}\emph{\textbf{Stefan-Sussmann (1973)}}\label{theo:stefsuss}
Let $\mathcal{D}$ be as smooth distribution on a smooth manifold M. If $\mathcal{D}$ satisfies the Stefan-Sussmann conditions, then $\mathcal{D}$ is integrable.
\end{theoreme}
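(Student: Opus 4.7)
The plan is to mimic the proof of Lobry's theorem (Theorem \ref{theo:Lobry70}), observing, as the authors hint at in the discussion preceding the statement, that Lobry's argument only ever uses the ``partial involutivity'' relation along the integral curve of the chosen vector field $X$, rather than on an open neighborhood of $x$. So the Stefan--Sussmann hypothesis \eqref{eq:locfintype2} should be exactly enough for the argument to go through, and the strategy is to re-examine each step of Lobry's proof and weaken the domain of validity to the curve $t\mapsto\phi_X^t(x)$.

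First I would establish the key dynamical lemma: for every $x\in M$ and every $X\in\Gamma(\mathcal{D})$ defined at $x$, the pushforward of the flow sends $\mathcal{D}_x$ isomorphically onto $\mathcal{D}_{\phi_X^t(x)}$ for all sufficiently small $t$. To do this, pick the sections $X_1,\dots,X_p$ and the functions $f_{i,j}(t)$ provided by \eqref{eq:locfintype2} and transport the $X_i$ back along the flow by setting $V_i(t)=\phi_{X*}^{-t}\bigl(X_i(\phi_X^t(x))\bigr)\in T_xM$. Exactly as in Lobry's computation the $V_i$'s satisfy the linear ODE $\frac{\dd}{\dd t}V_i(t)=\sum_{j=1}^p f_{i,j}(t)\,V_j(t)$, which in matrix form reads $\dot V=V\cdot F(t)$; its flow is a time-dependent invertible linear map, so the rank of the family $\{V_i(t)\}$ is constant and equals the rank of $\{X_i(x)\}$, namely $\dim\mathcal{D}_x$. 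Pushing forward by $\phi_X^t$ and invoking that $X_1,\dots,X_p$ span $\mathcal{D}$ along the entire integral curve, this yields $\phi_{X*}^t(\mathcal{D}_x)=\mathcal{D}_{\phi_X^t(x)}$. A standard compactness argument on $[0,t_0]$ then extends this to all $t_0$ for which the flow is defined. Consequently the dimension of $\mathcal{D}$ is constant along any integral path, and by concatenation of such paths it is constant on every preleaf $\mathcal{L}_x$.

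The remainder of the proof then copies the second half of Lobry's argument essentially verbatim. Set $p=\dim\mathcal{D}_x$, choose vector fields $X_1,\dots,X_p\in\Gamma(\mathcal{D})$ whose values at $x$ form a basis of $\mathcal{D}_x$, and define
\begin{equation*}
\phi_x(t_1,\dots,t_p)=\phi_{X_p}^{t_p}\circ\cdots\circ\phi_{X_1}^{t_1}(x)
\end{equation*}
on a small neighborhood of the origin in $\mathbb{R}^p$. Using the preservation of $\mathcal{D}$ by the flows (proved in the first step) one checks that $\phi_{x*}$ at any point $(t_1,\dots,t_p)$ is injective with image equal to $\mathcal{D}_{\phi_x(t_1,\dots,t_p)}$, so $\phi_x$ is an injective immersion whose image is an integral manifold of $\mathcal{D}$ contained in $\mathcal{L}_x$. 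Repeating the construction at every $y\in\mathcal{L}_x$ yields a family $(\phi_y^{-1})_{y\in\mathcal{L}_x}$ of candidate charts. The smooth compatibility of two such charts follows, as in Lobry's argument, from the fact that the coordinate curves in the target are piecewise integral curves of the generating sections, so preimages of open sets are open. This equips $\mathcal{L}_x$ with the structure of an immersed submanifold of $M$ with $T_y\mathcal{L}_x=\mathcal{D}_y$ for all $y\in\mathcal{L}_x$, and $\{\mathcal{L}_x\mid x\in M\}$ is the desired foliation integrating $\mathcal{D}$.

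The main obstacle is the first step: one must be careful that the $p$ sections produced by the Stefan--Sussmann condition are only guaranteed to span $\mathcal{D}$ and to satisfy the bracket relation along the single integral curve $t\mapsto\phi_X^t(x)$, and that the functions $f_{i,j}$ are only functions of $t$, not of a point in $M$. This is precisely what forces the bookkeeping to take place inside the single vector space $T_xM$ via the auxiliary curve $V_i(t)$, rather than trying to solve a PDE on a neighborhood; once this is noticed, the linear ODE argument closes cleanly. Every later step $-$ rank constancy on preleaves, construction of charts, smooth compatibility $-$ is then formally the same as in Lobry's proof, the only change being that the ``locally of finite type'' hypothesis has been replaced by its restriction to integral curves.
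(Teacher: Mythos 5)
Your proposal is correct and takes essentially the same approach as the paper: the paper's own proof of Theorem \ref{theo:stefsuss} is precisely the one-sentence observation that it is ``essentially the proof by Lobry, restricted to integral curves,'' with the preleaf $\mathcal{L}_{x}$ as the integral manifold through $x$. Your write-up simply fills in that outline faithfully — the transported fields $V_{i}(t)$ solving the linear ODE $\dot{V}=V\cdot F(t)$ to get flow-invariance of $\mathcal{D}$ along integral curves, followed by Lobry's chart construction on $\mathcal{L}_{x}$ — so there is no gap and no genuinely different route.
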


\begin{proof}
This is essentially the proof by Lobry, restricted to integral curves. As always the idea is that the integral manifold of $\mathcal{D}$ passing through a point $x$ is the preleaf $\mathcal{L}_{x}$. 
\end{proof}


Originally, inspired by geometric control theory, Stefan and Sussmann did not rely on the sections of the distribution, but rather worked with a \emph{generating set of vector fields}: that is, a subset $\mathfrak{V}\subset\mathfrak{X}(M)$ (and not necessarily a submodule) which spans the distribution at each point, and in particular $\mathfrak{V}\subset\Gamma(\mathcal{D})$. In that case one cannot reverse the direction of the flows, as we would have been allowed to do in Hermann's or Lobry's formulation of the problem. As a side remark, notice that a recent result \cite{Texas} shows that even if the generating set $\mathfrak{V}$ has infinite cardinal, we can always choose a finite subset which still spans $\mathcal{D}_{x}$ at every point $x$.

To start with, we need a refinement of integral paths defined in Section \ref{Hermann}: we call an \emph{orbital path of the generating set $\mathfrak{V}$} any piecewise smooth path $\gamma:[a,b]\to M$ such that for every open interval $I\subset[a,b]$ where $\gamma$ is differentiable, there exists a vector field $X\in\mathfrak{V}$ satisfying:
\begin{equation}
\frac{\dd}{\dd t}\gamma (t)= X\big(\gamma(t)\big)
\end{equation}
Two points $x$ and $y$ which are linked by an orbital path are said to be on the same \emph{orbit} of $\mathfrak{V}$. We denote by $\mathcal{O}_{x}$ the orbit of $\mathfrak{V}$ passing through the point $x$. Orbits form equivalence classes of points, and as such they form a partition of $M$. Obviously when $\mathfrak{V}=\Gamma(\mathcal{D})$, orbital paths are  the integral paths of the distribution $\mathcal{D}$, so that the orbits of the subset $\Gamma(\mathcal{D})$ are the preleaves of the distribution $\mathcal{D}$. We want to show that the orbits of $\mathfrak{V}$ are the integral manifolds of the distribution $\mathcal{D}$.
As a historical note, P. Stefan used to call the orbits the \emph{accessible sets} of the subset of vector fields $\mathfrak{V}$.

Thus, let us proceed to some analysis in order to find the condition for a smooth distribution $\mathcal{D}$ to be integrable.
If one considers a generating set $\mathfrak{V}$ that spans the distribution at every point of $M$, we would like to find the conditions under which the tangent spaces of the orbits of $\mathfrak{V}$ coincide with the distribution at every point. If this is the case, for any $x\in M$, the orbit $\mathcal{O}_{x}$ of $\mathfrak{V}$ through $x$ would be an integral manifold of $\mathcal{D}$ of the same dimension as the preleaf $\mathcal{L}_{x}$, hence they would coincide. Let $x\in M$ and $X$ be an element of $\mathfrak{V}$, then the integral curve $t\mapsto\phi_{X}^{t}(x)$ defines a tangent vector at $x$ that belongs to $\mathcal{D}_{x}$. Now if we look for all the orbital paths through $x$, their tangent vectors at $x$ should define elements of $\mathcal{D}_{x}$. Given another element $Y$ of $\mathfrak{V}$, for small $t$ the path
\begin{equation}
t\longmapsto\phi_{X}^{t}\circ\phi_{Y}^{t}\circ\phi_{X}^{-t}\circ\phi_{Y}^{-t}(x)
\end{equation}
is an orbital curve, hence at $t=0$ the tangent vector $[X,Y](x)$ should belong to $\mathcal{D}_{x}$ for the orbit to be an integral manifold of $\mathcal{D}$. Any other higher bracket should also belong to $\mathcal{D}_{x}$, hence the intuition that the distribution is involutive at the point $x$ is natural.

It is not sufficient though because $\mathcal{D}_{x}$ may involve more directions that have to be taken into account to guarantee that the distribution is integrable. Let $y=\phi_{X}^{T}(x)$ be a point located on the integral curve of $X$, and let $Z$ be an element of $\mathfrak{V}$ defined on a neighborhood of $y$. Then the curve $t\mapsto\phi_{X}^{-T}\big(\phi_{Z}^{t}(y)\big)$ is an orbital path passing through $x$. The condition that the orbit $\mathcal{O}_{x}$ is a submanifold of $M$, and the fact that $X$ is tangent to it, imply that the pushforward $\phi_{X\ast}^{-T}$ sends $T_{y}\mathcal{O}_{x}$ to $T_{x}\mathcal{O}_{x}$. Hence if $\mathcal{D}$ is integrable and if the orbits are the integral submanifolds of $\mathcal{D}$, then the tangent vector $\phi_{X\ast}^{-T}\big(Z(y)\big)$ should necessarily belong to $\mathcal{D}_{x}$. That is to say, the distribution $\mathcal{D}$ should be invariant with respect to the action of the flows of elements of $\mathfrak{V}$. More precisely we say that a distribution is \emph{invariant under a group $G$ of local diffeomorphisms} if for any $g\in G$ the push forward $g_{\ast}$ sends $\mathcal{D}_{x}$ into $\mathcal{D}_{g(x)}$. In other words, to be integrable, the distribution at the point $x$ should not exclude directions that are pushed forward from any point in the neighborhood, as can be shown in the next example:

\begin{example}\label{example13}
We already have an example of a smooth distribution which is not integrable: the one given in example \ref{ex:smoothdist}. This happens because even though it is involutive, it is not invariant under the action of itself: the vector field $-\frac{\partial}{\partial x}$ is a (constant) section of the distribution, hence one can push forward the distribution $\mathcal{D}_{(x,y)}=T_{(x,y)}\mathbb{R}^{2}$ (for $x>0$) to negative $x$, but the image would still be isomorphic to $\mathbb{R}^{2}$, however the distribution for negative $x$ is only one dimensional. Hence it is not invariant under itself, hence it is not integrable.
\end{example}


At each point $x$ the submodule $\mathfrak{V}$ defines a subspace $\mathcal{D}_{x}$ of $T_{x}M$, and H. Sussmann has shown that the induced distribution $\mathcal{D}$ is integrable if $\mathcal{D}$ is stable under the action of the flows of elements of $\mathfrak{V}$ (and $-\mathfrak{V}$). 
Let us first fix the notations: given a smooth distribution $\mathcal{D}$ and a subset $\mathfrak{V}\subset\mathfrak{X}(M)$ (and not necessarily a $\cinf(M)$-submodule), we say that \emph{$\mathcal{D}$ is $\mathfrak{V}$-invariant} if it is invariant under the group of flows of both elements of $\mathfrak{V}$ and elements of $-\mathfrak{V}$. In other words it means that given a vector field $X\in\mathfrak{V}$, the distribution $\mathcal{D}$ should be invariant under $\phi_{X\ast}^{t}$ and $\phi_{-X\ast}^{t}$, for any $t$ for which the flow is defined. We now state Sussmann's result:

\begin{proposition}
Let $\mathcal{D}$ be a smooth distribution on a smooth manifold $M$ and let $\mathfrak{V}$ be a subset of $\mathfrak{X}(M)$ which spans $\mathcal{D}$ at every point. Then the distribution $\mathcal{D}$ is integrable if and only if it is $\mathfrak{V}$-invariant.
\end{proposition}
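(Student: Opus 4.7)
The forward direction is essentially immediate from the definition of integrability. If $\mathcal{D}$ is integrable with associated foliation $\mathcal{F}$, then every $X\in\mathfrak{V}\subset\Gamma(\mathcal{D})$ is, at each point $y$, tangent to the leaf $\mathcal{F}_y$. A vector field everywhere tangent to the leaves of a (possibly singular) foliation has the property that its flow $\phi_X^t$ preserves leaves, so that $\phi_X^t$ restricts to a local diffeomorphism of $\mathcal{F}_x$ onto $\mathcal{F}_{\phi_X^t(x)}$. Differentiating, $\phi_{X*}^{t}(\mathcal{D}_x)=\phi_{X*}^{t}(T_x\mathcal{F}_x)=T_{\phi_X^t(x)}\mathcal{F}_{\phi_X^t(x)}=\mathcal{D}_{\phi_X^t(x)}$; the same applies to $-X\in -\mathfrak{V}$. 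Hence $\mathcal{D}$ is $\mathfrak{V}$-invariant.

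For the converse I would mimic Lobry's strategy (Theorem \ref{theo:Lobry70}), replacing his local finite-type assumption by $\mathfrak{V}$-invariance and working with orbits of $\mathfrak{V}$ instead of preleaves of $\mathcal{D}$. Fix $x_0\in M$ and set $p=\dim\mathcal{D}_{x_0}$. Since $\mathfrak{V}$ spans $\mathcal{D}$ pointwise, pick $X_1,\ldots,X_p\in\mathfrak{V}$ with $X_1(x_0),\ldots,X_p(x_0)$ a basis of $\mathcal{D}_{x_0}$, and define
\begin{equation*}
\phi_{x_0}(t_1,\ldots,t_p)=\phi_{X_p}^{t_p}\circ\cdots\circ\phi_{X_1}^{t_1}(x_0)
\end{equation*}
on a small neighborhood $U$ of $0\in\mathbb{R}^p$. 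Two ingredients are crucial. First, the $\mathfrak{V}$-invariance implies that for every $X\in\mathfrak{V}$ the map $\phi_{X*}^t:\mathcal{D}_x\to\mathcal{D}_{\phi_X^t(x)}$ is a linear isomorphism (using invariance under both $\mathfrak{V}$ and $-\mathfrak{V}$), so the rank of $\mathcal{D}$ is constant along the flow of any element of $\mathfrak{V}$, hence constant equal to $p$ along the entire orbit $\mathcal{O}_{x_0}$. Second, writing $\partial\phi_{x_0}/\partial t_i$ at a point $(t_1,\ldots,t_p)$ as the pushforward of $X_i$ by a composition of flows of elements of $\mathfrak{V}$, the same $\mathfrak{V}$-invariance forces this vector to lie in $\mathcal{D}_{\phi_{x_0}(t)}$. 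At $t=0$ the $\partial_{t_i}\phi_{x_0}$ are linearly independent, so $\phi_{x_0}$ is an immersion near $0$; and since it takes values in a $p$-dimensional space $\mathcal{D}_{\phi_{x_0}(t)}$, it is in fact an immersion of full rank onto $\mathcal{D}$.

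With these two facts in hand, the construction of the foliation proceeds exactly as in Lobry's proof. For each $y\in\mathcal{O}_{x_0}$ one chooses $Y_1,\ldots,Y_p\in\mathfrak{V}$ whose values at $y$ form a basis of $\mathcal{D}_y$, defines the analogous chart $\phi_y$, and checks that $\phi_y^{-1}\circ\phi_{x_0}$ is a diffeomorphism where defined, by using the same concatenation-of-flows argument as in Theorem \ref{theo:Lobry70} together with the fact that every $\phi_y$ already has image in $\mathcal{O}_{x_0}$ (since orbital paths compose to orbital paths). This equips $\mathcal{O}_{x_0}$ with a smooth atlas turning it into an immersed $p$-submanifold of $M$ with $T_y\mathcal{O}_{x_0}=\mathcal{D}_y$ for every $y\in\mathcal{O}_{x_0}$. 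The family $\{\mathcal{O}_x\mid x\in M\}$ is a partition of $M$ by construction, so the collection of orbits is an integrating foliation of $\mathcal{D}$.

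The genuinely nontrivial step in this plan is the rank-constancy argument: one must use $\mathfrak{V}$-invariance in both directions to conclude that $\phi_{X*}^t$ is not merely a map $\mathcal{D}_x\to\mathcal{D}_{\phi_X^t(x)}$ but an isomorphism, and therefore that the dimension of $\mathcal{D}$ cannot drop nor grow along orbits. This is precisely the feature that fails for an involutive but non-integrable distribution like the one in Example \ref{example13}, where the pushforward by a section of $\mathcal{D}$ sends a two-dimensional fiber into a one-dimensional fiber. Once rank-constancy on orbits is secured, the chart construction and the verification that the charts are smoothly compatible are straightforward adaptations of the arguments already carried out in the proof of Theorem \ref{theo:Lobry70}.
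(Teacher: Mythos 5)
Your proof is correct, but it takes a substantively different route from the paper's, so a comparison is worthwhile. For the forward direction you argue directly that the flow of a vector field tangent to the leaves preserves each leaf, so its differential carries $\mathcal{D}_x=T_x\mathcal{F}_x$ onto $\mathcal{D}_{\phi_X^t(x)}$; the paper instead routes this implication through the Stefan--Sussmann conditions. For the converse, the paper follows Sussmann's original scheme: it equips each orbit with an abstract topology, introduces the spaces $V_{x;\xi,\tau}=(\phi_{x,\xi})_{*}T_{\tau}\mathbb{R}^{n}$, proves $V_{x;\xi,\tau}\subset\mathcal{D}_{\phi_{x,\xi}(\tau)}$ by induction on the length of $\xi$ using invariance, and then establishes the harder reverse inclusion $\mathcal{D}_x\subset V_{z;\xi,\tau}$ by the concatenation-with-reverses argument involving the set $\mathcal{A}_x$, before invoking the constant-rank theorem and showing the resulting embedded pieces are open in the orbit. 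You bypass the entire $\mathcal{A}_x$ step by exploiting the hypothesis that $\mathfrak{V}$ spans $\mathcal{D}$ pointwise: choosing the chart fields $X_1,\dots,X_p\in\mathfrak{V}$ to form a basis of $\mathcal{D}_{x_0}$ makes the reverse inclusion automatic at $\tau=0$, and your two-sided invariance observation (that $\phi_{X*}^{t}$ restricts to an isomorphism $\mathcal{D}_x\to\mathcal{D}_{\phi_X^t(x)}$, whence constancy of the rank along orbits) upgrades the immersion near $0$ to a map of full rank onto $\mathcal{D}$; chart compatibility is then Lobry's ODE-tangency argument, which only requires the pushed-forward chart fields to take values in $\mathcal{D}$ --- exactly what invariance provides. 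What each approach buys: yours is shorter and shows that, under the pointwise-spanning hypothesis actually stated in the proposition, Sussmann's $\mathcal{A}_x$ machinery is dispensable; the paper's version is the one that survives in the genuine orbit-theorem setting, where the generating family is not assumed to span the distribution everywhere, and its explicit orbit topology handles the point-set questions (that the charts generate a well-defined manifold topology on the orbit) which both your sketch and the paper's rendition of Lobry's proof leave implicit.
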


\begin{remarque}
Notice that the proposition is more general than just assuming that $\mathcal{D}$ is $\Gamma(\mathcal{D})$-invariant. 
\end{remarque}

\begin{proof}
First assume that $\mathcal{D}$ is integrable, then its sections satisfy the Stefan-Sussmann conditions. These conditions are sufficient to prove that the distribution $\mathcal{D}$ is invariant under the flow of any section $X\in\Gamma(\mathcal{D})$, that is: it is $\Gamma(\mathcal{D})$-invariant. Since $\mathfrak{V}\subset\Gamma(\mathcal{D})$, it is indeed $\mathfrak{V}$-invariant.

Conversely assume that the distribution $\mathcal{D}$ is $\mathfrak{V}$-invariant. We first show that the orbits of $\mathfrak{V}$ are submanifolds of $M$, and that these submanifolds are the integral submanifolds of $\mathcal{D}$. We can equip the orbits of $\mathfrak{V}$ with a natural topology: given an orbit $\mathcal{O}$ of $\mathfrak{V}$, for any given $\xi=(X_{1},\ldots,X_{n})\in\mathfrak{V}^{n}$ and  $x\in\mathcal{O}$, we define the map $\phi_{x\xi}$ by:
\begin{align}
\phi_{x,\xi}:\hspace{0.6cm}\mathbb{R}^{n}\hspace{0.6cm}&\xrightarrow{\hspace*{2cm}} \hspace{1.1cm} M\\
(t_{1},\ldots,t_{n})&\xmapsto{\hspace*{2cm}} \phi_{X_{p}}^{t_{p}}\circ\ldots\circ\phi_{X_{1}}^{t_{1}}(x)\nonumber
\end{align}
We introduced the same map in the proof of Lobry's Theorem, except that the vector fields here are restricted to be elements of $\mathfrak{V}$. The orbit $\mathcal{O}$ consists of the images of all such maps (with fixed $x$, for all $n$ and all $\xi\in\mathfrak{V}^{n}$). We equip the orbit $\mathcal{O}$ with the strongest topology such that these maps are continuous. One can show \cite{Sussmannofficiel} that the topology does not depend on the choice of a point $x\in\mathcal{O}$, and that the subspace topology is contained in this topology. Then the orbit $\mathcal{O}$ is a (connected) topological subspace of $M$. In our case, for every $x\in\mathcal{O}$ the maps $\{\phi_{x,\xi}\}_{\xi\in\mathfrak{V}^{n}}$ define the orbit $\mathcal{O}$ but they are not considered as charts for $\mathcal{O}$ because the dimension $n$ varies. We will build the coordinate charts and the smooth atlas at the end of the proof.

The goal of the proof is to show that the tangent space of the orbit $\mathcal{O}$ at each point $y$ is precisely $\mathcal{D}_{y}$. Recall that for an immersed submanifold $S\xhookrightarrow{\rho}M$ (where $\rho$ is an injective immersion), the tangent space of $S$ in $M$ at the point $\rho(x)$ is denoted by $T_{\rho(x)}S$ and by definition it is identified with the image of $T_{x}S$ by $\rho_{\ast}$:
\begin{equation*}
T_{\rho(x)}S=\rho_{\ast}(T_{x}S)
\end{equation*}
Since we do not have a proper definition of charts for the orbit $\mathcal{O}$, we will first deduce some result on the relationships between the maps $\phi_{x,\xi}$ and the distribution.


Let $x\in\mathcal{O}$, and let $\tau=(\tau_{1},\ldots,\tau_{n})$ be an $n$-tuple in the neighborhood of $0\in\mathbb{R}^{n}$. We define $V_{x;\xi,\tau}$ to be the subset:
\begin{equation*}
V_{x;\xi,\tau}=(\phi_{x,\xi})_{\ast}T_{\tau}\mathbb{R}^{n}\subset T_{\phi_{x,\xi}(\tau)}M
\end{equation*}
We prove the assertion that $V_{x;\xi,\tau}\subset\mathcal{D}_{\phi_{x,\xi}(\tau)}$ by induction: for $n=1$ we know that given a vector field $X\in\mathfrak{V}$, the tangent vector at $t=t_{0}$ of the integral curve $t\mapsto\phi_{X}^{t}(x)$ is $X(\phi_{X}^{t_{0}}(x))$ and is then contained in $\mathcal{D}_{\phi_{X}^{t_{0}}(x)}$ because $\mathfrak{V}\subset\Gamma(\mathcal{D})$. Now for any $n\geq2$, take 
$\xi\in\mathfrak{V}^{n}$ and $\tau\in\mathbb{R}^{n}$ such that the point ${\phi_{x,\xi}(\tau)}$ is well defined. Write $\xi=(\eta,X_{p})$ for some element $X_{p}\in\mathfrak{V}$ and $\eta\in\mathfrak{V}^{n-1}$. Moreover write $\tau=(\sigma,t_{p})$ for some time $t_{p}$ and $\sigma\in\mathbb{R}^{n-1}$. Assume that the property holds at the point $\phi_{x,\eta}(\sigma)$, that is: $V_{x;\eta,\sigma}\subset\mathcal{D}_{\phi_{x,\eta}(\sigma)}$. The space $V_{x;\xi,\tau}\subset T_{z}M$ at the point $z=\phi_{x,\xi}(\tau)$ is generated by $X_{p}(z)$ and by the push forward of $V_{x;\eta,\sigma}$ by the flow of $X_{p}$. But we know by hypothesis that the distribution is $\mathfrak{V}$-invariant, hence $(\phi_{X_{p}}^{t_{p}})_{\ast}V_{x;\eta,\sigma}\subset\mathcal{D}_{z}$. And since $X_{p}(z)\in\mathcal{D}_{z}$ as well, we obtain that:
\begin{equation*}\label{reverseinclusion}
V_{x;\xi,\tau}\subset\mathcal{D}_{\phi_{x,\xi}(\tau)}
\end{equation*}
We conclude that the tangent space of the orbit at every $x\in M$ is included into the distribution $\mathcal{D}_{x}$.

We now would like to prove the reverse claim, that is: for any $x\in\mathcal{O}$, there exist $z\in\mathcal{O}$, $\xi\in\mathfrak{V}^{n}$ (for some $n\geq 1$) and $\tau\in\mathbb{R}^{n}$ such that $x=\phi_{z,\xi}(\tau)$ and $\mathcal{D}_{x}\subset V_{z;\xi,\tau}$. Let $\mathcal{A}_{x}$ be the subset of $\mathcal{D}_{x}$ which consists of the image at $x$ of any vector field of $\mathfrak{V}$ through the group of local diffeomorphism generated by the flows of elements of $\mathfrak{V}$. In other words, for any vector $X\in\mathcal{A}_{x}$, then there exist $z\in\mathcal{O}$, $Z\in T_{z}\mathcal{O}$, and also $\xi\in\mathfrak{V}^{n}$(for some $n\geq 1$) and $\tau\in\mathbb{R}^{n}$ such that:
\begin{equation*}
x=\phi_{z,\xi}(\tau)\hspace{1.2cm}\text{and}\hspace{1.2cm}X=\big(\phi_{z,\xi}(\tau)\big)_{\ast}(Z)
\end{equation*}
The subset $\mathcal{A}_{x}$ necessarily spans $\mathcal{D}_{x}$, for it contains the elements of $\mathfrak{V}$ at $x$, which span $\mathcal{D}_{x}$ by hypothesis.
Next, we let $\eta=(Z,\xi)\in\mathfrak{V}^{n+1}$ and $\sigma=(0,\tau)\in\mathbb{R}^{n+1}$ so that necessarily:
\begin{equation}
x=\phi_{z,\xi}(\tau)=\phi_{X_{p}}^{t_{p}}\circ\ldots\circ\phi_{X_{1}}^{t_{1}}(z)=\phi_{X_{p}}^{t_{p}}\circ\ldots\circ\phi_{X_{1}}^{t_{1}}\circ\phi^{0}_{Z}(z)=\phi_{z,\eta}(\sigma)
\end{equation}
Moreover, we observe that $X=\big(\phi_{z,\xi}(\tau)\big)_{\ast}(Z)\in(\phi_{z,\eta})_{\ast}T_{\sigma}\mathbb{R}^{n+1}$. However this result is not sufficient to show that the whole subset $\mathcal{A}_{x}$ (and hence all of $\mathcal{D}_{x}$) is contained in some $V_{z';\eta',\tau'}$, because we have treated only one vector field.

Now assume that there exist two orbital paths ending at the point $x$, that is: $x=\phi_{z,\xi}(\tau)$ and $x=\phi_{z',\xi'}(\tau')$ for some points $z,z'$ and some tuples $\tau,\tau'$ and $\xi,\xi'$. We define $\tau^{-1}=(t_{n},\ldots,t_{1})$ and $\xi^{-1}=(X_{n},\ldots,X_{1})$, and we set $\eta=(\xi,\xi^{-1},\xi')$ and $\sigma=(\tau,-\tau^{-1},\tau')$. Then one can check that:
\begin{equation}
\phi_{z',\eta}(\sigma)=\phi_{\phi_{z',\xi'}(\tau'),(\xi,\xi^{-1})}(\tau,-\tau^{-1})=\phi_{z',\xi'}(\tau')=x
\end{equation}
because $x\mapsto\phi_{x,(\xi,\xi^{-1})}(\tau,-\tau^{-1})$ is the identity. Hence the image of $T_{\sigma}\mathbb{R}^{2n+n'}$ by the pushforward $(\phi_{z',\eta})_{\ast}$ naturally contains $V_{z';\xi',\tau'}$, and also $V_{z;\xi,\tau}$ by definition of $\sigma$ and $\eta$. Thus, if $X,X'\in\mathcal{A}_{x}$ then $X$ is contained in some $V_{z;\eta,\tau}$ and $X'$ is contained in some $V_{z';\eta',\tau'}$, such that $x=\phi_{z,\xi}(\tau)=\phi_{z',\xi'}(\tau')$. But we have just shown that it means that both $X$ and $X'$ are contained in some $V_{y;\eta,\sigma}$ (for $y=z$ or $y=z'$). Hence applying the same process to other vectors of $\mathcal{A}_{x}$, and since we only need a finite number of elements of $\mathcal{A}_{x}$ to span the distribution $\mathcal{D}_{x}$, we conclude that $\mathcal{D}_{x}$ is contained in some $V_{m;\theta,\pi}$. Since we have shown the reverse inclusion $V_{m;\theta,\pi}\subset\mathcal{D}_{\phi_{m,\theta}(\pi)}=\mathcal{D}_{x}$, we conclude that for every $x\in\mathcal{O}$, there exist $z\in\mathcal{O}$, $\xi\in\mathfrak{V}^{n}$ and $\tau\in\mathbb{R}^{n}$ (for some $n\geq 1$) such that $\mathcal{D}_{x}=V_{z;\xi,\tau}$.

To finish the proof, we have to define some smooth atlas for $\mathcal{O}$, and then show that this orbit is an integral manifold of $\mathcal{D}$, that is: to show that $T_{y}\mathcal{O}=\mathcal{D}_{y}$ for every $y\in\mathcal{O}$. Let $x\in\mathcal{O}$, then we know that there exist $z\in\mathcal{O}$, $\xi\in\mathfrak{V}^{n}$ and $\tau\in\mathbb{R}^{n}$ such that
\begin{equation*}
x=\phi_{z,\xi}(\tau)\hspace{1.2cm}\text{and}\hspace{1.2cm}\mathcal{D}_{x}=V_{z;\xi,\tau}
\end{equation*}
Let $k$ be the dimension of $\mathcal{D}_{x}$, and recall that $V_{x;\xi,\tau'}\subset\mathcal{D}_{\phi_{z,\xi}(\tau')}$ for every $n$-tuple $\tau'$ in the neighborhood of $\tau$. Acknowledging that the rank of the distribution $\mathcal{D}$ is constant on the orbit $\mathcal{O}$ because $\mathcal{D}$ is $\mathfrak{V}$-invariant, we deduce that the rank of the map $\phi_{z,\xi}$ cannot exceed $k$ in a neighborhood of $\tau$. Since the map $(\phi_{z,\xi})_{\ast}$ is upper semi-continuous, its rank cannot decrease, thus it means that locally around $\tau$, the rank of $\phi_{z,\xi}$ is constant equal to $k$. Then, by the Rank Theorem, we deduce that its image that is denoted by $N$ is an embedded submanifold of $M$, locally parametrized by $k$ coordinates. The tangent space of $N$ at any point $\phi_{z,\xi}(\tau')$ close to $x$ (hence for $\tau'$ in a neighborhood of $\tau$) is naturally $V_{z;\xi,\tau'}$. Since it is included in $\mathcal{D}_{\phi_{z,\xi}(\tau')}$ and since they have the same dimension, they coincide, and we deduce that $N$ is an integral manifold of $\mathcal{D}$ passing through $x$.

There remains to show that $N$ is in fact an open subset of the orbit $\mathcal{O}$, and that all submanifolds constructed in this way cover $\mathcal{O}$, so that we can conclude that they form a smooth atlas for the orbit $\mathcal{O}$.
Let $x\in N$, and let $X_{1},\ldots,X_{n}$ be elements of $\mathcal{A}_{x}$ defining a base of $\mathcal{D}_{x}$. Since these tangent vectors can be smoothly extended to vector fields that are locally free taking values in $\mathcal{D}$, and since $D_{y}=T_{y}N$ for every $y$ in a neighborhood of $x$ (intersected with the submanifold $N$), the map $\phi_{x,\xi}$ induces a diffeomorphism of $0\in\mathbb{R}^{n}$ onto this neighborhood of $x$, where $\xi=(X_{1},\ldots,X_{n})$. By construction of the map $\phi_{x,\xi}$, every point in the image belongs to the same orbit as $x$. This property being true for every point of the submanifold $N$ (and $N$ being connected), we deduce that $N$ is contained in a simple orbit of $\mathfrak{V}$. 

From the preceding result, if there is an orbit $\mathcal{O}$ intersecting $N$, then $N\subset \mathcal{O}$. Let us now show that $N$ is open in $\mathcal{O}$, with respect to the topology of $\mathcal{O}$. That is, given $z\in\mathcal{O}$ and $\xi\in\mathfrak{V}^{n}$, $(\phi_{z,\xi})^{-1}(N)$ should be an open set of $\mathbb{R}^{n}$. Let $\tau$ be an element of the preimage. We want to show that there is a neighborhood $U$ of this point which is contained in $(\phi_{z,\xi})^{-1}(N)$; this is equivalent to saying that $\phi_{z,\xi}(U)\subset N$. Let $\tau=(t_{1,\ldots,t_{n}})\in U$ and let $x=\phi_{z,\xi}(\tau)$. For every $1\leq i\leq n$, let $\gamma_{i}$ be the path defined as:
\begin{equation}
\gamma_{i}:s\longmapsto(t_{1},\ldots,t_{i}+s,\ldots,t_{n})
\end{equation}
It is sent by $\phi_{z,\xi}$ to a smooth path in $\mathcal{O}$ passing through $x$. We have to show that it is contained in $N$. Let $X_{s}$ be he tangent vector to the path $\gamma_{i}$ at the point $\gamma_{i}(s)$. It is sent by $(\phi_{z,\xi})_{\ast}$ to the tangent vector $Y_{s}$ of the path $s\mapsto \phi_{z,\xi}\big(\gamma_{i}(s)\big)$ at the corresponding point. Since $(\phi_{z,\xi})_{\ast}(T_{\gamma_{i}(s)}\mathbb{R}^{n})\subset \mathcal{D}_{\phi_{z,\xi}(\gamma_{i}(s))}$, the tangent vector $Y_{s}$ belongs to $T_{\phi_{z,\xi}(\gamma_{i}(s))}N$ for every $s$ in a neighborhood of $0\in\mathbb{R}$. It can be extended to a smooth vector field $Y$ on $N$. The path $s\mapsto \phi_{z,\xi}\big(\gamma_{i}(s)\big)$ is then an integral curve of the vector field $Y$ passing through
$x$. Then it belongs to $N$ for small enough $s$. This result being true for any other path $\gamma_{j}$ with $1\leq j\leq n$, we deduce that there exists a small neighborhood of $\tau$ in $\mathbb{R}^{n}$ which is sent to $N$ by $\phi_{z,\xi}$. Hence $N$ is an open subset of $\mathcal{O}$.

For any point $x\in\mathcal{O}$, the above discussion shows that we can construct an (embedded) integral manifold $N$ passing through $x$. Let $\mathfrak{N}$ be the set of all such manifolds. They are open in $\mathcal{O}$ and they all have the same dimension, because the rank of the distribution $\mathcal{D}$ is constant on $\mathcal{O}$. Thus $\mathfrak{N}$ forms an open cover of the orbit, and we want to promote it to be a smooth atlas for $\mathcal{O}$. But since any two elements $N_{1}$ and $N_{2}$ of $\mathfrak{N}$ are embedded submanifolds of $M$ such that $N_{1}\cap N_{2}\neq\varnothing$, their transition functions are induced by the transition functions between the charts on $M$, and as such they are diffeomorphisms. Hence $\mathfrak{N}$ is a smooth atlas for the orbit $\mathcal{O}$, which turns it into an immersed submanifold of $M$, whose tangent spaces coincide with the distribution $\mathcal{D}$ at every point. Thus we have proven that any orbit $\mathcal{O}$ is an integral manifold of $\mathcal{D}$, and since they form a partition of $M$, it means that the distribution $\mathcal{D}$ is integrable, hence concluding the proof.
\end{proof}

\begin{remarque}
The result of Sussmann is strong because the sufficient condition is that the distribution is invariant under the action of a generating set $\mathfrak{V}$, and we do not need to know more or test the invariance under any other section of $\mathcal{D}$. In practical cases though, this proposition is mainly used to show that a distribution is not integrable: one only has to find a generating family of vector fields, which does not leave the distribution invariant, such as in Example \ref{example13}.
\end{remarque}

\nocite{Bullo}
\nocite{Lee}
\nocite{Tolley}
\nocite{Hawkins}

\chapter{The universal Lie \texorpdfstring{$\infty$}{infinity}-algebroid of a singular foliation}

\section{Introduction, main definitions and main results}

\subsection{Purpose}

Regular foliations, \emph{i.e.} integrable distributions of constant rank, are familiar objects of differential geometry \cite{Hector}, 
and the role of Lie algebroids and groupoids in their study is now well-understood \cite{MoeMrc}. Hermann foliations are much 
less studied, but the pioneering works of Stefan and Sussmann \cites{Stefanofficiel,Sussmannofficiel} is now being 
continued by several authors, including Androulidakis, Debord, Skandalis and Zambon \cites{AndrouSkandal,AndrouZamb,Debord}, leading to a new interest in those. 
Also, Sinan Sert\"oz \cites{Sertoz1, Sertoz2}
in the holomorphic case gave several interesting results about their local structures that seems not to have been explored further.

Hermann foliations have been introduced and studied in Section \ref{Hermann}.
There are abundant examples of Hermann foliations: orbits of group actions, vector fields vanishing at given points with given orders, symplectic leaves of a Poisson manifold,
leaves of source-connected Lie groupoids, to cite a few, and also polynomial or analytic  infinitesimal symmetries of an object.

Hermann foliations were explored by several authors after the pioneering works of P. Stefan and H. Sussman \cites{Stefanofficiel,Sussmannofficiel}. C. Debord \cite{Debord} studied
in detail the case where a Hermann foliation arises from a Lie algebroid whose anchor is injective on a dense open subset. More recently, the geometric understanding of Hermann foliations was improved by I. Androulidakis, G. Skandalis and M. Zambon, who 
constructed a \emph{holonomy groupoid} that encodes some geometrical features of the foliation \cites{AndrouSkandal,AndrouZamb}.

At the same time, in several distinct communities of both mathematics and theoretical physics, an increasing interest for the so-called `higher structures' or derived objects'
has emerged. In particular, there has been an intense focus on an object that has been given different names by different communities: the Lie  $\infty$-algebroid, or $Q$-manifold (better known as differential graded manifold), see Section \ref{Qmanifolds}.

Of these, we feel that the language of  Lie $\infty$-algebroids, although quite  complicated, is conceptually more elementary, and easier to grasp. 
Since we wish to be understandable for the general public of differential geometers and theoretical physicists, we state our theorems in terms of Lie $\infty$-algebroids. But we think and prove results with in $Q$-manifold language, in particular when we deal with morphisms of Lie $\infty$-algebroids. All the Lie $\infty$-algebroids that will appear in the present thesis are negatively graded, i.e. are defined on a negatively graded vector space, and, similarly, all $Q$-manifolds are positively graded, i.e. are what is referred to 
as $NQ$-manifolds as explained in Section \ref{Qmanifolds} or in \cite{BQZ}.

\nocite{AKSZ}
 
The purpose of the present thesis is to give a precise meaning to the idea that \emph{a Hermann foliation on a manifold admits (often) a Lie $\infty$-algebroid structure $-$
unique up to homotopy $-$ resolving it.} 
In the previous sentence, `often' includes the (locally) real analytic and holomorphic Hermann foliations around a point, 
but some smooth foliations do not satisfy the requirements (see Example \ref{tu}). In addition to being unique up to homotopy, it is universal $-$ in the sense that it is a terminal object in the category of Lie $\infty$-algebroids associated to a given Hermann foliation $-$- see Theorem \ref{theo:onlyOne} and the following corollaries.  Moreover, we claim that the \emph{cohomologies of this Lie $\infty$-algebroid
structure have a  geometrical meaning}, 
for instance when dealing with holonomies along leaves. We intend in particular to relate our construction with the recent works of I. Adroulidakis, G. Skandalis and M. Zambon \cites{AndrouSkandal,AndrouSkandal2,AndrouZamb}, and to derive 
several cohomological objects that are relevant to the study of Hermann foliations, although their complete geometrical meaning is still to be understood. 

%

\nocite{Debord2}
\nocite{AndrouSkandal3}

Let $M$ be a manifold that may be smooth, real analytic or complex. 
Denote by $ {\mathscr O}( U)$, with ${ U} \subset M$ an open subset, the algebra of smooth, 
real analytic or holomorphic functions over $ U$ and by ${\mathfrak X}( U) $ the ${\mathscr O}( U) $-module
of vector fields over $ U$. The assignment $ {\mathscr O}:{ U} \mapsto  {\mathscr O}( U)$ is  a sheaf of algebras while 
the assignment ${\mathfrak X}:{ U} \mapsto  {\mathfrak X}( U)$ is a sheaf of Lie algebras, as well as a module over the algebra 
sheaf $ {\mathscr O}$.

Hermann foliations are generally defined in the smooth category. It is obvious, however, that real analytic or holomorphic Hermann foliations induce
smooth Hermann foliations, so that the results that will describe still hold true in their respective categories. Then we generalize Definition \ref{hermannfoliation} to include both the real analytic and the holomorphic cases, that is:

\begin{definition}
A \emph{Hermann foliation} is a sub-sheaf ${\mathcal D}: U \mapsto  {\mathcal D}(U)$ of ${\mathfrak X}$, which is locally finitely generated as an ${\mathscr O}$-submodule and closed under the Lie bracket of vector fields.
\end{definition}

By  `locally finitely generated' we mean that every $x \in M$ admits a neighborhood $ U$ and $k$ sections $X_1, \dots,X_k\in{\mathcal D}( U)$ such that, 
for every $ V \subset  U$, the ${\mathscr O}( V)$-module ${\mathcal D}( V)$ is generated by the restrictions to $ V$ of $X_1, \dots,X_k$.
We call such a neighborhood $ U$ a \emph{trivializing neighborhood for ${\mathcal D}$}. The restriction of $\mathcal{D}$ at a point $x$ is given by the evaluation of all the sections of $\mathcal{D}$ at the point $x$ and is denoted by $\mathcal{D}_{x}$.
A \emph{singular sub-foliation} of a Hermann foliation ${\mathcal D}$ is a Hermann foliation ${\mathcal D}'$ such that ${\mathcal D}'( U) \subset {\mathcal D}( U)$ for 
all open subsets ${ U} \subset M$. 



A Hermann foliation on a manifold $M$ will be said to be \emph{finitely generated} when there exist $k$ vector fields $X_1, \dots, X_k \in {\mathfrak X}(M)$,
globally defined on the whole of $M$, that generate ${\mathcal D}$, i.e., such that, for every open subset ${ U} \subset M$, ${\mathcal D}( U)$ is 
generated over $ {\mathscr O}( U)$ by the restrictions to $ U$ of $X_1, \dots, X_k$.

%
 
When there exists a (smooth, real analytic or holomorphic) vector bundle $A$ over $M$ and a vector bundle morphism $\rho: A \to TM$ over the identity of $M$ such 
that\footnote{i.e. the sheaf ${\mathcal D}$ is obtained by sheafifying the presheaf $ \rho(\Gamma(A))$, which  means in this context that every point $x\in M$ admits 
a neighborhood $ U$ such that $ {\mathcal D}( U) =  \rho(\Gamma_{ U}(A)) $.} ${\mathcal D} =  \rho(\Gamma(A))$, where ${\Gamma}(A):  U \mapsto 
\Gamma_{ U}(A)$ is the sheaf of sections of $A$, we say that ${\mathcal D}$ \emph{is covered by} $(A,{\rho})$.

Let $\mathcal{D}$ be a Hermann foliation on $M$. We call  \emph{leaf of the Hermann foliation ${\mathcal D}$} a connected submanifold of $M$ whose tangent space at all points $x$ of $M$, is ${\mathcal D}_x$, and which is maximal with respect to inclusion
among such submanifolds. The integration of Hermann foliations has been studied in Section \ref{Hermann} for the smooth case, and in Section \ref{Nagano} for the real analytic case:

\begin{proposition}
A Hermann foliation ${\mathcal D}$ on a manifold $M$ induces a partition of $M$ into leaves. 
\end{proposition}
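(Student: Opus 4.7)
The plan is to adapt Hermann's theorem (Theorem 1.3.1) to the sheaf-theoretic setting described in the remark following Definition 2.1.1, and thereby exhibit the leaves as the equivalence classes of a relation on $M$ given by piecewise integral paths of local sections of $\mathcal{D}$. Because $\mathcal{D}$ is locally finitely generated as an $\mathscr{O}$-submodule of $\mathfrak{X}$ and closed under Lie bracket, one has all the ingredients needed to run the arguments of Sections 1.3 and 1.4, with the caveat that the pointwise distribution $y \mapsto \mathcal{D}_y$ need not itself be locally finitely generated in the pointwise sense (as Example 1.3.1 illustrates). So I would work throughout with local sections of the sheaf $\mathcal{D}$.

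First I would define an equivalence relation on $M$ by setting $x \sim y$ whenever there exists a piecewise $C^1$ path $\gamma:[a,b]\to M$ from $x$ to $y$ such that on each open subinterval where $\gamma$ is $C^1$, one has $\dot\gamma(t) = X(\gamma(t))$ for some $X \in \mathcal{D}(U)$ defined on a neighborhood $U$ of $\gamma(t)$. Reflexivity is trivial, symmetry uses $-X \in \mathcal{D}(U)$ (reversing the flow), and transitivity is concatenation of paths. Denote by $\mathcal{L}_x$ the equivalence class of $x$; the family $\{\mathcal{L}_x\}_{x \in M}$ is then a partition of $M$ by construction.

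The geometric content is to show that each $\mathcal{L}_x$ admits a smooth (resp.\ real analytic, holomorphic) manifold structure making it an immersed connected submanifold of $M$ with $T_y\mathcal{L}_x = \mathcal{D}_y$ at every $y \in \mathcal{L}_x$. Fix a trivializing neighborhood $U$ of $x$ with generators $X_1,\dots,X_p \in \mathcal{D}(U)$, and consider, as in the proofs of Theorems 1.3.1 and 1.4.1, the maps
\begin{equation*}
\phi_{y,\xi} : (t_1,\dots,t_n) \longmapsto \phi^{t_n}_{X_{i_n}} \circ \cdots \circ \phi^{t_1}_{X_{i_1}}(y)
\end{equation*}
for tuples $\xi = (X_{i_1},\dots,X_{i_n})$ of local generators of $\mathcal{D}$. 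The key lemma, adapted from Hermann's argument, is that $\dim \mathcal{D}_y$ is constant along any integral curve of a local section $X$ of $\mathcal{D}$: involutivity combined with local finite generation gives identities $[X,X_i] = \sum_j f_{ij} X_j$ in $\mathcal{D}(U)$, which translate into a linear ODE $\dot{A}(t) = F(t)A(t)$ on the matrix $A(t)$ whose columns express $X_i(\phi^t_X(y))$ in local coordinates, and its fundamental solution is invertible. The $\phi_{y,\xi}$, restricted to the locus where they have constant rank equal to $\dim \mathcal{D}_x$, then provide mutually compatible smooth coordinate charts on $\mathcal{L}_x$ and identify the tangent space at each point with $\mathcal{D}_y$.

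Finally, maximality of $\mathcal{L}_x$ among connected submanifolds with tangent space $\mathcal{D}$ follows because any such $N \ni x$ must be contained in $\mathcal{L}_x$: any smooth path in $N$ is locally the integral curve of a local section of $\mathcal{D}$ (since $T_yN = \mathcal{D}_y$ and $\mathcal{D}$ is locally finitely generated near $y$), so every point of $N$ is $\sim$-equivalent to $x$. The main obstacle, as emphasized in the remark after Definition 2.1.1, is precisely the sheaf-versus-pointwise distinction: the involutivity relations live in the $\mathscr{O}(U)$-module $\mathcal{D}(U)$, not in the pointwise distribution, so the rank-constancy argument must be phrased in terms of sections of the sheaf throughout. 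In the real analytic and holomorphic cases the same strategy goes through using Nagano's Theorem 1.2.1 in place of Hermann's, with essentially no modification.
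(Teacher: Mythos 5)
Your proposal is correct and follows essentially the same route as the paper: the paper proves this proposition by invoking the proofs of Hermann's theorem (smooth case) and Nagano's theorem (analytic case), with precisely the adaptation you make, namely running the rank-constancy and flow-chart arguments with local sections of the sheaf $\mathcal{D}$ rather than with sections of the pointwise distribution, which need not be locally finitely generated. Your explicit treatment of the equivalence relation by integral paths, the linear ODE argument for constancy of $\dim\mathcal{D}_y$ along integral curves, and the maximality of the resulting classes matches the content of the paper's Sections on Hermann and Nagano and the remark following the definition of Hermann foliations.
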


\begin{remarque}
Unlike the case of regular foliations,  Hermann foliations are not characterized by their leaves, and two different Hermann foliations may have the same leaves but differ as sheaves 
of vector fields. For instance, as noticed in \cite{AndrouZamb}, for $M$ a real or complex vector space (supposed, in the real case,
to be of dimension greater than or equal to $2$) and for each integer $k \geq 1 $, consider ${\mathcal D}_k$ to be the module of all smooth,
real analytic, or holomorphic vector fields  vanishing at order $k$ at the origin. This is clearly a Hermann foliation for all $k$, and all such 
Hermann foliations have exactly the  same two leaves: the origin and the complement of the origin. They are not, however,  identical as sub-modules of the module of vector fields. 
\end{remarque}

We would like to convince the reader of the interest of the notion of Hermann foliations by giving an extended list of examples. 

\begin{example}\label{Ex:AlgebroidsAreExamples}
\normalfont
For $A $ a (smooth or holomorphic \cite{LSX1}) Lie algebroid over ${ M}$ with anchor $\rho:A \to T M$, the ${\mathscr O}$-module $\rho(\Gamma(A))
\subset {\mathcal D}$ is a Hermann foliation. It is a finitely generated foliation when $\Gamma(A)$ is a finitely generated 
${\mathscr O}$-module, which is always the case when the vector bundle $ A $ is trivial, or more generally, when there exists a vector bundle $B$ such 
that the direct sum $A \oplus B$ is a trivial vector bundle. In the smooth case, it is always the case when ${ M} $ is compact.
\end{example}

In particular, regular foliations, orbits of connected Lie group actions, orbits of Lie algebra or Lie algebroid actions, 
symplectic leaves of Poisson manifolds and foliations induced by Dirac structures, are Hermann foliations covered by a vector bundle.

\begin{example}\label{ex:symmetries}
\normalfont 
Consider, for $\mathbb{K}=\mathbb{R}$ or $\mathbb{C}$:
 \begin{enumerate}
 \item Let $P:=(P_1,\dots,P_k)$ be a $k$-tuple of polynomial functions in $d$ variables over $ {\mathbb K}= {\mathbb R}$ or $ {\mathbb C}$. The symmetries of $ P$,  i.e. all polynomial vector fields $X \in {\mathfrak X}({\mathbb K}^d)$ that satisfy $X[P_i] =0$ for all $i \in \{1, \dots,k\}$, form a Hermann foliation. An interesting application, that appears in the Batalin-Vilkovisky context, occurs when considering the symmetries of a polynomial function $S$, which represents the classical action.
 \item \emph{Symmetries of $W$} i.e. all polynomial vector fields $X$ such that $X[P] \in P $,
 $P$ being now assumed to be the ideal of polynomial functions vanishing on some affine variety $W \subset {\mathbb K}^d$.
 \item \emph{Vector fields vanishing on $W$}, a self-consistent name.
\end{enumerate} 
  All the previous spaces of polynomial vector fields are closed under the Lie bracket, and form a sub-module (over the ring of polynomial functions on ${\mathbb K}^d$) of
  the module of algebraic vector fields. Since the latter is finitely generated over the ring of polynomial functions on ${\mathbb K}^d$,
  and since the ring of polynomial functions is Noetherian, each of these spaces is a finitely generated module over the polynomial functions. The ${\mathscr O}({\mathbb R}^d)$-module
  generated by these polynomial vector fields (with ${\mathscr O}$ standing here for smooth, or real analytic or holomorphic functions) 
  is therefore also a Hermann foliation. 
 \end{example}
 
 \begin{example}
 \label{ex:symmetriesMore}
 \normalfont
  Note that  Example \ref{ex:symmetries} can be extended to other types of polynomial vector fields. Given $\varphi:Z \to W$ a resolution
  in the sense of Hironaka of a singular affine manifold $W \subset {\mathbb C}^d$, one can consider the Hermann foliation of 
  \emph{symmetries of $W$ compatible with $\varphi$}, generated by all symmetries of $W$ (i.e. vector fields tangent to $W$) whose pull-back 
  through $\varphi$ is a well-defined regular vector field on $Z$. 
\end{example}

 It has been conjectured (see \cite{AndrouZambis} for instance) that not every smooth Hermann foliation is, locally, of the type described in Example~\ref{Ex:AlgebroidsAreExamples},
 i.e. is the image under the anchor map of a Lie algebroid. As far as we know, the question remains open to this day. We show that we can drop the assumption that the bracket on $\Gamma(A)$ satisfies the Jacobi identity, by generalizing the notion of Lie algebroids:

\begin{definition}
An \emph{almost-Lie algebroid} over ${ M}$ is a vector bundle $A \to  M$, equipped with a vector bundle morphism $\rho: A \to T M$ called the \emph{anchor map}, and a skew-symmetric bracket $[\ .\ ,\, .\ ]_A $ on $\Gamma (A) $, 
satisfying the \emph{Leibniz identity}:
\begin{align}\label{algebroid}
\forall\ x,y\in\Gamma(A),\ f\in\cinf({ M})\hspace{1cm}&[x,fy]_{A}=f[x,y]_{A}+\rho(x)[f] \, y\ ,
\end{align}
together with the \emph{Lie algebra homomorphism condition}:
\begin{equation}\label{algebroid2}
\forall\  x,y\in\Gamma(A)\hspace{1cm}\rho\big([x,y]_{A}\big)=\big[\rho(x),\rho(y)\big]\ .
\end{equation}
\end{definition}

We do not require that the bracket $[\ .\ ,\, .\ ]_A $ be a Lie bracket: it may not satisfy the Jacobi identity.
However, the Jacobi identity being satisfied for vector fields, condition (\ref{algebroid2}) imposes that the Jacobiator takes values in 
the kernel of the anchor map. 
We the following proposition to M. Zambon:

\begin{proposition} 
\label{prop:Almost}
Let $M$ be a smooth, or real analytic or complex manifold, and let $(A,\rho)$ be a pair, where $A\to M$ is a vector bundle and $\rho:A \to TM$ is a vector bundle morphism called the anchor map.
\begin{enumerate}
\item For every almost-Lie algebroid structure on $A \to  M$, the image of the anchor map $\rho : \Gamma(A) \to {\mathfrak X}({ M})$ is a Hermann foliation.
 \item Every finitely generated foliation on $M$ is the image under the anchor map of an almost-Lie algebroid, defined on a trivial bundle.
 \item In the smooth case, if $(A,\rho)$ covers a Hermann foliation ${\mathcal D}$, the vector bundle $A$ can be equipped with an almost-Lie algebroid structure with anchor $\rho$.
 \end{enumerate}
\end{proposition}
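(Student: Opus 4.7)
The plan is to handle the three parts in turn, reducing Part 3 to Part 2 via a partition of unity.

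For Part 1, both properties of a Hermann foliation are essentially built into the axioms of an almost-Lie algebroid. Local finite generation of $\rho(\Gamma(A))$ follows from local triviality of $A$: around each $x \in M$, pick a local frame $s_1, \ldots, s_r$ of $A|_U$, and then $\rho(s_1), \ldots, \rho(s_r)$ generate the module $\rho(\Gamma(A))(V) \subset \mathfrak{X}(V)$ for all $V \subset U$, by $\mathscr{O}(V)$-linearity of $\rho$. Closure under the Lie bracket is exactly condition \eqref{algebroid2}: for all $X, Y \in \rho(\Gamma(A))$, say $X = \rho(x)$ and $Y = \rho(y)$, one has $[X, Y] = \rho([x,y]_A) \in \rho(\Gamma(A))$.

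For Part 2, let $X_1, \ldots, X_k \in {\mathfrak X}(M)$ be a finite set of global generators of the finitely generated foliation ${\mathcal D}$, set $A = M \times \mathbb{K}^k$ with the tautological global frame $(e_i)_{i=1}^k$, and declare $\rho(e_i) := X_i$. Since ${\mathcal D}$ is involutive and finitely generated by the $X_i$, there exist functions $f_{ij}^l \in {\mathscr O}(M)$ such that $[X_i, X_j] = \sum_l f_{ij}^l X_l$. These structure functions are not uniquely determined, but I may antisymmetrize them by setting $c_{ij}^l := \tfrac{1}{2}(f_{ij}^l - f_{ji}^l)$, which still satisfies $[X_i, X_j] = \sum_l c_{ij}^l X_l$ because $[X_j, X_i] = -[X_i, X_j]$. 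Define $[e_i, e_j]_A := \sum_l c_{ij}^l e_l$, extend by skew-symmetry, $\mathbb{K}$-bilinearity, and the Leibniz identity \eqref{algebroid}. The Lie algebra homomorphism condition holds on the frame by construction, and propagates to all of $\Gamma(A)$ using \eqref{algebroid} on both sides and the Leibniz property of $\rho(x)$ as a derivation.

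For Part 3, the idea is to patch the construction of Part 2 from local trivializations of $A$ via a partition of unity. Choose an open cover $\{U_\alpha\}$ of $M$ on which $A$ is trivializable. On each $U_\alpha$, apply Part 2 to the local frame $(e_i^\alpha)$ of $A|_{U_\alpha}$, using the hypothesis that $(A,\rho)$ covers ${\mathcal D}$, so that $[\rho(e_i^\alpha), \rho(e_j^\alpha)]$ belongs to ${\mathcal D}(U_\alpha) = \rho(\Gamma_{U_\alpha}(A))$ and can be lifted to a section of $A|_{U_\alpha}$. This yields a local almost-Lie algebroid bracket $[\cdot,\cdot]_A^\alpha$ on $\Gamma_{U_\alpha}(A)$ with anchor $\rho|_{U_\alpha}$. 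Let $\{\chi_\alpha\}$ be a smooth partition of unity subordinate to $\{U_\alpha\}$, and define globally
\begin{equation*}
[x,y]_A \;:=\; \sum_\alpha \chi_\alpha \, [x|_{U_\alpha}, y|_{U_\alpha}]_A^\alpha.
\end{equation*}
Skew-symmetry is inherited termwise; the homomorphism condition follows from
\begin{equation*}
\rho([x,y]_A) = \sum_\alpha \chi_\alpha \, [\rho(x), \rho(y)] = [\rho(x),\rho(y)],
\end{equation*}
using $\sum_\alpha \chi_\alpha = 1$; and Leibniz survives because each local bracket satisfies Leibniz with the \emph{same} anchor $\rho$, so
\begin{equation*}
[x,fy]_A = \sum_\alpha \chi_\alpha \bigl(f[x,y]_A^\alpha + \rho(x)[f]\,y\bigr) = f[x,y]_A + \rho(x)[f]\,y.
\end{equation*}
The main obstacle, and the reason Part 3 is restricted to the smooth category, is precisely the partition-of-unity step: in the real analytic or holomorphic setting one cannot glue the locally constructed brackets in this way, and a genuinely different argument (or a counterexample) would be needed.
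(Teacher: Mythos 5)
Your proposal is correct and follows essentially the same route as the paper's proof: item 1 directly from the Leibniz and homomorphism axioms, item 2 by antisymmetrizing structure functions of global generators on a trivial bundle, and item 3 by applying item 2 on trivializing opens and gluing with a partition of unity, exploiting that skew-symmetry, the Leibniz rule and the anchor-homomorphism condition (unlike the Jacobi identity) survive convex combinations. Your closing remark on why the partition-of-unity step confines item 3 to the smooth category is also consistent with the paper's statement.
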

\begin{proof}
The first item follows from Conditions \eqref{algebroid} and \eqref{algebroid2}. 
For the second  item, consider a finitely generated Hermann foliation ${\mathcal D} $ and let $X_1,\dots,X_r $ be generators of $\mathcal{D}$. Since ${\mathcal D}$ is closed under the Lie bracket of vector fields, there exist functions $c^k_{ij} \in {\mathscr O}(M)$ satisfying:
 \begin{equation} [X_i,X_j] = \sum_{k=1}^r \ c^k_{ij}\, X_k \end{equation}
for all indices $i,j \in \{1, \dots,r\}$. Upon replacing $ c^k_{ij}$ by
$  \frac{1}{2}(c^k_{ij}-c^k_{ji})$ if necessary, we can assume that the functions $c^k_{ij} \in {\mathscr O}(M)$ satisfy the skew-symmetry 
relations $ c^k_{ij}= -c^k_{ji}$ for all possible indices. Choose $A$ to be the trivial bundle $A = {\mathbb R}^r \times { M} \to { M}$. Denote its canonical global sections by $e_1, \dots,e_r$ and define:
\begin{enumerate}
\item an anchor map by $\rho(e_i) = X_i$ for all $i=1, \dots,r$,
\item a skew-symmetric bracket by $[e_i,e_j]_{A} = \sum_{k=1}^r c^k_{ij} e_k$ for all $i,j=1, \dots,r$,
\end{enumerate}
then extend these structures by, respectively, ${\mathscr O}$-linearity and the Leibniz property. These structures define by construction an almost-Lie algebroid structure on $A$ with anchor~$\rho$, 
that covers ${\mathcal D} $ by construction.

Let us now prove the third item of the proposition. Let $n$ be the rank of $A$ and ${ U} $ be an open set on which $A$ admits a trivialization. By the previous point, 
an almost-Lie algebroid bracket on $A$ with anchor $ \rho|_{ U}$ can be defined over $ U$. Unlike Lie algebroid brackets, almost-Lie algebroid brackets can be glued  using partitions of unity. 
More precisely, let $(\varphi_i)_{i \in I}$ be a partition of unity subordinate to a covering  $({ U}_i)_{i \in I} $ by open sets
trivializing the vector bundle $A$. By the proof of item 2., we can define a bracket $[\ \cdot\ ,\, \cdot\ ]_{ U_{i}} $ that satisfies (\ref{algebroid}-\ref{algebroid2}) on each space of local 
sections $\Gamma_{ U_{i}}(A)$. The bracket:
\begin{equation}
[\ .\ ,\, .\ ]_{A}=\sum_{i \in I}\ \varphi_i [\ .\ ,\,.\ ]_{ U_{i}}
\end{equation}
still satisfies \eqref{algebroid} and \eqref{algebroid2}, hence defines an almost-Lie algebroid structure on $A$ with anchor~$\rho$.
\end{proof}

Recall that for every Lie $\infty$-algebroid $E$ over $M$, the binary bracket restricts to a skew-symmetric bilinear bracket on $\Gamma(E_{-1})$. Together with the anchor map, 
it defines an almost-Lie algebroid structure on $E_{-1}$, therefore by using the first item of Proposition \ref{prop:Almost}, we obtain:
\begin{proposition} \label{prop:fromNQtoSingularFoliations2}
For every Lie $\infty$-algebroid $E$ over $M$ with anchor $\rho$, ${\mathcal D} = \rho\big(\Gamma(E_{-1})\big)$ is a Hermann foliation.
\end{proposition}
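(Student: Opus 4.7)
The plan is essentially to recognize this as an immediate corollary of item (1) of Proposition \ref{prop:Almost}, once we have identified an almost-Lie algebroid structure on the vector bundle $E_{-1}$ whose anchor is the given $\rho$. So the entire content is to extract an almost-Lie algebroid out of a Lie $\infty$-algebroid by truncating at degree $-1$.

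First I would verify that the restriction of the $2$-bracket $\{\,\cdot\,,\,\cdot\,\}_2$ to $\Gamma(E_{-1})$ lands back in $\Gamma(E_{-1})$: this is a degree count, since $\{\,\cdot\,,\,\cdot\,\}_2$ has degree $+1$ in Voronov's convention, so two degree $-1$ sections produce a degree $-1$ section. Next, graded symmetry of the symmetric $2$-bracket applied to two elements of (odd) degree $-1$ becomes ordinary skew-symmetry, so we get a skew-symmetric $\mathbb{R}$-bilinear bracket $[\,\cdot\,,\,\cdot\,]_{E_{-1}} := \{\,\cdot\,,\,\cdot\,\}_2$ on $\Gamma(E_{-1})$. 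The Leibniz identity \eqref{robinson} of Definition \ref{def:Linftyoids}, restricted to $\Gamma(E_{-1})$, reads
\begin{equation*}
[x,fy]_{E_{-1}} = f\,[x,y]_{E_{-1}} + \rho(x)[f]\,y,
\end{equation*}
which is precisely the Leibniz rule \eqref{algebroid}. Finally, item (3) of the higher Jacobi identities in Definition \ref{def:Linftyoids} is exactly the Lie algebra homomorphism condition \eqref{algebroid2}: $\rho([x,y]_{E_{-1}}) = [\rho(x),\rho(y)]$ for all $x,y\in\Gamma(E_{-1})$.

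Therefore $(E_{-1},\rho,[\,\cdot\,,\,\cdot\,]_{E_{-1}})$ is an almost-Lie algebroid in the sense of the definition given just before Proposition \ref{prop:Almost}. I do not need to invoke the Jacobi identity for $[\,\cdot\,,\,\cdot\,]_{E_{-1}}$ (which in a Lie $\infty$-algebroid holds only up to the homotopy provided by $\{\,\cdot\,,\,\cdot\,\}_3$), because the almost-Lie algebroid definition deliberately does not require it. Applying item (1) of Proposition \ref{prop:Almost} to this almost-Lie algebroid then gives that $\mathcal{D} = \rho(\Gamma(E_{-1}))$ is a Hermann foliation, concluding the proof.

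There is no real obstacle here; the only point that requires any care is the bookkeeping of the degree shift between the symmetric convention used for $L_{\infty}$-brackets and the skew-symmetric convention used in the almost-Lie algebroid definition, which is handled by formulas \eqref{eq:formula1}--\eqref{eq:formula2}.
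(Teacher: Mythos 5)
Your proposal is correct and follows exactly the paper's own route: the paper proves this proposition precisely by observing that the binary bracket restricts to a skew-symmetric bracket on $\Gamma(E_{-1})$ which, together with the anchor, defines an almost-Lie algebroid structure on $E_{-1}$, and then invoking the first item of Proposition \ref{prop:Almost}. Your additional bookkeeping (the degree count showing the $2$-bracket preserves $\Gamma(E_{-1})$, graded symmetry becoming skew-symmetry in degree $-1$, and the identification of the Leibniz and anchor-compatibility conditions) simply makes explicit what the paper leaves as a one-line recollection.
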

\noindent We call this Hermann foliation the \emph{Hermann foliation of the Lie $\infty$-algebroid structure on $E$}. Looking for Lie $\infty$-algebroid structures associated to Hermann foliations would be a first step toward giving an answer to the Androulidakis-Zambon conjecture.

%

Below, we list various Hermann foliations, which seem not to be of any of the previous types.

 \begin{example}
  \label{ex:sousvariete}
 \normalfont
Vector fields on a manifold $M$ which are tangent to a submanifold $L$ are an example of Hermann foliation. 
Of course, $L$ is the only singular leaf, while connected components of  $M\slash L$
 are the regular ones.
 \end{example}

\begin{example}\label{ex:orderk}
\normalfont
Let $k$ and $d$ be integers greater than or equal to $1$. Vector fields vanishing at order $k$ at the origin of ${\mathbb R}^d$ form a Hermann foliation.
For $k=1$, it is the Hermann foliation associated to the action of the group of invertible $d \times d$ -matrices on ${\mathbb R}^d$ and is the image through the anchor map 
of a transformation Lie algebroid. For other values of $k$, it
is not obvious from which Lie algebroid it could arise.
\end{example}

\begin{example}\label{ex:bivectorfield}
\normalfont
In \cite{Turki}, a bivector field $\pi \in \Gamma(\wedge^2 T M)$ on a manifold $M$ is said to be \emph{foliated} when the space of vector fields of the form $\pi^\# (\alpha)$ for 
some $\alpha =\Gamma(T^*M)$ is closed under the Lie bracket, hence defining a Hermann foliation. When $\pi$ is a Poisson bivector, or
at least a twisted Poisson bivector (sometimes also referred to as `Poisson with background' \cites{Park,Klimcik,Severaweinstein}), it
is known that $T^* M$ comes equipped with a Lie algebroid structure \cite{costeweinstein} with anchor $\pi^\#:T^*  M \to T  M$, but for `generic' 
foliated bivector fields, no such formula seems to exist, as discussed in \cite{Turki}.
\end{example}

\begin{example}\label{ex:Leibnizoids}
\normalfont
Leibniz algebroid are defined as Lie algebroids, but the space of sections is assumed to be a Leibniz algebra, 
see Section \ref{sec:leibnizoid}). The image of the anchor map of Leibniz algebroids is obviously also a Hermann foliation. 
Courant algebroids are examples of Leibniz algebroids. Also, for $S$ a function on $M$, a Leibniz algebroid structure on the bundle $\wedge^2 TM$ is 
given by the anchor $P \mapsto P_S := P^\# ({\diff } S)$ together with the Leibniz bracket:
 $$  (P,Q) \mapsto {\mathcal L}_{P_S} Q  $$
for $P$ and $Q$ two sections of $\wedge^2 TM$, i.e., bivector fields. Note 
 that for $M$ a vector space and $S$ a polynomial function, the associated Hermann foliation is a sub-foliation of 
 the foliation of symmetries of $S$ described in Example \ref{symetriesofS}, referred to as the \emph{Hermann foliation of trivial symmetries of $S$}.
\end{example}
 
Now we give an example of a sub-sheaf of the sheaf of vector fields, which is closed under the Lie bracket, but which is \emph{not} a Hermann foliation.
\begin{example}
\normalfont
On $M = {\mathbb R}$, vector fields vanishing on ${\mathbb R}_-$ are closed under the Lie bracket but are not locally finitely generated, hence are 
not a Hermann foliation.
On $M={\mathbb R}^2$ with variables $(x,y)$, set ${\mathcal D}$ to be the $\mathcal{C}^{\infty}(M)$-module generated by
the vector field $ \frac{\partial }{\partial x}$ and vector fields of the form $ \varphi \frac{\partial }{\partial y}$
where $ \varphi$ is a smooth function vanishing on the half-plane $x \leq 0$. Then ${\mathcal D}$ is stable under the Lie bracket but it is not locally finitely generated.
\end{example}

\subsection{Main results}


We now assume that the reader is familiar with the equivalence between Lie $\infty$-algebroids and $NQ$-manifolds, as well as with Hermann foliations, and state the main results of this thesis. We recall that we intend to state results that are true in the smooth, real analytic and holomorphic settings altogether.

\begin{definition}\label{def:resolution}
Let ${\mathcal D}  \subset \Vect ( M)$ be a Hermann foliation on a manifold $M$. A \emph{resolution $(E,{\mathrm d},\rho)$ of the foliation $\mathcal{D}$} is a triple consisting of:
\begin{enumerate}
\item a collection of vector bundles $E=({\bigoplus}E_{-i})_{i \geq 1}$ over  $ M$,
\item a collection ${\mathrm d} = ({\mathrm d}^{(i)})_{i \geq 2}$ of vector bundle morphisms
${\mathrm d}^{(i)}: E_{-i} \to E_{-i+1}$ over the identity of $M$,
\item a vector bundle morphism $\rho:E_{-1} \to T M$ over the identity of $ M$ called the \emph{anchor of the resolution},
\end{enumerate}
 such that the following sequence of sections of ${\mathscr O}$-modules is an exact sequence of sheaves:
\begin{center}
\begin{tikzcd}[column sep=0.7cm,row sep=0.4cm] \label{eq:resolutions}
\ldots\ar[r,"\dd^{(3)}"]&\Gamma(E_{-2})\ar[r,"\dd^{(2)}"]&\Gamma(E_{-1})\ar[r,"\rho"]&\mathcal{D}\ar[r]&0
\end{tikzcd}
\end{center}
A resolution is said to be \emph{of length $n$} if $E_{-i}=0$ for $i \geq n+1$, and \emph{finite} if it admits a finite length.
We shall speak of a \emph{resolution by trivial bundles} when all the vector bundles $(E_{-i})_{i \geq 1}$ are trivial. 
\end{definition}
 Recall that being an 'exact sequence of sheaves' means that in some neighborhood $ { U}$ of every point $ x \in M$, the following complex is exact:
\begin{center}
\begin{tikzcd}[column sep=0.7cm,row sep=0.4cm] \label{eq:resolutions}
\ldots\ar[r,"\dd^{(3)}"]&\Gamma_{ U}(E_{-2})\ar[r,"\dd^{(2)}"]&\Gamma_{ U}(E_{-1})\ar[r,"\rho"]&\mathcal{D}({ U})\ar[r]&0
\end{tikzcd}
\end{center}
%
where $\Gamma_{ U}$ stands for sections over $ { U}$ and $\mathcal{D}({ U})$ stands for the $\mathscr{O}({ U})$-module
of vector fields defining the Hermann foliation $ \mathcal{D}$ over ${ U}$. In the smooth case, the complex is exact at the level of global sections 
if and only if it is exact in a neighborhood of each point, since local exactness can be extended to any open set by using partitions of unity. In the holomorphic and real analytic cases, the definition is only valid locally in a neighborhood of each point.




  Since sections of the vector bundles over $M$ are projective $\Funct$-modules by the Serre-Swan theorem, resolutions of smooth Hermann foliations are in fact a 
  very classical object of algebraic topology: they are projective resolutions of ${\mathcal D}$ in the category of $\Funct$-modules.
  It is a classical that such resolutions always exist. But these projective modules may not correspond to vector bundles - they may not be locally finitely generated. 
  By the Serre-Swan theorem, however:
  
  \begin{lemme}
   Resolutions of a Hermann foliation $ {\mathcal D}$ are resolutions of ${\mathcal D}$ by locally finitely generated ${\mathscr O}$-modules. 
  \end{lemme}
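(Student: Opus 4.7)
The plan is to observe that the statement reduces to a purely local and essentially tautological fact about sections of vector bundles, once one unpacks the definitions. By Definition \ref{def:resolution}, a resolution of $\mathcal{D}$ is built from the sheaves of sections $\Gamma(E_{-i})$ of vector bundles $E_{-i} \to M$, so it suffices to show that for every vector bundle $E_{-i}$ over $M$, the sheaf $\Gamma(E_{-i})$ is locally finitely generated as an $\mathscr{O}$-module. This part of the lemma does not require any input from the Hermann foliation itself.

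First I would fix a point $x \in M$ and use the defining property of a vector bundle: there exists an open neighborhood $U$ of $x$ (taken small enough to be simultaneously a trivializing open set for $E_{-i}$ and, if desired, a trivializing neighborhood for $\mathcal{D}$) together with a local trivialization $\Phi : E_{-i}|_U \xrightarrow{\sim} U \times \mathbb{K}^{r_i}$, where $r_i$ is the rank of $E_{-i}$ and $\mathbb{K}$ is the appropriate ground field ($\mathbb{R}$ or $\mathbb{C}$, according to the smooth/real analytic/holomorphic setting). This trivialization induces an isomorphism of $\mathscr{O}(V)$-modules
\begin{equation*}
\Gamma_V(E_{-i}) \; \simeq \; \mathscr{O}(V)^{r_i}
\end{equation*}
for every open $V \subset U$, and the images under $\Phi^{-1}$ of the $r_i$ constant unit sections of $U \times \mathbb{K}^{r_i}$ provide $r_i$ sections $e_1, \dots, e_{r_i} \in \Gamma_U(E_{-i})$ whose restrictions to any open $V \subset U$ generate $\Gamma_V(E_{-i})$ as an $\mathscr{O}(V)$-module. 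This is precisely the condition that $\Gamma(E_{-i})$ be locally finitely generated as an $\mathscr{O}$-submodule of itself.

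There is no real obstacle: the content of the lemma is essentially the Serre--Swan correspondence in its most elementary form, that vector bundles correspond to locally finitely generated (in fact locally free, hence projective) $\mathscr{O}$-modules. Applying this observation termwise to every $E_{-i}$ appearing in the resolution $(E,\mathrm{d},\rho)$ yields the statement: a resolution of $\mathcal{D}$ by sections of vector bundles is the same datum as an exact sequence of sheaves of $\mathscr{O}$-modules whose entries are locally finitely generated, and which terminates in $\mathcal{D} \to 0$. I would conclude by remarking that the converse direction $-$ that a resolution by locally finitely generated projective $\mathscr{O}$-modules arises from vector bundles $-$ is the nontrivial half of Serre--Swan and justifies the passage back and forth between the two points of view, but is not needed for the stated lemma.
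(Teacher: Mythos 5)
Your proof is correct and follows essentially the same route as the paper: the paper simply invokes the Serre--Swan correspondence to note that section sheaves of vector bundles are locally finitely generated (indeed locally free) $\mathscr{O}$-modules, and you have just made that elementary half explicit via a local trivialization and its frame of constant sections. Nothing further is needed, since exactness is already part of Definition \ref{def:resolution} and, as you correctly observe, the nontrivial converse half of Serre--Swan plays no role here.
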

  
  There are several contexts in which such resolutions always exist, at least locally, and are finite. For instance , for Hermann foliations generated by polynomial vector fields on $\mathbb{C}^{n}$, the existence is due to the fact that the ring of polynomial
  functions is Noetherian, and finiteness is due to the Hilbert syzygy theorem. Moreover,
  a real analytic or holomorphic resolution is also a smooth resolution: this is not an easy result, the proof of which uses theorems due to Malgrange and Tougeron \cite{Tougeron}. In short:

\begin{proposition}\label{prop:exres}The two following items hold:
\begin{enumerate}
\item Any holomorphic (resp. real analytic) Hermann foliation on a complex (resp. real analytic) manifold $M$ admits, in a neighborhood of a point, a resolution by trivial vector bundles 
whose length is less or equal to $n$ (i.e. $E_{-i}=0$ for all $i \geq n+2$).
\item Moreover, a real analytic resolution of a real analytic Hermann foliation $\mathcal{D}$ is also a smooth resolution of the smooth Hermann foliation generated by $\mathcal{D}$.
\end{enumerate}
\end{proposition}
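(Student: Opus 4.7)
\medskip

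The two items are of rather different nature, so I would treat them separately.

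For item 1, I would work entirely in a small neighborhood of a fixed point $x\in M$. The key algebraic input is that the local ring $\mathscr{O}_x$ of germs at $x$ (either convergent power series over $\mathbb{R}$ or $\mathbb{C}$) is a Noetherian regular local ring of Krull dimension $n=\dim M$. Since $\mathcal{D}$ is locally finitely generated, its stalk $\mathcal{D}_x$ is a finitely generated $\mathscr{O}_x$-module, so the Auslander--Buchsbaum--Serre style form of the Hilbert syzygy theorem for regular local rings of dimension $n$ guarantees the existence of a free resolution of $\mathcal{D}_x$ by free $\mathscr{O}_x$-modules of finite rank and of length at most $n$. Concretely, I would build it by induction: choose finitely many generators of $\mathcal{D}_x$ to obtain a surjection $\mathscr{O}_x^{r_1}\twoheadrightarrow\mathcal{D}_x$; the kernel is finitely generated by Noetherianity, giving a surjection $\mathscr{O}_x^{r_2}\twoheadrightarrow\ker$, etc. Finite global dimension forces the kernel to be free at step $n$, so the process terminates.

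To promote this germ-level free resolution to a resolution by trivial bundles in some open neighborhood $U$ of $x$, I would invoke the coherence of $\mathscr{O}$ -- Oka's theorem in the holomorphic case and Cartan's coherence theorem in the real analytic case. Coherence implies that a finite set of generators of a stalk of a coherent sub-ideal extends to generators of the sheaf on some neighborhood, and that relations between germ-generators lift to relations on a neighborhood. Iterating this for each of the finitely many arrows $\mathrm{d}^{(i)}$ and $\rho$ in the finite germ resolution, one obtains trivial bundles $E_{-i}$ (of ranks $r_i$) and morphisms defined on a common small enough $U$, whose stalk at $x$ recovers the free resolution, and which therefore defines an exact sheaf complex on (possibly a shrinking of) $U$.

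For item 2, I would use the fundamental flatness theorem of Malgrange and Tougeron: on a real analytic manifold, the sheaf $\mathcal{C}^\infty$ is flat as a module over the sheaf $\mathcal{C}^\omega$ of real analytic functions (a deep result, cf. \cite{Tougeron}). Tensoring the given exact real analytic sequence
\[
\ldots\longrightarrow \Gamma^\omega(E_{-2}) \xrightarrow{\mathrm{d}^{(2)}} \Gamma^\omega(E_{-1}) \xrightarrow{\rho} \mathcal{D} \longrightarrow 0
\]
stalkwise with $\mathcal{C}^\infty$ over $\mathcal{C}^\omega$ preserves exactness, and the resulting complex has exactly the smooth sections $\Gamma^\infty(E_{-i})$ in each degree, because a trivial (or even locally trivial) real analytic bundle satisfies $\mathcal{C}^\infty\otimes_{\mathcal{C}^\omega}\Gamma^\omega(E_{-i}) = \Gamma^\infty(E_{-i})$. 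The last term becomes $\mathcal{C}^\infty\otimes_{\mathcal{C}^\omega}\mathcal{D}$, which, because the smooth Hermann foliation "generated by $\mathcal{D}$" is by definition the $\mathcal{C}^\infty$-submodule of smooth vector fields spanned by the real analytic sections of $\mathcal{D}$, is precisely the desired smooth foliation.

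I expect the only real obstacle to be the passage from stalks to a neighborhood in item 1: the Noetherian and syzygy arguments are purely algebraic and immediate at the germ level, but turning the resulting germ-level free resolution into an honest exact complex of trivial vector bundles on an open set is what requires the non-trivial input of Oka/Cartan coherence. In item 2, the entire substance is concentrated in citing the Malgrange--Tougeron flatness result; granted that, the conclusion is a one-line diagram chase.
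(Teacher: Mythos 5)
Your proposal is correct and rests on the same two pillars as the paper's proof: Hilbert's syzygy theorem for item 1 and the Malgrange--Tougeron flatness theorem (cited as Theorem 4 of \cite{Tougeron}) for item 2. Two differences in execution are worth noting. For the real analytic case of item 1, the paper does not argue directly over real analytic germs: it complexifies the manifold (realizing it as the fixed locus of $z\mapsto\bar{z}$), identifies real analytic functions with restrictions of holomorphic ones, and applies the holomorphic syzygy theorem of \cite{gunningrossi}; your route through the regular local ring of real analytic germs plus Cartan coherence is equally valid, and you are in fact more explicit than the paper about the spreading-out step from a germ-level free resolution to an exact complex of trivial bundles on a neighborhood, which the paper passes over silently. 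For item 2, do not compress too much: flatness is a statement about \emph{germs} ($\mathcal{C}^{\infty}_{x}$ is flat over $\mathcal{C}^{\omega}_{x}$), not about the rings of functions on open sets, so stalkwise tensoring only yields exactness of the smooth complex at the level of germs. The paper devotes the second half of its proof to promoting this to exactness of sections over arbitrary open sets: given a section $e$ killed by $\mathrm{d}^{(k)}$, it chooses local primitives $f_{x}$, extracts a locally finite cover with partition of unity $(\chi_{i})$, and uses $\mathscr{O}$-linearity of the differential to check that $f=\sum_{i}\chi_{i}f_{i}$ is a global primitive. Since the smooth-case definition of a resolution in this paper demands exactness of sections rather than of stalks, that partition-of-unity step is precisely what your ``one-line diagram chase'' must contain, and it deserves to be written out.
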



%


\noindent We refer to Section \ref{existence}
  for a proof of this proposition. We now introduce the main object and the two main theorems of the present part. 

\begin{definition}
Let ${\mathcal D}$ be a Hermann foliation of a manifold $M$. We say that a Lie $\infty$-algebroid $(E,Q)$ over $M$ is an \emph{universal
Lie $\infty$-algebroid resolving ${\mathcal D}$} if:
\begin{enumerate}
\item $ {\mathcal D}$ is the foliation associated to $(E,Q)$, i.e. $\rho\big(\Gamma(E_{-1})\big) = {\mathcal D}$,  
\item the linear part of $(E,Q)$ is a resolution of  $ {\mathcal D}$.
\end{enumerate}
When $E_{-k}=0$ for all $k \geq n+1$, we speak of a \emph{universal Lie $n$-algebroid resolving ${\mathcal D}$}. 
\end{definition}

\noindent Obviously, the existence of such a structure depends on the existence of a resolution of the Hermann foliation.

\bigskip
\begin{theoreme}
\label{theo:existe}
Let ${\mathcal D}$ be a Hermann foliation of $M$. A universal Lie $\infty$-algebroid resolving ${\mathcal D}$ exists:
\begin{enumerate}
\item in the smooth case, when a resolution  $(E,{\mathrm d},\rho)$ of ${\mathcal D} $ exists,
\item in a neighborhood of every point in $M$ in the real analytic and holomorphic cases.
\end{enumerate}
\end{theoreme}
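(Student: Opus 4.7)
The plan is to build the homological vector field $Q$ by induction on arity. Decompose $Q = \sum_{k \geq 0} Q^{(k)}$ formally into its components of homogeneous arity. In arity $0$, Lemma \ref{dualQ0} identifies $Q^{(0)}$ with the dualization of the collection $(\mathrm{d}^{(i)})_{i \geq 2}$ provided by the resolution, so $[Q^{(0)}, Q^{(0)}] = 0$ will follow from $\mathrm{d}^{(i-1)} \circ \mathrm{d}^{(i)} = 0$. In arity $1$, one combines $\rho$ with an almost-Lie algebroid bracket on $E_{-1}$ (whose existence is guaranteed by Proposition \ref{prop:Almost}) via formulas \eqref{anchor1}-\eqref{anchor2} to build a derivation $Q^{(1)}$; the Leibniz axiom for the almost-Lie bracket together with $\rho \circ \mathrm{d}^{(2)} = 0$ will yield $[Q^{(0)}, Q^{(1)}] = 0$.

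For the inductive step, one assumes $Q^{(0)}, \ldots, Q^{(n-1)}$ have been constructed so that $[Q, Q]^{(k)} = 0$ for all $k \leq n - 1$. The equation to solve is
\[
2 [Q^{(0)}, Q^{(n)}] = - R_n, \qquad R_n := \sum_{\substack{i+j=n \\ i,j \geq 1}} [Q^{(i)}, Q^{(j)}].
\]
A graded Jacobi identity applied to $[Q^{(0)}, R_n]$ combined with the inductive hypothesis shows that $R_n$ is a $[Q^{(0)}, -]$-cocycle. Being $\mathscr{O}$-linear and of arity $n$, the operator $R_n$ is determined by its restriction to $\Gamma(E^{*})$, i.e.\ by an element of $\Gamma(S^{n+1}(E^*) \otimes E)$, and finding $Q^{(n)}$ amounts to producing a primitive of this element in the complex obtained from the resolution $(E, \mathrm{d}, \rho)$ by tensoring with $\Gamma(S^{n+1}(E^*))$. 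The hard part will be guaranteeing that such a primitive always exists: since each $\Gamma(S^{n+1}(E^*))$ is a projective $\mathscr{O}$-module by the Serre-Swan theorem, tensoring with it preserves exactness of the resolution of $\mathcal{D}$, and a degree-count argument then shows that the cohomology group in which $R_n$ is to be killed vanishes in the range of degrees in which we must work, so a primitive $Q^{(n)}$ can be chosen.

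In the smooth case, exactness of the resolution is available at the level of global sections (local exactness being extended to global exactness by partitions of unity), so at each inductive step $Q^{(n)}$ can be chosen globally, and the procedure produces a homological vector field $Q = \sum_{k \geq 0} Q^{(k)}$ defined on the whole of $M$. In the real analytic and holomorphic settings, Proposition \ref{prop:exres} only provides resolutions in a neighborhood of each point, so the inductive construction must be carried out locally on such a neighborhood, yielding a local universal Lie $\infty$-algebroid. In both cases, by construction the linear part of $Q$ coincides with the given resolution, so the resulting Lie $\infty$-algebroid resolves $\mathcal{D}$ in the sense of the definition, and the induction on arity is the only non-trivial mechanism at work.
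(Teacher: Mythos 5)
Your skeleton is the paper's own: an induction on arity, with $Q^{(0)}$ dual to the differential of the resolution, an almost-Lie algebroid structure on $E_{-1}$ (Proposition \ref{prop:Almost}) feeding the arity-one component, and the obstruction $R_n$ shown to be a $[Q^{(0)},\,.\ ]$-cocycle which is then killed using exactness of the resolution. But the two lowest-arity steps --- which are precisely where the geometry enters --- are wrong as you state them. First, the vector field $X$ obtained by dualizing the almost-Lie bracket through \eqref{anchor1}--\eqref{anchor2} does \emph{not} satisfy $[Q^{(0)},X]=0$: the almost-Lie structure only defines $\{x,y\}_{2}$ for $x,y\in\Gamma(E_{-1})$, and the commutator with $Q^{(0)}$ produces terms such as $\{x,\dd^{(2)}y\}$ with $y\in\Gamma(E_{-2})$, with no compensating term $\dd\{x,y\}$ since that bracket is not yet defined. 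One must add a vertical correction $Y$ of depth at least $2$ (this is what encodes the binary brackets involving $E_{-i}$, $i\geq 2$) solving $[Q^{(0)},Y]=-[Q^{(0)},X]$, and the existence of $Y$ is itself an application of the key vanishing lemma (Lemma \ref{lemmefondamental2}, item 2), whose hypothesis is verified using the verticality of $[X,X]$, i.e.\ the anchor-morphism axiom \eqref{algebroid2} --- not the Leibniz rule together with $\rho\circ\dd^{(2)}=0$, as you claim.

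Second, your blanket ``degree-count'' vanishing fails at arity $2$. The rows of the bicomplex of vertical vector fields of arity $n$ are exact only after augmenting by $\Gamma(S^{n+1}(E^{\ast}))\otimes\mathcal{D}$, and $\mathcal{D}$ is not a vector bundle and is not part of the complex in which the primitive must be found; consequently the relevant cohomology does \emph{not} vanish at depth $1$. What is true (Lemma \ref{lemmefondamental2}) is that root-free cocycles are coboundaries --- which by degree count covers cocycles of degree at most arity minus one --- and that cocycles whose root lies in $\mathrm{Ker}(\mathrm{id}\otimes\rho)$ are coboundaries. Your obstruction $R_{n}$ has degree $2$, so the degree count $2\leq n-1$ only applies for $n\geq 3$; for $n=2$ the obstruction $[Q^{(1)},Q^{(1)}]$ can have a nonzero root, and one needs the separate geometric input that this root is killed by $\mathrm{id}\otimes\rho$ --- dual to the statement that the Jacobiator of the almost-Lie bracket takes values in the kernel of the anchor --- before the diagram chase applies. (In addition, the $\mathscr{O}$-linearity of $R_{n}$ that you assert is not automatic, since $Q^{(1)}$ has a horizontal part; in the paper it follows from the depth $\geq 2$ of the corrections and of the higher $Q^{(i)}$.) These low-arity repairs are the actual content of the paper's proof; once they are in place, your induction for $n\geq 3$ is exactly the paper's argument.
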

\bigskip

\begin{remarque}
In the last two cases, there exists, in a neighborhood ${ U} $ of every point in $M$, a universal Lie $n$-algebroid resolving ${\mathcal D}$, with $n$ the dimension of $M$, 
whose linear part is a resolution of $ {\mathcal D}$ on ${ U} $ by trivial vector bundles.
\end{remarque}

The goal of this thesis is precisely to show that given any resolution of a Hermann foliation, the existence and uniqueness 
(in some sense discussed below) of a Lie $\infty$-algebroid structure on the resolution is guaranteed.
The use of the word `universal' is justified by the second item in the next theorem:

\begin{theoreme}\label{theo:onlyOne}
Let $(E,Q)$ be  a universal Lie $\infty$-algebroid  resolving a Hermann foliation $ {\mathcal D}$. 
Then, 
\begin{enumerate}
 \item for any Lie $\infty$-algebroid $ (E',Q')$ defining a sub-Hermann foliation of $ {\mathcal D}$ (i.e. such that  $\rho'\big(\Gamma(E_{-1}')\big) \subset {\mathcal D}$),
there is a Lie $\infty$-algebroid morphism from $ (E',Q')$ to $ (E,Q)$ over the identity of $M$ and any two such Lie $\infty$-algebroid morphisms are homotopic.
 \item in particular, two  universal Lie $\infty$-algebroids  resolving the Hermann foliation $ {\mathcal D}$ are isomorphic up to homotopy and two such isomorphisms are homotopic.
\end{enumerate}
\end{theoreme}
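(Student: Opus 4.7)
\textbf{Proof plan for Theorem \ref{theo:onlyOne}.} My approach will be an obstruction-theoretic induction on arity, parallel to the classical comparison theorem for projective resolutions in homological algebra. The exactness hypothesis on the linear part of $(E,Q)$ $-$ namely that it is a resolution of $\mathcal{D}$ $-$ will kill every obstruction that arises.

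\emph{Construction of $\Phi^{(0)}$.} First I note that the hypothesis $\rho'(\Gamma(E_{-1}')) \subset \mathcal{D} = \rho(\Gamma(E_{-1}))$ combined with the projectivity of $E_{-1}'$ as an $\mathscr{O}$-module (Serre--Swan) allows us to lift $\rho'$ to a vector bundle morphism $\phi_1 : E_{-1}' \to E_{-1}$ satisfying $\rho \circ \phi_1 = \rho'$. Then inductively, if $\phi_1,\ldots,\phi_{k-1}$ have been built commuting with the differentials, the composite $\phi_{k-1} \circ \dd'$ lands in $\ker(\dd^{(k-1)}) = \mathrm{im}(\dd^{(k)})$ by exactness of the resolution of $E$; projectivity of $E_{-k}'$ then furnishes a lift $\phi_k : E_{-k}' \to E_{-k}$. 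Dualizing and extending multiplicatively produces an arity-$0$ algebra morphism $\Phi^{(0)} : \mathscr{E} \to \mathscr{E}'$ intertwining the linear parts $Q^{(0)}$ and $Q'^{(0)}$.

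\emph{Extension to higher arities.} Assume $\Phi^{(0)}, \ldots, \Phi^{(k-1)}$ have been built so that every component of arity strictly less than $k$ of $\Phi \circ Q - Q' \circ \Phi$ vanishes. The component of arity $k$ of the intertwining relation takes the schematic form $[Q^{(0)}, \Phi^{(k)}] = -\Xi_k$, where
\begin{equation*}
\Xi_k \;=\; \sum_{\substack{i+j=k \\ 0<j}} \Phi^{(i)} \circ Q^{(j)} - \sum_{\substack{i+j=k \\ 0<i}} Q'^{(i)} \circ \Phi^{(j)}
\end{equation*}
involves only data already constructed. A direct computation using $[Q,Q]=0$, $[Q',Q']=0$, and the inductive hypothesis will show that $\Xi_k$ is a cocycle for the differential $\delta \mapsto [Q^{(0)}, \delta]$ on the space of $\Phi^{(0)}$-derivations of arity $k$, in the sense of Lemma \ref{lem:complexPhideriv}. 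This cochain complex is built from $\Gamma(S^{k+1}(E^*) \otimes E')$; since the linear part of $E$ is an exact resolution by projective $\mathscr{O}$-modules, the associated Hom complex is exact in the relevant degrees, so $\Xi_k$ is a coboundary and $\Phi^{(k)}$ can be chosen accordingly. Summing over $k$ yields $\Phi$.

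\emph{Uniqueness up to homotopy and the second item.} Given two such morphisms $\Phi, \Psi : (E',Q') \to (E,Q)$, I plan to construct a homotopy $(\Phi_t, H_t)$ in the sense of Definition \ref{def:homotopy} arity by arity with the same machinery. The base case is the classical statement that two chain maps lifting a common map between projective resolutions differ by a chain homotopy. For higher arities, the obstruction to extending a partial homotopy is again a $[Q^{(0)}, \,.\,]$-cocycle in the same exact Hom complex, hence a coboundary, producing the next component of $H_t$. The second item then follows by applying the first twice: obtain morphisms $\Phi : (F,Q_F) \to (E,Q_E)$ and $\Psi : (E,Q_E) \to (F,Q_F)$ between two universal Lie $\infty$-algebroids (both resolving the same $\mathcal{D}$); then $\Phi \circ \Psi$ and $\mathrm{id}_{\mathscr{E}}$ are both Lie $\infty$-morphisms $(E,Q_E) \to (E,Q_E)$, hence homotopic by the first item applied with $E'=E$, and symmetrically for $\Psi \circ \Phi$.

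The main obstacle I expect will be the careful verification that each $\Xi_k$ is indeed a $[Q^{(0)}, \,.\,]$-cocycle $-$ a delicate arity-graded algebraic manipulation of the homological equation $[Q,Q]=0$ $-$ together with the precise identification of the cochain complex of $\Phi^{(0)}$-derivations whose acyclicity follows from projectivity of the bundles and exactness of the resolution.
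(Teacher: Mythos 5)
Your strategy coincides with the paper's: build a chain map $\phi:E'\to E$ by the classical comparison theorem (projectivity of the bundles $E'_{-i}$ plus exactness of the resolution $E$), extend it arity by arity by showing that each obstruction is a cocycle in the complex of $\Phi^{(0)}$-derivations and killing it by acyclicity, run a parallel arity-by-arity induction for the homotopies, and deduce the second item by applying the first item twice to $\Phi\circ\Psi$ and $\Psi\circ\Phi$. This is precisely the route of Proposition \ref{prop:existencepart}, Lemma \ref{ultralemme} and Propositions \ref{bonresult} and \ref{lemiam}. Two points in your plan, however, do not survive scrutiny, and one of them is a genuine gap. First, the complex you name is the wrong one: $\Phi^{(0)}$-derivations of arity $k$ are sections of $S^{k+1}(E'^{\ast})\otimes E$, not of $S^{k+1}(E^{\ast})\otimes E'$. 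The order is not cosmetic: the rows of the associated bicomplex carry $\mathrm{id}\otimes\dd$, and their exactness is obtained by tensoring the exact complex $\bigl(\Gamma(E_{-\bullet}),\dd\bigr)$ with the projective module $\Gamma\bigl(S^{k+1}(E'^{\ast})\bigr)$; with your order one would need exactness of the linear part of $E'$, which is not assumed (the paper stresses that $(E',\dd',\rho')$ need not be a resolution of the sub-foliation).

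Second, and this is the gap: even for the correct complex, it is not true that it is ``exact in the relevant degrees,'' and cocycle does not imply coboundary at the step where your induction starts. The resolution of $\mathcal{D}$ is exact only as an augmented complex ending in $\mathcal{D}$, so the rows of the bicomplex fail to be exact at depth $1$; this is the content of Lemma \ref{lemmefondamental3}, which grants that a cocycle is a coboundary only when it is root-free, or when its root lies in $\ker(\mathrm{id}\otimes\rho)$. For $k\geq 2$ your obstruction $\Xi_k$ has degree $1$ and arity $k$, hence is root-free for degree reasons, and your argument works verbatim. But the first obstruction $\Xi_1=\Phi^{(0)}Q^{(1)}-Q'^{(1)}\Phi^{(0)}$ has, in general, a nonzero root (it measures the failure of $\phi_1$ to intertwine the binary brackets), and two extra facts must be proved before Lemma \ref{lemmefondamental3} applies: that $\Xi_1$ is vertical at all (a degree-$1$, arity-$1$ map can have a horizontal component; verticality follows from $\rho\circ\phi_1=\rho'$), and that $(\mathrm{id}\otimes\rho)\bigl(rt(\Xi_1)\bigr)=0$, which the paper deduces from the verticality of $[Q^{(1)},Q^{(1)}]$ and $[Q'^{(1)},Q'^{(1)}]$, i.e. from the almost-Lie algebroid property of the anchors and binary brackets on both sides. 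Without this anchor-compatibility argument the induction never gets off the ground, so your plan needs this supplementary step at arity $1$.
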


\begin{remarque}
This theorem also holds for real analytic and holomorphic singular foliations in a neighborhood of a point.
\end{remarque}

In particular, for every Lie algebroid $A$ defining a Hermann foliation ${\mathcal D} $,
a Lie $\infty$-morphism from $A$ to any universal Lie $\infty$-algebroid $ (E,Q)$ resolving ${\mathcal D}$ exists and any two such morphisms are homotopic. 
By the second item in Theorem \ref{theo:onlyOne}, for a Hermann foliation that admits a resolution,
it makes sense to speak of the cohomology of $({\mathscr E},Q)$,  where ${\mathscr E}$ is the sheaf of
functions on the graded vector bundle $E$, since a different resolution will have a canonically isomorphic cohomology.
Moreover, restricting the resolution to an arbitrary point of $M$ or an arbitrary leaf $L$ yields cohomologies that come
equipped with graded Lie algebras structures. We are able to relate those to the holonomies of Androulidakis and Skandalis \cite{AndrouSkandal}:

\begin{proposition}
\label{prop:mainConsequences}
Let ${\mathcal D}$ be a Hermann foliation, and $(E,Q)$ a universal Lie $\infty$-algebroid with anchor $\rho$ resolving ${\mathcal D}$.
\begin{enumerate}
\item For each leaf $L$, the linear operators $\dd^{(i)}:E_{-i} \to E_{-i+1}$ of the linear part of $(E,Q)$ are of constant rank along $L$,
so that the cohomologies of this complex form a graded vector bundle $ \sum_{i \geq 1} H_{\mathcal D}^{-i}$ over $L$, which: 
 \begin{enumerate}
  \item does not depend on the choice of a resolution;
  \item has a component in  degree  $-1$ equipped with a natural Lie algebroid structure which coincides with the holonomy
  Lie algebroid of Androulidakis and Skandalis;
  \item admits a graded Lie algebra structure on its space of sections, on which the previous Lie algebroid acts.
  \end{enumerate}
\item The $1$-truncated groupoid of $(E,Q)$ is a cover of the connected component of the manifold of identities of the holonomy groupoid of Androulidakis and Skandalis. 
\end{enumerate}%
\end{proposition}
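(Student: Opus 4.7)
The plan is to leverage Theorem~\ref{theo:onlyOne} systematically: since the universal Lie $\infty$-algebroid $(E,Q)$ is unique up to homotopy, every invariant canonically attached to it depends only on $\mathcal{D}$. To prove constancy of $\operatorname{rank}\dd^{(i)}$ along a leaf $L$, let $X\in\mathcal{D}$ and write $\phi_X^t$ for its local flow. Since $(\phi_X^t)^*\mathcal{D}=\mathcal{D}$, the pullback Lie $\infty$-algebroid $(\phi_X^t)^*(E,Q)$ is again universal for $\mathcal{D}$, so Theorem~\ref{theo:onlyOne} produces a Lie $\infty$-morphism of $(E,Q)$ covering $\phi_X^t$ whose linear part, restricted fiberwise, is a quasi-isomorphism of $L_\infty$-algebras between $E_x$ and $E_{\phi_X^t(x)}$. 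Any two points of $L$ are joined by a finite concatenation of such flows, hence the fiberwise cohomologies are all isomorphic along $L$; the ranks of $\dd^{(i)}$ are constant, kernels and images assemble into smooth sub-bundles, and their quotients form the graded vector bundle $H_\mathcal{D}^{-\bullet}$ over $L$. Independence of the resolution (item 1a) is immediate: two universal structures are isomorphic up to homotopy by Theorem~\ref{theo:onlyOne}, and the linear part of such a morphism restricts at each $x\in L$ to a quasi-isomorphism, yielding a canonical isomorphism of graded cohomology bundles. Homologically, $H_\mathcal{D}^{-i}|_x=\operatorname{Tor}_i^{\mathscr{O}}(\mathcal{D},\mathscr{O}/I_x)$, which makes the independence statement structural rather than incidental.

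For item 1b, I would use right-exactness of the tensor product. Tensoring the exact sequence $\Gamma(E_{-2})\xrightarrow{\dd^{(2)}}\Gamma(E_{-1})\xrightarrow{\rho}\mathcal{D}\to 0$ over $\mathscr{O}$ with the residue field $\mathscr{O}/I_x$ produces an exact sequence $E_{-2,x}\xrightarrow{\dd^{(2)}_x}E_{-1,x}\xrightarrow{\bar\rho_x}\mathcal{D}/I_x\mathcal{D}\to 0$, where $\mathcal{D}/I_x\mathcal{D}$ is the Androulidakis--Skandalis fiber at $x$. Factoring $\rho_x:E_{-1,x}\to T_xM$ through $\bar\rho_x$ and the evaluation map $\operatorname{ev}_x:\mathcal{D}/I_x\mathcal{D}\twoheadrightarrow T_xL$ (whose kernel is the isotropy Lie algebra $\mathfrak{g}_x$) identifies $H_\mathcal{D}^{-1}|_x=\ker(\rho_x)/\operatorname{im}(\dd^{(2)}_x)$ canonically with $\mathfrak{g}_x$. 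To match brackets, pick lifts $e_1,e_2\in\Gamma(E_{-1})$ of classes in $\mathfrak{g}_x$: the Leibniz identity \eqref{robinson} together with the compatibility $\rho(\{e_1,e_2\}_2)=[\rho(e_1),\rho(e_2)]$ force the class of $\{e_1,e_2\}_2(x)$ in $\mathcal{D}/I_x\mathcal{D}$ to equal $[\rho(e_1),\rho(e_2)]\bmod I_x\mathcal{D}$, which is precisely the Androulidakis--Skandalis isotropy bracket. Globally on $L$ the $2$-bracket descends to $\Gamma(H_\mathcal{D}^{-1})$ and, together with the induced anchor vanishing on the isotropy part, gives the claimed Lie algebroid structure.

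For item 1c I would invoke the standard fact that the $2$-bracket of an $L_\infty$-algebra descends to a strict graded Lie bracket on the cohomology of its unary bracket, higher brackets becoming homotopies; the Lie algebroid action of $H_\mathcal{D}^{-1}$ on the $H_\mathcal{D}^{-i}$ for $i\geq 2$ is the degree-preserving part of this descended bracket. For item 2, the $1$-truncated groupoid of $(E,Q)$ is by construction a source-simply-connected Lie groupoid integrating the Lie algebroid $E_{-1}/\operatorname{im}(\dd^{(2)})$ on the locus where this quotient is smooth (in particular on leaves, by item 1); its source fibers cover connected leaves of $\mathcal{D}$ and its isotropy at $x$ is the simply-connected Lie group with Lie algebra $\mathfrak{g}_x$, which covers the Androulidakis--Skandalis holonomy isotropy, giving the required cover of the identity component. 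The main obstacle will be item 1b: verifying that the descended bracket on $\mathfrak{g}_x$ is independent of the lifts chosen and that the assembly into a bundle of Lie brackets over $L$ patches into a genuine Lie algebroid structure compatible with Androulidakis--Skandalis; this is where the $L_\infty$ structure of $(E,Q)$ must be invoked more delicately than in the pointwise identification.
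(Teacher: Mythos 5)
Your treatment of item 1 takes a genuinely different route from the paper's, and it can be made to work, but at a cost. The paper (Proposition \ref{prop:parallel}) joins two points $x,y$ of a leaf by the flow of $\rho(e_t)$ for suitable time-dependent sections $e_t\in\Gamma(E_{-1})$, and integrates the degree-zero derivation $V_t=[Q,\partial_{e_t}]$ of $\mathscr{E}$; since $[Q,[Q,\partial_{e_t}]]=\tfrac{1}{2}[[Q,Q],\partial_{e_t}]=0$, the flow consists of Lie $\infty$-morphisms of $(E,Q)$ to itself, and its time-one map is an isomorphism carrying the fiber data at $x$ to that at $y$. This conjugates $\dd^{(i)}_x$ into $\dd^{(i)}_y$ directly, so constancy of ranks, item 1(a), and the isomorphism of holonomy structures along $L$ all come out in one stroke. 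Your pullback-plus-uniqueness argument instead needs two ingredients the paper never develops: pullbacks of Lie $\infty$-algebroids along diffeomorphisms, and the invariance of $\mathcal{D}$ \emph{as a module} under the flows of its own sections (the Hermann--Lobry arguments of Section \ref{foliations} give only the pointwise statement $\phi^t_{X\ast}\mathcal{D}_x=\mathcal{D}_{\phi^t_X(x)}$, not $(\phi^t_X)_\ast\mathcal{D}=\mathcal{D}$, which is what you need for $(\phi_X^t)^*(E,Q)$ to resolve $\mathcal{D}$). Also, homotopy equivalence only gives equality of fiberwise cohomology dimensions, not conjugate differentials, so rank constancy must be recovered by the induction starting from $\mathrm{rank}\,\rho_x=\dim L$; this step should be written out. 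Your item 1(b) is essentially the paper's Proposition \ref{prop:hol=hol} repackaged through right-exactness of $\otimes_{\mathscr{O}}\,\mathscr{O}/I_x$, which is fine and arguably cleaner; note however that the parenthetical formula $H^{-i}_{\mathcal{D}}|_x=\mathrm{Tor}_i^{\mathscr{O}}(\mathcal{D},\mathscr{O}/I_x)$ is wrong as stated: the resolution gives $\mathrm{Tor}_{i-1}^{\mathscr{O}}(\mathcal{D},\mathscr{O}/I_x)\cong H^{-i}_{\mathcal{D}}(x)$ for $i\geq 2$, while $H^{-1}_{\mathcal{D}}(x)$ is not a Tor of $\mathcal{D}$ at all, being the kernel of the evaluation $\mathcal{D}/I_x\mathcal{D}\to T_xM$ --- which is your own computation. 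Item 1(c) matches the paper.

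The genuine gap is item 2, not item 1(b) as you predict. You assert that the $1$-truncated groupoid of $(E,Q)$ is ``by construction'' the source-simply-connected Lie groupoid integrating $A:={\mathfrak i}_L^*E_{-1}/\dd^{(2)}{\mathfrak i}_L^*E_{-2}$. It is not: the $1$-truncated groupoid is defined as $E$-paths modulo homotopies through Lie $\infty$-algebroid morphisms $TI^2\to(E,Q)$ (which involve all the higher structure), whereas the source $1$-connected integration of $A$ is $A$-paths modulo $A$-homotopies, and identifying the two quotients is precisely the content of the paper's proof. The easy half is that $E$-paths and $E$-homotopies project to $A$-paths and $A$-homotopies and that every $A$-path lifts; the half your proposal is missing is that every $A$-homotopy lifts to an $E$-homotopy with prescribed boundary lifts. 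Without this, the natural map from the $1$-truncated groupoid to the integration of $A$ could fail to be injective, and ``cover'' would not follow. The paper proves the lifting by choosing a bundle-map lift $\Phi_\alpha$ of the Lie algebroid morphism $TI^2\to A$ valued in ${\mathfrak i}_L^*E_{-1}$, observing that the defect $\Psi=\Phi_\alpha\circ Q-\dd_{\mathrm{dR}}\Phi_\alpha$ is concentrated in a single degree and vanishes on the conormal of $\mathrm{im}(\dd^{(2)})$ because the projected map is an honest Lie algebroid morphism, and then correcting $\Phi_\alpha$ by a map $\Gamma({\mathfrak i}_L^*E_{-2})\to\Omega^2(I^2)$ killing $\Psi$. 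Finally, the comparison with Androulidakis--Skandalis should not go through isotropy groups (in a source-simply-connected groupoid the isotropy groups need not be simply connected, so your description of the isotropy at $x$ is incorrect as stated): the paper instead uses that the restriction of the holonomy groupoid to $L$ is a smooth $s$-connected groupoid integrating the holonomy Lie algebroid, which coincides with $A$ by Proposition \ref{prop:holoLieAlg}, and that any such integration is covered by the source $1$-connected one.
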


\noindent Notice that
the geometrical meaning of the graded Lie algebra  that appears in the third item of the previous proposition is still quite unclear to us. For clarity, we have dedicated different sections to the proofs of these various results. Theorem \ref{theo:existe} is proven in Section \ref{preuveexistence}, Theorem \ref{theo:onlyOne} is proven in Section \ref{morphismfoliations}, and Proposition \ref{prop:mainConsequences} is proven in Section \ref{moineau}.

We would like to address two objections. We are aware that the universal Lie $\infty$-algebroid structure resolving a Hermann foliation that we construct is, 
somehow, related to derived geometry. But we still claim that our construction is \emph{not} trivial from  the point of view of derived geometry. 
Let us explain this point. First, it is true that our construction follows in some sense the general idea presented in \cite{LS}, where it is shown 
that any resolution of a Lie algebra can be equipped with an $L_\infty$-algebra structure (then called a sh-Lie algebra). But the analogy should not be carried too far:
there is indeed a subtle but crucial difference between the case of vector spaces and the case of modules over spaces of functions. For vector space complexes, inexistence of homology 
is equivalent to saying that the identity is homotopic to the zero map. But for modules, this is not true. This makes the constructions we present here much more complicated,
but also richer in structure, since it means that the cohomology, does not contain all the relevant information and one has to look at the level of chains. 
In particular, \emph{we cannot apply classical tools like the transfer theorem  or the perturbation lemma \cite{perturbation} to derive our Lie $\infty$-algebroid structures, 
since for ${\mathscr O}$-modules, we do not have chain equivalence between a complex and its cohomology}. Of course, we do not deny that there might be general principles 
of derived geometry or higher structures behind our construction of the universal $L_\infty$-algebra resolving a foliation. We are convinced that there are such general ideas. 
We simply claim that the most classical constructions do not work, and that our construction has at least the merit of being understandable even by mathematicians or theoretical physicists who do not know anything about derived geometry or higher structures beside the definition of a $Q$-manifold, and that we do see the need of more sophisticated tools at this point.

Let us address a second objection. Since it has been conjectured by I. Androulidakis and M. Zambon \cite{AndrouZambis} that not every Hermann
foliation arises as the image under the anchor map of a Lie algebroid
in the neighborhood of a point, 
it is totally relevant to look for objects as universal Lie
$\infty$-algebroids. But we claim that even for foliations that arise in that way, the Lie $\infty$-algebroid 
resolving ${\mathcal  D}$ makes sense, and in fact makes more sense than the Lie algebroid in question. This comes from the simple observation that the algebroid, if any, 
covering a Hermann foliation is not unique. It is, in some sense, `too big' and may encode non-relevant materials. For instance, for $A$ a Lie algebroid defining
a Hermann foliation ${\mathcal D}$ and ${\mathfrak g}$ a Lie algebra, the direct product $A \times {\mathfrak g}$ is again a Lie algebroid that defines the same singular
foliation ${\mathcal D}$. The same would hold for semi-direct products for some $A$-action on ${\mathfrak g}$. More generally, the Lie alegbroid defining a foliation is not
a relevant object because sections which are in the kernel of the anchor map play no role. It is therefore hard to see which information encoded in the Lie algebroid is related to the Hermann foliation. On the contrary, any infomation extracted out of our negatively graded resolution-type Lie $\infty$-algebroid  is relevant provided that it 
be homotopy invariant: \emph{non-relevant information in a Lie algebroid defining  the Hermann foliation is  `killed' while considering the universal Lie $\infty$-algebroid}.

Let us say a few words about the proofs of the previous theorems. They are mainly based on step-by-step construction, and the vanishing of some obstruction classes. We warn the reader that too naive step-by-step constructions obtained by directly defining the homological vector field $Q$ on functions will not work. For Lie $\infty$-algebroids, unlike for
Lie $\infty$-algebras, the vanishing of the cohomology of their linear part does not trivialize the problem, because the cohomology of $Q$ is \emph{not} zero, and even the cohomology of its component of arity $0$ is not. But it happens that the cohomology of $P \mapsto [Q^{(0)},P]$, restricted to vertical vector fields, vanishes at least in the degrees that we are interested in. In fact Lemma \ref{lemmefondamental2} is crucial to understand the proof of all these theorems. 


\section{Proof of the main results}

\subsection{Existence of resolutions of a Hermann foliation}\label{existence}

%


The purpose of this subsection is to prove Proposition \ref{prop:exres}. 
\begin{proof}
The first item simply comes from Hilbert's syzygy theorem, which is valid for finitely generated ${\mathscr O}$-modules,
with ${\mathscr O}$ being the ring of holomorphic functions in a neighborhood of a point in ${\mathbb C}^n$. See  for references Theorem 4 page 137 in \cite{gunningrossi}. On the other hand, any real analytic manifold admits a complexification, such that the original manifold is the fixed point of the involution $z\mapsto \bar{z}$. The module of real analytic functions on this manifold consists of the real part of the module of holomorphic functions on its complexification. Then Hilbert's syzygy Theorem applies to finitely generated $\mathscr{O}$-modules $-$ where $\mathscr{O}$ is the sheaf of real analytic functions on $\mathbb{R}^{n}$. Hermann foliations being locally finitely generated submodules of holomorphic (resp. real analytic) vector fields, Hilbert's syzygy Theorem applies on a neighborhood of any point. Recall that this theorem claims that such finitely generated modules admit resolutions of length less than or equal to $n+1$,
where $n$ is the dimension of $M$. Elements of the resolution are free modules on holomorphic (resp. real analytic) functions defined in a neighborhood of this point - which we interpret as sections of trivial bundles.

Now, we have to prove the second item. According to Theorem 4 in \cite{Tougeron}, germs of smooth functions at a given point are a flat module over germs of real analytic functions at the same point. In particular, it means that given a complex $E_{-k-1} \to E_{-k} \to E_{-k+1}$ of vector bundles on the base manifold such that germs of real analytic functions 
have no cohomology at degree $-k$, the sheaf of germs of smooth sections also has no cohomology at degree $-k$. But since partitions of unity do exist for smooth sections,
it implies that sections on arbitrary open subsets do not have cohomology. Let $e \in \Gamma_U(E_{-k})$ be a local section of $E_{-k}$ defined on an open subset $U$ 
such that $e$ is an element of the kernel of $\dd^{(k)}:E_{-k} \to E_{-k+1}$. For every point $x \in U$, and for any neighborhood $U_x\subset U$ of $x$, there exists
a section $f_x \in \Gamma_{U_{x}}(E_{-k-1})$ such that $ \dd^{(k+1)} f_x = e$. A locally finite open cover $(U_{x_{i}})_{i \in I}$ indexed by $I$  admitting a partition of unity $
(\chi_i)_{i \in I}$
can be extracted from the family $(U_x)_{x \in U}$. Since $\dd^{(-k-1)}$ is ${\mathscr O}$-linear $f := \sum_{i \in I} \chi_i f_{i} $ is a section over $U$ of $E_{-k-1}$ satisfies $\dd^{(-k-1)} f=e$ by construction.
\end{proof}

Let us give several examples of resolutions of Hermann foliations.

\begin{example}
For a regular foliation ${\mathcal F}$, we define $E_{-1 x}=T_{x}{\mathcal F}$ the tangent space of the foliation, equipped with the bracket of vector fields tangent to the foliation. 
\end{example}

\begin{example}
Lie algebroids of quasi-graphoids (as defined by C. Debord \cite{Debord}) are also examples of resolutions with $E_{-i}=0$ for  all $i\geq 2$.
 These follows from Proposition 2 in \cite{Debord} which states that their anchor map is injective on an open subset, hence injective when seen as a map $\Gamma(E_{-1}) \to \Vect ( M)$. In general, a Lie algebroid $(A,[\ .\ ,\, .\ ],\rho)$ for which the anchor map $\rho:A \to TM$ is injective on an open dense subset give resolutions with $E_{-i}=0$ for  all $i\geq 2$ and $E_{-1}=A$.
\end{example}

\begin{example}\label{sl2}
The Lie algebra $\mathfrak{sl}_{2}$ acts on ${\mathbb R}^2$ (equipped with coordinates $x,y$) through the vector fields
 \begin{equation} \label{eq:sl2} 
  h = x \frac{\partial}{\partial x} - y  \frac{\partial}{\partial y}, \quad e = x \frac{\partial}{\partial y},\quad f= y \frac{\partial}{\partial x} .  
 \end{equation}
  Since $ [h,e]=2e, [h,f]=-2f $ and $[e,f]=h$,  the $\cinf ({\mathbb R}^2)$-module generated by $e,f,h$ is a Hermann foliation. The vector fields in
 (\ref{eq:sl2}) are not independent over $\cinf ({\mathbb R}^2)$, since:
   \begin{equation} \label{eq:relationsl2}    xy h  + y^2 e - x^2 f =0 . 
   \end{equation}
We use this equation to define a resolution by the following elements: 
\begin{enumerate} 
\item $E_{-1}$ is the trivial vector bundle of rank $3$ generated by $3$ sections that we denote by
$ \tilde{e},\tilde{f},\tilde{g}$, 
\item then we define an anchor by
 \begin{equation} \rho(\tilde{e})=e,\ \rho(\tilde{f})=f,\ \rho(\tilde{h})=h  \end{equation}
\item $E_{-2}$ shall be a trivial vector bundle of rank $1$, generated by a section that we call $r$,
\item  and we define a vector bundle morphism from $E_{-2}$ to $E_{-1}$ by:
  \begin{equation} {\mathrm d} (r) = xy \tilde{h}  + y^2 \tilde{e} - x^2 \tilde{f}, \end{equation}
\item  and we set $E_{-i}=0$ and $\dd=0$ for $i \geq 3$.
\end{enumerate}
  The triple $(E,{\mathrm d},\rho)$ is a resolution of the foliation given by the action of $\mathfrak{sl}_{2}$ on ${\mathbb R}^2$.
\end{example}

\begin{example}
 The adjoint action of  $\mathfrak{gl}_{n}$ on itself defines a Hermann foliation over $M := \mathfrak{gl}_n$.
To find a resolution of it, it suffices to consider $ E_{-1} $ to be the trivial bundle over  $M= \mathfrak{gl}_n $ with typical fiber $ \mathfrak{gl}_n$,
 and $E_{-2}$ to be the trivial bundle over $M$ with typical fiber $ {\mathbb R}^n$.
 The map $\rho : E_{-1} \to TM $ is, at a point $m \in M= \mathfrak{gl}_n$, obtained by mapping $a \in (E_{-1})_m \simeq \mathfrak{gl}_n $ to $[a,m] \in T_m M \simeq \mathfrak{gl}_n $,
 while $ \dd^{(2)}$ is the vector bundle morphism mapping, for all $m \in M $, an $n$-tuple
 $ (\lambda_1, \dots,\lambda_n) \in (E_{-2})_m $ to $ \sum_{i=1 }^n \lambda_i  m^i  \in (E_{-1})_m \simeq \mathfrak{gl}_n $.
 This comes from the fact that a smooth function $f:\mathfrak{gl}_n \to \mathfrak{gl}_n$ such that $[f(m),m]=0$ for all $m \in \mathfrak{gl}_n$
 has to be of the form $f(m)=\sum_{i=0}^{n} \lambda_i (m) m^i $, with $\lambda_0,\dots,\lambda_n$ smooth functions on $ \mathfrak{gl}_{n-1}$.
 
\end{example}

\nocite{komatsu}

Here is an example of a smooth Hermann foliation that does not admit smooth resolutions.

\begin{example}\label{tu}
Let $\chi$ be a smooth real-valued function on $M:={\mathbb R}$ vanishing identically on $]-\infty,0]$ and $[1,+\infty[$ and strictly positive on $]0,1[$. Consider the Hermann foliation generated by the vector field ${ V}$ defined by:
$${ V}_t := \chi(t)\frac{\dd}{\dd t}\hbox{ for all $t \in {\mathbb R}$}.$$
Let $E \to {\mathbb R}$ be a resolution. 
Let us replace $E$ by a resolution $E'$ defined by:
$E_{-i}'=E_{-i}$ for $i\geq 3$, $E_{-2}'=E_{-2}\oplus {\mathbb R}$ and  $E_{-1}'=E_{-1}\oplus {\mathbb R}$, equipped with the differential
 $ \dd^{(2)} \oplus id_{\mathbb R} $ in degree $-2$ while the additional generator in degree $-1$ is assumed to be in  the kernel of the anchor map.

In a neighborhood of each $ t \in {\mathbb R} $, $E_{-1}'$ admits a nowhere vanishing section $e_t$ such that $\rho(e_t|_{s}) = V_{s} $ for every $s$ in some neighborhood of $t$. Assume that its component in the additional generator is constant equal to $1$. 
Since $[0,1]$ is compact, we can find a finite family $e_{t_0} , \dots e_{t_k}$ of such local sections with $0=t_{0}<\ldots<t_{k}=1$ defined on intervals $I_0,\dots,I_k$. $I_{0}$ and $I_{k}$ can be chosen so that $0\in I_{0}$ and $1\in I_{k}$. For $\varphi_0, \dots,\varphi_k$
a partition of unity relative to these open subsets, the section $e|_{s}=\sum_{i=1}^k \varphi_i (s) e_{t_i}|_{s}$ is nowhere vanishing on a neighborhood of $[0,1]$ and satisfies  $ \rho(e|_{s})= V_{s}$ for every $s$ in this neighborhood. It can be extended to a nowhere vanishing section satisfying the same property on $M={\mathbb R}$.

Since ${\mathbb R}$ is a contractible manifold, each of the vector spaces $(E_{-i}')_{ i \in \mathbb{N}} $ must be trivial, and we denote by $n_{i}$ the rank of $E'_{-i}$. In particular, $\Gamma (E_{-1}')$ is generated by $n_1$ canonical generators.  Without any loss of generality, we can assume that $e_1=e$.
 Moreover, since the image of the anchor map is $V$, we have for every $1\leq k\leq n_{1}$: $\rho(e_k) = g_k { V} = g_k \rho(e_1)$ for some function $g_{k}\in\cinf(\mathbb{R})$. we have $\rho(e_k - g_k e_1)=0$, so that, upon replacing $e_k$ by $e_k - g_k e_1$ for $k=2, \dots,n_1$, we can assume that $\rho(e_k)=0$ for $k=2, \dots,n_1$ . Since $\mathrm{Im}(\dd^{(2)})=\mathrm{Ker}(\rho)$, let $f_2,\dots,f_{n_1}$
 be sections of $E_{-2}$ such that $\dd^{(2)}(f_i)=e_i$ for all $i=2, \dots,n_1$. 
 These sections are linearly independent at all points of $\mathbb{R}$ and it is clear that a second resolution of the foliation is obtained 
 by taking the quotient of $E_{-2}'$ by the sub-bundle generated by  $ f_2,\dots,f_{n_1}$ (denoted by $F_{-2}$) and by replacing $E_{-1}'$ by the trivial one-dimensional vector bundle generated by $e$, that we denote $F_{-1}$. All other vector bundles are left untouched, and we write $F_{-i}=E_{-i}$, for $i\geq3$. For this new resolution $F$, we now have that $F_{-1}$ is a trivial bundle of rank $1$ and that the anchor map $\rho$ projects to an anchor map $\rho_{F}:F_{-1}\to T\mathbb{R}$. When applied to the constant section equal to $1$, it gives ${ V}$.
 
The expression of the anchor map $\rho_{F}$ implies that its kernel in the sections of $F_{-1}$ is the ideal of all smooth real-valued functions on ${\mathbb R}$ vanishing identically on $[0,1]$. But this ideal is \emph{not} finitely generated as $\cinf ({\mathbb R})$-module, hence it cannot be the image of $\Gamma(E_2')\simeq \cinf ({\mathbb R})^k$ through a $\cinf ({\mathbb R})$-linear map. Hence no smooth resolution exists in this case.    
\end{example}

\subsection{A fundamental lemma on vertical vector fields}

The notion of arity introduced in Section \ref{section:dgmanifold} will be at the core of most of the proofs of the present thesis, and as such it deserves to be made precise. Let $E$ be a positively-graded manifold, that is a family of vector bundles $(E_{-i})_{i\geq1}$ over a base manifold $M$. Recall that, by a \emph{vector field}, we mean a derivation of the sheaf of functions $\mathscr{E}$ over $E$ and by a \emph{vertical vector field} we mean a $\mathscr{O}$-linear derivation of $\mathscr{E}$ (which geometrically means that the vector field is tangent to the fibers of $E \to M$). 

We say that a function $f\in\mathscr{E}$ is of \emph{arity $n$} and \emph{degree $k$} if 
$ f \in \Gamma (S^{n}(E^{\ast})_{k})$, \emph{i.e.} if it is a section of $ \sum_{i_1+ \dots + i_n = k } E^*_{-i_1} \odot \dots \odot E_{-i_n}^*  $, where $\odot$ denotes the symmetric product. 
A vector field is said to be of \emph{arity  $n$ and of degree $k \in {\mathbb Z}$} when, seen as a derivation of $\cinf(E)$, it increases the arity by $n$ and the degree by $k$.
The following proposition, states the main properties of the arity of a function and of a vector field:

\begin{proposition}\label{prop:arity}
Let $E \to  M $ be a positively graded manifold.
\begin{enumerate}
\item  The allowed values of arity of a function range from $0$ to $+\infty$, and that of a vector field range from $-1$ to $ +\infty$.
\item The arity of a function is less than or equal to its degree. 
\item Vector fields of arity $-1$ are vertical and of negative degree.
\item Vector fields of arity $0$ and of non zero degree are vertical.
\item Vector fields of arity $n \neq 1$ and of degree $+1$ are vertical.
\item The Lie  bracket of vector fields or arity $n$ and $n'$ is of arity $n+n'$.
\end{enumerate}
\end{proposition}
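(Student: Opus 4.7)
The plan is to establish the six items by direct computation, using the local structure of vector fields and the elementary numerical bound in item (2), which is really the engine driving everything else.

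First I would dispatch items (1) and (2). For functions, arity is non-negative by the very definition $\mathscr{E} = \Gamma(S(E^*)) = \bigoplus_{n\geq 0}\Gamma(S^n(E^*))$. For a vector field $X$, I would use the local expression in adapted coordinates $\{x^i\}$ on the base and homogeneous fibre coordinates $\{q^{\alpha_p}\}$ of degree $p\geq 1$,
\[
X \;=\; \sum_i v^i(x,q)\,\frac{\partial}{\partial x^i} \;+\; \sum_{p\geq 1}\sum_{\alpha_p} f^{\alpha_p}(x,q)\,\frac{\partial}{\partial q^{\alpha_p}},
\]
where $\partial/\partial x^i$ has arity $0$, each $\partial/\partial q^{\alpha_p}$ is the contraction by a dual section of $E_{-p}$ and so has arity $-1$, and the coefficients $v^i, f^{\alpha_p}\in\mathscr{E}$ have arity $\geq 0$. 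Since multiplication by a function of arity $m$ shifts arity by $m$, every summand of $X$ raises arity by at least $-1$. For item (2), any homogeneous summand of a function of arity $n$ and degree $k$ lies in $E^*_{-i_1}\odot\cdots\odot E^*_{-i_n}$ with each $i_j\geq 1$, whence $k = i_1+\cdots+i_n \geq n$.

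Items (3), (4) and (5) all flow from one observation: a vector field is vertical precisely when it annihilates $\mathscr{O}$, the subsheaf of functions of arity $0$ and degree $0$. For (3), if $X$ has arity $-1$ then $X[\mathscr{O}]$ is contained in the space of arity-$(-1)$ functions, which is $\{0\}$ by item (1); hence $X$ is vertical. Testing $X$ on $\xi\in\Gamma(E^*_{-i})$ (arity $1$, degree $i$), one has $X[\xi]\in\mathscr{O}$, forcing $|X|+i=0$ and thus $|X|=-i<0$. For (4), a vector field of arity $0$ and degree $|X|\neq 0$ sends $\mathscr{O}$ into the degree-$|X|$ part of $\mathscr{O}$, which is zero, so $X$ is vertical. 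For (5), if $X$ has arity $n\geq 2$ and degree $+1$, then $X[\mathscr{O}]\subset\Gamma(S^n(E^*)_1)$; but by item (2) any non-zero function of arity $n\geq 2$ has degree at least $2$, so this target vanishes and $X$ is again vertical. The remaining cases $n\in\{-1,0\}$ of item (5) are covered by items (3) and (4).

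Finally, item (6) is immediate: if $X$ is of arity $n$ and $Y$ of arity $n'$, then both compositions $X\circ Y$ and $Y\circ X$ shift arity by $n+n'$, and hence so does the graded commutator $[X,Y]$.

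There is no serious obstacle here; the only mild subtlety lies in rigorously justifying the local expression underlying item (1), which is standard for derivations of the sheaf $\Gamma(S(E^*))$ of a graded vector bundle. Once the local form is in hand, every remaining assertion is a one-line check based on item (2) and the criterion characterising vertical vector fields as those annihilating $\mathscr{O}$.
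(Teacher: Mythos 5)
Your proof is correct and takes essentially the same route as the paper: the paper's own argument (given where this proposition first appears, in the graded-manifolds section) is precisely your local decomposition of a derivation into coefficient functions of non-negative arity multiplying coordinate derivations of arity $0$ or $-1$, which yields item 1. The remaining items are left as immediate in the paper, and your verifications of them — the degree bound $k\geq n$ from positivity of the grading, the criterion that verticality is equivalent to annihilating $\mathscr{O}$, and the additivity of arity under composition — are the intended one-line checks.
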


Now let us say a few words about vertical vector fields. Vertical vector fields form a graded Lie subalgebra of the graded  Lie algebra ${\mathfrak X}(E)$ of vector fields (the grading being given by the degree) that we denote by $ {\mathfrak U}(E) $. The following proposition is an important result that will be used for the proof of the main theorem:

\begin{proposition}\label{lemmefondamental}
There is a one-to-one correspondance between almost-Lie algebroids $A\to M$ and graded manifolds $A[1]\to{ M}$ equipped with a degree 1 vector field of arity 1 whose self-commutator is vertical.
\end{proposition}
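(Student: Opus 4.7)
The plan is to adapt the proof of Theorem \ref{theo:equivoQue} (Vaintrob's equivalence) to this weakened setting, isolating which part of the condition $[Q,Q]=0$ corresponds to the Lie algebra homomorphism axiom \eqref{algebroid2} and which part corresponds to the Jacobi identity (which we drop). The key point is that ``$[Q,Q]$ vertical'' is strictly weaker than ``$[Q,Q]=0$'', and it precisely encodes the morphism property without imposing Jacobi.

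First I would set up the dictionary. A degree $+1$ vector field $Q$ of arity $1$ on $A[1]$ is, by Proposition \ref{prop:arity}, automatically determined by its action on functions of arity $0$ and $1$, namely on $\cinf(M)$ and on $\Gamma(A[1]^{\ast})$, extended by the (graded) Leibniz rule. Starting from such a $Q$, I define the anchor $\rho:A\to TM$ and the bracket $[\ .\ ,\,.\ ]_A$ by the by-now familiar formulas
\begin{align}
\big\langle Q[f],x\big\rangle &= \rho(x)[f], \\
\big\langle Q[\alpha],x\odot y\big\rangle &= \rho(x)\big[\langle\alpha,y\rangle\big] - \rho(y)\big[\langle\alpha,x\rangle\big] - \big\langle\alpha,\{x,y\}\big\rangle,
\end{align}
for $f\in\cinf(M)$, $\alpha\in\Gamma(A[1]^{\ast})$ and $x,y\in\Gamma(A[1])$, where $\{x,y\}$ is identified with $[\widetilde{x},\widetilde{y}]_{A}$ on $\Gamma(A)$ via the degree shift formula \eqref{eq:formula1}. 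Graded symmetry of $\odot$ on the left translates into skew-symmetry of $[\ .\ ,\,.\ ]_A$ on $\Gamma(A)$, and the derivation property of $Q$ over $\cinf(M)$ is immediately equivalent to the Leibniz identity \eqref{algebroid}. Conversely, starting from an almost-Lie algebroid, the same formulas define a degree $+1$, arity $1$ vector field, extended by derivation to all of $\funct$; this construction is inverse to the previous one.

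The only non-trivial point is showing that the self-commutator condition is equivalent to \eqref{algebroid2}. Since $Q$ has arity $1$, the operator $[Q,Q]=2Q^{2}$ has arity $2$. Recall that a vector field is vertical iff it vanishes on $\cinf(M)$, so ``$[Q,Q]$ vertical'' is equivalent to $Q^{2}(f)=0$ for every $f\in\cinf(M)$. Computing as in the Lie algebroid example, one finds
\begin{equation}
\big\langle Q^{2}[f],x\odot y\big\rangle = \Big(\big[\rho(x),\rho(y)\big] - \rho\big(\{x,y\}\big)\Big)[f]
\end{equation}
for all $x,y\in\Gamma(A[1])$ and $f\in\cinf(M)$. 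Thus $[Q,Q]$ is vertical if and only if $\rho\big([x,y]_{A}\big)=\big[\rho(x),\rho(y)\big]$, i.e. exactly the Lie algebra homomorphism condition \eqref{algebroid2}. Note that we do \emph{not} require $Q^{2}$ to vanish on $\Gamma(A[1]^{\ast})$: the arity-$3$ component $\langle Q^{2}[\alpha],x\odot y\odot z\rangle$ is precisely the Jacobiator of $[\ .\ ,\,.\ ]_A$, which may well be non-zero for an almost-Lie algebroid.

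The main technical obstacle is just bookkeeping the signs and degrees in the formula for $\{x,y\}$ versus $[\widetilde{x},\widetilde{y}]_{A}$ under the degree-shift \eqref{eq:formula1}--\eqref{eq:formula2}, and checking that extending $Q$ by derivation to all of $\funct$ produces a well-defined homogeneous operator of arity $1$ regardless of whether $Q^{2}$ vanishes on $\Gamma(A[1]^{\ast})$; this is automatic because derivations form a module over the algebra of functions and arity $1$ is preserved by the Leibniz extension. All other verifications are direct specializations of computations performed in the proof of Theorem \ref{theo:equivoQue}, so the proof essentially reduces to truncating that argument to the arity $\leq 2$ part.
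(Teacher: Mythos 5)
Your proposal is correct and follows essentially the same route as the paper: both truncate the Vaintrob--Voronov correspondence of Theorem \ref{theo:equivoQue}, define $\rho$ and the bracket via the dual formulas \eqref{anchor1}--\eqref{anchor2}, and observe that verticality of $[Q,Q]$ (i.e.\ vanishing on $\cinf(M)$) is exactly the homomorphism condition \eqref{algebroid2}, while the Jacobiator sits in the component of $Q^{2}$ acting on $\Gamma(A[1]^{\ast})$ and is left unconstrained. The only difference is that you write out explicitly the computation $\big\langle Q^{2}[f],x\odot y\big\rangle=\big(\big[\rho(x),\rho(y)\big]-\rho(\{x,y\})\big)[f]$, which the paper dismisses as ``an easy computation''.
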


\begin{remarque}
This result holds in the smooth, real analytic and holomorphic case.
\end{remarque}

\begin{proof}
The correspondence between Lie algebroids and $NQ$-manifolds of degree 1 is well-known. We can develop the same machinery here. For any almost-Lie algebroid $(A,[\ .\ ,\,.\ ]_{A},\rho_{A})$, there is a graded manifold $A[1]\to{ M}$ whose space of sections $\Gamma(A[1])$ is equipped with a graded symmetric bracket $\{\ .\ ,\,.\ \}$ of degree $+1$ (because local coordinates on the fibers of $A[1]$ have degree $+1$), and a degree $+1$ bundle map $\rho:A[1]\to TM$ compatible with the bracket.
%

Recall that functions on the graded manifold $A[1]$ form the graded algebra $\Gamma(\wedge^{\bullet} A^*)$.
Following Equations \eqref{anchor1} and \eqref{anchor2}, there is an unique vector field of degree $+1$
on the graded manifold $A[1] $ such that:
 $ \rho^* \dd_{\dd R} f = [X,f]  $
 for all functions $f$ on $M$ and $\{a,a'\}=\big[[Q,\partial_{a}],\partial_{a'}\big] $ 
for all sections  $a, a' \in \Gamma(A^*)$.
Above $\partial_a$ and $\partial_{a'}$ correspond to the vector fields of degree $-1$ on $A[1]$ given by contraction by $a$ and $a'$ respectively, as defined in Equation \eqref{innerder}.
An easy computation gives that Condition \eqref{algebroid2} is equivalent to requiring $[Q,Q]$ to be vertical, i.e. vanishes when applied to a function
on the base manifold.

Reciprocally, a degree 1 graded manifold $B\to{ M}$ with a degree 1 vector field $Y$ of arity 1 whose square is vertical provides us with the following elements: the horizontal part of $Y$ gives the action of the anchor map, whereas the vertical part gives the dual of the bracket (on constant sections), as in Equations \eqref{anchor1} and \eqref{anchor2}. The condition on the vertical self-commutator implies that Equation \eqref{algebroid2} is satisfied. Knowing that Equation \eqref{algebroid} is automatically satisfied by applying Equation \ref{eq:correspondence} to define $\{x,f y\}$, it turns $B[-1]$ into an almost-Lie algebroid.
\end{proof}


We will heavily use the concept of arity, in particular when decomposing the vector fields and the morphisms in components of homogeneous arities.
The space of vector fields on $E$ then carries two different gradings : the arity and the total degree.

Recall that a \emph{vertical vector field} on $E$ is a vector field which is $\mathscr{O}$-linear, which geometrically means that it is parallel to the fibers of the projection from $E$
onto its base $M$. Identifying the tangent space of $E$ at each point with the fiber $E$, we obtain that for every $n\geq1$, there is an isomorphism between the vector space of vertical vector fields of arity $n-1$ and elements of the direct sum:
\begin{equation*}
\F{U}^{(n-1)}=\bigoplus_{k=-\infty}^{+\infty}\underset{i,j\geq1}{\bigoplus_{i-j=k}} \Gamma\Big(S^{n}(E^*)_i  \otimes E_{-j}\Big)
\end{equation*}
Sections of $S^{n}(E^*)_i  \otimes E_{-j}$ are said to be of \emph{height} $i$ and \emph{depth} $j$. Since homogeneous elements of $E^{\ast}$ have at least degree one, the height is valued in $\{n, n+1,\ldots\}$, whereas the depth is valued in $\{1,2,\ldots\}$, so that vertical vector fields of arity $n-1$ and degree $k$ can be represented as infinite sums of elements in the anti-diagonals $i-j=k$ (`\emph{height $-$ depth = k}') in the sections of the bicomplex:
\bigskip


\begin{center}
\begin{tikzpicture}
\draw[xstep=3.5cm,ystep=1,color=gray] (1,0) grid (14,3.5);
    \node at (2,0.5)  {$\cdots$};
    \node at (5.3,0.5)  {$S^n(E^*)_{n} \otimes E_{-3}$};
    \node at (8.8,0.5)  {$S^n(E^*)_{n} \otimes E_{-2}$};
    \node at (12.3,0.5)  {$S^n(E^*)_{n} \otimes E_{-1}$};
    \node at (2,1.5)  {$\cdots$};
    \node at (5.3,1.5)  {$S^n(E^*)_{n+1} \otimes E_{-3}$};
    \node at (8.8,1.5)  {$S^n(E^*)_{n+1} \otimes E_{-2}$};
    \node at (12.3,1.5)  {$S^n(E^*)_{n+1} \otimes E_{-1}$};
    \node at (2,2.5)  {$\cdots$};
    \node at (5.3,2.5)  {$S^n(E^*)_{n+2} \otimes E_{-3}$};
    \node at (8.8,2.5)  {$S^n(E^*)_{n+2} \otimes E_{-2}$};
    \node at (12.3,2.5)  {$S^n(E^*)_{n+2} \otimes E_{-1}$};
    \node at (5.3,3.5)  {$\cdots$};
    \node at (8.8,3.5)  {$\cdots$};
    \node at (12.3,3.5)  {$\cdots$};
\end{tikzpicture}
\end{center}


\bigskip
The \emph{depth} (resp. \emph{height}) of a vertical vector field of a fixed degree is the minimum of the depths (resp. height) of all its non zero constituents. In the diagram above it would coincide with the depth and height of the lowest element of the anti diagonal which symbolizes the given vector field. The \emph{root} of a vertical vector field of degree $k$ is its component of depth $1$ $-$ the $root$ of $X$ is denoted by $rt(X)$. It can be zero, and in that case the depth of $X$ is stricly higher than 1. A \emph{root-free} element is a vertical vector field which does not have a root (in particular it means that its depth is stricly higher than $1$), otherwise it is said to be \emph{rooted}. For degree reason, any vertical vector field of arity $n$ and degree less than or equal to $n-1$ is root-free.


\begin{figure}[ht]
  \centering
  \begin{tikzpicture}[scale=0.50]
    \coordinate (Origin)   at (0,0);
    \coordinate (XAxisMin) at (10,-2);
    \coordinate (XAxisMax) at (0,-2);
    \coordinate (YAxisMin) at (10,1);
    \coordinate (YAxisMax) at (10,8);
    \coordinate (ZAxisMin) at (10,-2);
    \coordinate (ZAxisMax) at (10,-1);
    \coordinate (WAxisMin) at (10,-1);
    \coordinate (WAxisMax) at (10,1);
    \draw [ultra thick, black,-latex] (XAxisMin) -- (XAxisMax) node [left] {$depth$};
    \draw [ultra thick, black,-latex] (YAxisMin) -- (YAxisMax) node [above] {$height$};
    \draw [ultra thick, black] (ZAxisMin) -- (ZAxisMax);
    \draw [ultra thick, black,dashed] (WAxisMin) -- (WAxisMax);

    \clip (-3,-3) rectangle (12cm,9cm); 
    \coordinate (Bone) at (7,5);
    \coordinate (Btwo) at (3,9);
    \coordinate (B1) at (-1,9);
    \coordinate (B2) at (6,2);
    \coordinate (B3) at (4,4);
    \coordinate (B4) at (0,8);
    \draw[style=help lines,dashed] (-1,1) grid[step=2cm] (10,9);
    \draw (-1,-2) -- (10,-2)[dashed, gray];
    \draw (8,-2) -- (8,-1)[dashed, gray];
    \draw (6,-2) -- (6,-1)[dashed, gray];
    \draw (4,-2) -- (4,-1)[dashed, gray];
    \draw (2,-2) -- (2,-1)[dashed, gray];
    \draw (0,-2) -- (0,-1)[dashed, gray];
    \foreach \x in {0,1,...,4}{
      \foreach \y in {1,2,...,4}{
        \node[draw,circle,inner sep=1pt,fill] at (2*\x,2*\y) {};
        \node[draw,circle,inner sep=1pt,fill,red] at (6,6){};
        \node[draw,circle,inner sep=1pt,fill,red] at (4,8){};
         \node[draw,circle,inner sep=1pt,fill,blue] at (6,4){};
        \node[draw,circle,inner sep=1pt,fill,blue] at (4,6){};
        \node[draw,circle,inner sep=1pt,fill,blue] at (2,8){};
        \node[draw,circle,inner sep=1pt,fill,red] at (8,4){};
        \node[draw,circle,inner sep=1pt,fill,green] at (B2){};
        \node[draw,circle,inner sep=1pt,fill,green] at (B3){};
        \node[draw,circle,inner sep=1pt,fill,green] at (B4){};
      }
    }
    \node [below] at (8,-2)  {$1$};
    \node [below] at (6,-2)  {$2$};
    \node [below] at (4,-2)  {$3$};
    \node [right] at (10,2)  {$n$};
    \node [right] at (10,4)  {$n+1$};
    \node [right] at (10,6)  {$n+2$};
    \node [below right] at (10,-2)  {$0$};
    
    \draw [ultra thick,red] (Btwo)
        -- (8,4);
    \node [above right,red] at (5,7) {\large $ U$};
    \draw [ultra thick,green] (B1)
        -- (B4);
    \draw [ultra thick,green,dashed] (B4)
        -- (B3);
    \draw [ultra thick,green] (B3)
        -- (B2);
    \node [below left,green] at (3,5) {\large $ V$};
    \draw [ultra thick,blue] (1,9)
        -- (6,4);
    \draw [ultra thick,blue,dashed] (6,4)
        -- (8,2);
    \node [below ,blue] at (8,2) {\large $W$};
  \end{tikzpicture}
  \caption{\footnotesize This picture represents vertical vector field of degree $n$. Each box in the diagram above is represented here by a dot. Antidiagonals represent vertical vector fields of fixed degrees. Colored dots represents components of the vector fields which are not zero. Dashed lines make a link between components which are zero to any other adjacent component. The red antidiagonal symbolizes a rooted vector field $U$ of degree $n$ (of height $n+1$). The green antidiagonal represents a root-free vector field $ V$ of degree $n-2$ (of height $n$ and depth $2$). The blue antidiagonal represents a vector field $W$ of degree $n-1$ which is $-$ although rooted $-$ of depth $2$. Then necessarily $rt(W)=0$.}
  \label{figure1}
\end{figure}
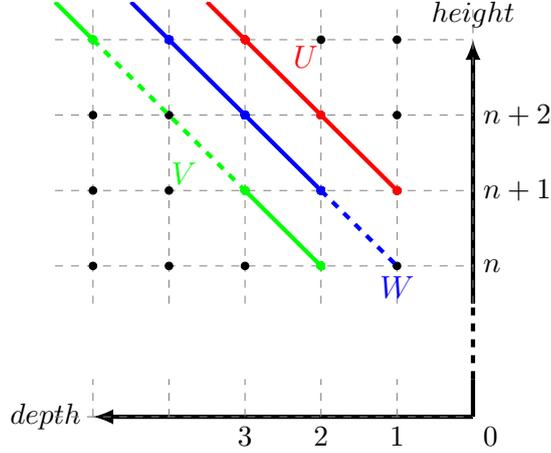

\bigskip
Now, it is clear that, when $(E,Q)$ is a Lie $\infty$-algebroid, and $Q^{(0)}$ the component of arity $0$ of $Q$
whose dual differential we denote by $\dd$ (as in Lemma \ref{dualQ0}),
then $X \mapsto [Q^{(0)},X]$ squares to zero and therefore makes vertical vector fields a complex.
This complex restricts to vertical vector fields of a given arity.
Moreover, upon decomposing vertical vector fields of a given arity as above, 
this operator is the total differential of a
bicomplex structure on ${\mathfrak{U}}^{(n-1)}$ with horizontal differential $\mathrm{id}\otimes\dd$ and a vertical differential $Q^{(0)}\otimes\mathrm{id}$:

\bigskip

\begin{center}
\begin{tikzpicture}
\draw[xstep=3.5cm,ystep=1,color=gray] (1,0) grid (14,3.5);
    \node at (2,0.5)  {$\cdots$};
    \node at (5.3,0.5)  {$S^n(E^*)_{n} \otimes E_{-3}$};
    \node at (8.8,0.5)  {$S^n(E^*)_{n} \otimes E_{-2}$};
    \node at (12.3,0.5)  {$S^n(E^*)_{n} \otimes E_{-1}$};
    \node at (2,1.5)  {$\cdots$};
    \node at (5.3,1.5)  {$S^n(E^*)_{n+1} \otimes E_{-3}$};
    \node at (8.8,1.5)  {$S^n(E^*)_{n+1} \otimes E_{-2}$};
    \node at (12.3,1.5)  {$S^n(E^*)_{n+1} \otimes E_{-1}$};
    \node at (2,2.5)  {$\cdots$};
    \node at (5.3,2.5)  {$S^n(E^*)_{n+2} \otimes E_{-3}$};
    \node at (8.8,2.5)  {$S^n(E^*)_{n+2} \otimes E_{-2}$};
    \node at (12.3,2.5)  {$S^n(E^*)_{n+2} \otimes E_{-1}$};
    \node at (5.3,3.5)  {$\cdots$};
    \node at (8.8,3.5)  {$\cdots$};
    \node at (12.3,3.5)  {$\cdots$};
    \node at (14.6,0.5)  {$0$};
    \node at (14.6,1.5)  {$0$};
    \node at (14.6,2.5)  {$0$};
    \node at (5.3,-0.5)  {$0$};
    \node at (8.8,-0.5)  {$0$};
    \node at (12.3,-0.5)  {$0$};
    
    \draw [-latex] (3,0.5) -- (3.9,0.5);
    \draw [-latex] (6.7,0.5) -- (7.4,0.5);
    \draw [-latex] (10.2,0.5) -- (10.9,0.5);
    \draw [-latex] (13.7,0.5) -- (14.4,0.5);
    \draw [-latex] (3,1.5) -- (3.8,1.5);
    \draw [-latex] (6.8,1.5) -- (7.3,1.5);
    \draw [-latex] (10.3,1.5) -- (10.8,1.5);
    \draw [-latex] (13.8,1.5) -- (14.4,1.5);
    \draw [-latex] (3,2.5) -- (3.8,2.5);
    \draw [-latex] (6.8,2.5) -- (7.3,2.5);
    \draw [-latex] (10.3,2.5) -- (10.8,2.5);
    \draw [-latex] (13.8,2.5) -- (14.4,2.5);
    \draw [-latex] (5.3,-0.2) -- (5.3,0.2);
    \draw [-latex] (5.3,0.8) -- (5.3,1.2);
    \draw [-latex] (5.3,1.8) -- (5.3,2.2);
    \draw [-latex] (5.3,2.8) -- (5.3,3.2);
    \draw [-latex] (8.8,-0.2) -- (8.8,0.2);
    \draw [-latex] (8.8,0.8) -- (8.8,1.2);
    \draw [-latex] (8.8,1.8) -- (8.8,2.2);
    \draw [-latex] (8.8,2.8) -- (8.8,3.2);
    \draw [-latex] (12.3,-0.2) -- (12.3,0.2);
    \draw [-latex] (12.3,0.8) -- (12.3,1.2);
    \draw [-latex] (12.3,1.8) -- (12.3,2.2);
    \draw [-latex] (12.3,2.8) -- (12.3,3.2);
\end{tikzpicture}
\end{center}

\noindent The following lemma then gives us the total differential on this bicomplex:

\begin{lemme}\label{lemmedifferentiel}
For every $n\geq0$, the space of vertical vector fields of arity $n$, equipped with the adjoint action $X\longmapsto[Q^{(0)},X]$, is, as a complex, isomorphic to the bicomplex $\mathfrak{ U}^{(n)}$, equipped with its total differential.
\end{lemme}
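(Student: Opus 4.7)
The plan is to make explicit the identification of graded vector spaces that underlies the statement, and then to match the adjoint action $[Q^{(0)},\,.\,]$ with the total differential of the bicomplex term by term.

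First, I would fix the canonical isomorphism of graded vector spaces between vertical vector fields of arity $n$ and $\mathfrak{U}^{(n)}$. Any vertical vector field $X$ on $E$ is, by definition, an $\mathscr{O}$-linear derivation of $\mathscr{E}$, and as such is entirely determined by its restriction to the generators of $\mathscr{E}$ as an $\mathscr{O}$-algebra, namely the sections of $E^{\ast}$. If $X$ has arity $n$, this restriction is a map $\Gamma(E^{\ast}_{-j}) \to \Gamma(S^{n+1}(E^{\ast})_{i})$ for each pair $(i,j)$; contracting with the evaluation pairing identifies $X$ with an element of $\bigoplus_{i,j\geq 1} \Gamma(S^{n+1}(E^{\ast})_{i}\otimes E_{-j})$, the $(i,j)$-component being represented by a pure tensor of the form $\xi \cdot \partial_{e}$ with $\xi \in \Gamma(S^{n+1}(E^{\ast})_{i})$, $e\in \Gamma(E_{-j})$ and $\partial_{e}$ the inner derivation from Equation \eqref{innerder}. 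Fixing the degree $k$ amounts to imposing the antidiagonal condition $i-j=k$, which reproduces exactly the bigraded piece $\mathfrak{U}^{(n)}$ described before the lemma.

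Second, I would compute $[Q^{(0)},\xi\cdot\partial_{e}]$ on a pure tensor, using that $Q^{(0)}$ is a degree $+1$, arity $0$ derivation of $\mathscr{E}$. The graded Leibniz rule gives
\begin{equation*}
[Q^{(0)},\xi\cdot\partial_{e}] \;=\; Q^{(0)}(\xi)\cdot\partial_{e} \;+\; (-1)^{|\xi|}\,\xi\cdot[Q^{(0)},\partial_{e}] .
\end{equation*}
The arity $0$ part of Equation \eqref{rienavoir} identifies $[Q^{(0)},\partial_{e}]$ with $\pm\,\partial_{\dd e}$, where $\dd:E_{-j}\to E_{-j+1}$ is the linear part of the $Q$-manifold, so that
\begin{equation*}
[Q^{(0)},\xi\cdot\partial_{e}] \;=\; Q^{(0)}(\xi)\cdot\partial_{e} \;\pm\; \xi\cdot\partial_{\dd e} .
\end{equation*}
Under the identification of the first step, the first summand belongs to $\Gamma(S^{n+1}(E^{\ast})_{i+1}\otimes E_{-j})$ and is the image of $\xi\otimes e$ under the vertical differential $Q^{(0)}\otimes\mathrm{id}$, while the second belongs to $\Gamma(S^{n+1}(E^{\ast})_{i}\otimes E_{-j+1})$ and is the image under the horizontal differential $\mathrm{id}\otimes\dd$. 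This is exactly the total differential of the bicomplex $\mathfrak{U}^{(n)}$.

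Finally, since the identification is $\mathscr{O}$-linear and respects the bigrading, extending by linearity the formula above to arbitrary vertical vector fields of arity $n$ yields that $[Q^{(0)},\,.\,]$ coincides, under the isomorphism, with the total differential of the bicomplex. The only point that requires vigilance, and that I expect to be the main bookkeeping obstacle, is the consistent tracking of Koszul signs coming from the graded commutator and from extending $Q^{(0)}$ as a derivation on $S(E^{\ast})$; this amounts to checking that the sign conventions used to define $Q^{(0)}\otimes\mathrm{id}$ and $\mathrm{id}\otimes\dd$ on $\mathfrak{U}^{(n)}$ match the natural ones induced by the identification, which is a routine verification.
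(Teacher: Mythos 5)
Your proposal is correct and follows essentially the same route as the paper: both identify a vertical vector field of arity $n$ with sums of pure tensors in $\Gamma(S^{n+1}(E^{\ast})\otimes E)$ and compute the commutator on such a tensor, splitting it into $Q^{(0)}\otimes\mathrm{id}$ plus $\mathrm{id}\otimes\dd$ (the paper does this by pairing against a test section $u\in\Gamma(E^{\ast})$ and invoking the duality identity $\langle u,\dd\xi\rangle=(-1)^{j-1}\langle Q^{(0)}(u),\xi\rangle$, which is exactly your fact $[Q^{(0)},\partial_{e}]=\pm\,\partial_{\dd e}$ in dual form). The only item the paper adds, which your plan leaves implicit, is the check that the total differential squares to zero via $[Q^{(0)},[Q^{(0)},X]]=\tfrac{1}{2}\big[[Q^{(0)},Q^{(0)}],X\big]=0$, a one-line verification.
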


\begin{proof}
We have to show that the adjoint action $X\mapsto[Q^{(0)},X]$ is the sum of the horizontal map $\R{id} \otimes \R{d} $ and the vertical map $ Q^{(0)} \otimes \R{id}$. Let $i,j\geq1$ and consider an element $P\otimes \xi\in \Gamma(S^{n+1}(E^{\ast})_{i}\otimes E_{-j})$ and $u\in\Gamma(E^\ast)$. Recall the identity:
\begin{equation}\label{actiondual}
\big<u,\dd^{(j)}(\xi)\big>=(-1)^{j-1}\big<Q^{(0)}(u),\xi\big>
\end{equation}
which is non identically vanishing if and only if $u\in\Gamma(E_{-j+1}^{\ast})$. Recall as well as the identification between $\xi$ and the degree $-j$ derivation $\partial_{\xi}$ obtained in Equation \ref{innerder}. Thus we can indifferentely write $P\partial_{\xi}$ or $P\otimes\xi$ when thinking either from the vertical vector field point of view, or from the bicomplex point of view. Then we have for all $u\in\Gamma(E^{\ast})$:
\begin{align}
\big[Q^{(0)},P\partial_\xi\big][u]
&=Q^{(0)}(P\partial_\xi[u])-(-1)^{i-j}(P\partial_\xi) (Q^{(0)}(u))\\
&=Q^{(0)}(P\big<u,\xi\big>)-(-1)^{i-j}P \big<Q^{(0)}(u),\xi\big>\nonumber\\
&=Q^{(0)}(P)\big<u,\xi\big>+(-1)^{i}P \big<u,\dd(\xi)\big>\nonumber\\
&=Q^{(0)}(P)\partial_{\xi}[u]+(-1)^{i}P\partial_{\dd(\xi)}[u]\nonumber\\
&=\big((Q^{(0)}\otimes\mathrm{id}+\mathrm{id}\otimes\dd)\circ(P\otimes\xi)\big)(u)\nonumber
\end{align}
Thus, we indeed have that $[Q^{(0)},\ .\ ]$ can be identified with $Q^{(0)}\otimes\mathrm{id}+\mathrm{id}\otimes\dd$. The properties of the brackets of vertical vector fields implies that this action preserves the arity and increases the degree by one. Moreover:
\begin{equation}
(Q^{(0)}\otimes\mathrm{id}+\mathrm{id}\otimes\dd)^{2}=Q^{(0)}\otimes\dd+\mathrm{id}\otimes\dd\circ (Q^{(0)}\otimes\mathrm{id})=Q^{(0)}\otimes\dd-Q^{(0)}\otimes\dd=0
\end{equation}
which is equivalent to this identity on vector fields, for any $X\in\mathfrak{U}^{(n)}$:
\begin{equation}
\big[Q^{(0)},[Q^{(0)},X]\big]=\frac{1}{2}\big[\underbrace{[Q^{(0)},Q^{(0)}]}_{=\ 0},X\big]
\end{equation}
Thus it is a differential on $\mathfrak{U}^{(n)}$. 
\end{proof}


The vertical lines in the previous bicomplex may not be  exact, whereas the exactness of the sequence:
\begin{center}
\begin{tikzcd}[column sep=0.7cm,row sep=0.4cm]
\ldots\ar[r,"\dd"]&\Gamma(E_{-2})\ar[r,"\dd"]&\Gamma(E_{-1})\ar[r,"\rho"]&\mathcal{D}\ar[r]&0
\end{tikzcd}
\end{center}
implies that the horizontal lines are exact, except maybe at depth $1$. More precisely, by exactness of the short sequence  $\Gamma(E_{-2})\overset{\R{d}}{\longrightarrow} \Gamma(E_{-1})\overset{\rho}{\longrightarrow}\mathcal{D} $,  an element of height $i$ and depth $1$ is a coboundary if and only if its image under $\R{id} \otimes \rho$ is zero. By diagram chasing, this leads to the following fundamental lemma, which will be used thoroughly:

\begin{lemme}\label{lemmefondamental2}
Let  $n \geq 1$ be an integer, and consider the bicomplex $\big(\F{\mathfrak U}^{(n)},[Q^{(0)},\,.\ ]\big) $ of vertical vector fields of arity $n$. 
\begin{enumerate}
\item A root-free cocycle is a coboundary.
\item A cocycle whose root is in the kernel of $ \R{id} \otimes \rho$ is a coboundary.

\end{enumerate}
 
\end{lemme}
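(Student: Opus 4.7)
The plan is to derive item 1 as the particular case of item 2 in which the root vanishes, so I will focus on item 2. The key tool is the exactness, for each fixed height $i$, of the tensored resolution
\[
\cdots \xrightarrow{D_h} \Gamma\bigl(S^{n+1}(E^*)_i \otimes E_{-2}\bigr) \xrightarrow{D_h} \Gamma\bigl(S^{n+1}(E^*)_i \otimes E_{-1}\bigr) \xrightarrow{\mathrm{id}\otimes\rho} \Gamma(S^{n+1}(E^*)_i) \otimes_{\mathscr{O}} \mathcal{D} \longrightarrow 0,
\]
where $D_h := \mathrm{id} \otimes \dd$ is the horizontal differential and $D_v := Q^{(0)} \otimes \mathrm{id}$ the vertical one. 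Exactness follows from the local projectivity (hence flatness) of $\Gamma(S^{n+1}(E^*)_i)$ combined with the fact that $(E,\dd,\rho)$ is a resolution of $\mathcal{D}$. Consequently $D_h$ is exact at every depth $\geq 2$, and its cohomology at depth $1$ is precisely detected by $\mathrm{id} \otimes \rho$.

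Given a cocycle $X = \sum_{j\geq 1} X_{k+j,j}$ of degree $k$ whose root $X_{k+1,1}$ lies in $\ker(\mathrm{id}\otimes\rho)$, I will construct a primitive by eliminating the components of $X$ depth by depth, starting with the root. First, the hypothesis combined with the displayed exactness produces $Y_1 \in \Gamma\bigl(S^{n+1}(E^*)_{k+1}\otimes E_{-2}\bigr)$ with $D_h Y_1 = X_{k+1,1}$, and setting $X^{(1)} := X - [Q^{(0)}, Y_1]$ kills the root while modifying only the depth-$2$ component (since $D_h Y_1$ sits at depth $1$ and $D_v Y_1$ sits at depth $2$). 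Inductively at depth $j\geq 2$, the cocycle identity $D_h X^{(j-1)}_{k+j,j} \pm D_v X^{(j-1)}_{k+j-1,j-1} = 0$ combined with the vanishing of the depth-$(j-1)$ component after the previous step forces $D_h X^{(j-1)}_{k+j,j} = 0$; exactness of $D_h$ at depth $j \geq 2$ then furnishes $Y_j \in \Gamma\bigl(S^{n+1}(E^*)_{k+j}\otimes E_{-(j+1)}\bigr)$ with $D_h Y_j = X^{(j-1)}_{k+j,j}$, and $X^{(j)} := X^{(j-1)} - [Q^{(0)}, Y_j]$ is supported in depths $\geq j+1$.

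The desired primitive is then the formal sum $Y := \sum_{j \geq 1} Y_j$. Because the $Y_j$ live at distinct and strictly increasing depths, $Y$ is a well-defined vertical vector field of degree $k-1$ and arity $n$ in the natural formal/completed setting; equivalently, only finitely many $Y_j$ contribute when $Y$ acts on a function of bounded arity. By construction $[Q^{(0)}, Y] = X$, proving item 2, and item 1 is the case in which the (zero) root trivially lies in $\ker(\mathrm{id}\otimes\rho)$. The main point requiring care is the bookkeeping: one must verify that subtracting $[Q^{(0)}, Y_j]$ only affects depths $\geq j$, which is automatic because $D_h Y_j$ lies at depth $j$ while $D_v Y_j$ lies at depth $j+1$, making the elimination strictly triangular; one must also manage the signs appearing in the cocycle identity coming from the bicomplex structure of Lemma \ref{lemmedifferentiel}, but these do not affect the argument structurally.
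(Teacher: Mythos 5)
Your proof is correct and follows essentially the same route as the paper's: a depth-by-depth diagram chase in the bicomplex, using exactness of the horizontal differential $\mathrm{id}\otimes\dd$ (coming from tensoring the resolution with the projective module $\Gamma(S^{n+1}(E^*))$) to eliminate the components of the cocycle one depth at a time, with the root hypothesis needed only for the first step. The only differences are cosmetic: you deduce item 1 as the zero-root case of item 2 while the paper proves item 1 first and notes item 2 is identical, and your recursive subtraction $X^{(j)} := X^{(j-1)} - [Q^{(0)},Y_j]$ is a cleaner bookkeeping of the same construction the paper carries out by checking the components of $[Q^{(0)}, Y_{p_0+1}+Y_{p_0+2}+\cdots]$ directly.
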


\begin{proof}
This is just a matter of classical arguments using the exactness of the lines of the augmented bicomplex.
Let us explain the first point. We let $p_{0}\geq2$ be the depth of $X$, that is the depth of its non zero component of lower height. Let $X$ be a cocycle of $\mathfrak{U}^{(n)}$ of degree $k$ lower than or equal to $n-1$. In that case its depth is stricly higher than 1. We can decompose this vertical vector field into its components of different depths (which are necessarily strictly greater than 1): $X=\sum_{p\geq p_{0}}X_{p}$. 
The vertical vector field $X$ being a cocycle, the commutator $[Q^{(0)},X]$ vanishes, and if we write it depthwise we obtain at the lower level (which is naturally at depth $p_{0}-1$):
\begin{equation}
0=[Q^{(0)},X]_{p_{0}-1}=(\mathrm{id}\otimes\dd)(X_{p_{0}})
\end{equation}

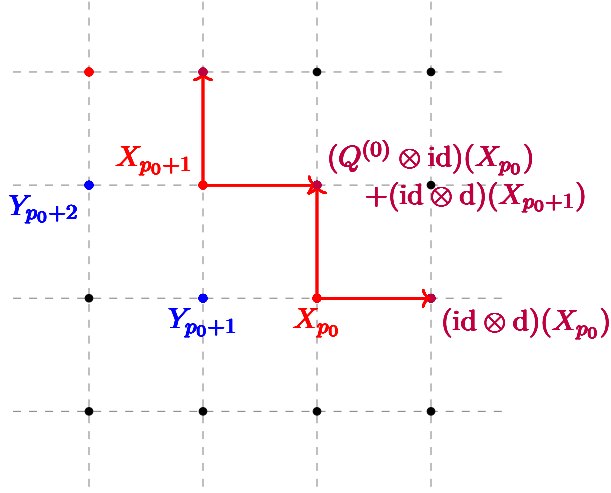
\begin{figure}[ht]
  \centering
  \begin{tikzpicture}[scale=0.50]
    \coordinate (Origin)   at (0,0);
    \coordinate (XAxisMin) at (10,-2);
    \coordinate (XAxisMax) at (0,-2);
    \coordinate (AAxisMin) at (9,-2);
    \coordinate (AAxisMax) at (7,-2);
    \coordinate (YAxisMin) at (10,1);
    \coordinate (YAxisMax) at (10,8);
    \coordinate (ZAxisMin) at (10,-2);
    \coordinate (ZAxisMax) at (10,-1);
    \coordinate (WAxisMin) at (10,-1);
    \coordinate (WAxisMax) at (10,1);

    \clip (-3,-3) rectangle (15cm,11cm); 
    \coordinate (Bone) at (4,2);
    \coordinate (Btwo) at (-6,12);
    \coordinate (B1) at (-7,11);
    \coordinate (B2) at (2,2);
    \coordinate (B3) at (0,4);
    \coordinate (B4) at (-4,8);
    \draw[style=help lines,dashed] (-2,-2) grid[step=3cm] (11,11);
    \foreach \x in {0,1,...,3}{
      \foreach \y in {0,1,...,3}{
        \node[draw,circle,inner sep=1pt,fill] at (3*\x,3*\y) {};
        \node[draw,circle,inner sep=1pt,fill,red] at (6,3){};
        \node [below,red] at (6,3) { $X_{p_{0}}$};
        \node[draw,circle,inner sep=1pt,fill,red] at (3,6){};
        \node [above left,red] at (3,6) { $X_{p_{0}+1}$};
         \node[draw,circle,inner sep=1pt,fill,red] at (0,9){};
        \node[draw,circle,inner sep=1pt,fill,blue] at (3,3){};
        \node [below,blue] at (3,3) { $Y_{p_{0}+1}$};
        \node[draw,circle,inner sep=1pt,fill,blue] at (0,6){};
        \node [below left,blue] at (0,6) { $Y_{p_{0}+2}$};
        \node[draw,circle,inner sep=1pt,fill,purple] at (9,3){};
        \node [below right,purple] at (9,3) { $(\mathrm{id}\otimes\dd)(X_{p_{0}})$};
        \draw [->,thick,red] (6,3) -- (9,3);
        \draw [->,thick,red] (6,3) -- (6,6);
        \draw [->,thick,red] (3,6) -- (6,6);
        \draw [->,thick,red] (3,6) -- (3,9);
        \node[draw,circle,inner sep=1pt,fill,purple] at (6,6){};
        \node [above right,purple] at (6,6) { $(Q^{(0)}\otimes\mathrm{id})(X_{p_{0}})$};
         \node [above right,purple] at (7,5) {$+(\mathrm{id}\otimes\dd)(X_{p_{0}+1})$};
         \node[draw,circle,inner sep=1pt,fill,purple] at (3,9){};
      }
    }

%
  \end{tikzpicture}
  \caption{\footnotesize Illustrated here is the process of building the first components of the vector field $Y$.}
  \label{figure12}
\end{figure}

Since $X$ is root-free, we know that $(\mathrm{id}\otimes\dd)(X_{p_{0}})$ has at least depth $p_{0}-1\geq1$. Since the differential $\dd$ is exact on the complex of sections of $E$, it implies that $X_{p_{0}}$ is a $(\mathrm{id}\otimes\dd)$-coboundary: there exists $Y_{p_{0}+1}\in\Gamma(S^{n+1}(E^{\ast})\otimes E_{-p_{0}-1})$ of depth $p_{0}+1$ such that
\begin{equation}
(\mathrm{id}\otimes\dd)(Y_{p_{0}+1})=X_{p_{0}}
\end{equation}
 Considering the term of depth $p_{0}$ we obtain
\begin{align}
0=[Q^{(0)},X]_{p_{0}}
&=(Q^{(0)}\otimes\mathrm{id})(X_{p_{0}})+(\mathrm{id}\otimes\dd)(X_{p_{0}+1})\\
&=(Q^{(0)}\otimes\mathrm{id})\circ(\mathrm{id}\otimes\dd)(Y_{p_{0}+1})+(\mathrm{id}\otimes\dd)(X_{p_{0}+1})\nonumber\\
&=(\mathrm{id}\otimes\dd)\Big(-(Q^{(0)}\otimes\mathrm{id})(Y_{p_{0}+1})+X_{p_{0}+1}\Big)\nonumber
\end{align}
The depth being higher or equal to 2, the exactness of the resolution implies that the term in parenthesis is an $(\mathrm{id}\otimes\dd)$-coboundary: there exists $Y_{p_{0}+2}\in\Gamma(S^{n+1}(E^{\ast})\otimes E_{-p_{0}-2})$ such that:
\begin{equation}
(\mathrm{id}\otimes\dd)(Y_{p_{0}+2})=-(Q^{(0)}\otimes\mathrm{id})(Y_{p_{0}+1})+X_{p_{0}+1}
\end{equation}
In particular, one can check that the commutator $[Q^{(0)},Y_{p_{0}+2}+Y_{p_{0}+1}]$ has components of depth $p_{0}$ and $p_{0}+1$ equal to that of $X$:
\begin{align}
[Q^{(0)},Y_{p_{0}+2}+Y_{p_{0}+1}]_{p_{0}}&=(\mathrm{id}\otimes\dd)(Y_{p_{0}+1})=X_{p_{0}}\\
[Q^{(0)},Y_{p_{0}+2}+Y_{p_{0}+1}]_{p_{0}+1}&=(\mathrm{id}\otimes\dd)(Y_{p_{0}+2})+(Q^{(0)}\otimes\mathrm{id})(Y_{p_{0}+1})=X_{p_{0}+1}
\end{align}
Using the same arguments at each step, we  build a vertical vector field $Y$ of depth $p_{0}+1$ such that $[Q^{(0)},Y]=X$. For the second item, the discussion is similar, except that we need that $(\mathrm{id}\otimes\rho)(X_{1})=0$ to construct the first element $Y_{2}$. The construction is then strictly identical. \end{proof}

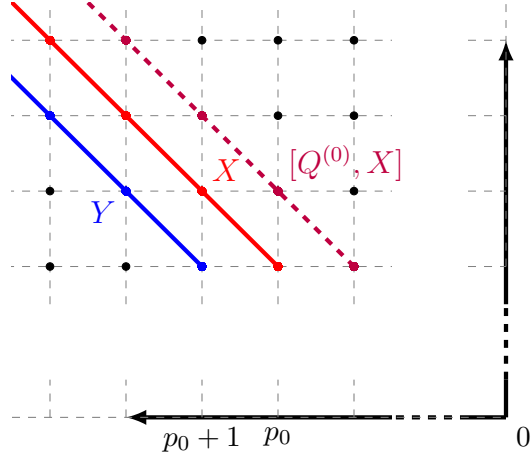
\begin{figure}[ht]
  \centering
  \begin{tikzpicture}[scale=0.50]
    \coordinate (Origin)   at (0,0);
    \coordinate (XAxisMin) at (10,-2);
    \coordinate (XAxisMax) at (0,-2);
    \coordinate (AAxisMin) at (9,-2);
    \coordinate (AAxisMax) at (7,-2);
    \coordinate (YAxisMin) at (10,1);
    \coordinate (YAxisMax) at (10,8);
    \coordinate (ZAxisMin) at (10,-2);
    \coordinate (ZAxisMax) at (10,-1);
    \coordinate (WAxisMin) at (10,-1);
    \coordinate (WAxisMax) at (10,1);
    \draw [ultra thick, black,-latex] (AAxisMax) -- (XAxisMax) node [left] {};
    \draw [ultra thick, black,-latex] (YAxisMin) -- (YAxisMax) node [above] {};
    \draw [ultra thick, black] (ZAxisMin) -- (ZAxisMax);
    \draw [ultra thick, black,dashed] (WAxisMin) -- (WAxisMax);
    \draw [ultra thick, black] (XAxisMin) -- (AAxisMin);
    \draw [ultra thick, black,dashed] (AAxisMin) -- (AAxisMax);

    \clip (-3,-3) rectangle (12cm,9cm); 
    \coordinate (Bone) at (4,2);
    \coordinate (Btwo) at (-6,12);
    \coordinate (B1) at (-7,11);
    \coordinate (B2) at (2,2);
    \coordinate (B3) at (0,4);
    \coordinate (B4) at (-4,8);
    \draw[style=help lines,dashed] (-4,1) grid[step=2cm] (7,9);
    \draw[style=help lines,dashed] (9,2) grid[step=2cm] (10,9);
    \draw (-3,-2) -- (10,-2)[dashed, gray];
    \draw (6,-2) -- (6,-1)[dashed, gray];
    \draw (4,-2) -- (4,-1)[dashed, gray];
    \draw (2,-2) -- (2,-1)[dashed, gray];
    \draw (0,-2) -- (0,-1)[dashed, gray];
    \draw (-2,-2) -- (-2,-1)[dashed, gray];
    \foreach \x in {-4,-3,...,3}{
      \foreach \y in {1,2,...,4}{
        \node[draw,circle,inner sep=1pt,fill] at (2*\x,2*\y) {};
        \node[draw,circle,inner sep=1pt,fill,red] at (Bone){};
        \node[draw,circle,inner sep=1pt,fill,red] at (0,6){};
        \node[draw,circle,inner sep=1pt,fill,red] at (2,4){};
        \node[draw,circle,inner sep=1pt,fill,red] at (-2,8){};
        \node[draw,circle,inner sep=1pt,fill,red] at (-4,10){};
        \node[draw,circle,inner sep=1pt,fill,blue] at (-6,10){};
        \node[draw,circle,inner sep=1pt,fill,blue] at (B2){};
        \node[draw,circle,inner sep=1pt,fill,blue] at (B3){};
        \node[draw,circle,inner sep=1pt,fill,blue] at (B4){};
        \node[draw,circle,inner sep=1pt,fill,blue] at (-2,6){};
        \node[draw,circle,inner sep=1pt,fill,purple] at (0,8){};
        \node[draw,circle,inner sep=1pt,fill,purple] at (2,6){};
        \node[draw,circle,inner sep=1pt,fill,purple] at (4,4){};
        \node[draw,circle,inner sep=1pt,fill,purple] at (6,2){};
      }
    }
    \node [below] at (4,-2)  {$p_{0}$};
    \node [below] at (2,-2)  {$p_{0}+1$};
    \node [right] at (10,2)  {};
    \node [right] at (10,4)  {};
    \node [right] at (10,6)  {};
    \node [below right] at (10,-2)  {$0$};
    
    \draw [ultra thick,red] (Btwo)
        -- (Bone);
    \node [above right,red] at (2,4) {\large $X$};
    \draw [ultra thick,blue] (B1)
        -- (B2);
    \node [below left,blue] at (0,4) {\large $Y$};
    
   \draw [ultra thick,purple, dashed] (-1,9) -- (6,2);
   \node [above right,purple] at (4,4) {\large $[Q^{(0)},X]$};
  \end{tikzpicture}
  \caption{\footnotesize $X$ being a root-free cocycle implies that it is actually a coboundary.}
  \label{figure11}
\end{figure}

\begin{remarque}
The first item implies that the cohomology of this bicomplex is zero in degree less than or equal to $ n - 1$.\end{remarque}

\subsection{The universal Lie \texorpdfstring{$\infty$}{infinity}-algebroid resolving a Hermann foliation}\label{preuveexistence}

We now intend to prove Theorem \ref{theo:existe}: that any resolution of a singular foliation can be equipped with a Lie $\infty$-algebroid structure. We chose to present a proof for smooth resolutions, but the same arguments will also work 
when working in the real analytic or holomorphic case in a neighborhood of a point.

According to Proposition \ref{prop:exres}, resolutions of a Hermann foliation always exist
for the holomorphic and real analytic cases in a neighborhood of a point, and these resolutions can be chosen to be finite
and by trivial vector bundles. In the smooth case, we have to assume that it exists. Thus, we have to prove that, given a resolution $(E,\dd,\rho)$ of $ {\mathcal D}$,
there is a homological degree $+1$ vector field $Q$ on the graded manifold $E$
whose linear part is the given resolution of ${\mathcal D}$. The proof of the theorem will focus on finding a homological degree $+1$ vector field $Q$ on $E\to M$ under the form:
\begin{equation}
Q=Q^{(0)}+Q^{(1)}+Q^{(2)}+\ldots
\end{equation}
where $Q^{(i)}$ is a vector field on $E\to M$ of degree 1 and arity $i$. Given the results in Proposition \ref{prop:arity}, the element $Q^{(i)}$ is vertical as soon as $i\neq1$ whereas $Q^{(1)}$ can carry both a vertical and a horizontal part. For degree reasons, there cannot be other kind of terms.

Le us start by defining $Q^{(0)}$: the differential $\dd$ can be dualized to give a differential $\R{d}^{\ast}$ on the dual of the resolution. As a morphism of vector bundles, it is $\Funct$-linear and can be extended to all of $\Gamma(S(E^{\ast}))$ by derivation. This extension is a vertical homological degree 1 vector field of arity 0 on the graded vector bundle $E$. Then we will define $Q^{(0)}$ as in \eqref{actiondual}:
\begin{equation}
Q^{(0)}=\R{d}^{\ast}
\end{equation}
Its action on elements of the dual is explicitely presented in Equation \eqref{actiondual}. The assumption that $ \R{d} $ squares to zero implies dually that $Q^{(0)}$ is a cohomological vector field, i.e. $[Q^{(0)},Q^{(0)}]=0$. 

The homological condition $[Q,Q]=0$ gives the following new equations:
\begin{align}
[Q^{(0)},Q^{(1)}]&=0\label{jacques0}\\
\forall\ n\geq2\hspace{1.3cm}[Q^{(0)},Q^{(n)}]&=-\frac{1}{2}\underset{i+j=n}{\sum_{1\leq i, j\leq n-1}} [Q^{(i)},Q^{(j)}]\label{jacques}
\end{align}
Thus, except for the first element $Q^{(1)}$, knowing all $Q^{(i)}$ up to level $n-1$ and showing that the right-hand side of \eqref{jacques} is a $[Q^{(0)},\cdot\,]$ cocycle would be sufficient to define $Q^{(n)}$, if this cocycle happens to be a coboundary. That will happen when the $[Q^{(0)},\cdot\,]$ cohomology is trivial. This enables us to define at each step a degree 1 vector fields $Q_{n}$ as
\begin{equation}
Q_n=\underset{0\leq i\leq n}{\sum}Q^{(i)}=Q_{n-1}+Q^{(n)}
\end{equation}
which has the following property: the commutator $[Q_{n},Q_{n}]$ is a sum of vector fields of respective arity higher than or equal to $n+1$. Eventually, in principle, defining formally $Q=\underset{n\rightarrow\infty}{\mathrm{lim}}Q_{n}$, we obtain at infinity: $[Q,Q]=0$. That is, there is a unique degree $+1$ homological vector field $Q$ such that its component of arity $i$ is $Q^{(i)}$.



\bigskip

Now we have everything in hand to go along the proof of the main theorem. Since we already have defined $Q^{(0)}$, the proof will then develop in three steps:
\begin{itemize}
\item Find $Q^{(1)}$
\item Find $Q^{(2)}$
\item Find $Q^{(n)}$ for every $n\geq3$
\end{itemize}

According to Proposition \ref{prop:Almost}, there exists an almost-Lie algebroid structure on $E_{-1}$ whose anchor is $\rho$.
According to Proposition \ref{lemmefondamental}, this almost Lie almost algebroid structure corresponds to a vector field $X$ on the graded manifold $ E_{-1}$,
which can be considered as a vector field of arity $1$ on $E$. It is tempting to set $Q^{(1)}=X$, but it is obviously not sufficient because $[Q^{(0)},X]$ is not zero as required. The vector field $[Q^{(0)},X]$ is a coboundary with respect to $Q^{(0)}$ in the space of vector fields. We will show that it is actually a coboundary in the space of \emph{vertical vector fields}, \emph{i.e.} there exists a vertical vector field $Y\in\mathfrak{U}^{(1)}$ such that
\begin{equation}
[Q^{(0)},X]=-[Q^{(0)},Y]
\end{equation}
which had been a priori not granted if $X$ had been a random vector field on $E$.
The property that $E_{-1}$ is an almost-Lie algebroid implies that $[X,X]$ is vertical (see Proposition \ref{lemmefondamental}).
Hence, $\big[Q^{(0)},[X,X]\big]$ is vertical as well, so for every $f\in\cinf({ M})$ we have:
\begin{equation}
0
=\frac{1}{2}\big[Q^{(0)},[X,X]\big](f)=\big[[Q^{(0)},X],X\big](f)=[Q^{(0)},X]\circ\rho^{\ast}(\dd_{\mathrm{dR}}f)\label{jenaiplusdidee}
\end{equation}
The operator $\rho^{\ast}\circ\dd_{\text{dR}}$ is a horizontal vector field, sending functions on $M$ to sections of $E_{-1}^{\ast}$. It fully encodes the action of the anchor map:
\begin{equation}\label{anchormapsuper}
\forall\ x\in E_{-1},\,f\in\Funct\hspace{1cm}\rho^{\ast}(\dd_{\mathrm{dR}}f)(x)=\dd_{\mathrm{dR}}f(\rho(x))=\rho(x)[f]
\end{equation}
Hence, Equation \eqref{jenaiplusdidee} is equivalent to the following condition in the bicomplex:
\begin{equation}\label{conditionQ1}
(\R{id}\otimes\rho)\circ\big(rt[Q^{(0)},X]\big)=0
\end{equation}
Thus the vertical vector field $[Q^{(0)},X]$ satisfies the hypothesis of Lemma \ref{lemmefondamental2}. Then there exists a vertical vector field $Y$ (see Figure \ref{figure2}) of arity 1 and depth greater than or equal to 2 such that:
\begin{equation}
[Q^{(0)},X]=-[Q^{(0)},Y]
\end{equation}
Letting $Q^{(1)}=X+Y$, we finally get Equation \eqref{jacques} for $n=2$:
\begin{equation}\label{jacques2}
[Q^{(0)},Q^{(1)}]=0
\end{equation}
The vertical vector field $Y$ generates the 2-brackets between two elements of the higher sections of $E$, whereas $X$ was limited to $E_{-1}$. 

\bigskip

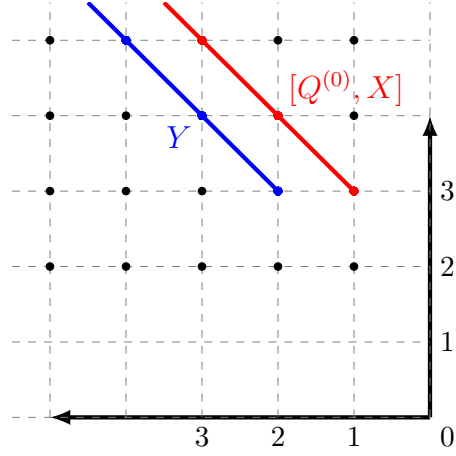
\begin{figure}[ht]
  \centering
  \begin{tikzpicture}[scale=0.50]
    \coordinate (Origin)   at (0,0);
    \coordinate (XAxisMin) at (10,0);
    \coordinate (XAxisMax) at (0,0);
    \coordinate (YAxisMin) at (10,0);
    \coordinate (YAxisMax) at (10,8);
    \coordinate (ZAxisMin) at (10,-2);
    \coordinate (ZAxisMax) at (10,-1);
    \coordinate (WAxisMin) at (10,-1);
    \coordinate (WAxisMax) at (10,1);
    \draw [ultra thick, black,-latex] (XAxisMin) -- (XAxisMax) node [left] {};
    \draw [ultra thick, black,-latex] (YAxisMin) -- (YAxisMax) node [above] {};

    \clip (-3,-1) rectangle (12cm,11cm); 
    \coordinate (Bone) at (8,6);
    \coordinate (Btwo) at (3,11);
    \coordinate (B1) at (1,11);
    \coordinate (B2) at (6,6);
    \draw[style=help lines,dashed] (-1,0) grid[step=2cm] (10,11);
    \foreach \x in {0,1,...,4}{
      \foreach \y in {2,3,...,5}{
        \node[draw,circle,inner sep=1pt,fill] at (2*\x,2*\y) {};
        \node[draw,circle,inner sep=1pt,fill,red] at (Bone){};
        \node[draw,circle,inner sep=1pt,fill,red] at (4,10){};
        \node[draw,circle,inner sep=1pt,fill,red] at (6,8){};
        \node[draw,circle,inner sep=1pt,fill,blue] at (6,6){};
        \node[draw,circle,inner sep=1pt,fill,blue] at (4,8){};
        \node[draw,circle,inner sep=1pt,fill,blue] at (2,10){};
      }
    }
    \node [below] at (8,0)  {$1$};
    \node [below] at (6,0)  {$2$};
    \node [below] at (4,0)  {$3$};
    \node [right] at (10,2)  {$1$};
    \node [right] at (10,4)  {$2$};
    \node [right] at (10,6)  {$3$};
    \node [below right] at (10,0)  {$0$};
    
    \draw [ultra thick,red] (Btwo)
        -- (Bone);
    \node [above right,red] at (6,8) {\large $[Q^{(0)},X]$};
    \draw [ultra thick,blue] (B1)
        -- (B2);
    \node [below left,blue] at (4,8) {\large $Y$};
  \end{tikzpicture}
  \caption{\footnotesize The commutator $[Q^{(0)},X]$ is obviously a coboundary in the space of all vector fields, but it is also a coboundary in the space of vertical vector fields, and $[Q^{(0)},X]=-[Q^{(0)},Y]$, with $Y$ vertical.}
  \label{figure2}
\end{figure}

\bigskip

\begin{remarque}
Condition \eqref{conditionQ1} is equivalent to the fact that the kernel of $\rho$ is stable under the adjoint action:
\begin{equation}
\forall\ x\in \Gamma(E_{-2}),\ y\in \Gamma(E_{-1})\hspace{1.5cm}\rho\big(\{\R{d}(x),y\}_{E_{-1}}\big)=0
\end{equation}
\end{remarque}

\bigskip

From now on we write $Q_{1}=Q^{(0)}+Q^{(1)}$ and we are interested in the commutator $[Q_{1},Q_{1}]$. By Equation \eqref{jacques2}, it is naturally equal to $[Q^{(1)},Q^{(1)}]$. In the decomposition $Q^{(1)}=X+Y$, the vector field $Y$ is vertical and has depth greater than or equal to 2, thus cannot act on $X$ in the commutator $[Q^{(1)},Q^{(1)}]$ because $X$ does not carry any element of degree $2$: this implies that $[X,Y]$ is vertical.
Since $[X,X]$ is vertical, this implies that $[Q^{(1)},Q^{(1)}]$ is.

Now, the vector field $[Q^{(1)},Q^{(1)}]$ is a $Q^{(0)}$-cocycle:
\begin{equation}
\big[Q^{(0)},\frac{1}{2}[Q^{(1)},Q^{(1)}]\big]=\big[[Q^{(0)},Q^{(1)}],Q^{(1)}\big]=0
\end{equation}
and we will show that it is in fact a coboundary in the bicomplex ${\mathfrak U}^{(2)}$. Recall that for odd vector fields, the relation $\big[[U,U],U\big]=0$ holds. For $U=Q^{(1)}$ and for every function $f\in\cinf({ M})$, we then have:
\begin{align}
0=\big[[Q^{(1)},Q^{(1)}],Q^{(1)}\big](f)=[Q^{(1)},Q^{(1)}]\circ\rho^{\ast}(\dd_{\mathrm{dR}}f)
\end{align}
because $[Q^{(1)},Q^{(1)}]$ is vertical. Using Equation \eqref{anchormapsuper}, we understand that the above equation is equivalent to the following one in the bicomplex ${\mathfrak U}^{(2)}$:
\begin{equation}\label{blabla}
(\R{id}\otimes\rho)\circ\big(rt[Q^{(1)},Q^{(1)}]\big)=0
\end{equation}
With the fact that $[Q^{(1)},Q^{(1)}]$ is a cocycle, item 2 in Lemma \ref{lemmefondamental2} implies that it is in fact a coboundary, i.e there exists a degree $1$ element $Q^{(2)}\in\F{\mathfrak  U}^{(2)}$ such that:
\begin{equation}
\frac{1}{2}[Q^{(1)},Q^{(1)}]=-[Q^{(0)},Q^{(2)}]
\end{equation}
that is, Equation \eqref{jacques} for $n=2.$

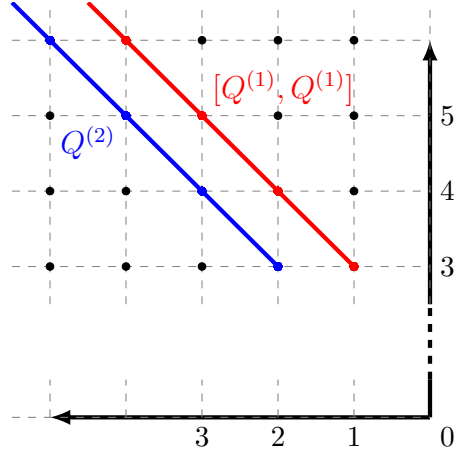
\begin{figure}[ht]
  \centering
  \begin{tikzpicture}[scale=0.50]
    \coordinate (Origin)   at (0,0);
    \coordinate (XAxisMin) at (10,-2);
    \coordinate (XAxisMax) at (0,-2);
    \coordinate (YAxisMin) at (10,1);
    \coordinate (YAxisMax) at (10,8);
    \coordinate (ZAxisMin) at (10,-2);
    \coordinate (ZAxisMax) at (10,-1);
    \coordinate (WAxisMin) at (10,-1);
    \coordinate (WAxisMax) at (10,1);
    \draw [ultra thick, black,-latex] (XAxisMin) -- (XAxisMax) node [left] {};
    \draw [ultra thick, black,-latex] (YAxisMin) -- (YAxisMax) node [above] {};
    \draw [ultra thick, black] (ZAxisMin) -- (ZAxisMax);
    \draw [ultra thick, black,dashed] (WAxisMin) -- (WAxisMax);

    \clip (-3,-3) rectangle (12cm,9cm); 
    \coordinate (Bone) at (8,2);
    \coordinate (Btwo) at (1,9);
    \coordinate (B1) at (-1,9);
    \coordinate (B2) at (6,2);
    \coordinate (B3) at (4,4);
    \coordinate (B4) at (0,8);
    \draw[style=help lines,dashed] (-1,1) grid[step=2cm] (10,9);
    \draw (-1,-2) -- (10,-2)[dashed, gray];
    \draw (8,-2) -- (8,-1)[dashed, gray];
    \draw (6,-2) -- (6,-1)[dashed, gray];
    \draw (4,-2) -- (4,-1)[dashed, gray];
    \draw (2,-2) -- (2,-1)[dashed, gray];
    \draw (0,-2) -- (0,-1)[dashed, gray];
    \foreach \x in {0,1,...,4}{
      \foreach \y in {1,2,...,4}{
        \node[draw,circle,inner sep=1pt,fill] at (2*\x,2*\y) {};
        \node[draw,circle,inner sep=1pt,fill,red] at (Bone){};
        \node[draw,circle,inner sep=1pt,fill,red] at (4,6){};
        \node[draw,circle,inner sep=1pt,fill,red] at (6,4){};
        \node[draw,circle,inner sep=1pt,fill,red] at (2,8){};
        \node[draw,circle,inner sep=1pt,fill,blue] at (B2){};
        \node[draw,circle,inner sep=1pt,fill,blue] at (B3){};
        \node[draw,circle,inner sep=1pt,fill,blue] at (B4){};
        \node[draw,circle,inner sep=1pt,fill,blue] at (2,6){};
      }
    }
    \node [below] at (8,-2)  {$1$};
    \node [below] at (6,-2)  {$2$};
    \node [below] at (4,-2)  {$3$};
    \node [right] at (10,2)  {$3$};
    \node [right] at (10,4)  {$4$};
    \node [right] at (10,6)  {$5$};
    \node [below right] at (10,-2)  {$0$};
    
    \draw [ultra thick,red] (Btwo)
        -- (Bone);
    \node [above right,red] at (4,6) {\large $[Q^{(1)},Q^{(1)}]$};
    \draw [ultra thick,blue] (B1)
        -- (B2);
    \node [below left,blue] at (2,6) {\large $Q^{(2)}$};
  \end{tikzpicture}
  \caption{\footnotesize The commutator $[Q^{(1)},Q^{(1)}]$ is a cocycle in the space of vertical vector fields, whose root lies in the kernel of the anchor map. As such it is a coboundary, and $\frac{1}{2}[Q^{(1)},Q^{(1)}]=-[Q^{(0)},Q^{(2)}]$}
  \label{figure3}
\end{figure}

\bigskip

\begin{remarque}
The commutator $[Q^{(1)},Q^{(1)}]$ corresponds by duality to the Jacobiator of the 2-bracket. In particular it may not identically vanish (this is why $Q^{(2)}$ is needed). However the Jacobi identity of the bracket $[\ .\ ,\, .\ ]_{E_{-1}}$ on the sections of $E_{-1}$ should lie in the kernel of the anchor map because the Jacobi identity is satisfied on the tangent space:
\begin{equation}
\forall\ x,y,z\in\Gamma(E_{-1})\hspace{1.3cm}\rho\big(Jac(x,y,z)\big)=0
\end{equation}
where $Jac(x,y,z)$ is the Jacobiator of the bracket on $E_{-1}$. This result is precisely the dual of Equation \eqref{blabla}.
\end{remarque}

\bigskip

 We will quickly handle the case $n=3$ to simplify the discussion of the induction for $n\geq4$.  Let us write $Q_{2}$ for the sum $Q^{(0)}+Q^{(1)}+Q^{(2)}$. Then
\begin{equation}
[Q_{2},Q_{2}]=2[Q^{(1)},Q^{(2)}]+[Q^{(2)},Q^{(2)}]
\end{equation}
by the above identities. By construction, $Q^{(2)}$ has depth greater than or equal to $2$, then the commutator $[Q^{(2)},Q^{(2)}]$ is vertical, living in ${\mathfrak U}^{(4)}$. For the same reason, the commutator $[Q^{(2)},X]$ is vertical as well, and so is $[Q^{(1)},Q^{(2)}]$. This term is a cocycle in the bicomplex ${\mathfrak U}^{(3)}$:
\begin{equation}\label{alerterouge}
\big[Q^{(0)},[Q^{(1)},Q^{(2)}]\big]=\big[\underbrace{[Q^{(0)},Q^{(1)}]}_{=\, 0},Q^{(2)}\big]+\frac{1}{2}\big[Q^{(1)},[Q^{(1)},Q^{(1)}]\big]
\end{equation}
The triple commutator of a vector field vanishes so that both sides of \eqref{alerterouge} are equal to zero. Since this cocycle has arity $3$ and degree $2$, it is root-free (of depth at least 2) and then by item 1 of Lemma \ref{lemmefondamental2} it is a coboundary. Thus there exists a degree 1 element $Q^{(3)}$ of arity 3 and depth greater than or equal to 3 such that:
\begin{equation}
[Q^{(1)},Q^{(2)}]=-[Q^{(0)},Q^{(3)}]
\end{equation}
which is Equation \eqref{jacques} for $n=3$.

Now assume that we have built all $Q^{(i)}$ satisfying Equations (\ref{jacques}) up to some order $n\geq3$, and let $Q_{n}=\underset{0\leq i\leq n}{\sum}Q^{(i)}$. Then we obtain that:
\begin{equation}
[Q_{n},Q_{n}]=\underset{i+j=n+1}{\sum_{1\leq i, j\leq n}} [Q^{(i)},Q^{(j)}]\ +\ \ldots
\end{equation}
where the suspension points stand for elements which have arity strictly higher than $n+1$. On the other hand the visible sum has arity $n+1$, and we choose to denote it by $D_{n+1}$. By construction $Q^{(n)}$ has depth at least $n\geq2$, thus the commutator $[Q^{(n)},X]$ is vertical. Since every other commutator in the sum is vertical, so is $D_{n+1}$. Moreover the bracket $[Q^{(i)},Q^{(n+1-i)}]$ has height $n+2$ and degree 2 then $D_{n+1}$ is a root-free element of the bicomplex $\mathfrak{U}^{(n+1)}$ of depth at least $n$. It is also a cocycle:
\begin{align}
\frac{1}{2}[Q^{(0)},D_{n+1}]\label{cerbere}
&=\underset{i+j=n+1}{\sum_{1\leq i,j\leq n}}\big[[Q^{(0)},Q^{(i)}],Q^{(j)}\big]\\
&=\big[\underbrace{[Q^{(0)},Q^{(1)}]}_{=\,0},Q^{(n)}\big]+\big[[Q^{(0)},Q^{(n)}],Q^{(1)}\big]+\underset{i+j=n+1}{\sum_{2\leq i,j\leq n-1}}\big[[Q^{(0)},Q^{(i)}],Q^{(j)}\big]\nonumber\\
&=-\underset{k+l=n}{\sum_{1\leq k,l\leq n-1}}\big[[Q^{(k)},Q^{(l)}],Q^{(1)}\big]-\underset{i+j=n+1}{\sum_{2\leq i,j\leq n-1}}\quad\underset{k+l=i}{\sum_{1\leq k,l\leq i-1}}\big[[Q^{(k)},Q^{(l)}],Q^{(j)}\big]\nonumber\\
&=-\underset{i+j=n+1}{\sum_{2\leq i,j+1\leq n}}\quad\underset{k+l=i}{\sum_{1\leq k,l\leq i-1}}\big[[Q^{(k)},Q^{(l)}],Q^{(j)}\big]\nonumber\\
&=-\underset{2\leq i\leq n}{\sum}\quad\underset{i+j=n+1}{\sum_{1\leq j\leq n-1}}\quad\underset{k+l=i}{\sum_{1\leq k,l\leq n-1}}\big[[Q^{(k)},Q^{(l)}],Q^{(j)}\big]\nonumber\\
&=-\underset{j+k+l=n+1}{\sum_{1\leq j,k,l\leq n-1}}\big[[Q^{(k)},Q^{(l)}],Q^{(j)}\big]\nonumber
\end{align}
Using the Jacobi identity for the bracket, we obtain:
\begin{align}
\frac{1}{2}[Q^{(0)},D_{n+1}]&=-\underset{j+k+l=n+1}{\sum_{1\leq j,k,l\leq n-1}}\big[[Q^{(k)},Q^{(l)}],Q^{(j)}\big]\\
&=\underset{j+k+l=n+1}{\sum_{1\leq j,k,l\leq n-1}}\big[[Q^{(k)},Q^{(j)}],Q^{(l)}\big]-\underset{j+k+l=n+1}{\sum_{1\leq j,k,l\leq n-1}}\big[Q^{(k)},[Q^{(l)},Q^{(j)}]\big]\nonumber\\
&=\underset{j+k+l=n+1}{\sum_{1\leq j,k,l\leq n-1}}2\big[[Q^{(k)},Q^{(l)}],Q^{(j)}\big]\nonumber\\
&=-[Q^{(0)},D_{n+1}]\nonumber
\end{align}
so that $[Q^{(0)},D_{n+1}]=0$. By item 1 of Lemma \ref{lemmefondamental2}, the root-free cocycle $D_{n+1}$ is automatically a coboundary: there exists an element $Q^{(n+1)}$ of arity $n+1$ and depth $n+1$ such that:
\begin{equation}\label{dn}
\frac{1}{2}\underset{i+j=n+1}{\sum_{1\leq i, j\leq n}} [Q^{(i)},Q^{(j)}]=-[Q^{(0)},Q^{(n+1)}]
\end{equation}
that is precisely Equation \eqref{jacques} for $n+1$.

\bigskip

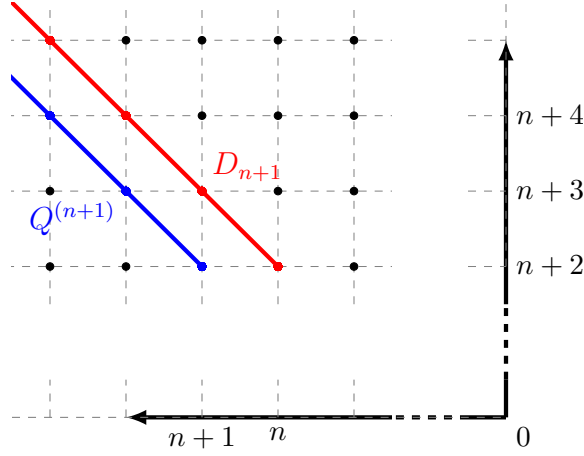
\begin{figure}[ht]
  \centering
  \begin{tikzpicture}[scale=0.50]
    \coordinate (Origin)   at (0,0);
    \coordinate (XAxisMin) at (10,-2);
    \coordinate (XAxisMax) at (0,-2);
    \coordinate (AAxisMin) at (9,-2);
    \coordinate (AAxisMax) at (7,-2);
    \coordinate (YAxisMin) at (10,1);
    \coordinate (YAxisMax) at (10,8);
    \coordinate (ZAxisMin) at (10,-2);
    \coordinate (ZAxisMax) at (10,-1);
    \coordinate (WAxisMin) at (10,-1);
    \coordinate (WAxisMax) at (10,1);
    \draw [ultra thick, black,-latex] (AAxisMax) -- (XAxisMax) node [left] {};
    \draw [ultra thick, black,-latex] (YAxisMin) -- (YAxisMax) node [above] {};
    \draw [ultra thick, black] (ZAxisMin) -- (ZAxisMax);
    \draw [ultra thick, black,dashed] (WAxisMin) -- (WAxisMax);
    \draw [ultra thick, black] (XAxisMin) -- (AAxisMin);
    \draw [ultra thick, black,dashed] (AAxisMin) -- (AAxisMax);

    \clip (-3,-3) rectangle (12cm,9cm); 
    \coordinate (Bone) at (4,2);
    \coordinate (Btwo) at (-6,12);
    \coordinate (B1) at (-7,11);
    \coordinate (B2) at (2,2);
    \coordinate (B3) at (0,4);
    \coordinate (B4) at (-4,8);
    \draw[style=help lines,dashed] (-4,1) grid[step=2cm] (7,9);
    \draw[style=help lines,dashed] (9,2) grid[step=2cm] (10,9);
    \draw (-3,-2) -- (10,-2)[dashed, gray];
    \draw (6,-2) -- (6,-1)[dashed, gray];
    \draw (4,-2) -- (4,-1)[dashed, gray];
    \draw (2,-2) -- (2,-1)[dashed, gray];
    \draw (0,-2) -- (0,-1)[dashed, gray];
    \draw (-2,-2) -- (-2,-1)[dashed, gray];
    \foreach \x in {-4,-3,...,3}{
      \foreach \y in {1,2,...,4}{
        \node[draw,circle,inner sep=1pt,fill] at (2*\x,2*\y) {};
        \node[draw,circle,inner sep=1pt,fill,red] at (Bone){};
        \node[draw,circle,inner sep=1pt,fill,red] at (0,6){};
        \node[draw,circle,inner sep=1pt,fill,red] at (2,4){};
        \node[draw,circle,inner sep=1pt,fill,red] at (-2,8){};
        \node[draw,circle,inner sep=1pt,fill,red] at (-4,10){};
        \node[draw,circle,inner sep=1pt,fill,blue] at (-6,10){};
        \node[draw,circle,inner sep=1pt,fill,blue] at (B2){};
        \node[draw,circle,inner sep=1pt,fill,blue] at (B3){};
        \node[draw,circle,inner sep=1pt,fill,blue] at (B4){};
        \node[draw,circle,inner sep=1pt,fill,blue] at (-2,6){};
      }
    }
    \node [below] at (4,-2)  {$n$};
    \node [below] at (2,-2)  {$n+1$};
    \node [right] at (10,2)  {$n+2$};
    \node [right] at (10,4)  {$n+3$};
    \node [right] at (10,6)  {$n+4$};
    \node [below right] at (10,-2)  {$0$};
    
    \draw [ultra thick,red] (Btwo)
        -- (Bone);
    \node [above right,red] at (2,4) {\large $D_{n+1}$};
    \draw [ultra thick,blue] (B1)
        -- (B2);
    \node [below left,blue] at (0,4) {\large $Q^{(n+1)}$};
  \end{tikzpicture}
  \caption{\footnotesize $D_{n+1}$ is a root-free cocycle in the space of vertical vector fields, and as such it is necessarily a coboundary: there exists $Q^{(n+1)}\in\mathfrak{\mathfrak U}^{(n+1)}$ such that $\frac{1}{2}D_{n+1}=-[Q^{(0)},Q^{(n+1)}]$.}
  \label{figure4}
\end{figure}

\bigskip

Consequently, by construction the vector field $Q_{n+1}=\underset{0\leq i\leq n+1}{\sum}Q^{(i)}$ now satisfies all equations in \eqref{jacques} up to order $n+1$:
\begin{equation}
[Q_{n+1},Q_{n+1}]=\underset{i+j=n+2}{\sum_{1\leq i, j\leq n+1}} [Q^{(i)},Q^{(j)}]+\ldots
\end{equation}
The first term is of arity $n+2$ and the suspension points stand for elements which have arity stricly higher than $n+2$. The induction can then work as well at level $n+2$. Step by step we can construct a vector field as a formal sum $Q=\sum_{i=0}^{\infty}Q^{(i)}$ such that $[Q,Q]=0$ as desired.

\begin{remarque}
At every step, we have chosen an element $Q^{(i)}$ of arity $i$, but there is some latitude in this choice since it is defined up to a $Q^{(0)}$ closed (and hence, exact) term. In the end, this leads to some freedom in the choice of the Lie $\infty$-algebroid structure. Any two such structures happen to be isomorphic up to homotopy, as shown in the next section.
\end{remarque}

Here are two examples of Lie $\infty$-algebroid structures on a resolution of a Hermann foliation.

\begin{example}
Let ${\mathcal D}$ be the Hermann foliation on $M:={\mathbb R}^2$, equipped with coordinates $x,y$, defined by all vector fields of the form $f(x,y)\frac{\partial}{\partial x} $
with $f(x,y)$ a function that vanishes at order $2$ at the origin (that is, such that $f(0,0)= \frac{\partial f}{\partial x}(0,0)=\frac{\partial f}{\partial y}(0,0)=0$).
The Hermann foliation ${\mathcal D}$ is generated by the vector fields:
\begin{equation}{\mathcal V}_{y^2}:= y^2 \frac{\partial}{\partial x},\quad{\mathcal V}_{xy}:=xy \frac{\partial}{\partial x} , \quad {\mathcal V}_{x^2}=x^2 \frac{\partial}{\partial x}\end{equation}
A resolution of this foliation is given by choosing $E_{-1}$ (resp. $E_{-2}$) to be a trivial bundle of rank $3$ (resp. $2$),
with canonical generators denoted respectively by $ (e_{x^2},e_{xy}, e_{y^2})$ (resp. $(f_{x^2,xy},f_{xy,y^2})$) together with the anchor  
\begin{equation}  \rho({e}_{y^2}) ={\mathcal V}_{y^2},\quad  \rho({e}_{xy}) ={\mathcal V}_{xy} \quad\hbox{ and } \quad  \rho({e}_{x^2}) ={\mathcal V}_{x^2}  \end{equation}
and the differential
\begin{equation} d^{(2)}( f_{xy,y^2}) = x {e}_{y^2} - y {e}_{xy}\hspace{1cm} \hbox{ and }\hspace{1cm}
 d^{(2)}( f_{x^2,xy}) = x {e}_{xy} -  y {e}_{x^2}. \end{equation}
 We then require $E_{-i}=0$ for all $i \geq 3$. Now, an easy computation gives:
 \begin{equation} [{\mathcal V}_{y^2},{\mathcal V}_{x^2}] = 2 y {\mathcal V}_{xy}
 , \quad[{\mathcal V}_{y^2},{\mathcal V}_{xy}] = y {\mathcal V}_{y^2}
 , \quad[{\mathcal V}_{x^2},{\mathcal V}_{xy}] = - y {\mathcal V}_{x^2} \end{equation}
which makes it natural to impose similar relations on the generators
$e_{y^2},e_{xy},e_{x^2}$ of $\Gamma(E_{-1})$:
 \begin{equation}\label{eq;choix}
   \{{e}_{y^2},{e}_{x^2}\} = 2y {e}_{xy}
 , \quad\{{e}_{y^2},{e}_{xy}\} = y {e}_{y^2}
 , \quad\{{e}_{x^2},{e}_{xy}\} =  -y  e_{x^2} \end{equation}
This bracket satisfies the Jacobi identity. The foliation therefore comes from a Lie algebroid action.

Consider now ${\mathcal D}'$ to be the Hermann foliation of all vector fields vanishing at order $2$ at the origin.
${\mathcal D}'$ is obtained by adding to the previous generators of ${\mathcal D}$  the following family of generators:
\begin{equation}{\mathcal W}_{y^2}:= y^2 \frac{\partial}{\partial y},\quad{\mathcal W}_{xy}:=xy \frac{\partial}{\partial y} ,\quad {\mathcal W}_{x^2}=x^2 \frac{\partial}{\partial y}\end{equation}
Since, as a module over functions on $M$, the foliation ${\mathcal D}'$ is isomorphic to two copies of ${\mathcal D}$ ,  it
still admits resolutions of length $2$, which in degrees $-1$ and $-2$ are trivial vector bundles of rank $6$ and $4$ respectively.
In addition to the generators of the previous Hermann foliation, this new resolution
admits $5$ additional generators denoted respectively by $ (f_{x^2},f_{xy}, f_{y^2})$ (resp. $(g_{x^2,xy},g_{xy,y^2})$) in degree $-1$ (resp. $-2$). We define the anchor by: 
\begin{equation}  \rho({f}_{y^2}) ={\mathcal W}_{y^2},\quad   \rho({f}_{xy}) ={\mathcal W}_{xy} \quad\hbox{ and }\quad \rho({f}_{x^2}) ={\mathcal W}_{x^2} . \end{equation}
We impose relations on the brackets similar to those of Equation (\ref{eq;choix}):
 \begin{equation}
   \{{f}_{y^2},{f}_{x^2}\} = 2x {f}_{xy}
 ,\quad \{{f}_{y^2},{f}_{xy}\} = x {f}_{y^2}
 ,\quad \{{f}_{x^2},{f}_{xy}\} =  -x  f_{x^2}. \end{equation}
We still have to define the Lie brackets of the type ${e}_{y^2},{f}_{xy}$. At this point, there is no natural choice, as long as an almost-Lie algebroid structure is obtained , and there does not seem to exist a manner to construct them that would allow the $3$-ary bracket $\Gamma(\wedge^3 E_{-1}) \to \Gamma(E_{-2})$ to be equal to zero.
\end{example}


\subsection{Universality of the Lie \texorpdfstring{$\infty$}{infinity}-algebroid resolving a foliation }\label{morphismfoliations}


In this section we prove Theorem \ref{theo:onlyOne}. The second item is a simple corollary of the first one. Again, we prove the theorem in the smooth case. The real analytic and holomorphic cases are similar, if we restrict to a neighborhood of a point.

Assume that we are given a Hermann foliation $\C{D}$ that admits a resolution $(E,\dd,\rho)$.
By Theorem \ref{theo:existe}, the resolution $(E,\dd,\rho)$ 
can be endowed with a Lie $\infty$-algebroid structure
with linear part $(E,\dd,\rho)$. 
Let $(F,Q_{F})$ be a Lie $\infty$-algebroid whose induced Hermann foliation $\mathcal{D}'$ is a sub-foliation of $ {\mathcal D}$, that is: $$\rho'\big(\Gamma(F_{-1})\big) \subset \rho\big(\Gamma(E_{-1})\big)= \mathcal{D}$$ and let $(F,\dd',\rho')$ be its linear part. It is not necessarily a resolution of the sub-foliation.

Since the image of $\rho'$ is a subset of the image of $\rho$, for any section $\sigma\in\Gamma(F_{-1})$,we have $\rho'(\sigma)\in\mathrm{Im}(\rho)$ and then there exists a section $\tau\in\Gamma(E_{-1})$ which is a preimage of $\rho'(\sigma)$ by $\rho$. The choice of $\tau$ depends smoothly on $\sigma$, therefore there exists a bundle map $\phi_{1}:F_{-1}\to E_{-1}$ covering the identity of $M$ such that the following diagram is commutative:
\begin{center}
\begin{tikzcd}[column sep=0.7cm,row sep=0.4cm]
F_{-1}\ar[dd,"\phi_{1}"]\ar[dr,"\rho'"]&\\
&T M\\
E_{-1}\ar[ur,"\rho"]&\\
\end{tikzcd}
\end{center}
Since $\rho'\circ\mathrm{d'^{(2)}}=0$ and since $\mathrm{Im}(\rho')\subset\mathrm{Im}(\rho)$, we have $\phi_{1}\big(\mathrm{Im}(\dd'^{(2)})\big)\subset\mathrm{Ker}(\rho)$. Since $(E,d,\rho)$ is a resolution of the Hermann foliation $\mathcal{D}$, this implies $\phi_{1}\big(\mathrm{Im}(\dd'^{(2)})\big)\subset\mathrm{Im}(\dd^{(2)})$. Hence there is a bundle map $\phi_{2}:F_{-2}\to E_{-2}$ such that the following diagram commutes:
\begin{center}
\begin{tikzcd}[column sep=0.9cm,row sep=0.6cm]
F_{-2}\ar[dd,"\phi_{2}"]\ar[r,"\dd'^{(2)}"]&F_{-1}\ar[dd,"\phi_{1}"]\\
&\\
E_{-2}\ar[r,"\dd^{(2)}"]&E_{-1}\\
\end{tikzcd}
\end{center}
By the above diagram, we know that $\dd^{(2)}\circ\phi_{2}\circ\dd'^{(3)}=\phi_{1}\circ\dd'^{(2)}\circ\dd'^{(3)}=0$. This implies $\phi_{2}\big(\mathrm{Im}(\dd'^{(3)})\big)\subset\mathrm{Ker}(\dd^{(2)})$, which is equivalent to $\phi_{2}\big(\mathrm{Im}(\dd'^{(3)})\big)\subset\mathrm{Im}(\dd^{(3)})$ because $(E,\dd)$ is a resolution. Therefore there exists a bundle map $\phi_{3}:F_{-3}\to E_{-3}$ such that the following diagram is commutative:
\begin{center}
\begin{tikzcd}[column sep=0.9cm,row sep=0.6cm]
F_{-3}\ar[dd,"\phi_{3}"]\ar[r,"\dd'^{(3)}"]&F_{-2}\ar[dd,"\phi_{2}"]\\
&\\
E_{-3}\ar[r,"\dd^{(3)}"]&E_{-2}\\
\end{tikzcd}
\end{center}
By recursion, we can therefore construct a family of bundle maps which intertwine $\mathrm{d}$ and $\mathrm{d}'$. Summing all the $\phi_{i}$ together, we obtain a bundle map in the category of graded manifolds $\phi:F\to E$ preserving the degree, which acts as a chain map:
\begin{equation}\label{mapresolution}
\phi \circ\mathrm{d}'=\mathrm{d}\circ\phi
\end{equation}
We summarize the situation by the following commutative diagram:
\begin{center}
\begin{tikzcd}[column sep=0.9cm,row sep=0.6cm]
\dots  \ar[r,"\dd'^{(4)}"] &F_{-3} \ar[dd,"\phi_{3}"] \ar[r,"\dd'^{(3)}"]&F_{-2}\ar[dd,"\phi_{2}"] \ar[r,"\dd'^{(2)}"] &F_{-1}\ar[dd,"\phi_{1}"] \ar[r,"\rho' "] &TM\ar[dd,"id"]\\
 & & & &\\
\dots  \ar[r,"\dd^{(4)}"]  &E_{-3} \ar[r,"\dd^{(3)}"]&E_{-2} \ar[r,"\dd^{(2)}"]  &E_{-1} \ar[r,"\rho"] &TM \\
\end{tikzcd}
\end{center}
\begin{remarque}
Note that these are so far general facts for $\mathscr{O}$-modules.
\end{remarque}

We denote by $\widehat{\Phi}^{(0)}:E^{\ast}\to F^{\ast}$ the dual map of $\phi$, which is obviously of arity 0. We extend it to a morphism of algebras $\Phi^{(0)}:\funct\to\functt$ using Equation \eqref{baboum}. The chain map condition then translates as:
\begin{equation}
Q_{F}^{(0)}\Phi^{(0)}=\Phi^{(0)}Q_{E}^{(0)}
\end{equation}
However it may not be a Lie $\infty$-morphism, i.e. it may not intertwine the homological vector fields $Q_{F}$ and $Q_{E}$. By construction, $\Phi^{(0)}$ also satisfies the following identity at level $-1$:
\begin{equation}\label{commutativity}
\rho'^{\ast}=\Phi^{(0)}\circ\rho^{\ast}
\end{equation}
which can be seen as the commutativity of the following diagram:
\begin{center}
\begin{tikzcd}[column sep=0.9cm,row sep=0.6cm]
F_{-1}\ar[dd,"\phi_{1}"]\ar[r,"\rho'"]&T M\ar[dd,"\mathrm{id}"]\\
&\\
E_{-1}\ar[r,"\rho"]&T M\\
\end{tikzcd}
\end{center}

%
%



We will mostly follow the same method as in the proof of the last section. We expect that $\Phi$ formally involves linear maps of various arities: $\Phi=\sum_{i\geq0}\Phi^{(i)}$ where each component $\Phi^{(i)}$ is defined by its restriction to  the sections of $E^{\ast}$, that is $\widehat{\Phi}^{(i)}:\Gamma(E^{\ast})\to \Gamma(S^{i+1}(F^{\ast}))$, and extended to $\funct$ using Equations \eqref{baboum}-\eqref{biboum} and all other components of lower arities. Then $\Phi^{(i)}$ can be formally seen as en element of $\Gamma(S^{i+1}(F^{\ast})\otimes E)$ of degree 0. We start at arity zero with $\widehat{\Phi}^{(0)}=\phi^{\ast}$, the dual map of the chain map, and extend it to all of $\funct$ by requiring that it be a morphism of algebras. We can build each of the $\Phi^{(i)}$ step by step using the same logic as in the proof of the main theorem. Indeed, for every $n\geq2$, $\mathscr{O}$-linear maps from $\funct$ to $\functt$ of arity $n-1$ can be represented as elements of the direct sum
\begin{equation*}
\mathfrak{V}^{(n-1)}=\bigoplus_{k=-\infty}^{+\infty}\underset{i,j\geq1}{\bigoplus_{i-j=k}}\Gamma\Big(S^{n}(F^{\ast})_{i}\otimes E_{-j}\Big)
\end{equation*}
Again, we say that an element in $S^{n}(F^*)_l \otimes E_{-k} $ is of \emph{depth $k$
and height $l$}, and that the root $rt$ of an element of arity $n-1$ is its component of depth 1.

The proof of the existence part of the first item of Theorem \ref{theo:onlyOne} relies on a variation of Lemma \ref{lemmefondamental2}. For all $n \geq 1$, there is a natural bicomplex structure on $\mathfrak{V}^{(n-1)}$:

\bigskip

\begin{center}
\begin{tikzpicture}
\draw[xstep=3.5cm,ystep=1,color=gray] (1,0) grid (14,3.5);
    \node at (2,0.5)  {$\cdots$};
    \node at (5.3,0.5)  {$S^n(F^*)_{n} \otimes E_{-3}$};
    \node at (8.8,0.5)  {$S^n(F^*)_{n} \otimes E_{-2}$};
    \node at (12.3,0.5)  {$S^n(F^*)_{n} \otimes E_{-1}$};
    \node at (2,1.5)  {$\cdots$};
    \node at (5.3,1.5)  {$S^n(F^*)_{n+1} \otimes E_{-3}$};
    \node at (8.8,1.5)  {$S^n(F^*)_{n+1} \otimes E_{-2}$};
    \node at (12.3,1.5)  {$S^n(F^*)_{n+1} \otimes E_{-1}$};
    \node at (2,2.5)  {$\cdots$};
    \node at (5.3,2.5)  {$S^n(F^*)_{n+2} \otimes E_{-3}$};
    \node at (8.8,2.5)  {$S^n(F^*)_{n+2} \otimes E_{-2}$};
    \node at (12.3,2.5)  {$S^n(F^*)_{n+2} \otimes E_{-1}$};
    \node at (5.3,3.5)  {$\cdots$};
    \node at (8.8,3.5)  {$\cdots$};
    \node at (12.3,3.5)  {$\cdots$};
    \node at (14.6,0.5)  {$0$};
    \node at (14.6,1.5)  {$0$};
    \node at (14.6,2.5)  {$0$};
    \node at (5.3,-0.5)  {$0$};
    \node at (8.8,-0.5)  {$0$};
    \node at (12.3,-0.5)  {$0$};
    
    \draw [-latex] (3,0.5) -- (3.9,0.5);
    \draw [-latex] (6.7,0.5) -- (7.4,0.5);
    \draw [-latex] (10.2,0.5) -- (10.9,0.5);
    \draw [-latex] (13.7,0.5) -- (14.4,0.5);
    \draw [-latex] (3,1.5) -- (3.8,1.5);
    \draw [-latex] (6.8,1.5) -- (7.3,1.5);
    \draw [-latex] (10.3,1.5) -- (10.8,1.5);
    \draw [-latex] (13.8,1.5) -- (14.4,1.5);
    \draw [-latex] (3,2.5) -- (3.8,2.5);
    \draw [-latex] (6.8,2.5) -- (7.3,2.5);
    \draw [-latex] (10.3,2.5) -- (10.8,2.5);
    \draw [-latex] (13.8,2.5) -- (14.4,2.5);
    \draw [-latex] (5.3,-0.2) -- (5.3,0.2);
    \draw [-latex] (5.3,0.8) -- (5.3,1.2);
    \draw [-latex] (5.3,1.8) -- (5.3,2.2);
    \draw [-latex] (5.3,2.8) -- (5.3,3.2);
    \draw [-latex] (8.8,-0.2) -- (8.8,0.2);
    \draw [-latex] (8.8,0.8) -- (8.8,1.2);
    \draw [-latex] (8.8,1.8) -- (8.8,2.2);
    \draw [-latex] (8.8,2.8) -- (8.8,3.2);
    \draw [-latex] (12.3,-0.2) -- (12.3,0.2);
    \draw [-latex] (12.3,0.8) -- (12.3,1.2);
    \draw [-latex] (12.3,1.8) -- (12.3,2.2);
    \draw [-latex] (12.3,2.8) -- (12.3,3.2);
\end{tikzpicture}
\label{bicomplexe2}
\end{center}

\bigskip
\noindent where the horizontal lines correspond to the action of $\mathrm{id}\otimes \dd$ and the vertical lines to the action of $Q_{F}^{(0)}\otimes\mathrm{id}$. Following the same arguments as in the proof of Lemma \ref{lemmedifferentiel}, the total differential $Q_{F}^{(0)}\otimes\mathrm{id}+\mathrm{id}\otimes \dd$ on the sections of the bicomplex can be naturally identified with the following operator:
\begin{equation}\label{differential}
\forall\ \alpha\in\mathfrak{V}^{(n-1)}\hspace{1cm}\partial(\alpha)=Q_{F}^{(0)}\circ\alpha-(-1)^{|\alpha|}\alpha\circ Q_{E}^{(0)}
\end{equation}
where for clarity elements of $\mathfrak{V}^{(n-1)}$ have been identified with maps from $\funct$ to $\functt$. This turns $\mathfrak{V}^{(n-1)}$ into a differential complex, whose elements of homogeneous degree are represented by antidiagonals in the bicomplex. The cohomology is governed by results similar to those in Lemma \ref{lemmefondamental2}:

\begin{lemme}\label{lemmefondamental3}
Let  $n \geq 1$ be an integer, and consider the bicomplex $(\F{V}^{(n)},\partial) $. 
\begin{enumerate}
\item A root-free cocycle is a coboundary.
\item A  cocycle whose root is in the kernel of $ \R{id} \otimes \rho$ is a coboundary.
\end{enumerate}
\end{lemme}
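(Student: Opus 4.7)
The plan is to mimic the proof of Lemma \ref{lemmefondamental2} almost verbatim, simply with $\mathfrak{U}^{(n-1)}$ replaced by $\mathfrak{V}^{(n-1)}$. The first step is to verify that the operator $\partial$ defined in \eqref{differential} agrees, under the identification $\mathfrak{V}^{(n-1)} \simeq \bigoplus \Gamma\big(S^n(F^{\ast}) \otimes E\big)$, with the total differential $Q_F^{(0)} \otimes \mathrm{id} + \mathrm{id} \otimes \dd$ of the bicomplex. This is a direct computation strictly analogous to the one performed in Lemma \ref{lemmedifferentiel}, using the identification of a section $\xi \in \Gamma(E_{-j})$ with the inner derivation $\partial_\xi$ of degree $-j$ acting on $\mathscr{E}$, and the duality relation $\langle u, \dd \xi\rangle = (-1)^{j-1}\langle Q_E^{(0)}(u), \xi\rangle$. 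It automatically gives $\partial^2 = 0$.

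Next I would establish exactness of the horizontal lines of the bicomplex in depth $\geq 2$, and in depth $1$ exactly on the kernel of $\mathrm{id} \otimes \rho$. Because the sheaf of sections $\Gamma(S^n(F^\ast)_l)$ is locally free over $\mathscr{O}$, hence flat, tensoring the resolution
\begin{center}
\begin{tikzcd}[column sep=0.7cm,row sep=0.4cm]
\cdots \ar[r,"\dd"] & \Gamma(E_{-2}) \ar[r,"\dd"] & \Gamma(E_{-1}) \ar[r,"\rho"] & \mathcal{D} \ar[r] & 0
\end{tikzcd}
\end{center}
with $\Gamma(S^n(F^\ast)_l)$ preserves its exactness, for every height $l \geq n$. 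This yields the desired exactness properties for the rows of the bicomplex augmented by $\mathrm{id} \otimes \rho$.

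With the bicomplex and its row-exactness in hand, the diagram chase is identical to the one in Lemma \ref{lemmefondamental2}. For the first item, let $\alpha$ be a root-free cocycle of degree $k$ and let $p_0 \geq 2$ denote its depth, so that $\alpha = \sum_{p \geq p_0} \alpha_p$. The component of $\partial \alpha = 0$ at depth $p_0 - 1$ reads $(\mathrm{id} \otimes \dd)(\alpha_{p_0}) = 0$, and since $p_0 - 1 \geq 1$, the exactness of the horizontal line provides $\beta_{p_0+1}$ with $(\mathrm{id} \otimes \dd)(\beta_{p_0+1}) = \alpha_{p_0}$. Pushing this up depth by depth and collecting $\beta := \sum_{p \geq p_0+1} \beta_p$, one obtains $\partial \beta = \alpha$. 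For the second item, the only modification is at the initial step: one needs the component of lowest depth (depth $1$) to be exact, which is precisely guaranteed by the assumption that the root lies in the kernel of $\mathrm{id} \otimes \rho$; from there the same cascading construction proceeds without change.

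The only real subtlety, which I expect to be the main point to check carefully, is the flatness (or, equivalently in this context, the local freeness) of $\Gamma(S^n(F^\ast)_l)$ and the fact that this is enough to transfer exactness of sheaves of $\mathscr{O}$-modules from the $E$-resolution to each row of the bicomplex. Once that is granted, everything else is a formal diagram chase parallel to Lemma \ref{lemmefondamental2}, and no new ingredient is required.
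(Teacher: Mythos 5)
Your proposal is correct and follows essentially the same route as the paper: the paper likewise identifies $\partial$ with the total differential of the bicomplex (by the same argument as Lemma \ref{lemmedifferentiel}), deduces exactness of the rows away from depth $1$ — with depth-$1$ coboundaries given by the kernel of $\mathrm{id}\otimes\rho$ — from the fact that $\Gamma(S^{n}(F^{\ast}))$ is a projective $\mathscr{O}$-module so that tensoring over $\mathscr{O}$ preserves exactness, and then concludes by the same diagram chase as in Lemma \ref{lemmefondamental2}. Your appeal to local freeness/flatness of $\Gamma(S^{n}(F^{\ast})_{l})$ is just a rephrasing of that projectivity argument, so there is no genuine difference in approach.
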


\begin{proof}
In the bicomplex above, lines are exact except maybe in degree $-1$, where coboundaries are given by the kernel of  $ \R{id} \otimes \rho$.
This comes from the fact that $(E,\dd,\rho)$ is exact, and $\Gamma(S^n(F^*))$ is a projective $ {\mathscr O}$-module, so that
tensoring over $ {\mathscr O}$ preserves exactness.
Now, root-free cocycles have no component in sections of $S^{n}(F^*) \otimes E_{-1}$, then such a cocycle takes values only in a sub-bi-complex of  ${\mathfrak V}^{(n-1)} $ where all lines are exact, i.e. the bicomplex of elements of depth greater than or equal to $2$, 
hence it is a coboundary by simple diagram chasing.
This proves the first item. The second item comes from the simple observation that a cocycle in $\Gamma( S^{n}(F^*) \otimes E )$  of depth $1$ has a component in the sections of $S^{n}(F^*) \otimes E_{-1}$.
If this term lies in the kernel of $ \R{id} \otimes \rho$, it is in the image of  $\R{id} \otimes \dd^{(2)}$
and the result then follows by diagram chasing.
\end{proof}

Let us now explain the meaning of this bicomplex. Let $\Phi:{\mathscr E} \to {\mathscr F}$ be any graded ${\mathscr O}$-linear algebra morphism
(not a priori a Lie $\infty$-algebroid morphism). Consider \emph{$\Phi$-derivations} of degree $k$, i.e. maps 
$\delta:{\mathscr E} \to {\mathscr F}$ such that:

\begin{equation}
\delta^{(n)}(fg) = \sum_{i=0}^{n}\,\delta^{(i)}(f) \Phi^{(n-i)}(g) + (-1)^{k|f|} \Phi^{(n-i)}(f) \delta^{(i)}(g)
\end{equation}
for all functions $f,g \in {\mathscr E}$, and all arities $n\geq 0$, or equivalently that:
\begin{equation}
\delta(fg)=\delta(f)\Phi(g)+(-1)^{k|f|}\Phi(f)\delta(g).
\end{equation}
When  ${\mathscr E} = {\mathscr F} $, we recover the previous definition of $\Phi$-derivations, see Equation \eqref{derivation}. By ${\mathscr O}$-linearity,  $\Phi$-derivations are determined by their restrictions to sections of $E^* $, and, as such,
can be identified with sections of $S(F^*) \otimes E $ and those of arity $n$
are sections of $S^{n+1}(F^*) \otimes E $, i.e. elements of ${\mathfrak V}^{(n)}$.
It is clear that if $\delta$
is a $\Phi$-derivation of degree $k$ and arity $n$, then 
\begin{equation}
\label{eq:diffPsider}
\partial (\delta) = Q_{F}^{(0)}\circ\delta  -(-1)^{k} \delta\circ Q_{E}^{(0)} \end{equation}
is a $\Phi$-derivation of degree $k+1$ and arity $n$,
and that the previous operation makes the space of $\Phi$-derivations of arity $n$
a complex. 

\begin{lemme}
\label{lem:meaning}
 Let $\Phi:{\mathscr E} \to {\mathscr F}$ be any graded ${\mathscr O}$-linear algebra morphism.
 Then the space of $\Phi$-derivations of arity $n$, equiped with the differential (\ref{differential})
coincides, as a complex, with the bicomplex ${\mathfrak V}^{(n)}$, equipped with the total differential. 
\end{lemme}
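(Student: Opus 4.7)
The plan is to prove this as a direct generalization of Lemma \ref{lemmedifferentiel}, which already handled the diagonal case $\mathscr{E}=\mathscr{F}$, $\Phi=\mathrm{id}$. There are really only two things to check: that $\Phi$-derivations of arity $n$ are in bijection with sections of $S^{n+1}(F^\ast)\otimes E$, and that under this bijection the operator $\partial$ of \eqref{eq:diffPsider} agrees with the total differential of the bicomplex drawn just before the statement.

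For the bijection, I would argue as follows. Since $\Phi$ is $\mathscr{O}$-linear and $\mathscr{E}=\Gamma(S(E^\ast))$ is generated as an $\mathscr{O}$-algebra by $\Gamma(E^\ast)$, the twisted Leibniz rule shows that any $\Phi$-derivation $\delta$ is determined by its restriction $\widehat{\delta}:\Gamma(E^\ast)\to\mathscr{F}$. Conversely, given any $\mathscr{O}$-linear map $\widehat{\delta}:\Gamma(E^\ast)\to\Gamma(S^{n+1}(F^\ast))$, one extends it to all of $\mathscr{E}$ by iterating the twisted Leibniz rule; well-definedness on the symmetric algebra follows from the graded-commutativity of the product in $\mathscr{F}$ together with the fact that $\Phi$ is itself an algebra morphism, so the extension respects the symmetric relations. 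By the duality pairing $\Gamma(E^\ast)\otimes\Gamma(E)\to\mathscr{O}$, the space of such $\widehat{\delta}$ of arity $n$ is canonically $\Gamma(S^{n+1}(F^\ast)\otimes E)=\mathfrak{V}^{(n)}$: to $\alpha=P\otimes x\in\Gamma(S^{n+1}(F^\ast)_i\otimes E_{-j})$ one associates the derivation $\delta_\alpha:\xi\mapsto P\,\langle\xi,x\rangle$, nonzero only for $\xi\in\Gamma(E^\ast_{-j})$ and carrying degree $i-j$, exactly matching the grading on $\mathfrak{V}^{(n)}$.

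For the identification of the differentials, I would mimic the computation already carried out in the proof of Lemma \ref{lemmedifferentiel}. Taking $\alpha=P\otimes x$ as above and any $\xi\in\Gamma(E^\ast)$, and using that $Q_F^{(0)}$ is vertical (so it acts trivially on the scalar $\langle\xi,x\rangle\in\mathscr{O}$) together with Equation \eqref{actiondual} in the form $\langle Q_E^{(0)}\xi,x\rangle=(-1)^{j-1}\langle\xi,\dd x\rangle$, one computes
\begin{equation*}
\partial(\delta_\alpha)(\xi)=Q_F^{(0)}\bigl(P\langle\xi,x\rangle\bigr)-(-1)^{i-j}P\,\langle Q_E^{(0)}\xi,x\rangle=Q_F^{(0)}(P)\langle\xi,x\rangle+(-1)^i P\,\langle\xi,\dd x\rangle,
\end{equation*}
which is exactly the derivation associated to $(Q_F^{(0)}\otimes\mathrm{id}+\mathrm{id}\otimes\dd)(\alpha)$. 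This shows that $\partial$ corresponds term-wise to the total differential of the bicomplex. The only delicate point is bookkeeping of signs, since three gradings interact ($|P|=i$, $|x|=-j$, and the resulting $|\delta_\alpha|=i-j$); but as in Lemma \ref{lemmedifferentiel} they collapse to the clean $(-1)^i$ above, so I expect no substantive obstacle beyond careful sign management.
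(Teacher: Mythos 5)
Your proof is correct and takes essentially the same approach as the paper: the paper gives no separate argument for Lemma \ref{lem:meaning}, relying on the preceding identification of $\Phi$-derivations with sections of $S^{n+1}(F^{\ast})\otimes E$ via restriction to $\Gamma(E^{\ast})$ and on the remark that the operator \eqref{differential} is identified with the total differential ``following the same arguments as in the proof of Lemma \ref{lemmedifferentiel}.'' Your write-up simply carries out that adaptation explicitly (the extension-by-twisted-Leibniz bijection, and the computation giving $Q_F^{(0)}(P)\langle\xi,x\rangle+(-1)^{i}P\langle\xi,\dd x\rangle$), with signs matching the paper's computation in Lemma \ref{lemmedifferentiel}.
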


The proof of Theorem \ref{theo:onlyOne} will proceed in two steps. We first have to show the existence of a Lie $\infty$-morphism between $(F,Q_{F})$ and $(E,Q_{E})$, and then prove that two such Lie $\infty$-morphisms are homotopic. The existence part of the first item of Theorem \ref{theo:onlyOne} comes from the following proposition:

\begin{proposition}\label{prop:existencepart}
Any chain map $\phi:F\to E$ is the linear part of a Lie $\infty$-morphism from $ (F,Q_F)$ to $ (E,Q_{E})$ over the identity of $M$.
\end{proposition}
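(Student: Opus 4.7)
The plan is to construct the higher components $\Phi^{(n)}$ for $n \geq 1$ by induction on the arity, paralleling the step-by-step construction of $Q^{(n)}$ in the existence proof of Theorem \ref{theo:existe}. Decomposing the defining equation $\Phi \circ Q_E = Q_F \circ \Phi$ into components of homogeneous arity and isolating the contribution of $\Phi^{(n)}$, one obtains for each $n \geq 1$ the cohomological equation
\[
-\partial\big(\Phi^{(n)}\big) \;=\; R_n \;:=\; \sum_{i=1}^{n} Q_F^{(i)} \circ \Phi^{(n-i)} \;-\; \sum_{i=0}^{n-1} \Phi^{(i)} \circ Q_E^{(n-i)},
\]
where $\partial$ is the differential \eqref{differential} on the bicomplex $\mathfrak{V}^{(n)}$; the arity $0$ equation $Q_F^{(0)} \circ \Phi^{(0)} = \Phi^{(0)} \circ Q_E^{(0)}$ already holds by the chain map property of $\phi$.

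At the inductive step $n$, I would first verify that $R_n$ is a $\Phi^{(0)}$-derivation of degree $1$ and arity $n$, so that by Lemma \ref{lem:meaning} it may be regarded as a degree $1$ element of $\mathfrak{V}^{(n)}$. Next I would check that $R_n$ is a $\partial$-cocycle, using $[Q_E,Q_E]=0=[Q_F,Q_F]$ together with the inductive hypothesis that $-\partial(\Phi^{(m)}) = R_m$ for all $0 < m < n$. Finally, I would apply Lemma \ref{lemmefondamental3} to produce $\Phi^{(n)}$ with $-\partial(\Phi^{(n)}) = R_n$ and extend it to all of $\mathscr{E}$ through formulas \eqref{baboum}--\eqref{biboum}.

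The dichotomy of Lemma \ref{lemmefondamental3} will be resolved as follows. For every $n \geq 2$, the root of $R_n$ would have to live in $\Gamma(S^{n+1}(F^*)_2 \otimes E_{-1})$; but since each homogeneous factor of $F^*$ has degree at least $1$, one has $S^{n+1}(F^*)_2 = 0$ as soon as $n \geq 2$, so $R_n$ is automatically root-free and item 1 of Lemma \ref{lemmefondamental3} will apply. The only delicate case is $n = 1$, where the root of $R_1$ may be nonzero; in that case I will evaluate $R_1$ on a function $f \in \mathscr{O}(M)$ and obtain
\[
R_1(f) \;=\; Q_F^{(1)}(f) - \Phi^{(0)}\big(Q_E^{(1)}(f)\big) \;=\; \rho'^{*}(\dd_{\mathrm{dR}} f) - \Phi^{(0)}\big(\rho^{*}(\dd_{\mathrm{dR}} f)\big) \;=\; 0,
\]
by Equation \eqref{commutativity}. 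Exactly as in the derivation of \eqref{conditionQ1}, this vanishing is equivalent to $(\mathrm{id}\otimes\rho)\circ rt(R_1)=0$, so item 2 of Lemma \ref{lemmefondamental3} will apply.

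The main technical obstacle will be the cocycle check. It will require a careful reorganization of the double sums defining $\partial(R_n)$, using both the homological identities $Q_E^{2} = Q_F^{2} = 0$ and the inductive assumptions. The computation will mirror the Jacobi-style manipulation displayed in equation \eqref{cerbere} and the surrounding lines: after expanding $Q_F^{(0)} \circ R_n + R_n \circ Q_E^{(0)}$, the terms will pair off against rearrangements of the double compositions and yield $\partial(R_n) = 0$ after cancellation. Once the induction is complete, assembling $\Phi = \sum_{n \geq 0} \Phi^{(n)}$ produces the required Lie $\infty$-algebroid morphism from $(F,Q_F)$ to $(E,Q_E)$ whose linear part is the prescribed chain map $\phi$.
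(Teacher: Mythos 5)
Your strategy coincides with the paper's own proof: induction on arity, the recursion $\partial(\Phi^{(n)}) = -R_n$ in the bicomplex $\mathfrak{V}^{(n)}$, and Lemma \ref{lemmefondamental3} to solve each step; your degree count showing that $R_n$ is automatically root-free for $n \geq 2$ is a correct (and slightly cleaner) version of the paper's observation that the corresponding term has depth at least $n+1$, and the cocycle verification you defer is exactly the computation the paper carries out in Equations \eqref{cocycle}--\eqref{cocycle4}.

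The case $n=1$, however, contains a genuine error, and it is the one point of the induction that actually requires an idea. Your computation $R_1(f) = \rho'^{*}(\dd_{\mathrm{dR}} f) - \Phi^{(0)}\big(\rho^{*}(\dd_{\mathrm{dR}} f)\big) = 0$, valid for $f \in \mathscr{O}(M)$, establishes only that $R_1$ is $\mathscr{O}$-linear, i.e.\ vertical --- the property needed to regard $R_1$ as an element of $\mathfrak{V}^{(1)}$ in the first place. It is \emph{not} equivalent to $(\mathrm{id}\otimes\rho)\circ rt(R_1)=0$: that condition says that $R_1$ kills the sections $\rho^{*}(\dd_{\mathrm{dR}} f) \in \Gamma(E_{-1}^{*})$, which are functions of arity $1$ and degree $1$ on $E$, not functions of degree $0$. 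Your appeal to the derivation of \eqref{conditionQ1} misreads it: there the quantity that vanishes is $[Q^{(0)},X]\circ\rho^{\ast}(\dd_{\mathrm{dR}}f)$ --- evaluation on $\rho^{*}(\dd_{\mathrm{dR}}f)$ --- while verticality of $[Q^{(0)},X]$ is known separately. For your $R_1$ the root condition amounts to the identity
\begin{equation*}
R_1\big(\rho^{*}\dd_{\mathrm{dR}}f\big) \;=\; \tfrac{1}{2}\,[Q_F^{(1)},Q_F^{(1)}](f)\;-\;\tfrac{1}{2}\,\Phi^{(0)}\Big([Q_E^{(1)},Q_E^{(1)}](f)\Big)\;=\;0 ,
\end{equation*}
where the first equality uses \eqref{commutativity} together with $Q_E^{(1)}(f)=\rho^{*}\dd_{\mathrm{dR}}f$ and $Q_F^{(1)}(f)=\rho'^{*}\dd_{\mathrm{dR}}f$, and the second requires knowing that both self-commutators $[Q_E^{(1)},Q_E^{(1)}]$ and $[Q_F^{(1)},Q_F^{(1)}]$ are vertical, i.e.\ the anchor--bracket compatibility of the two almost-Lie algebroid structures (Proposition \ref{lemmefondamental}). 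This is precisely the extra input the paper invokes in Equations \eqref{fantasia}--\eqref{bonjouuuur}, and your argument never uses it; without it, item 2 of Lemma \ref{lemmefondamental3} cannot be applied and the induction cannot start.
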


\begin{proof}
We have to show that there exists a Lie ${\infty}$-algebroid morphism from $(F,Q_F)$ to $(E,Q_E)$ over $M$,
that is a degree $0$ morphism of algebras, $\Phi: {\mathscr E} \to {\mathscr F}$, commuting with the homological vector fields:
\begin{equation}\label{intertwine}
Q_F\circ\Phi=\Phi\circ Q_E.
\end{equation}
As usual we expect that $\Phi$ formally involves linear maps of various arities: $\Phi=\sum_{i\geq 0}\Phi^{(i)}$ where each 
component $\Phi^{(i)}$ restricts to a map $\widehat{\Phi}^{(i)}:\Gamma(E^{\ast})\to \Gamma(S^{i+1}(F^{\ast}))$, and satisfies Equations \eqref{baboum}-\eqref{biboum} on $\funct$. By ${\mathscr O}$-linearity, $\widehat{\Phi}^{(i)}$ can be seen as en element of $\Gamma(S^{i+1}F^*)\otimes E$ of degree $0$,
i.e. an element in the bicomplex ${\mathfrak V}^{i}$. Notice that $\Phi^{(i)}$ depends on $\widehat{\Phi}^{(0)},\ldots,\widehat{\Phi}^{(i)}$ only.

Thus, the map $\Phi^{(n-1)}:\Gamma(E^{\ast})\to \Gamma(S^{n}(F^{\ast}))$ will be represented by an antidiagonal of depth $n$ and height $n$ (so that $height-depth=0$). The necessary conditions that the $\Phi^{(i)}$ have to satisfy are obtained by isolating each side of \eqref{intertwine} by arity. Then they are equivalent to the set of following equations:
\begin{equation}
\forall\ k\geq0\hspace{1.5cm}\sum_{i+j=k}Q_{F}^{(i)}\Phi^{(j)}=\sum_{i+j=k}\Phi^{(j)}Q_{E}^{(i)}
\end{equation}
They can be rewritten more symbolically when putting the terms involving the component of $\Phi$ with highest arity on the left-hand side, and using Equation \eqref{differential}:
\begin{align}
\partial(\Phi^{(0)})&=0\\
\forall\ k\geq1\hspace{1.5cm}\partial(\Phi^{(k)})&=\underset{i+j=k}{\sum_{1\leq i,j+1\leq k}}\Phi^{(j)}Q_{E}^{(i)}-Q_{F}^{(i)}\Phi^{(j)}\label{raindrop}
\end{align}
where in each case, $\Phi^{(k)}$ satisfies Equations \eqref{baboum}-\eqref{biboum}. We will construct $\Phi^{(k)}$ by induction, and even if $\Phi^{(k)}$ is not a $\Phi$-derivation, it will be constructed so that it satisfies Equations \eqref{baboum}-\eqref{biboum} and that \eqref{raindrop} makes sense in $\mathrm{Hom}_{\mathscr{O}}(\funct,\functt)$.

The first equation is automatically satisfied by construction of the given chain map $\phi$. One can notice that for $k\geq1$, Equation \eqref{intertwine} means that the right-hand side of Equation \eqref{raindrop} is a $\partial$-coboundary. For $k=1$, we get the  following necessary condition that has to be satisfied by the map $\Phi^{(1)}$:
\begin{equation}\label{huisclos}
\partial(\Phi^{(1)})		= \Phi^{(0)}Q_{E}^{(1)}	-Q_{F}^{(1)}\Phi^{(0)}
\end{equation}
The right-hand side $-$ that we chose to denote by $C_{1}$ $-$ is a linear map from $\funct$ to $\functt$, of arity 1 and of degree 1. Its action is first defined on $\Gamma(E^{\ast})$, and then it is extended to all of $\funct$ as a $\Phi^{(0)}$-derivation. Equation \eqref{commutativity} and the fact that $\Phi^{(0)}$ intertwines the anchors of $E$ and $F$ imply that $C_{1}$ is vertical (i.e. $\mathscr{O}$-linear). We then have to show that it is a $\partial$-coboundary in $\mathfrak{V}^{(1)}=\Gamma\big(S^{2}(E^{\ast})\otimes F\big)$. It is a $\partial$-cocycle:
\begin{align}
\partial(C_{1})
&=Q_{F}^{(0)}\circ(C_{1})+(C_{1})\circ Q_{E}^{(0)}\\
&=(Q_{F}^{(0)}\Phi^{(0)}-\Phi^{(0)}Q_{E}^{(0)}	)Q_{E}^{(1)}+Q_{F}^{(1)}(Q_{F}^{(0)}\Phi^{(0)}-\Phi^{(0)}Q_{E}^{(0)}	)\nonumber\\
&=0\nonumber
\end{align}
where we used the fact that $[Q_{F}^{(0)},Q_{F}^{(1)}]=0$ and $[Q_{E}^{(0)},Q_{E}^{(1)}]=0$. Moreover, we already know from the last section that $[Q_{F}^{(1)},Q_{F}^{(1)}]$ and $[Q_{E}^{(1)},Q_{E}^{(1)}]$ are vertical. Then we get:
\begin{equation}
\forall\ f\in\Funct\hspace{1.3cm}\left(\Phi^{(0)}\circ [Q_{E}^{(1)},Q_{E}^{(1)}]-[Q_{F}^{(1)},Q_{F}^{(1)}]\circ\Phi^{(0)}\right)(f)=0
\end{equation}
and a short calculation shows that this equation is equivalent to:
\begin{equation}\label{fantasia}
\forall\ f\in\Funct\hspace{1.3cm}C_{1}\circ Q_{E}^{(1)}(f)-Q_{F}^{(1)}\circ C_{1}(f)=0
\end{equation}
The $\Phi^{(0)}$-derivation $C_{1}$ being vertical, the last term of the left-hand side of \eqref{fantasia} vanishes and we are left with the first term. The henceforth obtained identity can be translated as:
\begin{equation}\label{bonjouuuur}
(\mathrm{id}\otimes\rho)\circ rt(C_{1})=0
\end{equation}
Being a cocycle whose root lies in the kernel of $\mathrm{id}\otimes\rho$, Lemma \ref{lemmefondamental3} tells us that $C_{1}$ is a $\partial$-coboundary, i.e. that there exists an element $\widehat{\Phi}^{(1)}\in\mathfrak{V}^{(1)}$ such that:
\begin{equation}\label{2homomorphism}
\partial(\widehat{\Phi}^{(1)})=\Phi^{(0)}Q_{E}^{(1)}	-Q_{F}^{(1)}\Phi^{(0)}
\end{equation}
that is exactly Equation \eqref{raindrop} for $k=1$. By using Equation \eqref{boum}, we can extend $\widehat{\Phi}^{(1)}$ to a $\Phi^{(0)}$-derivation $\Phi^{(1)}:\funct\to\functt$.

\begin{figure}[ht]
  \centering
  \begin{tikzpicture}[scale=0.50]
    \coordinate (Origin)   at (0,0);
    \coordinate (XAxisMin) at (10,0);
    \coordinate (XAxisMax) at (0,0);
    \coordinate (YAxisMin) at (10,0);
    \coordinate (YAxisMax) at (10,8);
    \coordinate (ZAxisMin) at (10,-2);
    \coordinate (ZAxisMax) at (10,-1);
    \coordinate (WAxisMin) at (10,-1);
    \coordinate (WAxisMax) at (10,1);
    \draw [ultra thick, black,-latex] (XAxisMin) -- (XAxisMax) node [left] {};
    \draw [ultra thick, black,-latex] (YAxisMin) -- (YAxisMax) node [above] {};

    \clip (-1,-1) rectangle (12cm,9cm); 
    \coordinate (Bone) at (8,4);
    \coordinate (Btwo) at (1,11);
    \coordinate (B1) at (-1,11);
    \coordinate (B2) at (6,4);
    \draw[style=help lines,dashed] (-4,0) grid[step=2cm] (10,11);
    \foreach \x in {-4,-3,...,4}{
      \foreach \y in {2,...,5}{
        \node[draw,circle,inner sep=1pt,fill] at (2*\x,2*\y) {};
        \node[draw,circle,inner sep=1pt,fill,red] at (Bone){};
        \node[draw,circle,inner sep=1pt,fill,red] at (4,8){};
        \node[draw,circle,inner sep=1pt,fill,red] at (2,10){};
        \node[draw,circle,inner sep=1pt,fill,red] at (6,6){};
        \node[draw,circle,inner sep=1pt,fill,blue] at (B2){};
        \node[draw,circle,inner sep=1pt,fill,blue] at (4,6){};
        \node[draw,circle,inner sep=1pt,fill,blue] at (0,10){};
        \node[draw,circle,inner sep=1pt,fill,blue] at (2,8){};
      }
    }
    \node [below] at (8,0)  {$1$};
    \node [below] at (6,0)  {$2$};
    \node [below] at (4,0)  {$3$};
    \node [right] at (10,2)  {$1$};
    \node [right] at (10,4)  {$2$};
    \node [right] at (10,6)  {$3$};
    \node [below right] at (10,0)  {$0$};
    
    \draw [ultra thick,red] (Btwo)
        -- (Bone);
    \node [above right,red] at (6,6) {\large $C_{1}$};
    \draw [ultra thick,blue] (B1)
        -- (B2);
    \node [below left,blue] at (4,6) {\large $\Phi^{(1)}$};
  \end{tikzpicture}
  \caption{\footnotesize $C_{1}$ is a $\partial$-cocycle whose root lies in the kernel of the anchor map $\rho$, and a such it is a coboundary: $C_{1}=\partial(\Phi^{(1)})$.}
  \label{figure5}
\end{figure}
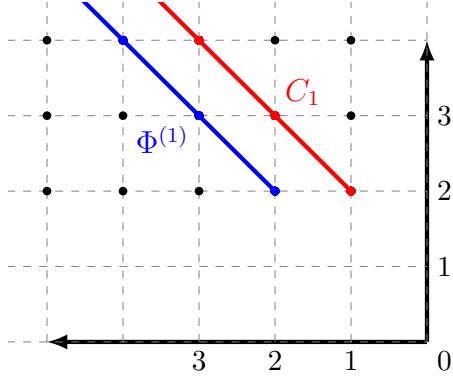

\bigskip
\begin{remarque}
The fact that $rt(C_{1})$ is in the kernel of $\mathrm{id}\otimes\rho$ (see Equation \eqref{bonjouuuur}) is equivalent to the following condition:
\begin{equation}
\forall\ x,y\in \Gamma(F_{-1})\hspace{1.3cm}\rho\Big(\big\{\phi_{1}(x),\phi_{1}(y)\big\}_{E_{-1}}-\phi_{1}\big(\{x,y\}_{F_{-1}}\big)\Big)=0
\end{equation}
which holds true due to the morphism condition \eqref{algebroid2} on $E_{-1}$ and to the identity $\rho'=\rho\circ\phi_{1}$. That is to say, the bundle morphism $\phi_{1}:F_{-1}\to E_{-1}$ is not necessarily compatible with the brackets, but it is rather compatible up to homotopy:
\begin{equation}\label{2homomorphism}
\forall\ x,y\in \Gamma(F_{-1})\hspace{1.3cm}\big\{\phi_{1}(x),\phi_{1}(y)\big\}_{E_{-1}}-\phi_{1}\big(\{x,y\}_{F_{-1}}\big)=\dd\theta_{1,1}(x,y)
\end{equation}
for $\theta_{1,1}:F_{-1}\odot F_{-1}\to E_{-2}$ a degree 0 symmetric map. More generally, Equation \eqref{2homomorphism} defines a chain homotopy $\theta_{i,j}:F_{-i}\vee F_{-j}\to E_{-(i+j)}$ of degree 0 between $\phi\circ\{\ .\ ,\,.\ \}_{F}$ and $\{\ .\ ,\,.\ \}_{E}\circ\phi$, that is to say, a \emph{2-homomorphism} in the $L_{\infty}$ setting.
\end{remarque}

\bigskip
Let $n\geq1$ and assume that $\widehat{\Phi}^{(i)}$ is already defined for $0\leq i\leq n$. For every $0\leq i\leq n$, we denote by $\Phi^{(i)}$ the unique map from $\funct$ to $\functt$, that obeys Equations \eqref{baboum}-\eqref{biboum} and restricts to $\widehat{\Phi}^{(i)}$ on $\Gamma(E^{\ast})$. We want to define $\Phi^{(n+1)}$, so that Equation \eqref{raindrop} for $k=n+1$ be satisfied:
\begin{equation}\label{whitewidow}
\partial(\Phi^{(n+1)})	=\underset{i+j=n+1}{\sum_{0\leq i-1,j\leq n}}\Phi^{(j)}Q_{E}^{(i)}-Q_{F}^{(i)}\Phi^{(j)}
\end{equation}
The right-hand side $-$ denoted by $C_{n+1}$ $-$ is well defined since every term appearing in the expression of $C_{n+1}$ has been defined before. It is first defined on $\Gamma(E^{\ast})$ and then we extend it to all of $\funct$ by the derivation properties of $Q_{E}, Q_{F}$ and  the derivation conditions \eqref{baboum}-\eqref{biboum} satisfied by $\Phi^{(i)}$, for $0\leq i\leq n+1$.
$C_{n+1}$ is of arity $n+1$ and of degree 1, then for degree reasons, it is vertical, has depth $n+1$, and belongs to $\mathfrak{V}^{(n+1)}=\Gamma(S^{n+2}(F^{\ast})\otimes E)$. Moreover, it is a $\partial$-cocycle:
\begin{equation}\label{cocycle}
\partial(C_{n+1})=\underset{i+j=n+1}{\sum_{1\leq i,j+1\leq n+1}}Q_{F}^{(0)}\Phi^{(j)}Q_{E}^{(i)}-Q_{F}^{(0)}Q_{F}^{(i)}\Phi^{(j)}+\Phi^{(j)}Q_{E}^{(i)}Q_{E}^{(0)}-Q_{F}^{(i)}\Phi^{(j)}Q_{E}^{(0)}
\end{equation}
The first term gives:
\begin{align}
\underset{i+j=n+1}{\sum_{1\leq i,j+1\leq n+1}}Q_{F}^{(0)}\Phi^{(j)}Q_{E}^{(i)}&=
Q_{F}^{(0)}\Phi^{(0)}Q_{E}^{(n+1)}+\underset{i+j=n+1}{\sum_{1\leq i,j\leq n}}Q_{F}^{(0)}\Phi^{(j)}Q_{E}^{(i)}\\
&=\Phi^{(0)}Q_{E}^{(0)}Q_{E}^{(n+1)}+\underset{i+j=n+1}{\sum_{1\leq i,j\leq n}}\Phi^{(j)}Q_{E}^{(0)}Q_{E}^{(i)}\nonumber\\
&\quad\quad+\underset{i+j=n+1}{\sum_{1\leq i,j\leq n}}\quad\underset{k+l=j}{\sum_{1\leq k,l+1\leq j}}\big(\Phi^{(l)}Q_{E}^{(k)}-Q_{F}^{(k)}\Phi^{(l)}\big)Q_{E}^{(i)}\nonumber
\end{align}
where we used the recursion hypothesis for all $j\leq n$ on the right-hand side. The last term in \eqref{cocycle} is:
\begin{align}
\underset{i+j=n+1}{\sum_{1\leq i,j+1\leq n+1}}Q_{F}^{(i)}\Phi^{(j)}Q_{E}^{(0)}
&=Q_{F}^{(n+1)}\Phi^{(0)}Q_{E}^{(0)}+\underset{i+j=n+1}{\sum_{1\leq i,j\leq n}}Q_{F}^{(i)}\Phi^{(j)}Q_{E}^{(0)}\\
&=Q_{F}^{(n+1)}Q_{F}^{(0)}\Phi^{(0)}+\underset{i+j=n+1}{\sum_{1\leq i,j\leq n}}Q_{F}^{(i)}Q_{F}^{(0)}\Phi^{(j)}\nonumber\\
&\quad\quad-\underset{i+j=n+1}{\sum_{1\leq i,j\leq n}}\quad\underset{k+l=j}{\sum_{1\leq k,l+1\leq j}}Q_{F}^{(i)}\big(\Phi^{(l)}Q_{E}^{(k)}-Q_{F}^{(k)}\Phi^{(l)}\big)\nonumber
\end{align}
We can then write Equation \eqref{cocycle} as:
\begin{align}\label{cocycle2}
\partial(C_{n+1})
&=\Phi^{(0)}[Q_{E}^{(0)},Q_{E}^{(n+1)}]+\underset{i+j=n+1}{\sum_{1\leq i,j\leq n}}\Phi^{(j)}[Q_{E}^{(0)},Q_{E}^{(i)}]+\underset{i+j=n+1}{\sum_{1\leq i,j\leq n}}\quad\underset{k+l=j}{\sum_{1\leq k,l+1\leq j}}\Phi^{(l)}Q_{E}^{(k)}Q_{E}^{(i)}\nonumber\\
&\quad-[Q_{F}^{(0)},Q^{(n+1)}]\Phi^{(0)}-\underset{i+j=n+1}{\sum_{1\leq i,j\leq n}}[Q_{F}^{(0)},Q_{F}^{(i)}]\Phi^{(j)}-\underset{i+j=n+1}{\sum_{1\leq i,j\leq n}}\quad\underset{k+l=j}{\sum_{1\leq k,l+1\leq j}}Q_{F}^{(k)}Q_{F}^{(i)}\Phi^{(l)}\nonumber\\
&\hspace{1.5cm}-\underset{i+j=n+1}{\sum_{1\leq i,j\leq n}}\quad\underset{k+l=j}{\sum_{1\leq k,l+1\leq j}}Q_{F}^{(k)}\Phi^{(l)}Q_{E}^{(i)}-Q_{F}^{(i)}\Phi^{(l)}Q_{E}^{(k)}
\end{align}
Since $j\in\{1,\ldots,n\}$ then the range of the index $l$ goes from $0$ to $n-1$, and for every $l$, we have $i+k=n+1-l$, in particular $k$ is less than or equal to $n$, and the last line in Equation \eqref{cocycle2} is equal to:
\begin{equation*}
-\sum_{0\leq l\leq n-1}\ \underset{i+k=n+1-l}{\sum_{1\leq i,k\leq n}}Q_{F}^{(k)}\Phi^{(l)}Q_{E}^{(i)}-Q_{F}^{(i)}\Phi^{(l)}Q_{E}^{(k)}
\end{equation*}
which vanishes because the labelling of the second sum is symmetric in $i$ and $k$, whereas the summand is antisymmetric. Moreover Equation \eqref{cocycle2} gives:
\begin{align}\label{cocycle3}
\partial(C_{n+1})
&=\Phi^{(0)}[Q_{E}^{(0)},Q_{E}^{(n+1)}]+\underset{i+j=n+1}{\sum_{1\leq i,j\leq n}}\Phi^{(j)}[Q_{E}^{(0)},Q_{E}^{(i)}]+\sum_{0\leq l\leq n-1}\ \underset{i+k=n+1-l}{\sum_{1\leq i,k\leq n}}\Phi^{(l)}Q_{E}^{(k)}Q_{E}^{(i)}\\
&\quad-[Q_{F}^{(0)},Q_{F}^{(n+1)}]\Phi^{(0)}-\underset{i+j=n+1}{\sum_{1\leq i,j\leq n}}[Q_{F}^{(0)},Q_{F}^{(i)}]\Phi^{(j)}-\sum_{0\leq l\leq n-1}\ \underset{i+k=n+1-l}{\sum_{1\leq i,k\leq n}}Q_{F}^{(k)}Q_{F}^{(i)}\Phi^{(l)}\nonumber
\end{align}

Here we have to split the discussion into two cases. For $n=1$, \eqref{cocycle3} becomes:
\begin{equation}\label{cocycle3bis}
\partial(C_{2})=\Phi^{(1)}\circ[Q_{E}^{(0)},Q_{E}^{(1)}]-[Q_{F}^{(0)},Q_{F}^{(1)}]\circ\Phi^{(1)}=0
\end{equation}
then $C_{2}$ is a $\partial$ cocycle. Since it has depth 2, then by Lemma \ref{lemmefondamental3}, it is a coboundary, that is there exists an element $\widehat{\Phi}^{(2)}\in\mathfrak{V}^{(2)}$ of depth 3 such that:
\begin{equation}
C_{2}=\partial(\widehat{\Phi}^{(2)})
\end{equation}
which is Equation \eqref{raindrop} for $k=2$. We extend it to a map $\Phi^{(2)}:\funct\to\functt$ using Equation \eqref{biboum}.

First let us assume that $n\geq2$. For $l=0$, the sum $\underset{i+k=n+1}{\sum_{1\leq i,k\leq n}}\Phi^{(0)}Q_{E}^{(k)}Q_{E}^{(i)}$ is combined with the first term of Equation \eqref{cocycle3} to give:
\begin{equation}
\Phi^{(0)}\Big([Q_{E}^{(0)},Q_{E}^{(n+1)}]+\underset{i+k=n+1}{\sum_{1\leq i,k\leq n}}Q_{E}^{(k)}Q_{E}^{(i)}\Big)=\Phi^{(0)}[Q_{E},Q_{E}]^{(n+1)}=0
\end{equation}
The same argument applies to the fourth term, and then Equation \eqref{cocycle3} for $n\geq2$ becomes:
\begin{align}\label{cocycle4}
\partial(C_{n+1})
&=\underset{i+j=n+1}{\sum_{1\leq i,j\leq n}}\Phi^{(j)}[Q_{E}^{(0)},Q_{E}^{(i)}]+\sum_{1\leq l\leq n-1}\ \underset{i+k=n+1-l}{\sum_{1\leq i,k\leq n}}\Phi^{(l)}Q_{E}^{(k)}Q_{E}^{(i)}\\
&\hspace{1cm}-\underset{i+j=n+1}{\sum_{1\leq i,j\leq n}}[Q_{F}^{(0)},Q_{F}^{(i)}]\Phi^{(j)}-\sum_{1\leq l\leq n-1}\ \underset{i+k=n+1-l}{\sum_{1\leq i,k\leq n}}Q_{F}^{(k)}Q_{F}^{(i)}\Phi^{(l)}\nonumber\\
&=\Phi^{(n)}\underbrace{[Q_{E}^{(0)},Q_{E}^{(1)}]}_{=\ 0}+\underset{i+j=n+1}{\sum_{2\leq i,j+1\leq n}}\Phi^{(j)}[Q_{E}^{(0)},Q_{E}^{(i)}]+\sum_{1\leq l\leq n-1}\ \underset{i+k=n+1-l}{\sum_{1\leq i,k\leq n}}\Phi^{(l)}Q_{E}^{(k)}Q_{E}^{(i)}\nonumber\\
&\hspace{1cm}-\underbrace{[Q_{F}^{(0)},Q_{F}^{(1)}]}_{=\ 0}\Phi^{(n)}-\underset{i+j=n+1}{\sum_{2\leq i,j+1\leq n}}[Q_{F}^{(0)},Q_{F}^{(i)}]\Phi^{(j)}-\sum_{1\leq l\leq n-1}\ \underset{i+k=n+1-l}{\sum_{1\leq i,k\leq n}}Q_{F}^{(k)}Q_{F}^{(i)}\Phi^{(l)}\nonumber\\
&=\sum_{1\leq l\leq n-1}\quad\underset{i+l=n+1}{\sum_{2\leq i\leq n}}\Phi^{(l)}[Q_{E}^{(0)},Q_{E}^{(i)}]+\underset{i+k=n+1-l}{\sum_{1\leq i,k\leq n-1}}\Phi^{(l)}Q_{E}^{(k)}Q_{E}^{(i)}\nonumber\\
&\hspace{1cm}-\sum_{1\leq l\leq n-1}\quad\underset{i+l=n+1}{\sum_{2\leq i\leq n}}[Q_{F}^{(0)},Q_{F}^{(i)}]\Phi^{(l)}+\underset{i+k=n+1-l}{\sum_{1\leq i,k\leq n-1}}Q_{F}^{(k)}Q_{F}^{(i)}\Phi^{(l)}\nonumber\\
&=\sum_{1\leq l\leq n-1}\quad\underset{i+l=n+1}{\sum_{2\leq i\leq n}}\Phi^{(l)}\Big([Q_{E}^{(0)},Q_{E}^{(i)}]+\underset{j+k=i}{\sum_{1\leq j,k\leq i-1}}Q_{E}^{(k)}Q_{E}^{(j)}\Big)\nonumber\\
&\hspace{1cm}-\sum_{1\leq l\leq n-1}\quad\underset{i+l=n+1}{\sum_{2\leq i\leq n}}\Big([Q_{F}^{(0)},Q_{F}^{(i)}]+\underset{j+k=i}{\sum_{1\leq j,k\leq i-1}}Q_{F}^{(k)}Q_{F}^{(i)}\Big)\Phi^{(l)}\nonumber\\
&=\underset{i+l=n+1}{\sum_{2\leq i,l+1\leq n}}\quad\Phi^{(l)}[Q_{E},Q_{E}]^{(i)}-[Q_{F},Q_{F}]^{(j)}\Phi^{(l)}\nonumber
\end{align}
which vanishes identically (for any $l$ and $i$) because $[Q_{F},Q_{F}]=0$ and $[Q_{E},Q_{E}]=0$. Thus, we indeed have that $C_{n+1}$ is a $\partial$-cocycle:
\begin{equation}
\partial(C_{n+1})=0
\end{equation}
Since it has depth $n+1\geq3$, it is necessarily root-free, and then by Lemma \ref{lemmefondamental3} it is a coboundary: there exists an element $\widehat{\Phi}^{(n+1)}$ of degree 0 such that:
\begin{equation}\label{explication}
C_{n+1}=\partial(\widehat{\Phi}^{(n+1)})
\end{equation}
which is Equation \eqref{raindrop} for $k=n+1$. We extend $\widehat{\Phi}^{(n+1)}$ to a map $\Phi^{(n+1)}:\funct\to\functt$ by using Equation \eqref{biboum}, i.e. the $\Phi$-derivation property. Then, we can apply the same process to the component of Equation \eqref{raindrop} of arity $n+2$.
\begin{figure}[ht]
  \centering
  \begin{tikzpicture}[scale=0.50]
    \coordinate (Origin)   at (0,0);
    \coordinate (XAxisMin) at (10,-2);
    \coordinate (XAxisMax) at (0,-2);
    \coordinate (AAxisMin) at (9,-2);
    \coordinate (AAxisMax) at (7,-2);
    \coordinate (YAxisMin) at (10,1);
    \coordinate (YAxisMax) at (10,8);
    \coordinate (ZAxisMin) at (10,-2);
    \coordinate (ZAxisMax) at (10,-1);
    \coordinate (WAxisMin) at (10,-1);
    \coordinate (WAxisMax) at (10,1);
    \draw [ultra thick, black,-latex] (AAxisMax) -- (XAxisMax) node [left] {};
    \draw [ultra thick, black,-latex] (YAxisMin) -- (YAxisMax) node [above] {};
    \draw [ultra thick, black] (ZAxisMin) -- (ZAxisMax);
    \draw [ultra thick, black,dashed] (WAxisMin) -- (WAxisMax);
    \draw [ultra thick, black] (XAxisMin) -- (AAxisMin);
    \draw [ultra thick, black,dashed] (AAxisMin) -- (AAxisMax);

    \clip (-3,-3) rectangle (12cm,9cm); 
    \coordinate (Bone) at (4,2);
    \coordinate (Btwo) at (-6,12);
    \coordinate (B1) at (-7,11);
    \coordinate (B2) at (2,2);
    \coordinate (B3) at (0,4);
    \coordinate (B4) at (-4,8);
    \draw[style=help lines,dashed] (-4,1) grid[step=2cm] (7,9);
    \draw[style=help lines,dashed] (9,2) grid[step=2cm] (10,9);
    \draw (-3,-2) -- (10,-2)[dashed, gray];
    \draw (6,-2) -- (6,-1)[dashed, gray];
    \draw (4,-2) -- (4,-1)[dashed, gray];
    \draw (2,-2) -- (2,-1)[dashed, gray];
    \draw (0,-2) -- (0,-1)[dashed, gray];
    \draw (-2,-2) -- (-2,-1)[dashed, gray];
    \foreach \x in {-4,-3,...,3}{
      \foreach \y in {1,2,...,4}{
        \node[draw,circle,inner sep=1pt,fill] at (2*\x,2*\y) {};
        \node[draw,circle,inner sep=1pt,fill,red] at (Bone){};
        \node[draw,circle,inner sep=1pt,fill,red] at (0,6){};
        \node[draw,circle,inner sep=1pt,fill,red] at (2,4){};
        \node[draw,circle,inner sep=1pt,fill,red] at (-2,8){};
        \node[draw,circle,inner sep=1pt,fill,red] at (-4,10){};
        \node[draw,circle,inner sep=1pt,fill,blue] at (-4,8){};
        \node[draw,circle,inner sep=1pt,fill,blue] at (-2,6){};
        \node[draw,circle,inner sep=1pt,fill,blue] at (2,2){};
        \node[draw,circle,inner sep=1pt,fill,blue] at (0,4){};
      }
    }
    \node [below] at (4,-2)  {$n+1$};
    \node [below] at (2,-2)  {$n+2$};
    \node [right] at (10,2)  {$n+2$};
    \node [right] at (10,4)  {$n+3$};
    \node [below right] at (10,-2)  {$0$};
    
    \draw [ultra thick,red] (Btwo)
        -- (Bone);
    \node [above right,red] at (2,4) {\large $C_{n+1}$};
    \draw [ultra thick,blue] (2,2)
        -- (-3,7);
    \node [above right,blue] at (-2,2) {\large $\Phi^{(n+1)}$};
  \end{tikzpicture}
  \caption{\footnotesize $C_{n+1}$ is a root-free cocycle, then it is a coboundary: $C_{n+1}=\partial(\Phi^{(n+1)})$}
  \label{figure6}
\end{figure}
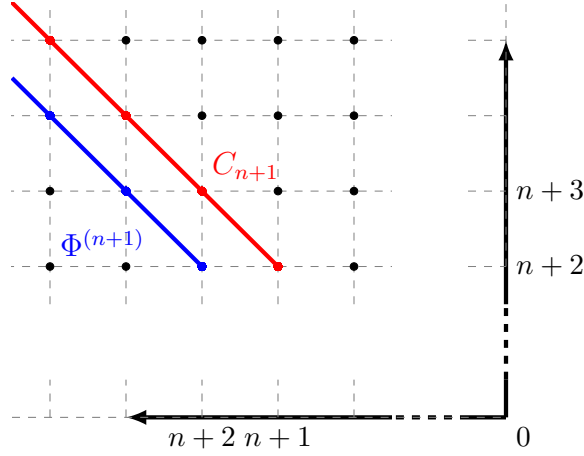

The unique graded algebra morphism $\Phi:\funct\to \functt$ whose components are the $\Phi^{(i)}$'s intertwines $Q_{F}$ and $Q_{E}$ by construction. Thus it is a Lie ${\infty}$-morphism compatible with the Lie $\infty$-algebroid structures on $E$ and $F$.
\end{proof}

We then have shown the existence part of the first item of Theorem \ref{theo:onlyOne}. However, the proof has involved several choices in the definition of such a morphism. Indeed, at each step of the recursion we could have chosen another element of arity $n+1$ which satisfies Equation \eqref{explication}, which would have given another Lie $\infty$-morphism. We will now prove that they are in fact homotopic, in the sense of Definition \ref{def:homotopy}.

In fact we will show a stronger result: that any two Lie $\infty$-morphisms from $(F,Q_{F})$ to $(E,Q_{E})$ are homotopic, that is, the second part of the first item of Theorem \ref{theo:onlyOne}. We first show in Proposition \ref{bonresult} that two Lie $\infty$-morphisms that have the same linear part are homotopic. This result is based on the crucial Lemma \ref{ultralemme}. Then we show in Proposition \ref{lemiam} that two homotopic chain maps from the chain complex $(F,\dd',\rho')$ to $(E,\dd,\rho)$ induce homotopic Lie $\infty$-morphisms, which concludes the proof.

Recall that we can define an operator $[Q,\,.\ ]$ on the space of maps $\alpha:\funct\to\functt$ by:
\begin{equation}
[Q,\alpha]=Q_{F}\circ\alpha-(-1)^{|\alpha|}\alpha\circ Q_{E}
\end{equation}
The following lemma casts light on what happens when we decide to compare two different Lie $\infty$-morphisms built from the map $\phi$ defined at the beginning of this section:

\begin{lemme}\label{ultralemme}
Let $\Phi,\Psi:\funct\to\functt$ be two Lie $\infty$-morphisms from $(F,Q_{F})$ to $(E,Q_{E})$ such that $\Phi^{(i)}=\Psi^{(i)}$ for every $0\leq i\leq n$ for some $n$. Then there exists a Lie $\infty$-morphism $\chi$ which is homotopic to $\Phi$ and which satisfies $\chi=\Psi$ up to arity $n+1$.
\end{lemme}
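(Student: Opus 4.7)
The plan is to exploit the fact that $\Phi$ and $\Psi$ agree up to arity $n$, so that the discrepancy at arity $n+1$ is a $\partial$-exact obstruction that can be eliminated by a single homotopy step; higher-arity corrections are built inductively, following the pattern of Proposition \ref{prop:existencepart}.

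First, I would identify the obstruction. Both $\Phi$ and $\Psi$ satisfy Equation \eqref{raindrop} at arity $n+1$, with right-hand sides $C_{n+1}^{\Phi}$ and $C_{n+1}^{\Psi}$ depending only on $Q_E$, $Q_F$, and the components of arity at most $n$ of $\Phi$ or $\Psi$ respectively. Since $\Phi^{(i)} = \Psi^{(i)}$ for all $i \leq n$, these two right-hand sides coincide, and therefore $\Delta := \Psi^{(n+1)} - \Phi^{(n+1)}$ is a $\partial$-cocycle in $\mathfrak{V}^{(n+1)}$. As a degree-$0$ element of $\Gamma(S^{n+2}(F^{\ast}) \otimes E)$, with $S^{n+2}(F^{\ast})$ of minimum height $n+2$, the constraint height $=$ depth forces depth $\geq n+2 \geq 2$, so $\Delta$ is root-free. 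By the first item of Lemma \ref{lemmefondamental3}, there exists a degree $-1$ element $\widehat{H} \in \mathfrak{V}^{(n+1)}$ such that $\partial(\widehat{H}) = \Delta$; I extend it via Equation \eqref{derivation} to a $\Phi^{(0)}$-derivation $H^{(n+1)}$ of arity $n+1$ and degree $-1$.

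Then I would construct $\chi$ and a homotopy $(\Phi_t, H_t)$ simultaneously. Set $\Phi_t^{(i)} := \Phi^{(i)}$ for $0 \leq i \leq n$, $\Phi_t^{(n+1)} := \Phi^{(n+1)} + t\, \partial(\widehat{H})$, and $H_t^{(n+1)} := H^{(n+1)}$ constant in $t$; then $\Phi_1^{(n+1)} = \Psi^{(n+1)}$ and $\dot\Phi_t^{(n+1)} = [Q, H_t]^{(n+1)}$ hold by construction. For each arity $i \geq n+2$, I would build $\Phi_t^{(i)}$ and $H_t^{(i-1)}$ so that the Lie $\infty$-morphism equation and the homotopy equation $\dot\Phi_t^{(i)} = [Q, H_t]^{(i)}$ are simultaneously satisfied. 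The equation to solve at each stage is of the form $\partial(H_t^{(i-1)}) = R_t$, where $R_t$ is determined by previously constructed components; a direct expansion, parallel to Equations \eqref{cocycle3}--\eqref{cocycle4}, shows that $R_t$ is a $\partial$-cocycle of degree $0$ in $\mathfrak{V}^{(i)}$, hence of depth $\geq i \geq n+2 \geq 2$, hence root-free, hence a $\partial$-coboundary by Lemma \ref{lemmefondamental3}. Integrating in $t$ then produces $\Phi_t^{(i)}$, and setting $\chi := \Phi_1$ answers the lemma.

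The main obstacle lies in this last inductive step: the mixed obstruction $R_t$ involves the time derivatives $\dot\Phi_t^{(j)}$, the partial derivation condition for $H_t$, and the Lie $\infty$-morphism condition for $\Phi_t$, all of which must be threaded through the same kind of Jacobi-type cancellation that underlies the computation $\partial(C_{n+1}) = 0$ in the proof of Proposition \ref{prop:existencepart}. Verifying that these combine to give a root-free cocycle for every $t \in ]0,1[$ is the technical heart of the argument; once that is established, Lemma \ref{lemmefondamental3} delivers the needed primitives and the recursion proceeds without further obstruction.
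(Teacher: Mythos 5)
Your first step is correct and coincides with the paper's own: since $\Phi^{(i)}=\Psi^{(i)}$ for $i\leq n$, the difference $\Delta=(\Psi-\Phi)^{(n+1)}$ is a $\Phi^{(0)}$-derivation and a $\partial$-cocycle in $\mathfrak{V}^{(n+1)}$ (your derivation of the cocycle property from the equality of the right-hand sides of \eqref{raindrop} is equivalent to the paper's, which differences the two intertwining relations), and your degree count showing that a degree-$0$ element of arity $n+1$ is automatically root-free, hence $\Delta=\partial(\widehat{H})$ by Lemma \ref{lemmefondamental3}, is exactly right, as is the affine step $\Phi_t^{(n+1)}=\Phi^{(n+1)}+t\,\partial(\widehat{H})$.

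The gap is everything above arity $n+1$, and it cannot be deferred: producing a path $t\mapsto\Phi_t$ of genuine Lie $\infty$-morphisms together with $\Phi_t$-derivations $H_t$ satisfying \eqref{Hortense} is the actual content of the lemma. Your plan couples the unknowns $\Phi_t^{(i)}$ and $H_t^{(i-1)}$ through a ``mixed obstruction'' $R_t$ whose cocycle property you never establish --- you explicitly flag it as the technical heart and stop there --- so the proof is incomplete at its decisive step. Moreover, this coupled obstruction analysis is the wrong frame: the paper solves no cohomological equation above arity $n+1$. It takes $H_t$ to be the extension of $\widehat{H}$ as a $\Phi_t$-derivation whose restriction to $\Gamma(E^{\ast})$ is concentrated in arity $n+1$ (this is the content of the formulas defining $H_{2n+1|t}$ and of \eqref{pfff}), and then \emph{defines} $\Phi_t$ at higher arities as the solution of the differential equation $\frac{\dd}{\dd t}\Phi_t^{(i)}=[Q,H_t]^{(i)}$ with initial condition $\Phi$. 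Multiplicativity, i.e.\ Equations \eqref{baboum}--\eqref{biboum}, is preserved because the multiplicativity defect has vanishing $t$-derivative and vanishes at $t=0$; and the morphism equation is automatic along the flow, since $\frac{\dd}{\dd t}[Q,\Phi_t]=[Q,[Q,H_t]]=0$ forces $[Q,\Phi_t]$ to remain zero because it is zero at $t=0$. This flow argument is the idea missing from your write-up: with it there is no higher-arity obstruction to check at all, and without it your induction stalls exactly where you admit it does.
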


\begin{proof}
Let $\Psi-\Phi:\funct\to\functt$ be the map obtained by applying $\Psi$ and $\Phi$ to a function of $\funct$, and taking the difference in $\functt$. We can decompose $\Psi-\Phi$ into components of homogeneous arities, and in particular we are interested in the component $(\Psi-\Phi)^{(n+1)}$ of arity $n+1$. We understand it as the difference $\Psi^{(n+1)}-\Phi^{(n+1)}$. Since $\Phi$ and $\Psi$ are graded algebra morphisms, they satisfy Equation \eqref{biboum}, which gives here:
\begin{align}
(\Psi-\Phi)^{(n+1)}(fg)
&=\sum_{k=0}^{n+1}\Psi^{(k)}(f)\Psi^{(n+1-k)}(g)-\Phi^{(k)}(f)\Phi^{(n+1-k)}(g)\\
&=(\Psi-\Phi)^{(n+1)}(f)\Phi^{(0)}(g)+\Phi^{(0)}(f)(\Psi-\Phi)^{(n+1)}(g)\nonumber
\end{align}
because $\Phi^{(k)}=\Psi^{(k)}$ for every $0\leq k\leq n$. Then, $(\Psi-\Phi)^{(n+1)}$ is a $\Phi^{(0)}$-derivation. On the other hand, $\Phi$ and $\Psi$ being Lie $\infty$-morphisms, we have:
\begin{equation}
Q_{F}\circ(\Psi-\Phi)=(\Psi-\Phi)\circ Q_{E}
\end{equation}
Isolating the linear part of $Q_{E},Q_{F}$ on the left-hand side, we obtain the following equation at arity $n+1$:
\begin{equation}
\partial\left((\Psi-\Phi)^{(n+1)}\right)=\sum_{k=0}^{n}\ Q_{F}^{(n-k)}\circ(\Psi-\Phi)^{(k)}-(\Psi-\Phi)^{(k)}\circ Q_{E}^{(n-k)}
\end{equation}
Since $(\Psi-\Phi)^{(k)}=0$ for every $0\leq k\leq n$, the right-hand side vanishes. Thus $(\Psi-\Phi)^{(n+1)}$ is a $\partial$-cocycle in the bicomplex $\mathfrak{V}^{(n+1)}$, and given that it is root free, it is in fact a coboundary. That is to say there is an element $\widehat{H}\in \mathfrak{V}^{(n+1)}$ such that:
\begin{equation}
(\Psi-\Phi)^{(n+1)}=\partial\big(\widehat{H}\big)
\end{equation}
It can then be seen as a map $\widehat{H}:\Gamma(E)^{\ast}\to \Gamma(S^{(n+2)}(F^{\ast}))$. We extend $\widehat{H}$ to a  $\Phi^{(0)}$-derivation of degree $-1$ on $\funct$ that is denoted by $H^{(n+1)}$:
\begin{equation}
\forall\ f,g\in\funct\hspace{1cm}H^{(n+1)}(fg)=H^{(n+1)}(f)\Phi^{(0)}(g)+(-1)^{|f|}\Phi^{(0)}(f)H^{(n+1)}(g)
\end{equation}
We can define a family of functions $(H_{2n+1|t})_{t\in[0,1]}$ up to arity $2n+1$ as:
\begin{align}
\forall\ 0\leq k\leq n\hspace{2.1cm}H_{2n+1|t}^{(k)}(f)&=0\\
\forall\ n+1\leq k\leq 2n+1\hspace{1cm}H_{2n+1|t}^{(k)}(fg)&=H^{(n+1)}(f)\Phi^{(k-n-1)}(g)\\
&\hspace{1cm}+(-1)^{|f|}\Phi^{(k-n-1)}(f)H^{(n+1)}(g)\nonumber
\end{align}
for every $f,g\in\funct$ and every $t\in[0,1]$. That is, $H_{2n+1|t}$ does not actually depend on $t$ for any arity. Let $\Phi_{2n+1|t}$ be the map from $\funct$ to $\functt$ defined up to arity $2n+1$ and for every $t\in[0,1]$ by:
\begin{equation}\label{abovequation}
\forall\ 0\leq i\leq 2n+1\hspace{2cm}\Phi_{2n+1|t}^{(i)}=\Phi^{(i)}+t\,[Q,H_{2n+1|t}]^{(i)}
\end{equation}
Then we observe that $\Phi_{2n+1|t}$ coincides with $\Phi$ and $\Psi$ up to arity $n$, and at arity $n+1$ it satisfies the boundary conditions:
\begin{equation}
\Phi_{2n+1|t=0}^{(n+1)}=\Phi^{(n+1)}\hspace{1cm}\text{and}\hspace{1cm}\Phi_{2n+1|t=1}^{(n+1)}=\Psi^{(n+1)}
\end{equation}
whereas for higher arities, $\Phi_{2n+1|t=1}^{(i)}$ does not necessarily coincide with $\Psi^{(i)}$. Equation \eqref{abovequation} implies that $\Phi_{2n+1|t}$ satisfies Equations \eqref{baboum}-\eqref{biboum}, and that it is differentiable.
At arity $2n+2$, we extend $H_{2n+1|t}$ to a map $H_{2n+2|t}:\funct\to\functt$ for every $t\in[0,1]$:
\begin{align}
H_{2n+2|t}^{(k)}(f)&=H_{2n+1|t}^{(k)}(f)\hspace{2.1cm}\text{for all}\hspace{0.3cm}0\leq k\leq 2n+1\\
H_{2n+2|t}^{(2n+2)}(fg)&=\sum_{k=0}^{2n+2}H_{2n+1|t}^{(k)}(f)\Phi_{2n+1|t}^{(2n+2-k)}(g)+(-1)^{|f|}\Phi_{2n+1|t}^{(2n+2-k)}(f)H_{2n+1|t}^{(k)}(g)\label{pfff}
\end{align}
for every $f,g\in\funct$.

This enables us to define $\Phi_{t}^{(2n+2)}$ as the map satisfying the following boundary value problem:
\begin{equation}
\left\{\begin{array}{ll}
\Phi_{t=0}^{(2n+2)}&=\Phi^{(2n+2)}\\
\frac{\dd\Phi_{t}^{(2n+2)}}{\dd t}&=[Q,H_{2n+2|t}]^{(2n+2)}
\end{array}\right.
\end{equation}
for every $t\in\ ]0,1[$. In particular this implies that $\Phi^{(2n+2)}_{t}$ satisfies Equation \eqref{biboum}, since:
\begin{align}
&\frac{\dd}{\dd t}\left(\Phi_{t}^{(2n+2)}(fg)-\sum_{k=0}^{2n+2}\Phi_{t}^{(k)}(f)\Phi_{t}^{(2n+2-k)}(g)\right)\\
&\hspace{1cm}=[Q,H_{2n+2|t}]^{(2n+2)}(fg)-\sum_{k=0}^{2n+2}[Q,H_{2n+2|t}]^{(k)}(f)\Phi_{t}^{(2n+2-k)}(g)-\Phi_{t}^{(k)}(f)[Q,H_{2n+2|t}]^{(2n+2-k)}(g)\nonumber
\end{align}
which is vanishing because $[Q,H_{2n+2|t}]$ satisfies Equation \eqref{pfff}. We define $\Phi_{2n+2|t}:\funct\to\functt$ by:
\begin{align}
\Phi_{2n+2|t}^{(k)}(f)&=\Phi_{2n+1|t}^{(k)}(f)\hspace{1.1cm}\text{for all}\hspace{0.3cm}0\leq k\leq 2n+1\\
\Phi^{(2n+2)}_{2n+2|t}&=\Phi^{(2n+2)}_{t}
\end{align}
for every $t\in]0,1[$ and $f\in\funct$. By construction, it satisfies equations \eqref{baboum}-\eqref{biboum}.

By recursion we define $H_{i|t}$ and then $\Phi_{i|t}$ for $i\geq 2n+3$. For every $t\in[0,1]$, there is a unique algebra morphism $\Phi_{t}$ restricting to $\Phi_{i|t}$ (for $2n+1\leq i$) when one restricts its action to components of arity $i$ and lower. Moreover, for every $t\in[0,1]$, there is a unique $\Phi_{t}$-derivation $H_{t}$ restricting to $H_{i|t}$ when one considers only its components of arity $i$ or lower. By construction, the two maps are related by the following identity:
\begin{equation}\label{groudon}
\frac{\dd \Phi_{t}^{(i)}}{\dd t}=[Q,H_{t}]^{(i)}
\end{equation}
for every $i\geq0$ and every $t\in]0,1[$ (in particular, there is no subdivision of the interval). In particular, the map $\Phi_{t}^{(i)}$ is differentiable for every $t\in]0,1[$.

By construction the morphism $\Phi_{t=0}$ coincides with $\Phi$, and we define $\chi$ to be $\Phi_{t=1}$. For every arity $i\geq 0$, the commutator $[Q,\Phi_{t}]$ is differentiable on $]0,1[$ and its derivative vanishes, because $\frac{\dd{\Phi}_{t}}{\dd t}$ is $[Q,\,.\ ]$-exact. Since $\Phi^{(i)}_{t}$ is continuous, it means that the commutator $[Q,\Phi_{t}]$ is constant along this closed interval $[0,1]$. And since for $t=0$, $\Phi_{0}=\Phi$ is a Lie $\infty$-morphism, i.e. $[Q,\Phi]=0$, it implies that the commutator is in fact null for every $t$, \emph{i.e.} that $\Phi_{t}$ is a Lie $\infty$-morphism for every $t\in[0,1]$, and in particular $\chi$ is a Lie $\infty$-morphism.

This turns $(\Phi_{t},H_{t})$ into a homotopy  between $\Phi$ and $\chi$, that is: they are homotopic. Since $\chi=\Phi_{t=1}$ and the component of arity $n+1$ of $\Phi_{t=1}$ and $\Psi$ coincide by construction, the Lemma is proven.
\end{proof}


The following proposition is then easily proven:
\begin{proposition}\label{bonresult}
Two Lie $\infty$-morphisms $\Phi_{0},\Phi_{\infty}:\funct\to\functt$ over $M$ admitting the same linear part $\phi=(\phi_{i})_{i\geq1}$ are homotopic.
\begin{center}
\begin{tikzcd}[column sep=0.9cm,row sep=0.6cm]
\dots  \ar[r,"\dd'^{(4)}"] &F_{-3} \ar[dd,"\phi_{3}"] \ar[r,"\dd'^{(3)}"]&F_{-2}\ar[dd,"\phi_{2}"] \ar[r,"\dd'^{(2)}"] &F_{-1}\ar[dd,"\phi_{1}"] \ar[r,"\rho' "] &TM\ar[dd,"id"]\\
 & & & &\\
\dots  \ar[r,"\dd^{(4)}"]  &E_{-3} \ar[r,"\dd^{(3)}"]&E_{-2} \ar[r,"\dd^{(2)}"]  &E_{-1} \ar[r,"\rho"] &TM \\
\end{tikzcd}
\end{center}
\end{proposition}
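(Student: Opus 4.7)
The plan is to iteratively apply Lemma \ref{ultralemme} to produce a sequence of Lie $\infty$-morphisms interpolating between $\Phi_0$ and $\Phi_\infty$, and then to concatenate the resulting homotopies onto a shrinking sequence of subintervals of $[0,1]$. Since $\Phi_0$ and $\Phi_\infty$ share the same linear part, they agree on components of arity $0$, so setting $\chi_0 := \Phi_0$ and applying Lemma \ref{ultralemme} recursively produces Lie $\infty$-morphisms $(\chi_n)_{n \geq 0}$ satisfying $\chi_n^{(i)} = \Phi_\infty^{(i)}$ for all $0 \leq i \leq n$, together with homotopies $(\Phi_{n,s}, H_{n,s})_{s \in [0,1]}$ from $\chi_{n-1}$ to $\chi_n$. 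The key feature, built into the construction in Lemma \ref{ultralemme}, is that during the $n$-th homotopy all components of arity strictly less than $n$ remain constant equal to $\Phi_\infty^{(i)}$.

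Next I would telescope these homotopies onto the intervals $I_n := [1 - 1/n,\, 1 - 1/(n+1)]$ via the affine increasing bijections $\theta_n : I_n \to [0,1]$. Define $\Psi_t := \Phi_{n,\theta_n(t)}$ for $t \in I_n$ and $\Psi_1 := \Phi_\infty$; continuity across the endpoints $1 - 1/(n+1)$ follows from $\Phi_{n,1} = \chi_n = \Phi_{n+1,0}$. Analogously, set $H_t := \theta_n'(t)\, H_{n,\theta_n(t)}$ on the interior of each $I_n$, so that the identities $\frac{\dd \Phi_{n,s}^{(i)}}{\dd s} = [Q, H_{n,s}]^{(i)}$ provided by Lemma \ref{ultralemme}, together with the chain rule, immediately imply $\frac{\dd \Psi_t^{(i)}}{\dd t} = [Q, H_t]^{(i)}$ wherever the derivative exists.

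The main subtlety — and essentially the only non-trivial point — is to verify that $t \mapsto \Psi_t$ is a piecewise-$C^1$ path valued in Lie $\infty$-morphisms in the sense of Definition \ref{def:pickyAboutPaths}, and similarly that $H_t$ is piecewise-continuous valued in $\Psi_t$-derivations. This is where the arity-by-arity formulation of these notions pays off: for each fixed arity $k$, the stabilization property of Lemma \ref{ultralemme} implies that the path $t \mapsto \widehat{\Psi}_t^{(k)}$ only varies on the finitely many subintervals $I_1, \ldots, I_k$ and is constantly equal to $\Phi_\infty^{(k)}$ on $[1 - 1/(k+1), 1]$. Hence, although infinitely many homotopies are concatenated, each arity sees only finitely many non-trivial pieces, so the piecewise-$C^1$ condition holds with a subdivision depending on $k$ — exactly as allowed in Definition \ref{def:pickyAboutPaths}. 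The analogous finite-piece reduction handles $H_t^{(k)}$. Since $\Psi_0 = \chi_0 = \Phi_0$ and $\Psi_1 = \Phi_\infty$ by construction, the pair $(\Psi_t, H_t)$ is the required homotopy between $\Phi_0$ and $\Phi_\infty$.
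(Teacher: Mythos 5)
Your proof is correct and follows essentially the same route as the paper's: iterate Lemma \ref{ultralemme} to stabilize one arity at a time, concatenate the resulting homotopies, and compress the infinite concatenation into $[0,1]$, relying on the fact that each fixed arity varies on only finitely many pieces so the arity-dependent subdivision allowed by Definition \ref{def:pickyAboutPaths} applies. The only difference is cosmetic: you compress via the shrinking intervals $[1-1/n,\,1-1/(n+1)]$ with affine rescalings (and the explicit chain-rule factor $\theta_n'(t)$ on $H_t$), whereas the paper reparametrizes $[0,\infty)$ by $\arctan$.
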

\begin{proof}
Let us consider two such Lie $\infty$-morphisms $\Phi_{0}$ and $\Phi_{\infty}$. By construction $\Phi_{0}$ and $\Phi_{\infty}$ coincide at arity 0. By Lemma \ref{ultralemme}, there is a Lie $\infty$-morphism $\Phi_{1}$ which is homotopic to $\Phi_{0}$ via a family of Lie ${\infty}$-morphisms $(\Phi_{1,t})_{t\in[0,1]}$  and such that $\Phi_{1}^{(0)}=\Phi_{\infty}^{(0)}$ and $\Phi_{1}^{(1)}=\Phi_{\infty}^{(1)}$. Applying Lemma \ref{ultralemme} once more, we know that there is a Lie $\infty$-morphism $\Phi_{2}$ which is homotopic to $\Phi_{1}$ and such that $\Phi_{2}^{(i)}=\Phi_{\infty}^{(i)}$ for $i=0,1,2$. But $\Phi_{2}$ is actually homotopic to $\Phi_{0}$ since we can create an homotopy $(\Psi_{2,t})_{t\in[0,2]}$ which links these two morphisms:
\begin{equation}
\Psi_{2,t}=\begin{cases}\Phi_{1,t} \quad \text{for}\quad 0\leq t\leq 1\\
\Phi_{2,t} \quad \text{for}\quad 1\leq t\leq 2
\end{cases}
\end{equation}
It is differentiable except possibly in $t=0,1,2$ because the homotopy data provided in the proof of Lemma \ref{ultralemme} are constructed so that the homotopies are differentiable on $]0,1[$. Thus $t\to\Psi_{2,t}$ is a piecewise-$C^{1}$ map taking values in Lie $\infty$-morphisms which satisfies the derivative condition \eqref{Hortense}, and as such it is an honest homotopy between $\Phi_{0}$ and $\Phi_{2}$. By recursion, we can built a sequence of Lie $\infty$-morphism $(\Phi_{k})_{k\geq0}$ such that
\begin{equation}
\Phi_{k}^{(i)}=\Phi_{\infty}^{(i)}
\end{equation}
for all $0\leq i\leq k$, and such that there is a homotopy $\Phi_{k,t}$ which links $\Phi_{k-1}$ to $\Phi_{k}$. Then we can create a family of Lie $\infty$-morphisms $(\Psi_{k,t})_{t\in[0,k]}$ between $\Phi_{0}$ and $\Phi_{k}$ by:
\begin{equation}
\Psi_{k,t}=\begin{cases}\Phi_{1,t} \quad \text{for}\quad 0\leq t\leq 1\\
\Phi_{2,t} \quad \text{for}\quad 1\leq t\leq 2\\
\ldots\\
\Phi_{k,t} \quad \text{for}\quad k-1\leq t\leq k\\
\end{cases}
\end{equation}
 Notice that for $t\geq i$, the component of arity $i$ of $\Psi_{k,t}$ is invariant, equal to $\Phi_{i,t=i}^{(i)}$.  For any arity $0\leq i\leq k$, the path $t\mapsto\widehat{\Psi}^{(i)}_{t}$ is piecewise-$C^1$ in the usual sense, except possibly when $t$ takes integer values.  By construction (see Lemma \ref{ultralemme}), the derivative condition \eqref{Hortense} on $\Psi_{k,t}$ is satisfied wherever it is defined. Then $\Psi_{k,t}$ is a homotopy between $\Phi_{0}$ and $\Phi_{k}$. By recursion, after an infinite number of steps, and applying the bijective property of the function $arctan:\mathbb{R}_{+}\to [0,\frac{\pi}{2}]$, we end up with a homotopy $(\Psi_{\infty, t})_{t\in[0,1]}$ which links $\Phi_{0}$ to $\Phi_{\infty}$.
\end{proof}

Now recall that the choice of the chain map $\phi:F\to E$ was not unique, since there is some latitude in choosing $\phi_{n}$ for each $n\geq 1$. If we are given two chain maps $\phi$ and $\psi$,
\begin{center}
\begin{tikzcd}[column sep=0.7cm,row sep=0.4cm]
\ldots\ar[r,"\dd'"]&\ar[dd,shift left =0.5ex,"\phi"]\ar[dd,shift right =0.5ex,"\psi" left]F_{-2}\ar[r,"\dd'"]&\ar[dd,shift left =0.5ex,"\phi"]\ar[dd,shift right =0.5ex,"\psi" left]F_{-1}\ar[dr,"\rho'"]&\\
&&&T M\\
\ldots\ar[r,"\dd"]&E_{-2}\ar[r,"\dd"]&E_{-1}\ar[ur,"\rho"]&\\
\end{tikzcd}
\end{center}
then they are homotopic in the traditional sense: there exists a vector bundle morphism $h:F\to E$ of degree $-1$ such that:
\begin{equation}
\psi-\phi=h\circ\dd'+\dd\circ h
\end{equation}
\noindent By Proposition \ref{prop:existencepart}, the chain maps $\phi$ and $\psi$ induce Lie $\infty$-morphisms $\Phi,\Psi:\funct\to\functt$. The following proposition gives the relation between these two morphisms:
\begin{proposition}\label{lemiam}
Let $\phi,\psi:F\to E$ be two homotopic chain maps between a Lie $\infty$-algebroid $(F,Q_{F})$ covering $\mathcal{D}'\subset\mathcal{D}$ and a universal Lie $\infty$-algebroid $(E,Q_{E})$ of $\mathcal{D}$. Then any Lie $\infty$-morphisms $\Phi$ and $\Psi$ whose respective linear parts are $\phi$ and $\psi$ are homotopic.
\end{proposition}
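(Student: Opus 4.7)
The strategy is to reduce to Proposition \ref{bonresult} by producing an intermediate Lie $\infty$-morphism with linear part $\psi$ that is already homotopic to $\Phi$. Since $h:F\to E$ is a bundle morphism of degree $-1$ satisfying $\psi-\phi = h\circ\dd' + \dd\circ h$, dualizing yields an $\mathscr{O}$-linear map $\widehat{H}^{(0)}:\Gamma(E^*)\to\Gamma(F^*)$ of degree $-1$ and arity $0$. Extending it by the $\Phi^{(0)}$-derivation rule \eqref{derivation} produces $H^{(0)}:\mathscr{E}\to\mathscr{F}$, and the chain homotopy identity translates (after the appropriate sign choice dictated by \eqref{actiondual}) into $\partial(H^{(0)}) = \Psi^{(0)} - \Phi^{(0)}$ in the notation of \eqref{eq:diffPsider}, where only the arity $0$ component matters.

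My plan is then to construct a piecewise-$C^{1}$ path $t\mapsto \Phi_t$ of Lie $\infty$-morphisms with $\Phi_0=\Phi$ and whose linear part at $t=1$ equals $\psi$, together with a piecewise-continuous path $t\mapsto H_t$ of $\Phi_t$-derivations of degree $-1$ satisfying $\frac{\mathrm d}{\mathrm dt}\Phi_t = [Q,H_t]$, in the sense of Definition \ref{def:homotopy}. At arity zero I would take $H_t^{(0)}$ constant equal to $H^{(0)}$ and set $\Phi_t^{(0)} := \Phi^{(0)} + t\,\partial(H^{(0)})$; this is affine in $t$, remains an algebra morphism, and at $t=1$ gives a graded algebra morphism whose linear part is $\psi^*$. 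The higher components $\Phi_t^{(n)}$ and $H_t^{(n)}$ are constructed by induction on arity, mimicking the arity-by-arity extension used in Lemma \ref{ultralemme}: assuming the data have been defined up to arity $n$, one seeks $H_t^{(n+1)}\in\mathfrak{V}^{(n+1)}$ and then defines $\Phi_t^{(n+1)}$ as the solution of the linear ODE $\dot\Phi_t^{(n+1)} = [Q,H_t]^{(n+1)}$ with initial value $\Phi^{(n+1)}$.

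The crux of the argument, which will be the main obstacle, is to verify that at each step the obstruction $C_{n+1}$ to finding such an $H_t^{(n+1)}$ is a $\partial$-cocycle to which Lemma \ref{lemmefondamental3} applies. Differentiating the morphism condition $[Q,\Phi_t]=0$ in $t$ and using the induction hypothesis, together with the higher Jacobi identities $[Q,Q]=0$, shows $\partial(C_{n+1})=0$ by a computation entirely parallel to the one performed in Equations \eqref{cocycle}--\eqref{cocycle4} of Proposition \ref{prop:existencepart}. Since $C_{n+1}$ has arity $n+1\geq 2$ and degree $0$, it is automatically root-free, hence a coboundary by Lemma \ref{lemmefondamental3}. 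Choosing a primitive and extending via the $\Phi_t^{(0)}$-derivation rule provides the desired $H_t^{(n+1)}$, and the resulting $\Phi_t^{(n+1)}$ satisfies the Leibniz conditions \eqref{baboum}--\eqref{biboum} precisely because $H_t^{(n+1)}$ is a $\Phi_t^{(0)}$-derivation. Gathering all components yields a Lie $\infty$-morphism $\Psi' := \Phi_1$ which is homotopic to $\Phi$ via $(\Phi_t,H_t)$ and has linear part $\psi$.

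To conclude, $\Psi$ and $\Psi'$ are two Lie $\infty$-morphisms sharing the linear part $\psi$, so Proposition \ref{bonresult} gives $\Psi\sim\Psi'$. By transitivity of the homotopy relation (proved in the discussion following Definition \ref{def:homotopy}), we obtain $\Phi\sim\Psi'\sim\Psi$, which is the desired statement. The whole proof boils down to running the induction of Proposition \ref{prop:existencepart} in a parametrized way along $t\in[0,1]$, with the initial condition at arity $0$ provided by the chain homotopy $h$.
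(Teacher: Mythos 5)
Your skeleton is the same as the paper's: seed the construction at arity $0$ with the dual of the chain homotopy $h$, build a homotopy from $\Phi$ to some Lie $\infty$-morphism $\Psi'$ whose linear part is $\psi$, and conclude with Proposition \ref{bonresult} and transitivity of $\sim$. The gap is that your inductive step, as written, is circular. You define $\Phi_t^{(n+1)}$ as the solution of the ODE $\dot\Phi_t^{(n+1)}=[Q,H_t]^{(n+1)}$, which presupposes that $H_t^{(n+1)}$ has already been chosen; at the same time you produce $H_t^{(n+1)}$ as a primitive, via Lemma \ref{lemmefondamental3}, of an ``obstruction'' $C_{n+1}$ of degree $0$ and arity $n+1$ --- but the only such cocycle available is $\dot\Phi_{t}^{(n+1)}-\big([Q,\,.\ ]-\partial\big)(H_{n|t})^{(n+1)}$, which presupposes that $\Phi_t^{(n+1)}$ (hence its $t$-derivative) already exists. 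Each of the two arity-$(n+1)$ objects is thus defined in terms of the other. Relatedly, ``differentiating the morphism condition $[Q,\Phi_t]=0$'' cannot serve as the cocycle verification: at arity $n+1$ that condition is precisely what is not yet known for the newly constructed component.

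There are two ways to break the circle, and the proposal executes neither. The paper's way is a two-stage construction: first run Proposition \ref{prop:existencepart} with the $t$-dependent chain map $\phi_t=(1-t)\phi+t\,\psi$, choosing every primitive $C^{1}$ in $t$, which yields a $C^1$ family of honest Lie $\infty$-morphisms $\Phi_t$ solving \eqref{whitewidow} at each $t$, with no reference to $H_t$; only then is $H_t$ built by a second induction on arity, in which the degree-$0$ cocycle displayed above is root-free, hence $\partial$-exact, giving \eqref{eq:recurrence}; the endgame is then Lemma \ref{ultralemme}. Alternatively, your ODE idea can be salvaged by choosing $H_t^{(n+1)}$ freely (in which case Lemma \ref{lemmefondamental3} is not needed at all), but then the burden shifts to proving that the flow stays inside Lie $\infty$-morphisms and inside algebra morphisms. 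Your appeal to the $\Phi_t^{(0)}$-derivation property does not settle this: unlike in Lemma \ref{ultralemme}, where $H$ has no components below arity $n+1$, here $H_t^{(0)}=h^{*}\neq 0$, so the statement ``$[Q,H_t]$ is a $\Phi_t$-derivation'' fails by terms proportional to $[Q,\Phi_t]^{(n+1)}$; consequently the morphism defect $[Q,\Phi_t]^{(n+1)}$ and the multiplicativity defect of $\Phi_t^{(n+1)}$ satisfy a coupled homogeneous linear ODE, and one must invoke uniqueness of solutions with zero initial data to conclude that both vanish identically. Filling in either repair would complete your proof.
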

%

\begin{proof}
The interpolating map $\phi_{t}=\phi(1-t)+t\,\psi$ is a chain map. Assume that to each $t$ we can associate a Lie $\infty$-morphism $\Phi_{t}:\funct\to\functt$ (see for example Proposition \ref{prop:existencepart}) such that $\Phi_{t=0}=\Phi$. There is no reason that $\Phi_{t=1}=\Psi$, however we necessarily have $\Phi_{t=1}^{(0)}=\Psi^{(0)}$ since $\phi_{t=1}$ and $\psi$ coincide. Although defining the Lie $\infty$-algebroid morphisms $\Phi_{t}$ arity by arity at each $t$, we can nonetheless require that the path $t\mapsto\Phi_{t}$ is $C^{1}$, that is: the map $t\mapsto\Phi^{(i)}_{t}$ is $C^{1}$ for every arity $i\geq0$. This is made possible because all equations in the proof of Proposition \ref{prop:existencepart} can be differentiated with respect to the dependence on $t$.

More precisely, being the dual map of $\phi_{t}$, the morphism $\Phi_{t}^{(0)}$ is $C^{1}$ for every $t\in]0,1[$. Then for every $t\in]0,1[$, the right-hand side of Equation \eqref{huisclos} is $C^{1}$, hence we can choose a coboundary which is $C^{1}$ as well. This defines a $C^{1}$ path  $t\mapsto\Phi^{(1)}_{t}$ in the space of maps from $\funct$ to $\functt$. For the same reason, the right-hand side of Equation \eqref{whitewidow} for $n=1$ is $C^{1}$ on $]0,1[$ because it only involves $\Phi^{(0)}_{t}$ and $\Phi^{(1)}_{t}$. Hence we can choose a $C^{1}$ path $t\mapsto\Phi^{(2)}_{t}$ that solves Equation \eqref{raindrop} for $k=2$, for every $t\in[0,1]$. By recursion, we show that the quantity $C_{n+1}$ defined on the right-hand side of Equation \eqref{whitewidow} is $C^{1}$ for every $t\in]0,1[$, hence we can choose a $C^{1}$ map $t\mapsto\Phi^{(n+1)}_{t}$ which solves Equation \eqref{raindrop}:
\begin{equation}
\partial(\Phi^{(n+1)}_{t})=\underset{i+j=n+1}{\sum_{1\leq i,j+1\leq n+1}}\Phi_{t}^{(j)}\circ Q_{E}^{(i)}-Q_{F}^{(i)}\circ\Phi_{t}^{(j)}
\end{equation}
The proof then provides us with a  path $t\mapsto \Phi_{t}$ taking values in Lie $\infty$-morphisms, such that its components of homogeneous arity are $C^{1}$. Since $\phi_{0}=\phi$ and $\phi_{1}=\psi$, we obtain that $\Phi_{t=0}^{(0)}=\Phi^{(0)}$ and $\Phi_{t=1}^{(0)}=\Psi^{(0)}$. The components of higher arity of $\Phi_{t=1}$ are not necessarily coinciding with those of $\Psi$. 

We have to show that there exists a degree $-1$ map $(H_{t})_{t\in]0,1[}$, which is a $\Phi_{t}$-derivation such that:
\begin{equation}\label{eq:recurrence}
\frac{\dd\Phi^{(k)}_{t}}{\dd t}=[Q,H_{t}]^{(k)}
\end{equation}
for all arities $k\geq0$ wherever the equation makes sense. By construction $\dot{\Phi}^{(0)}_{t}=[Q^{(0)},H^{(0)}_{t}]$ where $H^{(0)}_{t}$ is the (constant) dual map of $h$ seen as a $\Phi_{t}$-derivation. Then we observe that wherever it is defined:
\begin{align}
\partial\left(\frac{\dd\Phi^{(1)}_{t}}{\dd t}-[Q^{(1)},H^{(0)}_{t}]\right)&=\dot{\Phi}^{(0)}_{t}Q_{E}^{(1)}-Q_{F}^{(1)}\dot{\Phi}^{(0)}_{t}-\partial([Q^{(1)},H^{(0)}_{t}])\\
&=\partial(H^{(0)}_{t})Q_{E}^{(1)}-Q_{F}^{(1)}\partial(H^{(0)}_{t})-[\partial(Q^{(1)}),H^{(0)}_{t}]+[Q^{(1)},\partial(H^{(0)}_{t})]\nonumber\\
&=0\nonumber
\end{align}
where $[\partial(Q^{(1)}),H_{t}^{(0)}]=[Q_{F}^{(0)},Q_{F}^{(1)}]\circ H^{(0)}_{t}-H^{(0)}_{t}\circ [Q_{E}^{(0)},Q_{E}^{(1)}]$ vanishes because of Equation \eqref{jacques0}. Then for every $t$ where the above equation is defined, there exists a map $\widehat{H}^{(1)}_{t}$ of degree $-1$ and arity 1 such that:
\begin{equation}
\frac{\dd\Phi^{(1)}_{t}}{\dd t}-\big[Q^{(1)},H^{(0)}_{t}\big]=\big[Q^{(0)},\widehat{H}^{(1)}_{t}\big]
\end{equation}
Since the left-hand side is continuous, we can choose $t\mapsto \widehat{H}_{t}^{(1)}$ to be continuous. Moreover, we extend $\widehat{H}_{t}^{(1)}$ as a $\Phi^{(0)}_{t}$-derivation $H^{(1)}_{t}:\funct\to\functt$. Letting $H_{1|t}=H^{(0)}_{t}+H^{(1)}_{t}$, we obtain the derivative condition \eqref{eq:recurrence} at arity one:
\begin{equation}
\frac{\dd\Phi^{(1)}_{t}}{\dd t}=[Q,H_{1|t}]^{(1)}
\end{equation}

Now assume that for some $n\geq1$ we have a continuous family of degree $-1$ $\Phi_{t}$-derivations $(H_{n|t})_{t\in]0,1[}$ that  satisfy Equation \eqref{eq:recurrence} for all $0\leq i\leq n$. Then we observe that for every $t\in]0,1[$:
\begin{align}
\partial\left(\frac{\dd\Phi^{(n+1)}_{t}}{\dd t}-\big([Q,\,.\ ]-\partial\big)(H_{n|t})^{(n+1)}\right)&=\underset{i+j=n+1}{\sum_{0\leq i-1,j\leq n}}\dot{\Phi}_{t}^{(j)}Q_{E}^{(i)}-Q_{F}^{(i)}\dot{\Phi}_{t}^{(j)}-\partial\circ[Q,H_{n|t}]^{(n+1)}\\
&=\underset{i+j=n+1}{\sum_{0\leq i-1,j\leq n}}[Q,H_{n|t}]^{(j)}Q_{E}^{(i)}-Q_{F}^{(i)}[Q,H_{t}]^{(j)}-\partial\circ[Q,H_{n|t}]^{(n+1)}\nonumber\\
&=-([Q,\,.\ ]-\partial)\circ[Q,H_{n|t}]^{(n+1)}-\partial\circ[Q,H_{n|t}]^{(n+1)}\nonumber\\
&=0\nonumber
\end{align}
Then there exist an element $\widehat{H}_{t}^{(n+1)}$ which satisfies:
\begin{equation}
\frac{\dd\Phi^{(n+1)}_{t}}{\dd t}-\big([Q,\,.\ ]-\partial\big)(H_{n|t})^{(n+1)}=\big[Q^{(0)},\widehat{H}_{t}^{(n+1)}\big]
\end{equation}
for every $t\in]0,1[$. Since the left-hand side is continuous, we can choose $\widehat{H}^{(n+1)}_{t}$ to be continuous. We extend it to all of $\funct$ by the following conditions:
\begin{equation}
\begin{cases}H_{t}^{(n+1)}(fg)=\sum_{i=0}^{n+1}\,H_{t}^{(i)}\Phi_{t}^{(n+1-i)}(g)+(-1)^{|f|}\Phi_{t}^{(n+1-i)}(f)H^{(i)}_{t}(g)\\
H_{t}^{(2n+1)}(\alpha)=\widehat{H}_{t}^{(2n+1)}(\alpha)
\end{cases}
\end{equation}
for every $f,g\in\funct$ and $\alpha\in\Gamma(E^{\ast})$. We define a new map $H_{n+1|t}=H_{n|t}+H^{(n+1)}_{t}$, which is a $\Phi_{t}$-derivation by construction. Then this map satisfies Equation \eqref{eq:recurrence} at arities up to $n+1$.

By recursion, we obtain a continuous family of functions $(H_{\infty|t})_{t\in]0,1[}$ satisfying Equation \eqref{eq:recurrence} for every $t\in]0,1[$. This provides us with a homotopy $(\Phi_{t},H_{t})$ between $\Phi_{t=0}=\Phi$ and $\Phi_{t=1}$. Since $\Phi_{t=1}^{(0)}=\Psi^{(0)}$, using Lemma \ref{ultralemme} we show that $\Phi_{t=1}$ and $\Psi$ are homotopic, hence proving that $\Phi$ and $\Psi$ are homotopic, because homotopy is preserved by composition.\end{proof}

Since any two Lie $\infty$-morphisms $\Phi,\Psi$ from $(F,Q_{F})$ to $(E,Q_{E})$ induce linear parts $\phi,\psi$ which are homotopic in the usual sense (as chain maps), Proposition \ref{lemiam} implies that $\Phi$ and $\Psi$ are homotopic. Hence, the second part of the first item of Theorem \ref{theo:onlyOne} is proven.

\bigskip
Now assume that $(F,Q_{F})$ is a universal Lie $\infty$-algebroid for the Hermann foliation $\mathcal{D}$ as well. Then we could have defined a bundle chain map $\psi:E\to F$ which would satisfy
\begin{equation*}
\rho=\rho'\circ\psi\hspace{1.3cm}\text{and}\hspace{1.3cm}\dd'\circ\psi=\psi\circ\dd
\end{equation*}
as can be represented on the diagram:
\begin{center}
\begin{tikzcd}[column sep=0.7cm,row sep=0.4cm]
\ldots\ar[r,"\dd'"]&\ar[dd,shift left =0.5ex,"\phi"]F_{-2}\ar[r,"\dd'"]&\ar[dd,shift left =0.5ex,"\phi"]F_{-1}\ar[dr,"\rho'"]&\\
&&&T M\\
\ldots\ar[r,"\dd"]&\ar[uu,shift left =0.5ex,"\psi"]E_{-2}\ar[r,"\dd"]&\ar[uu,shift left =0.5ex,"\psi"]E_{-1}\ar[ur,"\rho"]&\\
\end{tikzcd}
\end{center}
A careful analysis shows that $\phi\circ\psi$ and $\psi\circ\phi$ are homotopic to the identity, i.e. there exist families of vector bundle morphisms $h=(h_{i})_{i\geq1}$ and $h'=(h'_{i})_{i\geq1}$ of degree $-1$ such that
\begin{equation*}
h_{i}:E_{-i}\longrightarrow E_{-i-1}\hspace{1cm}\text{and}\hspace{1cm}h'_{i}:F_{-i}\longrightarrow F_{-i-1}
\end{equation*}
and such that the homotopy relations are satisfied:
$$\phi \circ \psi - \mathrm{id} = {\mathrm d} \circ h + h \circ {\mathrm d}\hspace{1cm} \hbox{ and } \hspace{1cm}
\psi \circ \phi - \mathrm{id} = {\mathrm d}' \circ h' + h' \circ {\mathrm d}',$$
This provides us with the result, well-known in abelian categories, that two projective resolution are isomorphic up to homotopies. Since on the one hand the choice of a Lie $\infty$-algebroid structure on a resolution of $\mathcal{E}$ is unique up to homotopy, and on the other hand, two different resolutions induce isomorphic Lie $\infty$-algebroid structures, this finally justifies the name of \emph{universal Lie $\infty$-algebroid of a Hermann foliation $\mathcal{D}$}. Hence we naturally come to the second item of Theorem \ref{theo:onlyOne}, as a corollary of the first item:

\begin{corollaire}
Two  universal Lie $\infty$-algebroids  resolving the Hermann foliation $ {\mathcal D}$ are isomorphic up to homotopy and two such isomorphisms are homotopic.
\end{corollaire}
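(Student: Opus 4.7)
The plan is to obtain both statements as direct consequences of the first item of Theorem \ref{theo:onlyOne}, invoking it in three different ways. Let $(E,Q_E)$ and $(F,Q_F)$ be two universal Lie $\infty$-algebroids resolving the Hermann foliation $\mathcal{D}$. Since $(F,Q_F)$ has associated foliation equal to $\mathcal{D}$, it trivially defines a sub-Hermann foliation of $\mathcal{D}$, so the first item of Theorem \ref{theo:onlyOne} applied with source $(F,Q_F)$ and target $(E,Q_E)$ yields a Lie $\infty$-morphism $\Phi : \mathscr{E} \to \mathscr{F}$. Swapping the roles of $E$ and $F$, the same argument produces a Lie $\infty$-morphism $\Psi : \mathscr{F} \to \mathscr{E}$ in the other direction.

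To show that $\Phi$ and $\Psi$ are inverse up to homotopy, I would observe that both $\Phi\circ\Psi$ and $\mathrm{id}_{\mathscr{E}}$ are Lie $\infty$-morphisms from $(E,Q_E)$ to itself, and that $(E,Q_E)$ is a universal Lie $\infty$-algebroid resolving $\mathcal{D}$, which itself defines (trivially) a sub-Hermann foliation of $\mathcal{D}$. Applying the uniqueness clause of Theorem \ref{theo:onlyOne} (item 1) with source and target both equal to $(E,Q_E)$, one deduces that any two Lie $\infty$-morphisms from $(E,Q_E)$ to itself are homotopic; in particular
\begin{equation*}
\Phi \circ \Psi \sim \mathrm{id}_{\mathscr{E}} .
\end{equation*}
The same reasoning applied to $(F,Q_F)$ gives $\Psi \circ \Phi \sim \mathrm{id}_{\mathscr{F}}$. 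Hence $\Phi$ and $\Psi$ are isomorphisms up to homotopy in the sense of the definition given in Section \ref{Qmanifolds}, and the two universal Lie $\infty$-algebroids are isomorphic up to homotopy.

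For the second assertion, suppose that $\Phi,\Phi': \mathscr{E} \to \mathscr{F}$ are two Lie $\infty$-morphisms which both realize isomorphisms up to homotopy between $(E,Q_E)$ and $(F,Q_F)$. They are, in particular, Lie $\infty$-morphisms from the Lie $\infty$-algebroid $(F,Q_F)$ (viewed via its foliation, a sub-Hermann foliation of $\mathcal{D}$) to the universal Lie $\infty$-algebroid $(E,Q_E)$ resolving $\mathcal{D}$. Therefore the uniqueness clause of item 1 of Theorem \ref{theo:onlyOne} applies directly and gives $\Phi \sim \Phi'$. This completes the proof. The only point that requires care is to verify that the homotopy relation $\sim$ is indeed compatible with composition, which is exactly the content of the proposition stated after Definition \ref{def:homotopy} and therefore may be used freely; there is no genuine obstacle here, the corollary is essentially a diagram-chase in the homotopy category of Lie $\infty$-algebroids over $M$.
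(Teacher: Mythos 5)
Your proof is correct, and it reaches the corollary by a more formal route than the paper does. You treat item 1 of Theorem \ref{theo:onlyOne} as a black box (existence of a morphism into the universal object, plus homotopy-uniqueness of any such morphism) and run the standard ``terminal objects are unique up to isomorphism'' argument: morphisms in both directions exist because each universal Lie $\infty$-algebroid induces the foliation $\mathcal{D}$ itself, which is trivially a sub-foliation of $\mathcal{D}$; and each composite is homotopic to the identity because both are self-morphisms of a universal object, to which the uniqueness clause applies. The paper, by contrast, goes back down to the chain level: it constructs chain maps $\phi: F \to E$ and $\psi: E \to F$ between the two resolutions, invokes the classical fact that for projective resolutions $\phi\circ\psi$ and $\psi\circ\phi$ are chain-homotopic to the identity, and then promotes these chain homotopies to homotopies of Lie $\infty$-morphisms via the machinery of Propositions \ref{prop:existencepart}, \ref{bonresult} and \ref{lemiam}. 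Your route buys economy: once item 1 is available, no further chain-level work (in particular no appeal to the projective-resolution fact) is needed, and the same scheme disposes of the second assertion in one line, since two homotopy-isomorphisms $\Phi,\Phi'$ are in particular two Lie $\infty$-morphisms from one universal object to the other, hence homotopic by the uniqueness clause alone. What the paper's longer route buys is the explicit vector bundle homotopies $h$ and $h'$ between the two resolutions, which make the underlying homological-algebra content visible.

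Two cosmetic points. First, with your orientation conventions ($\Phi:\mathscr{E}\to\mathscr{F}$ and $\Psi:\mathscr{F}\to\mathscr{E}$ on functions), the composite $\Phi\circ\Psi$ is an algebra endomorphism of $\mathscr{F}$, so it should be compared with $\mathrm{id}_{\mathscr{F}}$, and $\Psi\circ\Phi$ with $\mathrm{id}_{\mathscr{E}}$; you have the two identity maps interchanged. This is purely a labelling slip and does not affect the argument. Second, your closing remark is over-cautious: compatibility of $\sim$ with composition is never actually used in your proof, since you only compose morphisms (which is automatic) and never compose homotopies; the uniqueness clause of item 1 carries the entire argument.
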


\section{The geometry of a Hermann foliation through the universal Lie \texorpdfstring{$\infty$}{infinity}-algebroid resolving it}\label{applications}

The purpose of this section is to make use of the universal Lie $\infty$-algebroid resolving a Hermann foliation in order to understand its geometry. 
For this purpose we must associate objects to the universal Lie $\infty$-algebroid which do not depend on the many choices made in the construction. 
In the next sections, we shall first study  the global invariants, and then turn to local ones, attached to the leaves.

\subsection{Universal foliated cohomology}\label{univsection}

Let ${\mathcal D}$ be a Hermann foliation, and let $ (E,Q)$ be a universal Lie $\infty$-algebroid resolving it, with sheaf of functions ${\mathscr E}$. 
Let us denote by $H_{\mathscr{U}}(\mathcal D)$ the cohomology of the complex given by the homological vector field $Q$ acting on $ {\mathscr E}$ and 
call it the  \emph{universal foliated cohomology of $ {\mathcal D}$}. This definition makes sense, i.e. does not depend on the choice of a universal Lie $\infty$-algebroid resolving
$ {\mathcal D}$, in view of the following corollary of Theorem \ref{theo:onlyOne}: 

\begin{corollaire}
\label{cor:cancohom}
Let ${\mathcal D}$ be a Hermann foliation on $M$. Let $ (E,Q_{E})$ and $(F,Q_{F})$ with sheaves of functions ${\mathscr E}$ and ${\mathscr F}$ be universal
Lie $\infty$-algebroids resolving ${\mathcal D}$. The cohomologies of $ ({\mathscr E}, Q_{E}) $ and $ ({\mathscr F}, Q_{F}) $ are canonically isomorphic as graded commutative algebras.  
\end{corollaire}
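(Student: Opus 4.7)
The plan is to deduce this corollary essentially formally from Theorem \ref{theo:onlyOne} together with Proposition \ref{prop:HomotopyMeansHomotopy}, by checking that the equivalence of Lie $\infty$-algebroids up to homotopy descends to an honest isomorphism at the level of $Q$-cohomology.

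First I would set up the induced maps on cohomology. By Theorem \ref{theo:onlyOne}, there exist Lie $\infty$-morphisms $\Phi:\mathscr{F}\to\mathscr{E}$ and $\Psi:\mathscr{E}\to\mathscr{F}$ over the identity of $M$ that are inverse to one another up to homotopy, i.e.\ $\Phi\circ\Psi\sim\mathrm{id}_{\mathscr{E}}$ and $\Psi\circ\Phi\sim\mathrm{id}_{\mathscr{F}}$. By Definition \ref{def:morphism}, $\Phi$ is a degree $0$ algebra morphism satisfying $\Phi\circ Q_F = Q_E\circ\Phi$, so $\Phi$ sends $Q_F$-cocycles to $Q_E$-cocycles and $Q_F$-coboundaries to $Q_E$-coboundaries; it therefore induces a graded algebra morphism $\Phi^{*}:H_{\mathscr{U}}(\mathcal{D})_{F}\to H_{\mathscr{U}}(\mathcal{D})_{E}$, and similarly for $\Psi^{*}$ in the other direction.

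Next I would show that these induced maps are mutually inverse. By Proposition \ref{prop:HomotopyMeansHomotopy}, the homotopy $\Phi\circ\Psi\sim\mathrm{id}_{\mathscr{E}}$ implies that there exists a degree $-1$ $\mathscr{O}$-linear map $H:\mathscr{E}\to\mathscr{E}$ such that
\begin{equation*}
\Phi\circ\Psi - \mathrm{id}_{\mathscr{E}} = [Q_E,H] = Q_E\circ H + H\circ Q_E.
\end{equation*}
Applied to any $Q_E$-cocycle $\alpha\in\mathscr{E}$, the right-hand side is a $Q_E$-coboundary, so $(\Phi\circ\Psi)(\alpha)$ and $\alpha$ represent the same class in cohomology; hence $\Phi^{*}\circ\Psi^{*}=\mathrm{id}$. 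The symmetric argument gives $\Psi^{*}\circ\Phi^{*}=\mathrm{id}$, so $\Phi^{*}$ is an isomorphism of graded commutative algebras (the algebra structure being automatically preserved since $\Phi$ is an algebra morphism).

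Finally I would establish canonicity. If $\Phi':\mathscr{F}\to\mathscr{E}$ is any other Lie $\infty$-morphism over the identity that arises as one half of an isomorphism up to homotopy between $(E,Q_E)$ and $(F,Q_F)$, then by the second item of Theorem \ref{theo:onlyOne} it is homotopic to $\Phi$. Invoking Proposition \ref{prop:HomotopyMeansHomotopy} once more, $\Phi'-\Phi=[Q,K]$ for some degree $-1$ $\mathscr{O}$-linear map $K$, so $\Phi'$ and $\Phi$ induce the same map $H_{\mathscr{U}}(\mathcal{D})_{F}\to H_{\mathscr{U}}(\mathcal{D})_{E}$. This independence from the chosen comparison morphism is precisely the canonicity statement. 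The whole argument is essentially formal once Theorem \ref{theo:onlyOne} and Proposition \ref{prop:HomotopyMeansHomotopy} are in place, so there is no real obstacle; the only subtlety worth flagging is to be explicit that the algebra structure on cohomology is inherited from the graded commutative algebra structure on $\mathscr{E}$ and $\mathscr{F}$, which is manifestly preserved because $\Phi$ is itself an algebra morphism, and that the coboundaries form a graded ideal with respect to this product (a standard consequence of $Q_E$ being a derivation of degree $+1$ squaring to zero).
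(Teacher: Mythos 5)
Your proof is correct and follows essentially the same route as the paper's own argument: obtain $\Phi$ and $\Psi$ inverse up to homotopy from Theorem \ref{theo:onlyOne}, pass to cohomology, and use Proposition \ref{prop:HomotopyMeansHomotopy} both for the mutual inverse property and for canonicity. Your additional remarks (coboundaries forming a graded ideal, the algebra structure being preserved since $\Phi$ is an algebra morphism) merely spell out details the paper leaves implicit.
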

\begin{proof}
By  Theorem \ref{theo:onlyOne}, 
there exist two Lie $\infty$-algebroid morphisms $\Phi:\mathscr{F}\to\mathscr{E}$ and $\Psi:\mathscr{E}\to\mathscr{F}$
whose compositions are homotopic to the identity maps of $\mathscr{E}$ and $\mathscr{F}$ respectively. 
Any two such morphisms are moreover homotopic.

Hence, both $\Phi$ and $\Psi$ naturally pass to the cohomology 
to give graded commutative algebra morphisms $\widetilde{\Phi}$ and $\widetilde{\Psi}$ relating the cohomologies of $ ({\mathscr E}, Q_{E}) $ and $ ({\mathscr F}, Q_{F})$.
Proposition \ref{prop:HomotopyMeansHomotopy} implies that $\widetilde{\Phi}$ and $\widetilde{\Psi}$ are inverse to one another at the level of cohomology, and that 
any two choices for $\Phi$ and $\Psi$ would yield the same morphisms at the level of cohomology.
\end{proof}

 \begin{remarque}
 It is clear that the first quotient space $H^0_{\mathscr{U}}({\mathcal D})$ consists 
 in smooth functions on $M$ vanishing under the action of $\rho^{\ast}\circ\dd_{\mathrm{dR}}$, that is, the functions
 which are constant along the leaves of $ {\mathcal D}$. Higher cohomologies are more difficult to interpret. 
\end{remarque}
 
Call \emph{forms on ${\mathcal D}$}, and denote the space of forms on $\mathcal{D}$ by $ \Omega({\mathcal D}) $, ${\mathscr O}$-multi-linear skew-symmetric assignments from
$ {\mathcal D}$ to $ {\mathscr O}$:
 $$ \Omega({\mathcal D}) := \mathrm{Hom}_{\mathscr O}\big(\wedge^{\bullet}_{\mathscr O}{\mathcal D},{\mathscr O}\big)= \bigoplus_{k \geq 0} \mathrm{Hom}\big(\wedge_{\mathscr O}^k {\mathcal D},{\mathscr O}\big).$$
 Note that $0$-forms on $\mathcal{D}$ are just functions on $M$. Also, a $k$-form $\alpha$ on $ {\mathcal D}$ induces an honest $k$-form $\alpha_L$ on each regular leaf $L$,
 but maybe not on singular ones. Conversely, any such family $L \mapsto \alpha_L$ of 
$k$-forms, defined on each leaf of ${\mathcal D}$, gives a $k$-form on ${\mathcal D}$ provided that for all $ X_1, \dots,X_k \in {\mathcal D}$, 
the function on $M$ defined at all regular points  $ x$ by $\alpha_{L_x}(X_1, \dots,X_k)$ (with $L_x$ the leaf through $ x$) extends to a function on $M$ which is smooth, real
analytic or holomorphic depending on the context, i.e. extends to a function in ${\mathscr O}$. 
A \emph{foliated de Rham operator} $\dd_{\mathrm{dR}}$ on $ \Omega({\mathcal D})$ is defined by the usual formula:
\begin{align}
{\dd}_{\mathrm{dR}} (\alpha)\big(X_0, \dots , X_k\big) &=\sum_{i=0}^k (-1)^{i} X_i\big[\alpha (X_{0},\ldots,\widehat{X_i},\ldots,X_k)\big]\\&\hspace{1cm}+ \sum_{0\leq i < j\leq k}(-1)^{i+j}\alpha\Big([X_i,X_j],X_0,\ldots, \widehat{X_{i}},\ldots,\widehat{X_{j}},\ldots,X_k\Big) ,\nonumber
\end{align}
 with the understanding that $\widehat{X_i}$ means that $X_i$ is omitted.
 When  forms on $\mathcal{D}$ are interpreted as families of forms on each leaf of ${\mathcal D}$, the de Rham operator consists in taking the usual de Rham operator on each leaf. 
 We call the cohomology of this operator the \emph{foliated  de Rham cohomology of $ {\mathcal D}$} and denote it by $H_{\mathrm{dR}}({\mathcal D})$.

 Let ${\mathcal D}$ be a Hermann foliation on $M$ and let $ (E,Q)$, with sheaf of functions ${\mathscr E}$, 
 be a  Lie $\infty$-algebroid resolving ${\mathcal D}$.
 There is a natural map $\rho^*$ from $\Omega({\mathcal D})$ to $ {\mathscr E}$ 
 given  by associating to each $ \alpha \in \Omega^k({\mathcal D})$
 the element $\rho^* {\alpha} \in  \Gamma(\wedge^k E_{-1}^*) \subset {\mathscr E}_k $ such that for all $x_1 , \dots,x_k \in \Gamma(E_1)$ by:
  \begin{equation}\label{eq:pullbackByAnchor} 
 \rho^* \alpha (x_1, \dots,x_k) = \alpha\big(\rho(x_1), \dots,\rho(x_k)\big) .
  \end{equation}
  It is routine to check that $\alpha \mapsto \rho^* {\alpha}$ is a chain map and a graded commutative algebra morphism, inducing therefore an algebra morphism, still denoted by $\rho^*$,
  from $H_{\mathrm{dR}}({\mathcal D})$ (the foliated de Rham cohomology of ${\mathcal D}$) to 
$ H_{\mathscr{U}}({\mathcal D})$ (the univeral foliated cohomology of ${\mathcal D}$).

\begin{proposition}
Let ${\mathcal D}$ be a Hermann foliation on $M$ that admits a universal
Lie $\infty$-algebroid resolving it.
The algebra morphism $\rho^*$ from the foliated de Rham cohomology of ${\mathcal D}$ to 
the univeral foliated cohomology of ${\mathcal D}$ given by Equation (\ref{eq:pullbackByAnchor}) is canonical 
(i.e. two universal Lie $\infty$-algebroids resolving ${\mathcal D}$ will induce the same morphism). It is an isomorphism in degree $0$ and $1$,
and it is injective in degrees $2$ and $3$.
\end{proposition}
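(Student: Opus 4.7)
My first step is to verify that formula (\ref{eq:pullbackByAnchor}) defines a chain map, i.e. $Q[\rho^*\alpha]=\rho^*(\dd_{\mathrm{dR}}\alpha)$. For $\alpha\in\Omega^k(\mathcal{D})$, the element $\rho^*\alpha$ lies in $\Gamma(\wedge^k E_{-1}^*)$ and is of pure arity $k$, so Proposition \ref{prop:arity} forces $Q[\rho^*\alpha]$ to receive contributions only from $Q^{(0)}$ and $Q^{(1)}$. The former vanishes because $\rho\circ\dd^{(2)}=0$, and a direct expansion via (\ref{anchor1})--(\ref{anchor2}), together with the anchor-morphism identity $\rho([x,y]_{E_{-1}})=[\rho(x),\rho(y)]$ built into the Lie $\infty$-algebroid structure, shows that $Q^{(1)}[\rho^*\alpha]=\rho^*(\dd_{\mathrm{dR}}\alpha)$. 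Canonicity is then a consequence of Corollary \ref{cor:cancohom}: for any Lie $\infty$-morphism $\Phi$ between two universal Lie $\infty$-algebroids resolving $\mathcal{D}$, the linear part $\widehat{\Phi}^{(0)}$ intertwines the anchors, so $\widehat{\Phi}^{(0)}(\rho_F^*\alpha)=\rho_E^*\alpha$ on the nose, while the higher arity components of $\Phi(\rho_F^*\alpha)-\rho_E^*\alpha$ are $Q_E$-exact by an argument of the same flavour as the recursion in the proof of Theorem \ref{theo:existe}.

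\textbf{Degrees $0$, $1$, and injectivity in degree $2$.} Degree $0$ is immediate since both $H^0_{\mathscr{U}}(\mathcal{D})$ and $H^0_{\mathrm{dR}}(\mathcal{D})$ equal the sheaf of functions annihilated by $\rho(\Gamma(E_{-1}))=\mathcal{D}$. For degree $1$, arity considerations give $\mathscr{E}_1=\Gamma(E_{-1}^*)$, and $Q[f]=Q^{(1)}[f]=\rho^*(\dd_{\mathrm{dR}} f)$ for all $f\in\mathscr{O}$. Given a $Q$-cocycle $\xi\in\Gamma(E_{-1}^*)$, the vanishing of $Q^{(0)}[\xi]$ means $\xi$ vanishes on $\mathrm{Im}(\dd^{(2)})=\mathrm{Ker}(\rho)$, so $\xi=\rho^*\alpha$ for a unique $1$-form $\alpha$, and the vanishing of $Q^{(1)}[\xi]=\rho^*(\dd_{\mathrm{dR}}\alpha)$ then forces $\dd_{\mathrm{dR}}\alpha=0$ by injectivity of $\rho^*$ on $2$-forms (a consequence of the surjectivity of $\rho$ onto $\mathcal{D}$). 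This yields both the surjectivity and the injectivity in degree $1$. Injectivity in degree $2$ is handled analogously: if $\rho^*\alpha=Q[\eta]$ with $\eta\in\mathscr{E}_1=\Gamma(E_{-1}^*)$, the same factoring produces $\eta=\rho^*\gamma$ for some $\gamma\in\Omega^1(\mathcal{D})$, whence $\rho^*\alpha=\rho^*(\dd_{\mathrm{dR}}\gamma)$ and thus $\alpha=\dd_{\mathrm{dR}}\gamma$.

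\textbf{Injectivity in degree $3$ and the main obstacle.} Now $\eta\in\mathscr{E}_2$ decomposes as $\eta=\eta_1+\eta_2$ with $\eta_1\in\Gamma(E_{-2}^*)$ of arity $1$ and $\eta_2\in\Gamma(\wedge^2 E_{-1}^*)$ of arity $2$, and $Q[\eta]=\rho^*\alpha$ splits by arity into the three equations $Q^{(0)}[\eta_1]=0$, $Q^{(0)}[\eta_2]+Q^{(1)}[\eta_1]=0$, and $Q^{(1)}[\eta_2]+Q^{(2)}[\eta_1]=\rho^*\alpha$. The natural candidate primitive $\gamma(\rho(x),\rho(y)):=\eta_2(x,y)$ fails to be well-defined, the defect being $\rho(y)[\eta_1(z)]-\eta_1(\{y,z\})$, which is controlled by $\eta_1$ via the arity-$2$ equation. \textbf{The main obstacle} is therefore to kill $\eta_1$ modulo a $Q$-coboundary, that is, to solve $Q^{(0)}[\mu]=\eta_1$ for some $\mu\in\Gamma(E_{-1}^*)$; the obstruction is the class of $\eta_1$ in the cohomology of the dual complex $(\Gamma(E_{-\bullet}^*),Q^{(0)})$ at position $2$, which can be identified with $\mathrm{Ext}^1_{\mathscr{O}}(\mathcal{D},\mathscr{O})$. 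I plan to show this obstruction vanishes in our setting by a bicomplex diagram-chase in the spirit of Lemma \ref{lemmefondamental2} transplanted to the dual side, exploiting the additional constraints imposed by the arity-$2$ and arity-$3$ equations, together with partitions of unity in the smooth case (as in Proposition \ref{prop:exres}) or local freeness of the resolution in the analytic and holomorphic cases. Once $\eta_1=0$ has been arranged, $\eta_2$ descends through $\rho$ to a $2$-form $\gamma$ with $\rho^*\gamma=\eta_2$, and the degree-$2$ argument reapplies verbatim to yield $\alpha=\dd_{\mathrm{dR}}\gamma$.
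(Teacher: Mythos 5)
Your treatment of canonicity and of degrees $0$, $1$ and $2$ is correct and follows essentially the same route as the paper: the chain-map verification, the identification of a degree-$1$ cocycle $\xi\in\Gamma(E_{-1}^*)$ as $\rho^*\alpha$ by factoring through $\Gamma(E_{-1})/\mathrm{Ker}(\rho)\simeq\mathcal{D}$ (using $Q^{(0)}[\xi]=0$, i.e. vanishing on $\mathrm{Im}(\dd^{(2)})=\mathrm{Ker}(\rho)$), and the degree-$2$ injectivity argument all coincide with the paper's. The problem is degree $3$, and it sits exactly where you say it does. After decomposing the primitive as $\eta=\eta_1+\eta_2$ with $\eta_1\in\Gamma(E_{-2}^*)$ and $\eta_2\in\Gamma(\wedge^2E_{-1}^*)$, everything hinges on producing $\mu\in\Gamma(E_{-1}^*)$ with $Q^{(0)}[\mu]=\eta_1$, equivalently $\eta_1\in\mathrm{Im}\big((\dd^{(2)})^*\big)$. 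You label this ``the main obstacle,'' correctly observe that it is not automatic because dualization does not preserve exactness (the obstruction living in the cohomology of the dual complex, related to $\mathrm{Ext}^1_{\mathscr{O}}(\mathcal{D},\mathscr{O})$), and then only \emph{announce} that you plan to kill it by a dual diagram chase. A plan is not a proof: since this step is the only point at which degree $3$ differs from degree $2$, the claim ``injective in degree $3$'' is left unproved in your write-up. It is also doubtful that your proposed inputs suffice as stated: the arity-$2$ equation tells you that $Q^{(1)}[\eta_1]$ is $Q^{(0)}$-exact, which constrains the image of $\eta_1$ under a different operator, not directly its class in the dualized complex, and partitions of unity address locality, not the failure of $\mathrm{Hom}_{\mathscr{O}}(-,\mathscr{O})$ to be exact.

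For comparison, the paper closes this step in one move: from the arity-$1$ component of $\rho^*\alpha=[Q,\beta]$ it deduces $Q^{(0)}[\eta_1]=0$, i.e. $\eta_1\in\mathrm{Ker}\big((\dd^{(3)})^*\big)$, and then asserts -- as a consequence of $(E,\dd,\rho)$ being a resolution -- that $\eta_1=(\dd^{(2)})^*\gamma$ for some $\gamma\in\Gamma(E_{-1}^*)$; replacing $\beta$ by $\beta'=\beta+[Q,\gamma]$ yields a new primitive of pure arity $2$, i.e. $\beta'\in\Gamma(\wedge^2E_{-1}^*)$, after which the degree-$2$ argument applies and gives $\beta'=\rho^*\gamma'$, $\alpha=\dd_{\mathrm{dR}}\gamma'$. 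This is precisely the coboundary-modification you envisage, so your skeleton matches the paper's; the difference is that the paper treats the dual-exactness input as settled and moves on, whereas you (rightly) flag it as the delicate point but then do not supply the argument. To complete your proof you must actually establish that $\eta_1$ is $(\dd^{(2)})^*$-exact (equivalently, that the functional it induces on $\mathrm{Ker}(\rho)$ extends to $\Gamma(E_{-1})$), not defer it.
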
 
  \begin{proof}
   Let $ (E,Q)$ and $(E',Q')$, with sheaves of functions ${\mathscr E}$ and ${\mathscr E}'$,
   be two universal 
   Lie $\infty$-algebroids resolving ${\mathcal D}$, and $\Phi:{\mathscr E}' \to {\mathscr E}$ a
   Lie $\infty$-morphism from the first one to the second one as in Corollary \ref{cor:cancohom}. 
   Let $\phi_1: E_{-1} \to E_{-1}'$ be the vector bundle morphism induced by $\Phi$, then $\rho' \circ \phi_1 = \rho $ with $ \rho, \rho'$
   the anchors of the Lie $\infty$-algebroids $E$ and $E'$, which proves
  the first claim by Proposition \ref{cor:cancohom}.
  
  Recall that elements of the cohomology of ${\mathscr E}$ and ${\mathscr E}'$ in degree 0
  are functions on $M$ constant along the leaves of ${\mathcal D}$. 
  A function of ${\mathscr E}$  of degree 1 is a section $\beta$ of ${E}_{-1}^*$.  If it is $Q$-closed, in particular, 
  it has to satisfy $\beta(\dd^{(2)}y)=0$ for all $y \in \Gamma (E_{-2})$. 
  This means that it vanishes on the image of $\dd^{(2)}$, which is the kernel of $\rho: \Gamma(E_{-1} )\to {\mathfrak X}(M)$. 
  It therefore pushes to the quotient $\Gamma(E_{-1})/\mathrm{Ker}(\rho) $ to define 
  an element $\alpha$ in $ \Omega^{1}(\mathcal{D})=\mathrm{Hom}_{\mathscr O}({\mathcal D},{\mathscr O})$ satisfying $ \rho^* \alpha =\beta $. 
  This implies $\rho^*( {\dd }_{\mathrm{dR}}\alpha) = Q[\beta]=0 $.
   Since $\rho^*$ is injective, ${\dd }_{\mathrm{dR}}\alpha =0 $. Also, it is clear that $\alpha= {\dd }_{\mathrm{dR}}f$  for some $f \in  {\mathscr O}$ if and only if 
   $\beta = Q[f] $. This proves that $\rho^*$ induces an isomorphism between the foliated de Rham cohomology and the univeral foliated cohomology.
   
   Injectivity in degrees $2$ and $3$ comes from the fact that if $\rho^* \alpha$ is a coboundary, with $\alpha \in  \Omega ({\mathcal D}) $ of degree $2$ or $3$,
   it has to be of the form $\rho^* \alpha = [Q,\beta]$
   with $\beta$ in $\Gamma(E_{-1}^*) $ or $\beta \in  \Gamma(\wedge^2 E_{-1}^* \oplus E_{-2}^*)$ respectively. In the first case, $\beta$ has to be in the kernel of 
   $(\dd^{(2)})^* : \Gamma(E_{-1}^*) \to \Gamma(E_{-2}^*)$, which implies, since $(E,\dd)$ is a resolution, that it is in the image of $ \rho^*$, i.e. $\beta =\rho^* \gamma$.
   But then $\alpha = {\dd}_{dR} \gamma$ is a coboundary. In the second case, the relation $\rho^* \alpha = [Q,\beta]$ imposes that the component in
   $\Gamma(E_{-2}^*)$ be in the kernel of  $(\dd^{(3)})^* : \Gamma(E_{-2}^*) \to \Gamma(E_{-3}^*)$, so that it is in the image under
   $(\dd^{(2)})^* : \Gamma(E_{-1}^*) \to \Gamma(E_{-2}^*)$ of some $\gamma$. Then $\beta':=\beta + [Q,\gamma]$ is an element that satisfies
   $ \rho^* \alpha = [Q,\beta'] $ and lies in $ \Gamma(\wedge^2 E_{-1}^*)$. It is routine to check that these two conditions imply that $\beta' $
   has to be in the image of $\rho^*$ and that $\alpha$ is exact, hence the claim.
  \end{proof}

Recall that the first item of Theorem \ref{theo:onlyOne} claims that for any Lie $\infty$-algebroid $(F,Q_{F})$ whose anchor map takes values in a sub-foliation $\mathcal{D}'\subset\mathcal{D}$, there exists a Lie ${\infty}$-morphism $\Phi:\funct\to\functt$. Since this morphism commutes with the respective homological vector fields $Q_{F}$ and $Q_{E}$, it passes to the quotient and it sends the universal foliated cohomology of $E$ into the $Q_{F}$-cohomology $H_{Q_{F}}(\functt)$ of the sheaf of functions $\mathscr{F}$ over $F$. Then composing with the anchor map we have a morphism from $H_{\mathrm{dR}}(\mathcal{D})$ to $H_{Q_{F}}(\functt)$:
\begin{center}
\begin{tikzcd}[column sep=0.9cm,row sep=0.6cm]
& H_{\mathrm{dR}}(\mathcal D) \ar[r,"\rho^{\ast}"]& H_{\mathscr U}(\mathcal D)\ar[r,"\Phi"] &  H_{Q_{F}}(\functt)
\end{tikzcd}
\end{center}
In particular if $A$ is a Lie algebroid over $M$, and $\mathcal D$ is the Hermann foliation induced by the anchor map, there is a morphism of cohomologies, provided that the foliation admits a resolution:
\begin{center}
\begin{tikzcd}[column sep=0.9cm,row sep=0.6cm]
& H_{\mathrm{dR}}(\mathcal D) \ar[r]&  H_A
\end{tikzcd}
\end{center}
where $H_{A}$ is the \emph{Lie algebroid cohomology} of $A$. This may be useful to relate the modular class of the Lie algebroid $A$ to the modular class of the foliation induced by its anchor map. The precise meaning of the spaces of the universal foliated cohomology is still to be understood: the modular class, we have been told, is being studied by R. Caseiro.

\subsection{The holonomy graded Lie algebra at a point}\label{pervenche}

In \cites{AndrouSkandal, AndrouZamb}, the \emph{holonomy of a Hermann foliation} at a point $x$ is defined, and a Lie algebra structure is defined on it. In this section, we show that this Lie algebra is the first component (in degree $-1$) of a graded Lie algebra, which is canonically associated to the Hermann foliation.

Let ${\mathcal D}$ be a Hermann foliation on a manifold $M$ and $(E,Q)$ be a universal
Lie $\infty$-algebroid resolving it.
Choose  an arbitrary point $x \in M$. Denote by ${\mathfrak i}_x^* E_{-i}$ the fiber of $E_{-i}$ at $x$. It is clear that the following
sequence is still a complex.
\begin{center}
\begin{tikzcd}[column sep=0.7cm,row sep=0.4cm]
\ldots \ar[r,"\dd^{(4)}"] &{\mathfrak i}_x^* E_{-3}  \ar[r,"\dd^{(3)}"] &{\mathfrak i}_x^* E_{-2}\ar[r,"\dd^{(2)}"] & {\mathrm{Ker}}(\rho_x)
\end{tikzcd}
\end{center}
where $\rho_x$ stands for the anchor map $ \rho_x : {\mathfrak i}_x^* E_{-1}  \to T_x M$ at the point $x$.
Note that the previous sequence may have cohomology: the exactness of the complex  in Definition \ref{def:resolution} at the level of section does not imply that it is exact at all points - 
see Example \ref{sl2}.

The linearity properties of the brackets
defining the Lie $\infty$-algebroid $(E,Q)$ 
in Definition \ref{def:Linftyoids} 
imply that the Lie $\infty$-algebroid structure $(E,Q)$ restricts to yield a $L_\infty$ algebra structure on the graded vector space
 $$ {\mathrm{Ker}} (\rho_{x})\oplus\bigoplus_{i \geq 2} {\mathfrak i}_x^* E_{-i}  $$ 
 whose $ Q$-vector we denote by  $Q_x $. Seen as a $Q$-manifold, its space of functions is the quotient of ${\mathscr E}$ by the $Q$-ideal ${\mathcal I}_x$ of functions vanishing 
 at the point  $x \in M $, and its linear part is precisely the complex above. We call this $L_\infty$ algebra the 
 \emph{holonomy $L_\infty$-algebra at $x \in M$ of the universal Lie $\infty$-algebroid $(E,Q)$ resolving ${\mathcal D}$}.

Taking the cohomology of the holonomy $L_\infty $-algebra at a point $x \in M$, we obtain a graded Lie algebra
(in the symmetric language - i.e. the bracket is symmetric and satisfies the graded Jacobi identity). 
We denote by $H_{\mathcal D}(x)=\oplus_{i \geq 1} H_{\mathcal D}^{-i}(x)$ this cohomology and call it the \emph{holonomy graded Lie algebra of ${\mathcal D}$ at the point $x \in M$}. 
We now see that different choices of universal Lie $\infty$-algebroids resolving ${\mathcal D}$ lead to the same holonomy graded Lie algebra:
 
\begin{proposition}
Let $ {\mathcal D}$ be a Hermann foliation on $M$ that admits two  universal Lie $\infty$-algebroids $(E,Q_{E})$ and $(F,Q_{F})$ resolving it. 
The holonomy graded Lie algebras at $x \in M$, computed with respect to these structures, are canonically isomorphic.
\end{proposition}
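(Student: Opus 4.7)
The plan is to deduce the proposition directly from Theorem \ref{theo:onlyOne} and Proposition \ref{prop:HomotopyMeansHomotopy}, by showing that every morphism involved in the homotopy-equivalence between the two universal Lie $\infty$-algebroids restricts to the fiber over $x$. First I would apply Theorem \ref{theo:onlyOne} to obtain Lie $\infty$-morphisms $\Phi:\mathscr{F}\to\mathscr{E}$ and $\Psi:\mathscr{E}\to\mathscr{F}$ over the identity of $M$, whose compositions are homotopic to the identities of $\mathscr{E}$ and $\mathscr{F}$. Since both morphisms cover $\mathrm{id}_{M}$ and are $\mathscr{O}$-linear, they send the ideal $\mathcal{I}_{x}\subset\mathscr{E}$ (resp. $\mathcal{J}_{x}\subset\mathscr{F}$) of functions vanishing at $x$ into its counterpart; hence they descend to morphisms $\overline{\Phi}$ and $\overline{\Psi}$ of the quotient $Q$-algebras, i.e. to $L_{\infty}$-morphisms between the holonomy $L_{\infty}$-algebras at $x$ associated to $(E,Q_{E})$ and $(F,Q_{F})$.

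Next I would check that these descended morphisms are inverse to one another in cohomology. By Proposition \ref{prop:HomotopyMeansHomotopy}, the homotopies guaranteeing $\Phi\circ\Psi\sim\mathrm{id}_{\mathscr{E}}$ and $\Psi\circ\Phi\sim\mathrm{id}_{\mathscr{F}}$ can be integrated into $\mathscr{O}$-linear maps $H,H':\mathscr{E}\to\mathscr{E}$ and $\mathscr{F}\to\mathscr{F}$ of degree $-1$ satisfying
\begin{equation*}
\Phi\circ\Psi-\mathrm{id}_{\mathscr{E}}=[Q_{E},H],\qquad\Psi\circ\Phi-\mathrm{id}_{\mathscr{F}}=[Q_{F},H'].
\end{equation*}
The $\mathscr{O}$-linearity of $H$ and $H'$ forces them to preserve the ideals $\mathcal{I}_{x}$ and $\mathcal{J}_{x}$, hence they too descend to $\overline{H},\overline{H'}$ on the holonomy $L_{\infty}$-algebras. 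Passing to $Q_{x}$-cohomology, the equalities $\overline{\Phi}\circ\overline{\Psi}-\mathrm{id}=[Q_{x},\overline{H}]$ and $\overline{\Psi}\circ\overline{\Phi}-\mathrm{id}=[Q_{x},\overline{H'}]$ yield that the induced maps on $H_{\mathcal{D}}(x)$ are mutually inverse isomorphisms of graded Lie algebras.

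Finally, to prove canonicity, I would invoke the second half of Theorem \ref{theo:onlyOne}: any two choices of such Lie $\infty$-morphism $\Phi$ and $\Phi'$ are homotopic, so by Proposition \ref{prop:HomotopyMeansHomotopy} their difference is of the form $[Q,K]$ for some $\mathscr{O}$-linear $K$. The same restriction argument shows that $\overline{\Phi}$ and $\overline{\Phi'}$ differ by a $[Q_{x},\,\cdot\,]$-coboundary, and hence induce the same map in cohomology. Consequently the isomorphism between the two holonomy graded Lie algebras at $x$ is independent of the choices made.

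The main obstacle I anticipate is not conceptual but rather in the checks that every ingredient of the proof of Theorem \ref{theo:onlyOne}---in particular the piecewise-$C^{1}$ path $\Phi_{t}$ of Lie $\infty$-morphisms and the $\Phi_{t}$-derivations $H_{t}$ constructed in Definition \ref{def:homotopy}---is compatible with the restriction to $x$. Because every component in the construction is $\mathscr{O}$-linear over the identity base map, the ideal $\mathcal{I}_{x}$ is stable under each of them, so restriction to the fiber over $x$ is well-defined; once this is verified the rest follows formally from Proposition \ref{prop:HomotopyMeansHomotopy}. Also, one must finally check the elementary (but necessary) point that the symmetric $2$-bracket descends correctly to cohomology, making $H_{\mathcal{D}}(x)$ a genuine graded Lie algebra, but this is immediate from the higher Jacobi identities since the differential $Q_{x}^{(0)}$ on the holonomy $L_{\infty}$-algebra is a derivation of the $2$-bracket.
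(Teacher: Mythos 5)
Your overall architecture is the one the paper uses — Theorem \ref{theo:onlyOne} for the pair $\Phi$, $\Psi$ with compositions homotopic to the identities, Proposition \ref{prop:HomotopyMeansHomotopy} to turn homotopies into $[Q,\cdot]$-exact terms, restriction to the point $x$, and the ``two such morphisms are homotopic'' clause for canonicity — and the canonicity paragraph is correct. However, two steps do not go through as written. The first is the descent of the homotopies $H$, $H'$. The holonomy $L_{\infty}$-algebra at $x$ is the quotient of $\mathscr{E}$ by the \emph{$Q$-ideal} $\mathcal{I}_x$, and this ideal is strictly larger than $I_x\cdot\mathscr{E}$: for it to be stable under $Q_E$ it must also contain the elements $Q_E(f)=\rho^{\ast}\dd_{\mathrm{dR}}f$ with $f(x)=0$, which are sections of $E_{-1}^{\ast}$ that in general do \emph{not} vanish at $x$ (this is precisely why the degree $-1$ part of the holonomy $L_\infty$-algebra is $\mathrm{Ker}(\rho_x)$ rather than all of $E_{-1,x}$). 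The morphisms $\Phi$ and $\Psi$ do preserve these extra generators, since they are algebra morphisms fixing $\mathscr{O}$ and intertwining the homological vector fields, so that $\Phi(Q_F f)=Q_E f$; but $H$ is only $\cinf(M)$-linear, neither a morphism nor a derivation, so $\mathscr{O}$-linearity gives $H(I_x\mathscr{F})\subset I_x\mathscr{E}$ and says nothing about terms of the form $H\big(Q_F(f)\,G\big)$. The repair is to restrict the identity $\Phi\circ\Psi-\mathrm{id}=[Q_E,H]$ to its arity-$0$ component, where $\Phi$, $\Psi$, $H$ are dual to vector bundle morphisms $\phi$, $\psi$, $h$ over $\mathrm{id}_M$ and the relation reads $\phi\circ\psi-\mathrm{id}=\dd\circ h+h\circ\dd$; this is an identity of bundle maps, so it restricts to the fiber at $x$ (and to $\mathrm{Ker}(\rho_x)$, because the linear parts intertwine the anchors), yielding mutually inverse maps on $H_{\mathcal{D}}(x)$ as graded vector spaces.

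The second, and more substantial, issue is that you assert the induced mutually inverse maps are isomorphisms \emph{of graded Lie algebras}, while your argument only establishes the vector-space statement: nothing you wrote addresses compatibility of the induced map with the brackets, which is the actual content of the proposition. This is exactly where the paper's proof invokes Equation \eqref{2homomorphism}: the arity-$1$ component of the morphism relation $\Phi\circ Q_F=Q_E\circ\Phi$ produces a symmetric degree-$0$ map $\theta$ satisfying
\begin{equation*}
\big\{\phi_{1}(u),\phi_{1}(v)\big\}-\phi_{1}\big(\{u,v\}\big)=\dd\,\theta(u,v)
\end{equation*}
for all sections $u,v$, so on $\dd_x$-cohomology classes the linear part strictly intertwines the induced brackets, and the vector-space isomorphism is a graded Lie algebra isomorphism. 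Your closing remark — that the $2$-bracket descends to cohomology because $Q_x^{(0)}$ is a derivation of it — concerns the well-definedness of the graded Lie algebra structure on $H_{\mathcal{D}}(x)$, which is a different point and does not substitute for this compatibility argument.
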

\begin{proof}
Two resolutions of the Hermann foliation $\mathcal D$ are isomorphic up to homotopy.
Then they are isomorphic as vector spaces at the level of cohomology. But then Equation \eqref{2homomorphism} tells us that they are in fact isomorphic as graded Lie algebras.
\end{proof}

Since $\sum_{i\geq 1} H^{-i}_{\mathcal D}(x)$ is a graded Lie algebra, $ H^{-1}_{\mathcal D}(x)$ is a Lie algebra.
We show that it is isomorphic to the holonomy Lie algebra constructed by I. Androulidakis and G. Skandalis in \cite{AndrouSkandal}, defined to be the quotient of the Lie 
algebra ${\mathcal D}_x$ of local sections in ${\mathcal D}$ vanishing at $x \in M$ by the Lie ideal $I_x {\mathcal D}$ 
(here $I_x$ stands for the ideal of local fonctions vanishing at $x$). It is equipped with a quotient Lie algebra structure. 

\begin{proposition}
\label{prop:hol=hol}
Let $\mathcal{D}$ be a Hermann foliation that admits a universal Lie $\infty$-algebroid resolving it.
For every $x \in M$, the holonomy Lie algebra of the Hermann foliation at $x $ as defined by Androulidakis and Skandalis  is isomorphic  to the Lie algebra $H^{-1}_{\mathcal D}(x)$. 
\end{proposition}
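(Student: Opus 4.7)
The plan is to construct an explicit Lie algebra isomorphism
\[
\Psi:H^{-1}_{\mathcal D}(x)\ \xrightarrow{\ \sim\ }\ \mathcal{D}_x/I_x\mathcal{D},
\]
using only the exactness of the resolution $\Gamma(E_{-2})\xrightarrow{\dd^{(2)}}\Gamma(E_{-1})\xrightarrow{\rho}\mathcal D\to 0$ and the almost-Lie algebroid axioms on $E_{-1}$. Here $\mathcal D_x$ denotes the Lie algebra of local sections of $\mathcal D$ vanishing at $x$, $I_x$ the ideal of functions vanishing at $x$, and $H^{-1}_{\mathcal D}(x)=\mathrm{Ker}(\rho_x)/\mathrm{Im}(\dd^{(2)}_x)$ is the degree $-1$ part of the cohomology of the holonomy $L_\infty$-algebra at $x$.

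First I would define the map on representatives. Given $e\in\mathrm{Ker}(\rho_x)\subset\mathfrak{i}_x^*E_{-1}$, choose a local section $\tilde e\in\Gamma(E_{-1})$ with $\tilde e(x)=e$ and set $\Psi(e):=[\rho(\tilde e)]\in\mathcal D_x/I_x\mathcal D$. This makes sense because $\rho(\tilde e)(x)=\rho_x(e)=0$, so $\rho(\tilde e)\in\mathcal D_x$. To check independence of the extension, note that if $\tilde e'$ is another lift of $e$, then $\tilde e-\tilde e'$ vanishes at $x$ and hence locally can be written as $\sum_i f_i s_i$ with $f_i\in I_x$ and $s_i\in\Gamma(E_{-1})$; thus $\rho(\tilde e)-\rho(\tilde e')=\sum_i f_i\rho(s_i)\in I_x\mathcal D$. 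To check that $\Psi$ descends to the quotient by $\mathrm{Im}(\dd^{(2)}_x)$, take $e=\dd^{(2)}_x(u)$ for some $u\in\mathfrak{i}_x^*E_{-2}$ and extend $u$ to $\tilde u\in\Gamma(E_{-2})$: then $\tilde e-\dd^{(2)}(\tilde u)$ vanishes at $x$, so by the previous argument $\rho(\tilde e)\in\rho(\dd^{(2)}(\tilde u))+I_x\mathcal D=I_x\mathcal D$ since $\rho\circ\dd^{(2)}=0$.

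Next I would prove bijectivity. Surjectivity: any $X\in\mathcal D_x$ lifts to some $\tilde e\in\Gamma(E_{-1})$ with $\rho(\tilde e)=X$ by exactness of the resolution at $\mathcal D$; then $\rho_x(\tilde e(x))=X(x)=0$, so $e:=\tilde e(x)\in\mathrm{Ker}(\rho_x)$ satisfies $\Psi([e])=[X]$. Injectivity is the main technical point: assume $e\in\mathrm{Ker}(\rho_x)$ with chosen lift $\tilde e$ and $\rho(\tilde e)=\sum_i f_i X_i\in I_x\mathcal D$. Lift each $X_i=\rho(\tilde e_i)$ using exactness, and consider $\tilde e-\sum_i f_i\tilde e_i\in\mathrm{Ker}(\rho)=\mathrm{Im}(\dd^{(2)})$. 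Choose $\tilde u\in\Gamma(E_{-2})$ with $\dd^{(2)}(\tilde u)=\tilde e-\sum_i f_i\tilde e_i$ and evaluate at $x$: since $f_i(x)=0$, we get $\dd^{(2)}_x(\tilde u(x))=e$, so $[e]=0$ in $H^{-1}_{\mathcal D}(x)$. Thus $\Psi$ is a well-defined bijection of vector spaces.

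Finally I would check that $\Psi$ is a Lie algebra morphism. On $H^{-1}_{\mathcal D}(x)$ the bracket is induced by the 2-bracket $\{\cdot,\cdot\}_2$ of the Lie $\infty$-algebroid $(E,Q)$; this bracket passes to the fiber because of the Leibniz identity \eqref{robinson} and the fact that both arguments are taken in $\mathrm{Ker}(\rho_x)$ (the anchor-dependent terms vanish upon restriction). For lifts $\tilde e,\tilde e'$ of $e,e'$, the almost-Lie algebroid condition \eqref{algebroid2} gives
\[
\rho\bigl(\{\tilde e,\tilde e'\}_2\bigr)=\bigl[\rho(\tilde e),\rho(\tilde e')\bigr],
\]
so $\Psi\bigl([\{e,e'\}_2]\bigr)=\bigl[[\rho(\tilde e),\rho(\tilde e')]\bigr]$, which is exactly the Androulidakis--Skandalis bracket on the right-hand side. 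Hence $\Psi$ is an isomorphism of Lie algebras. The only genuinely delicate step is the injectivity argument, where one must pick the lifts $\tilde e_i$ of the $X_i$ and exploit simultaneously the exactness of the resolution and the vanishing of the coefficients $f_i$ at $x$ to conclude that $e$ lies in the image of $\dd^{(2)}_x$.
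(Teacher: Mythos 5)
Your proof is correct and follows essentially the same route as the paper's: the same map (lift $e\in\mathrm{Ker}(\rho_x)$ to a local section, push forward by $\rho$, take the class mod $I_x\mathcal D$), the same well-definedness checks, and the identical injectivity argument via lifting the $X_i$ and using exactness of the resolution. The only difference is that you spell out the Lie-morphism property via the almost-Lie algebroid condition \eqref{algebroid2}, which the paper dismisses as clear.
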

\begin{proof}
An isomorphism $\tau$ is defined as follows. For all $e \in {\mathfrak i}_x^*  E_{-1}$ in the kernel of $\rho$, 
let $\tilde{e}$ be a local section through $e$. Then $\rho(\tilde{e})$ is a local section of ${\mathcal D}$ that vanishes at $x$. 
Its class modulo the Lie ideal  $I_x {\mathcal D}$ is well-defined, since another choice for $\tilde{e}$ would differ from the first
one by a section in ${\mathcal I}_x \Gamma(E_{-1})$. If $e = \dd^{(2)} f $ for some $f \in {\mathfrak i}_x^* E_{-2}$, $\tilde{e}$ can be chosen 
to be $\dd^{(2)}(\tilde{f})$ with $\tilde{f}$ any section through $f$,
so that $\rho(\tilde{e})= \rho  \circ \dd^{(2)} \tilde{f}=0$ at $x$. 
This yields a well defined map $\tau$ from the holonomy of the Hermann foliation at $x \in M$ to $H^{-1}_{\mathcal D}(x)$. 
It is clear that $\tau$ is surjective, since any local section of ${\mathcal D}$ vanishing at $x \in M$ is of the
form $\rho(\tilde{e})$ with $\tilde{e}$ a local section of $E_{-1}$ whose value at $x$ is in the kernel of $\rho$.
Now, let us prove injectivity. Let $e \in {\mathfrak i}_x^* E_{-1}$ with $\tau(e)=0$. Then for any section $\tilde{e} $ of $E_{-1}$ through $e$, 
we have that $\rho(\tilde{e})$ is in the ideal  $I_x {\mathcal D}$, i.e.  it is a finite sum of the form $\sum_{i=1}^r f_i X_i$,  
where for all $ i=1,  \dots,r$, $X_i \in {\mathcal D}$ and $f_i \in {\mathcal I}_x$. This implies $ \rho( \tilde{e} - \sum_i f_i \tilde{e_i} )=0$, 
where $\tilde{e_i}$ is, for all $i=1, \dots,n$, a local section of $E_{-1}$ mapped to $X_i$ through $\rho$. By definition of a resolution, 
there exists a section $\tilde{h} \in \Gamma(E_{-2})$ such that:
 \begin{equation}  \tilde{e} - \sum_i f_i \tilde{e_i} = \dd^{(2)} \tilde{h}\end{equation}  Evaluating this last relation at $ x\in M$ gives that $\tilde{e}(x)$ is in
 the image of $\dd^{(2)}$. This proves the injectivity of $\tau$. It is clear that $\tau$ is a Lie algebra morphism. This completes the proof.
\end{proof}

\begin{example}\label{symetriesofS}
Let $S$ be a homogeneous function on $\mathbb{R}^{n}$ with isolated singularities. Then by homogeneity, the function $S$ admits only one zero, at the origin, for otherwise the line through 0 and this singularity would be in the zero-locus of $S$. Indeed, if $x\neq0$ is a zero of $S$ then by homogeneity, the function vanishes on  the whole line generated by $x$. If the weight of $S$ is different from 1, then Euler's theorem on homogeneous functions ensures that $\dd S$ admits only one zero, at the origin as well, because the function $S$ stays invariant under the action of the Euler vector field. The space of multivector fields forms a chain complex when equipped with the differential $\iota_{\dd S}$:
\begin{center}
\begin{tikzcd}[column sep=0.7cm,row sep=0.4cm]
\ldots \ar[r,"\iota_{\dd S}"] &\Gamma\big(\bigwedge^{3}(T\mathbb{R}^{n})\big)  \ar[r,"\iota_{\dd S}"] &\Gamma\big(\bigwedge^{2}(T\mathbb{R}^{n})\big)\ar[r,"\iota_{\dd S}"] & \mathfrak{X}(\mathbb{R}^{n})
\end{tikzcd}
\end{center}
The image of the last arrow is the space of vector fields that vanish at $0$. It is easily seen to define a Hermann foliation $\mathcal{F}$ whose leaves are contained in $\mathbb{R}^{n}\backslash \{0\}$. Over $\mathbb{R}^{n}\backslash \{0\}$, the inner derivation $\iota_{\dd S}$ is non degenerate as the differential $\dd S$ is not vanishing. However at the origin, the inner derivation $\iota_{\dd S}$ acts as the trivial differential in the spaces of multivector fields, hence the cohomology reduces to:
\begin{center}
\begin{tikzcd}[column sep=0.7cm,row sep=0.4cm]
\ldots \ar[r,"0"] &\bigwedge^{3}(\mathbb{R}^{n})  \ar[r,"0"] &\bigwedge^{2}(\mathbb{R}^{n})\ar[r,"0"] & \mathbb{R}^{n}
\end{tikzcd}
\end{center}
Hence the $i$-th cohomology space is $H_{\mathcal{D}}^{-i}(0)=\bigwedge^{i+1}(\mathbb{R}^{n})$. We would like to find the graded Lie algebra structure on this cohomology (finding the Lie $\infty$-algebroid structure on the total space over $\mathbb{R}^{n}$ seems to be much more difficult). We expect the bracket $\{\ .\ ,\,.\ \}_{S}$ of two homogeneous elements $P,Q$ of $H_{\mathcal{F}}(0)=\bigwedge^{\bullet}(\mathbb{R}^{n})$ to be a derived bracket with respect to the function $S$, evaluated at the origin:
\begin{equation}
\{P,Q\}_{S}=\big[[P,S],Q\big]\big|_{0}
\end{equation}
where $[\ .\ ,\,.\ ]$ is the  Schouten-Nijenhuis bracket on multivector fields. This bracket is indeed skew-symmetric (or more precisely, graded symmetric for degree one elements) since the last term $\big[[P,Q],S\big]$ in the graded Jacobi identity involving $P,Q$ and the function $S$ vanishes at the origin, for $\dd S=0$ wherever $S=0$. Hence we obtain a structure of graded Lie algebra over the holonomy cohomology at the point zero.
\end{example}

\begin{example}
In Example \ref{sl2}, a resolution of the action of $\mathfrak{sl}_{2}$ on $\mathbb{R}^{2}$ was given, and we obtained a short exact sequence:
\begin{center}
\begin{tikzcd}[column sep=0.7cm,row sep=0.4cm]
0\ar[r] &\Gamma(E_{-2})  \ar[r,"\dd"] &\Gamma(E_{-1})\ar[r,"\rho"]  & \mathcal{D}\ar[r] &0
\end{tikzcd}
\end{center}
where $E_{-1}$ and $E_{-2}$ are trivial vector bundles with respective fiber $\mathbb{R}^{3}\simeq\mathfrak{sl}_{2}$ and $\mathbb{R}$. We want to define the universal Lie $\infty$-algebroid structure on this resolution.

We define the bracket between two constant sections $a,b$ of $E_{-1}$ as their bracket in $\mathfrak{sl}_{2}$. Then we extend it to every sections of $E_{-1}$ by the Leibniz identity \eqref{algebroid}. To define the bracket between sections of $E_{-1}$ and $E_{-2}$, we compute the three brackets:
\begin{equation*}
\{\widetilde{e},\dd r\},\quad \{\widetilde{f},\dd r\}\quad\text{and}\quad \{\widetilde{h},\dd r\}
\end{equation*}
The first one gives:
\begin{equation}
\{\widetilde{e},\dd r\}=xy\{\widetilde{e},\widetilde{h}\}+\rho(\widetilde{e})(xy)\widetilde{h}+\rho(\widetilde{e})(y^{2})\widetilde{e}-x^{2}\{\widetilde{e},\widetilde{f}\}=0
\end{equation}
The other brackets vanish as well. Since $\{\widetilde{e},\dd r\}=\dd\{\widetilde{e}, r\}$, and since $\dd$ is injective, then $\{\widetilde{e}, r\}=0$. The same arguments apply for the brackets with $\widetilde{f}$ and $\widetilde{h}$. We extend these brackets to a bracket between sections of  $E_{-1}$ and $E_{-2}$ by the Leibniz property \eqref{robinson}. Hence we have found the Lie $\infty$-algebroid structure associated to this resolution, since there is no 3-bracket in that example.

Over any point $x\in\mathbb{R}^{3}\backslash \{0\}$, the resolution is exact, thus there is no cohomology. On the contrary, over $0$ both the image of the anchor map and the image of $\dd$ vanish, hence we have some cohomology: $H_{\mathcal{D}}^{-1}(0)\simeq\mathbb{R}^{3}$ and $H_{\mathcal{D}}^{-2}(0)\simeq\mathbb{R}$. The Lie $\infty$-algebroid structure projects to the cohomology and we obtain a graded Lie algebra structure  on  $H_{\mathcal{D}}(0)=\mathbb{R}\oplus\mathfrak{sl}_{2}$: the usual bracket of $\mathfrak{sl}_{2}$ equips $H_{\mathcal{D}}^{-1}(0)$ and all other brackets vanish.
\end{example}

We conclude this section with a characterization of the Hermann foliations described by C. Debord \cite{Debord}.
We call \emph{Debord foliation} a Hermann foliation ${\mathcal D}$ which is a projective $ {\mathscr O}$-module, i.e. which is covered
by a vector bundle $(A,\rho)$ such that $\rho: \Gamma(A) \to {\mathcal D}$ is an isomorphism of ${\mathscr O}$-modules, 
with $\rho$ injective on a dense open subset of $M$. 

\begin{proposition}
Let ${\mathcal D}$ be a Hermann foliation. For every $x \in M$
the following are equivalent:
\begin{enumerate}
\item[(i)] There is a neighborhood of $x \in M$ on which $ {\mathcal D}$  is a Debord foliation. 
\item[(ii)] There is a neighborhood of $x \in M$ on which $ {\mathcal D}$ admits resolutions and $H^{-i}_{\mathcal D}(y)=0$ for all $ i\geq 2$
and all $y$ in this neighborhood.
\item[(iii)] There is a neighborhood of $x \in M$ on which $ {\mathcal D}$ admits resolutions and $H^{-2}_{\mathcal D}(x)=0$. 
\end{enumerate}
\end{proposition}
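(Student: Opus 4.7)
The strategy is to establish the cycle \((i)\Rightarrow(ii)\Rightarrow(iii)\Rightarrow(i)\). The first two implications are nearly tautological. For \((i)\Rightarrow(ii)\), given a Debord presentation \(\rho:\Gamma(A)\to {\mathcal D}\) on some neighborhood of \(x\), I would set \(E_{-1}:=A\) and \(E_{-i}:=0\) for \(i\geq 2\); the resulting complex is manifestly a resolution, and its pointwise complex at any \(y\) is concentrated in degree \(-1\), so all the holonomy cohomologies \(H^{-i}_{\mathcal D}(y)\) with \(i\geq 2\) vanish identically. The implication \((ii)\Rightarrow(iii)\) is just specialization to the point \(x\) and the index \(i=2\).

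The heart of the proof is \((iii)\Rightarrow(i)\). The key observation is an identification of \(H^{-2}_{\mathcal D}(x)\) with a derived functor. Given a resolution \((\Gamma(E),\dd,\rho)\) of \({\mathcal D}\) on a neighborhood \(U\) of \(x\), passing to stalks at \(x\) yields a free resolution
\[ \cdots \to \Gamma_x(E_{-2}) \xrightarrow{\,\dd^{(2)}\,} \Gamma_x(E_{-1})  \xrightarrow{\,\rho\,}{\mathcal D}_x \to 0 \]
of the \({\mathscr O}_x\)-module \({\mathcal D}_x\). Tensoring over \({\mathscr O}_x\) with the residue field \(k(x)={\mathscr O}_x/{\mathfrak m}_x\) recovers exactly the pointwise complex \(\cdots \to(E_{-2})_x \to (E_{-1})_x\) used to define the holonomy cohomology, whence a canonical isomorphism
\[ H^{-2}_{\mathcal D}(x) \;\cong\; \mathrm{Tor}_1^{{\mathscr O}_x}\!\big({\mathcal D}_x,\,k(x)\big). \]

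I would then invoke the usual consequence of Nakayama's lemma: a finitely generated module \(M\) over a local ring with \(\mathrm{Tor}_1(M,k)=0\) is free. Concretely, pick a minimal surjection \({\mathscr O}_x^{\,n}\twoheadrightarrow{\mathcal D}_x\) with \(n=\dim_{k(x)}{\mathcal D}_x/{\mathfrak m}_x{\mathcal D}_x\); by minimality its kernel \(N\) sits inside \({\mathfrak m}_x{\mathscr O}_x^{\,n}\), and a direct calculation identifies \(N/{\mathfrak m}_x N\) with \(\mathrm{Tor}_1^{{\mathscr O}_x}({\mathcal D}_x,k(x))=H^{-2}_{\mathcal D}(x)=0\). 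Note that \(N\) is finitely generated because it is a quotient of the finitely generated module \(\Gamma_x(E_{-2})\); this is what makes the argument work even in the smooth (non-Noetherian) case, and is the main technical point to verify. Nakayama's lemma then forces \(N=0\), so \({\mathcal D}_x\) is free of rank \(n\).

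It remains to spread this out and verify the density condition in the definition of Debord foliation. Choose sections \(v_1,\ldots,v_n\in{\mathcal D}(U)\) whose germs at \(x\) form a basis of \({\mathcal D}_x\). Shrinking \(U\) if necessary, these sections remain free over \({\mathscr O}(U)\) (a relation on \(U\) vanishes at \(x\), hence on a neighborhood by the free basis property) and generate \({\mathcal D}|_U\) (express a given finite family of local generators of \({\mathcal D}\) in terms of the \(v_i\) in the stalk, and extend; the error has zero germ and so vanishes on a smaller neighborhood). This exhibits \({\mathcal D}|_U\) as the image under the anchor \(\rho:U\times k^n\to TU\), \((y,(c_i))\mapsto\sum c_i v_i(y)\), of the sections of a trivial bundle, with \(\rho_\ast\) an \({\mathscr O}\)-module isomorphism. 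Finally, if \(\rho_y\) failed to be injective on a dense open subset, there would be an open \(V\subset U\) with \(\mathrm{rank}(\rho_y)<n\) throughout \(V\); on the open dense locus of constant rank in \(V\) one could produce a nowhere-vanishing section \(\sum c_i e_i\) of the kernel of \(\rho\), contradicting the \({\mathscr O}\)-linear injectivity of \(\rho_\ast\). Hence \(\rho\) is injective on a dense open subset, and \({\mathcal D}\) is a Debord foliation on \(U\).
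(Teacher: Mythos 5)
Your reduction (i)$\Rightarrow$(ii)$\Rightarrow$(iii) agrees with the paper's, and your key identification $H^{-2}_{\mathcal D}(x)\cong\mathrm{Tor}_1^{{\mathscr O}_x}\bigl({\mathcal D}_x,k(x)\bigr)$, followed by Nakayama, is a genuinely different route from the paper's toward freeness of the stalk: the paper never invokes Tor, arguing instead by semicontinuity of the ranks of the fiber maps $\dd^{(2)}_y$ and $\dd^{(3)}_y$. Your Tor--Nakayama step is essentially sound, up to one imprecision: the kernel $N$ of your minimal presentation is not literally a quotient of $\Gamma_x(E_{-2})$ (that module surjects onto the kernel of $\rho:\Gamma_x(E_{-1})\to{\mathcal D}_x$, which is a \emph{different} presentation); you need Schanuel's lemma, $N\oplus\Gamma_x(E_{-1})\cong\ker(\rho)\oplus{\mathscr O}_x^{\,n}$, to conclude that $N$ is finitely generated. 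That part is routine.

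The genuine gap is the spreading-out step, and it is not cosmetic. Condition (iii) is an assumption at the single point $x$, and freeness of the single stalk ${\mathcal D}_x$ does not by itself give the Debord property on a neighborhood: what is required is that $\rho_\ast$ be an isomorphism of sheaves of ${\mathscr O}$-modules, i.e.\ injective over every open $W\subset U$, equivalently on every stalk ${\mathcal D}_y$ with $y$ near $x$. Your parenthetical argument (``a relation on $U$ vanishes at $x$, hence on a neighborhood'') only shows that the coefficients of a relation have zero germ at $x$, hence vanish \emph{near} $x$ --- not on all of $U$ --- and it says nothing about relations on opens not containing $x$. Concretely, what must be ruled out is that the minimal number of generators $m_y=\dim_{k(y)}{\mathcal D}_y/{\mathfrak m}_y{\mathcal D}_y$ drops below $n$ at points $y$ arbitrarily close to $x$; if it did, your $n$ generators $v_i$ would acquire a nontrivial relation over ${\mathscr O}_y$ and $\rho_\ast$ would fail to be injective (your final density argument presupposes this injectivity, so it inherits the gap). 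The paper closes exactly this hole by semicontinuity: $H^{-2}_{\mathcal D}(x)=0$ squeezes $\dim\ker\dd^{(2)}_y$ between $\dim\mathrm{im}\,\dd^{(3)}_y$ and $\dim\ker\dd^{(2)}_x$, forcing $\dd^{(2)}$ to have \emph{constant} rank near $x$, so that $E_{-1}/\dd(E_{-2})$ is a vector bundle whose sheaf of sections the anchor carries isomorphically onto ${\mathcal D}$, injectivity coming from the sheaf-level exactness of the resolution rather than from a pointwise statement. Your argument can be repaired either by the same semicontinuity (which yields $H^{-2}_{\mathcal D}(y)=0$ and $m_y=n$ for all $y$ near $x$, so your Tor--Nakayama step applies at every nearby point and the germs of the $v_i$ form a basis of each nearby stalk), or by a sheaf-level Nakayama argument: show, via Schanuel against the resolution, that the kernel sheaf $\ker\rho_\ast$ is locally finitely generated, so that its vanishing stalk at $x$ forces it to vanish on a whole neighborhood. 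Either way, an idea beyond what you wrote is required.
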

\begin{proof}
Every Debord foliation is given by a Lie algebroid $A$ whose anchor is injective on an open subset. A resolution is therefore given by
$ E_{-1}=A$ and $E_{-i}:=0$ for all $i \geq 2$. Since a resolution of length $1$ exists in a neighborhood of $x$, the universal 
cohomologies $H^{-i}_{\mathcal D}(y)$ are all trivial for all $i \geq 2$. Hence (i) implies (ii). 
It is obvious  that (ii) implies (iii). Let us assume  that  (iii) holds. Let $E$ be a resolution of ${\mathcal D}$ with anchor $\rho$. 
Since the dimension of the image of $ \dd^{(3)}: E_{-3} \to  E_{-2}$ around $x$ is greater than or equal to its dimension at $x \in M$, while the dimension of the kernel of $ \dd^{(2)}: E_{-2} \to  E_{-1}$ is, around $x$, lower than or equal to its dimension at the point $x$, we indeed have $H^{-2}_{\mathcal D}(y)=0$ 
in a neighborhood of $x \in M$. Moreover, the dimension of the kernel of  $ \dd^{(2)}: E_{-2} \to  E_{-1}$ is constant in a neighborhood of $x$.
This implies that $E_{-1}':= E_{-1}/\dd(E_{-2})$ is a vector bundle. The anchor passes to the quotient to define a  morphism of ${\mathscr O}$-modules
$\rho: \Gamma(E_{-1}') \to {\mathcal D}$ that we still denote by $\rho$, and which is, by construction, an isomorphism of $\mathscr{O}$-modules. This completes the proof.  
\end{proof}

\subsection{Restriction to singular leaves }\label{moineau}

This section is devoted to the proof of Proposition \ref{prop:mainConsequences}. We let $\C{D}$ be a Hermann foliation on a smooth, real analytic or holomorphic manifold $M$ 
with sheaf of functions ${\mathscr O}$. Assume that $\C{D}$ comes equipped with a universal Lie $\infty$-algebroid $(E,Q)$ resolving 
it and let $L$ be a leaf of ${\C{D}}$. We start with a proposition:

\begin{proposition}
\label{prop:parallel}
Let ${\mathcal D}$ be a Hermann foliation on a manifold $M$ and $(E,Q)$ be a universal Lie $\infty$-algebroid resolving it.
The holonomy Lie $\infty$-algebras associated to two points $x$ and $y$ in the same leaf are isomorphic.
\end{proposition}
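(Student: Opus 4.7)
The plan is to construct, for any $X \in \mathcal{D}$, a flow on the graded manifold $E$ that integrates $X$ on the base and preserves the homological vector field $Q$, and then use it to transport the holonomy Lie $\infty$-algebra from $x$ to $y$. Since the leaf $L$ is, by the integrability results recalled in Section \ref{StefanSussmann}, the set of all points reachable from $x$ by concatenating integral paths of sections of $\mathcal{D}$, it suffices to treat the case $y = \phi_X^1(x)$ for a single $X\in\mathcal{D}$. I would then pick any lift $\xi\in\Gamma(E_{-1})$ of $X$, that is, with $\rho(\xi)=X$, and consider the degree $0$ vector field on $E$ defined by
\[V := [Q,\partial_\xi],\]
where $\partial_\xi$ is the inner derivation of degree $-1$ and arity $-1$ attached to $\xi$ via Equation \eqref{innerder}. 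A direct computation gives $V[f]=\rho(\xi)[f]=X[f]$ for $f\in\mathscr{O}$, so that $V$ covers $X$ on $M$; moreover, applying the graded Jacobi identity to $[Q,[Q,\partial_\xi]]$ and using $[Q,Q]=0$ yields
\[[Q,V] = [Q,[Q,\partial_\xi]] = \tfrac{1}{2}\big[[Q,Q],\partial_\xi\big] = 0.\]

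Integrating $V$ will then produce a one-parameter family $\Psi_t:E\to E$ of graded manifold automorphisms covering the flow $\phi_X^t$ on the base. Because $V$ has degree $0$, it decomposes into a horizontal part $X$ on the base together with a vertical part that is fiberwise linear on each $E_{-i}$; its flow therefore reduces to integrating $X$ on the base and solving a linear, time-dependent ordinary differential equation on the fibers along the trajectory $t\mapsto \phi_X^t(x)$. The family $\Psi_t$ is consequently defined on a suitable neighborhood of this trajectory for all $t\in[0,1]$. The identity $[Q,V]=0$ translates into $\Psi_t^*\circ Q= Q\circ\Psi_t^*$, so each $\Psi_t$ is a strict Lie $\infty$-algebroid automorphism of $(E,Q)$.

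Since $\Psi_t$ covers $\phi_X^t$, its pullback $\Psi_t^*:\mathscr{E}\to\mathscr{E}$ maps the ideal $\mathcal{I}_{\phi_X^t(x)}\cdot\mathscr{E}$ of functions vanishing at $\phi_X^t(x)$ to $\mathcal{I}_x\cdot\mathscr{E}$, and therefore descends to an isomorphism of the quotient algebras $\mathscr{E}/\mathcal{I}_{\phi_X^t(x)}\cdot\mathscr{E}$ and $\mathscr{E}/\mathcal{I}_x\cdot\mathscr{E}$. By the definition recalled above, these are precisely the algebras of functions on the holonomy Lie $\infty$-algebras at $\phi_X^t(x)$ and $x$, and the intertwining of $Q$ by $\Psi_t^*$ ensures that the induced quotient maps commute with the respective holonomy homological vector fields $Q_{\phi_X^t(x)}$ and $Q_x$. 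Evaluating at $t=1$ then gives the desired isomorphism of holonomy Lie $\infty$-algebras at $x$ and $y$, with inverse provided by $\Psi_{-1}^*$.

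The main technical point will be the existence of the flow of the degree $0$ vector field $V$ on the (possibly non-compact) graded manifold $E$ over the whole interval $[0,1]$. Because the vertical part of $V$ is fiberwise linear, the fiber ODE is linear with smooth time-dependent coefficients along the trajectory $\phi_X^t(x)$, so its solution exists whenever the base flow does, which is the case on $[0,1]$ by hypothesis. A secondary subtlety, not required by the statement but worth recording, is that the isomorphism a priori depends on the chosen lift $\xi$; two different lifts differ by an element of $\ker\rho=\mathrm{Im}(\dd^{(2)})$, and using Proposition \ref{prop:HomotopyMeansHomotopy} one can show that the resulting isomorphisms are canonically homotopic.
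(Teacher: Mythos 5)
Your proposal is correct and takes essentially the same route as the paper: the paper likewise picks a (time-dependent) lift $e_t\in\Gamma(E_{-1})$ of a path of vector fields joining $x$ to $y$, forms the degree-zero vector field $V_t=[Q,\partial_{e_t}]$ (which commutes with $Q$ since $[Q,Q]=0$), integrates it to a one-parameter family of strict Lie $\infty$-algebroid automorphisms, and restricts at the endpoints to obtain the isomorphism of holonomy Lie $\infty$-algebras. The one imprecision is your claim that the vertical part of $V$ is fiberwise linear---its components of arity $\geq 1$ are polynomial, not linear, in the fiber coordinates---but this does not break the argument, since the flow still exists for as long as the base flow does: the arity-zero part of $\Phi_t^{\ast}$ solves a linear equation and each higher-arity component then solves a linear inhomogeneous equation driven by the lower ones.
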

%
\begin{proof}
The points $x$ and $y$ being in the same leaf, there exist time-dependent sections $e_t $ of $E_{-1}$ such that the flow of the vector field $\rho(e_t)$, is well-defined
and maps $x$ to $y$ at $t=1$. Let $ {\partial}_{e_t} $ stand for the degree $-1$ vertical derivation of the algebra ${\mathscr E}$ of functions on $(E,Q)$
associated with the dual action:
\begin{equation}
\forall\ \xi\in\Gamma(S(E^*))\hspace{2cm}{\partial}_{e_t}(\xi)=\langle\xi,e_{t}\rangle
\end{equation}
The vector field $ V_{t} =[Q,{\partial}_{e_t} ]$ is for all $t \in I$ a derivation of degree $0$ of ${\mathscr E}$
that satisfies, for every function $F$ on $M$:
 \begin{equation} V_{t}[F] ={\partial}_{e_t}\circ\rho^{\ast}(\dd_{\mathrm{dR}}F)= \rho(e_t) [F]\end{equation}
The $1$-parameter family $(\Phi_t)_{t \in I}$ of algebra endomorphims of ${\mathscr E}$ obtained for all $F \in {\mathscr E}$ by solving  the differential equation
 \begin{equation} \frac{ \dd \Phi_t (F)}{\dd t} =  V_{t}  \circ \Phi_t (F),\end{equation}
is therefore a $1$-parameter family of Lie $\infty$-algebroid morphisms of $ (E,Q)$ to itself, which is defined, by construction, over the identity on $M$. Since $V_t$ is of arity 0, and since $\Phi_0=\mathrm{id}$ then $\Phi_1$ is of arity 0.
In particular $\Phi_1$ is a strict isomorphism of the Lie $\infty$-algebroid $(E,Q)$ mapping $x$ to $y$. This completes the proof.  
\end{proof}

We obtain the immediate consequence:

\begin{corollaire}\label{prop:doesNotDependsPoint}
 For any two points $x$ and $y$ in the same leaf of the foliation, $H_{\mathcal D}(x)$ and $H_{\mathcal D}(y)$ are isomorphic as graded Lie algebras.
 \end{corollaire}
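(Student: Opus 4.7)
The plan is to deduce this statement directly from Proposition \ref{prop:parallel}, which provides, for any pair of points $x,y$ lying on the same leaf $L$, a strict Lie $\infty$-algebroid isomorphism $\Phi_1 : (E,Q) \to (E,Q)$ whose underlying base map sends $x$ to $y$. All the work has essentially been done; what remains is to track how such a strict isomorphism restricts to the fibers over $x$ and $y$ and to observe that the resulting map is a morphism of holonomy $L_\infty$-algebras which then descends to cohomology.

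First I would observe that since $\Phi_1$ is a strict (arity $0$) isomorphism of graded commutative algebras, it respects the grading by degree, and, since its base map sends $x$ to $y$, it sends the maximal $Q$-ideal ${\mathcal I}_x \subset {\mathscr E}$ of functions vanishing at $x$ isomorphically onto the maximal $Q$-ideal ${\mathcal I}_y$. Recall from Section \ref{pervenche} that the holonomy $L_\infty$-algebra at a point $z\in M$ is precisely the Lie $\infty$-algebra whose space of functions is the quotient ${\mathscr E}/{\mathcal I}_z$ equipped with the induced homological vector field $Q_z$. The property $\Phi_1({\mathcal I}_x)={\mathcal I}_y$ combined with $\Phi_1 \circ Q = Q \circ \Phi_1$ therefore implies that $\Phi_1$ passes to the quotient and induces a strict isomorphism
\begin{equation}
\overline{\Phi}_1 : \big({\mathscr E}/{\mathcal I}_y,\, Q_y\big) \xrightarrow{\ \sim\ } \big({\mathscr E}/{\mathcal I}_x,\, Q_x\big)
\end{equation}
between the holonomy $L_\infty$-algebras at $y$ and $x$. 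Dually, this is an $L_\infty$-isomorphism between the $L_\infty$-algebra structures carried by $\mathrm{Ker}(\rho_x) \oplus \bigoplus_{i\geq 2}{\mathfrak i}_x^* E_{-i}$ and $\mathrm{Ker}(\rho_y) \oplus \bigoplus_{i\geq 2}{\mathfrak i}_y^* E_{-i}$.

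Next I would pass to cohomology. Since $\overline{\Phi}_1$ is a strict chain isomorphism intertwining the two homological vector fields $Q_x$ and $Q_y$, it descends to a graded algebra isomorphism on cohomologies. The binary bracket on each $H_{\mathcal D}(z)$ is the one induced by the $2$-bracket of the holonomy $L_\infty$-algebra at $z$ (all higher brackets vanish on cohomology by the higher Jacobi identities, so that one obtains a graded Lie algebra, as already noted in Section \ref{pervenche}). Because Lie $\infty$-morphisms are compatible with all the higher brackets, $\overline{\Phi}_1$ intertwines the $2$-brackets of the holonomy $L_\infty$-algebras at $x$ and at $y$, and therefore induces a graded Lie algebra isomorphism
\begin{equation}
H_{\mathcal D}(y) \xrightarrow{\ \sim\ } H_{\mathcal D}(x),
\end{equation}
which is exactly the statement of the corollary. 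The only minor subtlety I would double-check is that the grading by arity of functions on $E$ descends correctly to the quotients ${\mathscr E}/{\mathcal I}_x$ (so that the holonomy structure is genuinely an $L_\infty$-algebra and not merely an $L_\infty$-algebroid); this follows from the fact that ${\mathcal I}_x$ is generated by functions of arity $0$ and is stable under multiplication by functions of any arity, so the quotient carries a well-defined grading by arity preserved by $\overline{\Phi}_1$. There is no genuine obstacle here: the main work lies in Proposition \ref{prop:parallel}, and the present corollary is only a matter of transporting its conclusion to cohomology.
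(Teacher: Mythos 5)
Your proof is correct and follows essentially the same route as the paper, which presents this corollary as an immediate consequence of Proposition \ref{prop:parallel}: the strict isomorphism $\Phi_1$ descends to the holonomy $L_\infty$-algebras at $x$ and $y$ and then, since it intertwines the homological vector fields, to their cohomologies as graded Lie algebras. The only slip is a direction bookkeeping issue: since $\Phi_1$ covers the pullback $\phi^{\ast}$ of the base map $\phi$ sending $x$ to $y$, it carries ${\mathcal I}_y$ isomorphically onto ${\mathcal I}_x$ (which is consistent with your induced map $\overline{\Phi}_1:{\mathscr E}/{\mathcal I}_y\to{\mathscr E}/{\mathcal I}_x$), not ${\mathcal I}_x$ onto ${\mathcal I}_y$ as you first wrote.
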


Proposition \ref{prop:parallel} in particular implies that for all $i \geq 2$, the vector bundle morphism  $\dd^{(i)} : E_{-i} \to E_{-i+1}$ is of constant rank
at all points of a given leaf  $L$. 
This allows to truncate the Lie $\infty$-algebroid at a certain order $i$, to get a Lie $\infty$-algebroid structure on the graded vector bundle:
 $$  \bigslant{ {\mathfrak i}_L^* E_{-i}}{{\dd}^{(i+1)} ( {\mathfrak i}_L^* E_{-i-1})} \longrightarrow  {\mathfrak i}_L^*E_{-i+1} \longrightarrow \dots \longrightarrow {\mathfrak i}_L^* E_{-1} \longrightarrow TL $$
 Above, ${\mathfrak i}_L^*$ stands for the restriction to the leaf $L$ of a vector bundle over $M$.
 Since grading is bounded below, this Lie $\infty$-algebroid is a Lie $i$-algebroid, that we call the \emph{ $i$-th truncation of $E$}.
 For $i=1$, we get a Lie algebroid that we call the \emph{holonomy Lie algebroid of the leaf $L$}.
 The  name is justified by the following:
 
\begin{proposition}
\label{prop:holoLieAlg}
Let $L$ be a leaf of a Hermann foliation that admits a universal Lie $\infty$-algebroid resolving it.
The $1$-truncation of the Lie $\infty$-algebroid over the leaf $L$ coincides with the holonomy Lie algebroid of $L$ defined by Androulidakis and Skandalis in \cite{AndrouSkandal}. 
\end{proposition}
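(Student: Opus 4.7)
The plan is to construct a canonical isomorphism of Lie algebroids over $L$ between the $1$-truncation of $E$ and the Androulidakis--Skandalis holonomy Lie algebroid $\C{A}_L$, whose fiber at $x \in L$ is $\C{D}/I_x\C{D}$ (with $I_x$ the ideal of functions vanishing at $x$), whose anchor is evaluation, and whose bracket is induced from the Lie bracket of vector fields on $\C{D}$. The candidate morphism $\tau$ is the obvious one suggested by the anchor: at each $x \in L$ send the class of $\tilde{e}(x) \in \bigslant{{\mathfrak i}_x^* E_{-1}}{\dd^{(2)}({\mathfrak i}_x^* E_{-2})}$ to the class of $\rho(\tilde{e}) \in \C{D}/I_x\C{D}$.

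First I would check that $\tau$ is well-defined. Two lifts $\tilde{e}, \tilde{e}'$ of the same element of the quotient differ by a sum of a local section whose value at $x$ lies in $\dd^{(2)}({\mathfrak i}_x^* E_{-2})$ and by an element of $I_x \Gamma(E_{-1})$; both are mapped by $\rho$ into $I_x \C{D}$, exactly as in the proof of Proposition \ref{prop:hol=hol} (with $\rho \circ \dd^{(2)} = 0 $ handling the first term). By Proposition \ref{prop:parallel} the rank of $\dd^{(2)}$ is constant along $L$, so the quotient is a smooth vector bundle and $\tau$ is a smooth vector bundle morphism. Compatibility with the anchor maps is immediate, since both anchors evaluate to $\rho(\tilde e)|_x \in T_x L$. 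Compatibility with the brackets follows from the fact that, in a universal Lie $\infty$-algebroid resolving $\C{D}$, the $2$-bracket on $\Gamma(E_{-1})$ covers the Lie bracket of vector fields, i.e.\ $\rho(\{x,y\}_2) = [\rho(x),\rho(y)]$ (this is the Lie algebra homomorphism condition \eqref{algebroid2}, valid since the anchor component of $Q$ is dual to such a bracket); passing to the quotient at $x$ yields exactly the A--S bracket modulo $I_x \C{D}$.

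Next I would establish bijectivity of $\tau$ fiberwise. Surjectivity is immediate: any $X \in \C{D}$ is of the form $\rho(\tilde e)$ for some $\tilde{e} \in \Gamma(E_{-1})$, by exactness of the resolution at the level of sections. Injectivity is the one delicate point, and it is the proper analogue of the injectivity argument in the proof of Proposition \ref{prop:hol=hol}: suppose $\tilde e \in \Gamma(E_{-1})$ satisfies $\rho(\tilde e) = \sum_i f_i X_i$ with $f_i \in I_x$ and $X_i \in \C{D}$. Lift each $X_i = \rho(\tilde{e}_i)$; then $\rho(\tilde e - \sum_i f_i \tilde e_i) = 0$, so by exactness of the resolution at $\Gamma(E_{-1})$ there exists $\tilde h \in \Gamma(E_{-2})$ with $\tilde e - \sum_i f_i \tilde e_i = \dd^{(2)} \tilde h$. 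Evaluating at $x$ shows that $\tilde e(x) \in \dd^{(2)}({\mathfrak i}_x^* E_{-2})$, which is the desired vanishing in the quotient.

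The main obstacle I anticipate is making sure that the construction genuinely lives in the smooth (resp.\ real analytic, holomorphic) category over the whole leaf $L$, rather than pointwise: the bundle $\bigslant{{\mathfrak i}_L^* E_{-1}}{\dd^{(2)}({\mathfrak i}_L^* E_{-2})}$ must be honestly a vector bundle over $L$ (requiring constancy of rank, provided by Proposition \ref{prop:parallel}), and the A--S holonomy algebroid must be recognized in its standard form so that $\tau$ is a morphism of bundles and not merely a family of linear isomorphisms of fibers. The bracket compatibility also deserves verification in terms of sections rather than just on a fiber, but this is essentially formal once the almost-Lie identity \eqref{algebroid2} is in hand.
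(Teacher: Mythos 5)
Your proposal is correct and follows essentially the same route as the paper: the paper's proof simply declares itself ``a paraphrase of the proof of Proposition \ref{prop:hol=hol}'', and your argument is exactly that paraphrase carried out in full (the map $\tau$ sending the class of $\tilde e(x)$ to the class of $\rho(\tilde e)$ modulo $I_x\mathcal{D}$, well-definedness via $\rho\circ\dd^{(2)}=0$, bijectivity via exactness of the resolution at $\Gamma(E_{-1})$). Your additional checks $-$ constancy of the rank of $\dd^{(2)}$ along $L$ via Proposition \ref{prop:parallel} and bracket compatibility via condition \eqref{algebroid2} $-$ are precisely the points the paper leaves implicit, so they strengthen rather than diverge from its argument.
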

\begin{proof}
In \cite{AndrouSkandal}, the holonomy Lie algebroid is defined by the vector bundle whose fiber over $x \in L$ is the germ at $x$ of ${\mathcal D}/I_x{\mathcal D}$,
with $I_x$ the ideal of functions vanishing at $x$. The anchor map is defined by the evaluation at $x$ of an element in ${\mathcal D}$
and the bracket is induced from the Lie bracket of vector fields.
 Notice that the kernel of the anchor map is the holonomy Lie algebra at $x$ by construction.
  The proof is essentially a paraphrase of the proof of Proposition \ref{prop:hol=hol}.
\end{proof}

To any Lie $\infty$-algebroid $(E,Q)$ over a manifold $M$, one associates a topological groupoid as follows.
Let $I=[0,1]$.  Morphisms of Lie $\infty$-algebroids from the tangent Lie algebroid $TI$ to 
$(E,Q)$ are in one-to-one correspondence with paths $a:I \to E_{-1}$ over a path $\gamma:I \to M$ such that:
 \begin{equation} \frac{{\mathrm d} \gamma(t)}{{\mathrm  d}t} = \rho \circ a \, (t)\end{equation}
 It is said to be \emph{trivial} when $\gamma(t)$ is a constant path equal to some $m \in M$ and $a(t)=0_m $ for all $t \in I$. 
 A \emph{homotopy between two morphisms of Lie $\infty$-algebroids $a_0,a_1$ from the tangent Lie algebroid $TI$ to $(E,Q)$} is a
 Lie $\infty$-algebroid morphism from the tangent Lie algebroid $TI^2$ to $(E,Q)$ whose restrictions to $\{0\} \times I$ and
 $\{1\} \times I$ in $I^2$ are $a_0$ and $a_1$ respectively, while the restriction to $ I \times \{0\}$ and $ I \times \{1\}$ is trivial. 
 The groupoid product is given by concatenation of paths, which makes sense if we assume them to be trivial in neighborhoods of $t=0$ and $t=1$. 
 To obtain a topology on this quotient, we restrict ourself to $C^1$-paths and equip it with a Banach manifold topology, as in \cite{CrainicFernandes}.
 We call this groupoid the \emph{$1$-truncated groupoid} of  $(E,Q)$.

\begin{proposition}
Let $(E,Q)$ be a universal Lie $\infty$-algebroid resolving a singular foliation ${\mathcal D}$.
The $1$-truncated groupoid of $(E,Q)$ is a cover of the connected component of the manifold of units of the holonomy groupoid
described by Androulidakis and Skandalis in \cite{AndrouSkandal}.
\end{proposition}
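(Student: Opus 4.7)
The plan is to identify the $1$-truncated groupoid of $(E,Q)$ with a Weinstein-type integration of the holonomy Lie algebroid of Androulidakis--Skandalis, and then invoke the universality of the source-simply-connected integration. First, I would note that since the anchor of $(E,Q)$ has image $\C{D}$, any $C^1$-path $a:I\to E_{-1}$ over $\gamma:I\to M$ satisfying $\dot\gamma=\rho\circ a$ forces $\gamma$ to stay in a single leaf $L$ of $\C{D}$. By Proposition \ref{prop:holoLieAlg}, the restriction of the $1$-truncation of $(E,Q)$ to $L$ coincides, as a Lie algebroid, with the holonomy Lie algebroid $H^L$ of Androulidakis--Skandalis. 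Hence an $a$ as above is precisely an $H^L$-path in the sense of Crainic--Fernandes, and the $1$-truncated groupoid decomposes leafwise.

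Second, I would show that the Lie $\infty$-homotopy relation projects to the $A$-homotopy relation on $H^L$-paths. Concretely, a Lie $\infty$-algebroid morphism $TI^2\to (E,Q)$ induces, by composition with the natural projection onto the $1$-truncation over $L$, a Lie algebroid morphism $TI^2\to H^L$; the triviality conditions at the two ``constant'' edges $I\times\{0\}$ and $I\times\{1\}$ translate directly into the Crainic--Fernandes triviality conditions for an $H^L$-homotopy. This yields a well-defined continuous map $\tau$ from the $1$-truncated groupoid of $(E,Q)$ to the Weinstein groupoid $\C{W}(H^L)$ of $A$-paths modulo $A$-homotopies. The converse, namely that every $H^L$-path lifts to some $a$ and every $H^L$-homotopy lifts to a Lie $\infty$-homotopy into $(E,Q)$, follows from the universality result Theorem \ref{theo:onlyOne} applied to the Lie algebroid $TI$ (respectively $TI^2$), whose image under the anchor is a sub-foliation of $\C{D}$: the existence part gives the lift, and the uniqueness up to homotopy ensures that different lifts define the same class in the $1$-truncated groupoid.

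Third, the Androulidakis--Skandalis holonomy groupoid $\C{H}(\C{D})$ restricted to $L$ is itself an integration of $H^L$ (this is the content of their construction). Since $\C{W}(H^L)$ is, on the connected component of the units, the source-connected, source-simply-connected topological integration of $H^L$, it is universal among such integrations, and there is therefore a natural covering map $\C{W}(H^L)\to \C{H}(\C{D})|_L^{\circ}$ onto the connected component of the manifold of units. Composing with $\tau$ and taking disjoint unions over leaves yields the desired covering.

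The main obstacle will be the homotopy-lifting step at the level of $TI^2$: one must verify that a Lie $\infty$-morphism $TI^2\to (E,Q)$ really does exist extending a given Lie algebroid morphism $TI^2\to H^L$, and that two such extensions lie in the same Lie $\infty$-homotopy class. This is where Theorem \ref{theo:onlyOne} must be used carefully, because $H^L$ is only the $1$-truncation of the leafwise restriction and not the full Lie $\infty$-algebroid, so one needs to lift the Lie algebroid morphism through the tower of higher brackets, arity by arity, using the same obstruction-vanishing argument based on Lemma \ref{lemmefondamental2} as in Section~\ref{preuveexistence}. A secondary technical point is the Banach-manifold topology on the spaces of $C^1$-paths, which must be shown compatible with the covering so that $\tau$ is continuous and open.
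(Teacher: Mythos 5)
Your overall skeleton is the same as the paper's: reduce leafwise, identify the restriction of the $1$-truncated groupoid to a leaf $L$ with the source-simply-connected integration of $A:=\mathfrak{i}_L^*E_{-1}/\dd^{(2)}\mathfrak{i}_L^*E_{-2}$ via Proposition \ref{prop:holoLieAlg}, and conclude from the fact that the Androulidakis--Skandalis groupoid restricted to $L$ is a smooth groupoid integrating $A$. The genuine gap is in your homotopy-lifting step, and neither of the two tools you invoke can close it. First, Theorem \ref{theo:onlyOne} does not apply to $TI$ or $TI^2$: that theorem concerns Lie $\infty$-algebroids over the fixed base $M$ and morphisms covering $\mathrm{id}_M$, whereas an $E$-path is a morphism over a nonconstant base map $\gamma:I\to M$; moreover $TI$ is a Lie algebroid over $I$, so its anchor image is a submodule of $\mathfrak{X}(I)$, not a subsheaf of $\mathfrak{X}(M)$, and the hypothesis ``defining a sub-Hermann foliation of $\mathcal{D}$'' is meaningless for it. The paper's whole homotopy formalism (Definition \ref{def:homotopy}) is set up only for morphisms over the identity of $M$.

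Second, your fallback --- running ``the same obstruction-vanishing argument based on Lemma \ref{lemmefondamental2}'' --- fails for a structural reason: that lemma's diagram chase rests on the exactness of $\cdots\to\Gamma(E_{-2})\to\Gamma(E_{-1})\to\mathcal{D}\to 0$ as $\mathscr{O}$-modules over $M$, and exactness is destroyed by restriction to a singular leaf: the cohomology of $\mathfrak{i}_L^*E$ is precisely the holonomy graded Lie algebra, which is nonzero in general (see Example \ref{sl2}). So the rows of the relevant bicomplex over $L$ are not exact and there is no vanishing of obstructions to invoke. The paper replaces this with a direct, low-dimensional correction that needs neither ingredient: lift $\alpha:TI^2\to A$ to an arbitrary bundle morphism $\Phi_\alpha$ valued in $\mathfrak{i}_L^*E_{-1}$ with the required boundary behaviour; since $\Omega^k(I^2)=0$ for $k\geq 3$, the defect $\Psi:=\Phi_\alpha\circ Q-\dd_{\mathrm{dR}}\Phi_\alpha$ has a single possibly nonzero component $\Gamma(\mathfrak{i}_L^*E_{-1}^*)\to\Omega^2(I^2)$; because $\Phi_\alpha$ projects to the Lie algebroid morphism $\alpha$, this component vanishes on the conormal of $\mathrm{Im}(\dd^{(2)})$, hence factors through $(\dd^{(2)})^*$ and is killed by adding to $\Phi_\alpha$ a component $\Gamma(\mathfrak{i}_L^*E_{-2}^*)\to\Omega^2(I^2)$. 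This pointwise linear-algebra correction is what makes the injectivity step work; without it, or an equivalent argument, your proof does not go through.
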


\begin{proof}
The holonomy groupoid described in \cite{AndrouSkandal} admits ${\mathcal D}$ for induced foliation on $M$.
Moreover, for any leaf $L$ of ${\mathcal D}$, its restriction to $L$ is a smooth groupoid
integrating the holonomy Lie algebroid, which is shown in Proposition \ref{prop:holoLieAlg}
to coincide with the $1$-truncation  ${\mathfrak i}_L^* E_{-1}/{\dd^{(2)}} {\mathfrak i}_L^*E_{-2}$.

Let us check that the $1$-truncated groupoid of  $(E,Q)$ satisfies the same property.
It admits ${\mathcal D}$ for induced foliation on $M$. Let us show that its restriction to any leaf $L$
coincides with the universal cover of the Lie algebroid $A:={\mathfrak i}_L^* E_{-1}/{\dd^{(2)}} {\mathfrak i}_L^*E_{-2}$.

It is clear that any $E$-path induces an $A$-path in the usual sense of Crainic-Fernandes \cite{CrainicFernandes},
with $A$ being the Lie algebroid ${\mathfrak i}_L^* E_{-1}/{\dd^{(2)}} {\mathfrak i}_L^*E_{-2} $ over $L$.
It is also obvious that if two such paths are homotopic as $E$-paths, their induced $A$-paths are homotopic as $A$-paths.
Hence, the $1$-truncated groupoid of  $(E,Q)$ maps to the source $1$-connected Lie groupoid integrating $A$.

Let us check that this map is bijective. Surjectivity is obvious:  any $A$-path comes from a $E$-path
called its \emph{lift} because the quotient map ${\mathfrak i}_L^*  E_{-1} \to A$ is surjective.
Now, let us check that homotopic $A$-paths arise from homotopic $E$-paths, i.e. that
any Lie $\infty$-algebroid morphism from $TI^2$ to ${\mathfrak i}_L^* E_{-1}/{\dd^{(2)}} {\mathfrak i}_L^*E_{-2} $
lifts to a Lie $\infty$-algebroid morphism to $(E,Q)$ whose boudary values are arbitrary lifts of the initial $A$-paths.
Let $ \alpha$ be a Lie algebroid morphism from $TI^2 \to  {\mathfrak i}_L^* E_{-1}/{\dd^{(2)}} {\mathfrak i}_L^*E_{-2}$ whose restriction
to the boundaries satisfy the usual requirements of homotopies relating two $A$-paths $a_1$ and $a_2$.
The vector bundle morphism $\alpha : TI \to {\mathfrak i}_L^* E_{-1}/{\dd^{(2)}} {\mathfrak i}_L^*E_{-2}$ can be lifted to
a vector bundle morphism $\Phi_{\alpha} $ valued in $ {\mathfrak i}_L^* E_{-1}$ that still satisfies the requirements of homotopies of $E$-paths when restricted to boundaries,
and that relates two arbitrary lifts of $a_1$ and $a_2$. It is not a Lie $\infty$-algebroid morphism a priori, i.e
$ \Psi : = \Phi_\alpha \circ Q - {\diff}_{\mathrm{dR}} \Phi_\alpha $ is not zero. For degree reasons, $\Psi$ is a $\Phi_\alpha$ derivation which may be non-zero only in degree $1$.
In other words, its only term which may be vanishing is a map from $\Gamma({\mathfrak i}_L^*  E_{-1}^*)$ to $\Omega^2(I^2)$.

Since $\Phi_\alpha$ induces a Lie  $\infty$-morphism (in fact, a Lie algebroid morphism) when taking the quotient, $\Psi$
is zero on the image of  $  \Gamma ( A^*) \to \Gamma({\mathfrak i}_L^*  E_{-1}^*)$, i.e. the conormal of the image of $\dd^{(2)}: E_{-2} \to E_{-1}$.
This allows us to modify $\Phi_\alpha$ by adding a map $\Gamma({\mathfrak i}_L^* E_{-2}) $ to $ \Omega^2(I^2)$
so that the relation $ \Phi_\alpha \circ Q = {\diff}_{\mathrm{dR}} \Phi_\alpha $ holds on $\Gamma(E_{-1}^*) \subset {\mathcal E}_1$.
This modified $ \Phi_\alpha$ is a homotopy of $E$-paths by construction.
\end{proof}

\subsection{The Leibniz algebroid of a foliation}
\label{sec:leibnizoid}

As already mentionned in the introduction, it is a long standing problem to decide whether  any Hermann foliation is, locally,
the image of a Lie algebroid under the anchor map. It is known not to be the case globally, see \cite{AndrouZambis}.
We are not able to bring a positive or negative answer to decide this question, but we show that a Leibniz algebroid defining the foliation always exists, 
at least when a finite resolution of the Hermann foliation exists.
Conversely, for every Leibniz algebroid $(L,[\ .\ ,\,. \ ]_L,\rho)$ the \emph{morphism condition}
$ \rho([X,Y]_L)= [\rho(X),\rho(Y)] $ is satisfied and $\rho(\Gamma(L))$ is a Hermann foliation.

\begin{proposition}
Let ${\mathcal D}$ be a Hermann foliation that admits a universal Lie $\infty$-algebroid structure $(E,Q)$ with anchor $\rho$. Assume that its associated resolution is of finite length. Then  the vector bundle of finite rank over $M$ given by $L=\big(S(E^*) \otimes E\big)_{-1} $  comes equipped 
with a Leibniz algebroid structure, when equipped with the Leibniz bracket defined by: \begin{equation} [X,Y]_L := \big[[Q,X],Y\big]  \end{equation} for all $X,Y \in \Gamma(L)$ (identified with vertical 
vector fields $\partial_{X}$ and $\partial_{Y}$ of degree $-1$ on the graded manifold $E$). The anchor of $L$ is given by the composition
\begin{center}
\begin{tikzcd}
 &E_{-1}\oplus\bigoplus_{k \geq 1} S^k(E^*) \otimes E\big|_{-1} \ar[r]& E_{-1 } \ar[r,"\rho"]& TM 
 \end{tikzcd}
 \end{center}
\end{proposition}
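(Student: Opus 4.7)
The plan is to recognize this statement as an instance of the classical derived bracket construction of Kosmann-Schwarzbach and Voronov, transported to the $NQ$-manifold setting of $(E,Q)$. I will first check that $L=\bigl(S(E^{\ast})\otimes E\bigr)_{-1}$ is a vector bundle of finite rank: since the resolution has finite length, $E=\bigoplus_{i=1}^{N}E_{-i}$, and a degree $-1$ element with its $E$-factor in $E_{-i}$ forces the symmetric factor to lie in $S(E^{\ast})_{i-1}$, so only symmetric powers $S^{k}(E^{\ast})$ with $k\leq N-1$ contribute. Hence $L$ is a finite direct sum of finite-rank vector bundles.

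Next I will identify sections of $L$ with vertical vector fields of degree $-1$ on $E$ via the contraction prescription of Equation~\eqref{innerder} and verify that $[X,Y]_L:=[[Q,X],Y]$ lies in $L$. Degree counting gives $|[[Q,X],Y]|=-1$, so only verticality requires justification. For every $f\in\mathscr{O}$, the graded derivation formula yields $[[Q,X],Y](f)=-Y\bigl([Q,X](f)\bigr)$ since $Y(f)=0$. But $[Q,X](f)=X(Q[f])$ has degree $0$ in $\funct$, so applying the degree $-1$ operator $Y$ to it produces a function of degree $-1$ in the non-negatively graded algebra $\funct$, which necessarily vanishes.

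I will then turn to the anchor axioms. The Leibniz rule $[X,fY]_L = f[X,Y]_L + \rho_L(X)[f]\,Y$ expands directly via the derivation property of the Lie bracket into $[[Q,X],fY] = [Q,X](f)\,Y + f[[Q,X],Y]$, so it reduces to proving the identity $[Q,X](f)=\rho_L(X)[f]$. For this I will decompose $X$ according to arity: since $Q[f]=\rho^{\ast}(\dd_{\mathrm{dR}}f)\in\Gamma(E^{\ast}_{-1})$ is of pure arity $1$ and depth $1$, only the arity-$0$ component of $X$, which is precisely its image under the projection onto $E_{-1}$, produces a non-vanishing contribution when applied to $Q[f]$, and that contribution equals $\rho_L(X)[f]$ by Equation~\eqref{anchor1}. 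For the morphism condition $\rho_L([X,Y]_L)=[\rho_L(X),\rho_L(Y)]$, I will combine this identity with graded Jacobi in the form
\begin{equation*}
\bigl[Q,[[Q,X],Y]\bigr] = \bigl[[Q,[Q,X]],Y\bigr] + \bigl[[Q,X],[Q,Y]\bigr],
\end{equation*}
in which the first term vanishes since $[Q,[Q,X]]=\tfrac{1}{2}[[Q,Q],X]=0$. This yields $[[Q,X],[Q,Y]]=[Q,[X,Y]_L]$, and evaluating on $f\in\mathscr{O}$ gives $[\rho_L(X),\rho_L(Y)][f]=[[Q,X],[Q,Y]](f)=[Q,[X,Y]_L](f)=\rho_L([X,Y]_L)[f]$.

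The final step is the left Leibniz identity $[X,[Y,Z]_L]_L = [[X,Y]_L,Z]_L + [Y,[X,Z]_L]_L$, which is again a formal consequence of $[Q,Q]=0$. Applying graded Jacobi to $\bigl[[Q,X],[[Q,Y],Z]\bigr]$, with $[Q,X]$ and $[Q,Y]$ both of even degree $0$, produces $\bigl[[[Q,X],[Q,Y]],Z\bigr]+\bigl[[Q,Y],[[Q,X],Z]\bigr]$ with trivial signs. The second summand is $[Y,[X,Z]_L]_L$ by definition, and the first, using the key identity $[[Q,X],[Q,Y]]=[Q,[X,Y]_L]$ from the previous step, equals $\bigl[[Q,[X,Y]_L],Z\bigr]=[[X,Y]_L,Z]_L$. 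The conceptual heart of the argument is that in this graded setting the entire Leibniz structure reduces to a single instance of the derived bracket formalism; the technical point I expect to require the most care is the verticality and arity bookkeeping of the second and third paragraphs, since it is exactly there that the specific structure of the $NQ$-manifold enters, while the Leibniz identity and the morphism condition then become formal consequences of the defining relation $[Q,Q]=0$.
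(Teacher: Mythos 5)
Your proposal is correct and takes essentially the same route as the paper: both rest on the derived-bracket construction $[X,Y]_L=\big[[Q,X],Y\big]$ on vertical vector fields of degree $-1$, with the anchor controlled by the identity $[Q,X](f)=\rho_L(X)[f]$, the only difference being that you prove the Leibniz identity, the morphism condition and the verticality directly from $[Q,Q]=0$ and the grading, where the paper cites Kosmann-Schwarzbach for the Leibniz-algebra statement and asserts the rest. One cosmetic slip that does not affect the argument: the $E_{-1}$-component of $X$ is its arity $-1$ part as a vector field (contraction by a section of $E_{-1}$ lowers arity by one), not its arity $0$ part.
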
 
\begin{proof}
For every graded Lie algebra, ${\mathfrak g}:=\sum_{ i\in {\mathbb Z}}{\mathfrak g}_i $ and any homological element $Q$ of degree $+1$,
$ {\mathfrak g}_{-1}$ is a graded Leibniz algebra when equipped with the bracket $(X,Y) \mapsto \big[[Q,X],Y\big]$, see \cite{YKS}. Applied to a Lie $\infty$-algebroid $(E,Q)$ with sheaf 
of functions ${\mathscr E}$, this implies that vector fields of degree $-1$ form a Leibniz algebra with respect to a bracket given as above.
 This Leibniz algebra bracket is easily seen to restrict to vertical vector fields on $E$ of degree $-1$ (i.e. ${\mathscr O}$-linear derivations of $ {\mathscr E}$ of degree $-1$). 
 Since sections of $S^{k}(E^*) \otimes E_{-n} $ are always of positive degree for $k \geq n$, the vector bundle  $L=\big(S(E^*) \otimes E\big)_{-1}$ is a vector bundle of finite rank over $M$. Its sections are precisely vertical vector fields of  degree $-1$ on the graded manifold $E$.  Let us define a degree $+1$ vector bundle morphism $\eta$ by the composition:
\begin{center}
\begin{tikzcd}
 \eta:E_{-1}\oplus\bigoplus_{k \geq 1} S^k(E^*) \otimes E\big|_{-1} \ar[r]& E_{-1 } \ar[r,"\rho"]& TM 
 \end{tikzcd}
 \end{center}
 The proof that $\eta$ is an anchor map follows easily from the identification between $X\in\Gamma(L)$ and $\partial_X$ its associated (vertical) vector field (seen as a derivation of the ring of functions as in Equation \eqref{innerder}):
 \begin{equation} \eta (X)[f] = \big<\rho^{\ast}\circ\dd_{\text{dR}}(f),X\big>=\partial_{X}\big[\rho^{\ast}\circ\dd_{\text{dR}}(f)\big]=\big[\partial_{X},Q\big][f] \end{equation}
which implies,  for all $X ,Y\in \Gamma(L)$ and $f  \in {\mathscr O} $:
 \begin{eqnarray} 
 [X,fY]_L &=& f \, \big[[Q,X],Y\big] + \big([Q,X] \cdot f \big) \, Y \\
 &=&  f [X,Y]_L + \big(\rho (X) [f] \big)\, Y\nonumber
\end{eqnarray}
 Since $\eta(\Gamma(L))= \rho(\Gamma(E_{-1}))$, it is clear that both $(L, [\ .\ ,\,. \ ]_L, \rho)$ and $(E,Q)$ induce the initial Hermann foliation on
 $M$. This proves the  proposition.\end{proof}


\subsection{Homotopy \texorpdfstring{$NQ$}{NQ}-manifolds}


In the course of this thesis, we constructed objects which have a meaning both at the global level and at the local level. But for existence of resolutions, we only have local results. We would like to define some sort of generalized $NQ$-manifold as a collection of $NQ$-manifolds over open sets on a base manifold $M$. Hence each such open set $U$ is the base manifold of an $NQ$-manifold, the only constraint being that $NQ$-manifolds defined over overlapping open sets should be isomorphic up to homotopy on the intersection. This is a clue that homotopy and the induced equivalence relationship will be a key factor in the definition:

\begin{definition}
A \emph{homotopy $NQ$-manifold structure} on a smooth/analytic/complex manifold $M$ is the data of an open cover $\mathscr{A}=\bigcup_{i\in I}U_{i}$ of $M$ together with:
\begin{itemize}
\item a family of $NQ$-manifolds $(E^{i}\to U_{i},Q_{{i}})_{U_{i}\in\mathscr{A}}$, whose respective sheaves of functions are denoted by $\mathscr{E}^{i}$,
\item a Lie $\infty$-morphism $\gamma_{ij}:\mathscr{E}^{j}\to\mathscr{E}^{i}$ covering the identity of $U_{i}\cap U_{j}$ for any intersecting open sets $U_{i}$ and $U_{j}$,
\end{itemize}
such that two consistency conditions are satisfied:

\begin{itemize}
\item for any $U_{i}\in\mathscr{A}$ the Lie $\infty$-morphism $\gamma_{ii}$ is homotopic to the identity morphism on $\mathscr{E}^{i}$:
\begin{equation}\label{sheaf}
\gamma_{ii}\sim\mathrm{id}_{\mathscr{E}^{i}}
\end{equation}
\item  the \emph{cocycle rule}: given three open sets $U_{i}, U_{j}, U_{k}\in\mathscr{A}$ such that $U_{i}\cap U_{j}\cap U_{k}\neq\varnothing$, the Lie $\infty$-morphisms : 
\begin{center}
\begin{tikzcd}[column sep=1.3cm,row sep=0.5cm]
\mathscr{E}^{i}\ar[dr,"\gamma_{ji}"]&\\
&\ar[dl,"\gamma_{kj}"]\mathscr{E}^{j}\\
\mathscr{E}^{k}\ar[uu,"\gamma_{ik}"]&
\end{tikzcd}
\end{center}
satisfy the following homotopy equivalence:
\begin{equation}\label{cocycleNQ}
\gamma_{ik}\circ\gamma_{kj}\circ\gamma_{ji}\sim\mathrm{id}_{\mathscr{E}^{i}_{U_{i}\cap U_{j}\cap U_{k}}}
\end{equation}
\end{itemize}
\end{definition}

One has to note that the collection of $NQ$-manifolds $(E^{i},Q_{i})_{U_{i}\in\mathscr{A}}$ does not necessarily form a graded manifold since $\mathscr{A}$ is not a topology on $M$ and, moreover, the dimension of the fibers can vary from one open set to another. If one takes $k=i$, then since $\gamma_{ik}=\gamma_{ii}$ is homotopic to $\mathrm{id}_{\mathscr{E}^{i}}$ the cocycle condition becomes:
\begin{equation}
\gamma_{ij}\circ\gamma_{ji}\sim\mathrm{id}_{\mathscr{E}^{i}_{U_{i}\cap U_{j}}}
\end{equation}
and since the reverse composition is homotopic to the identity as well (for the same reasons), it means that the Lie $\infty$-algebroid structures over $U_{i}$ and $U_{j}$ are isomorphic up to homotopy, as required. Moreover, if one had taken another map $\gamma'_{ji}:\mathscr{E}^{i}\to\mathscr{E}^{j}$ homotopic to $\gamma_{ji}$ over $U_{i}\cap U_{j}$, then none of the above results would have been modified. Hence the object that matters most is not the map $\gamma_{ji}$ but it is rather its homotopy class, of which $\gamma_{ji}$ should merely be seen as a \emph{representative}.

Now it may happen that $M$ is equipped with another homotopy $NQ$-manifold structure, and we would like to relate them. If $\mathscr{A}$ and $\mathscr{B}$ are two open covers of $M$ and $\mathfrak{E}=(E^{i},Q_{i})_{U_{i}\in\mathscr{A}}$, $\mathfrak{E}'=(E'^{\alpha},Q'_{\alpha})_{V_{\alpha}\in\mathscr{B}}$ are two homotopy $NQ$-structures over $M$, we say that they are \emph{equivalent homopoty $NQ$-manifolds} if for any $U_{i},U_{j}\in\mathscr{A}$ and $V_{\alpha},V_{\beta}\in\mathscr{B}$ such that $U_{i}\cap U_{j}\cap V_{\alpha}\cap V_{\beta}\neq\varnothing$ there exist Lie $\infty$-morphisms $\eta_{\alpha i}:\mathscr{E}^{i}\to\mathscr{E'}^{\alpha}$ and $\eta_{i\alpha}:\mathscr{E'}^{\alpha}\to\mathscr{E}^{i}$ covering the identity on $U_{i}\cap V_{\alpha}$ and $\eta_{\beta j}:\mathscr{E}^{j}\to\mathscr{E'}^{\beta}$ and $\eta_{j\beta}:\mathscr{E'}^{\beta}\to\mathscr{E}^{j}$ covering the identity on $U_{j}\cap V_{\beta}$, such that their compositions are homotopic:
\begin{align}
\gamma'_{\beta\alpha}\circ\eta_{\alpha i}\sim\eta_{\beta j}\circ\gamma_{ji}\hspace{1cm}&\text{and}\hspace{1cm}\gamma_{ji}\circ\eta_{i\alpha}\sim\eta_{j\beta}\circ\gamma'_{\beta\alpha}\label{mariokart1}\\
\eta_{\alpha i}\circ\eta_{i\alpha}\sim\mathrm{id}_{\mathscr{E'}^{\alpha}}\hspace{1cm}&\text{and}\hspace{1cm}\eta_{i\alpha}\circ\eta_{\alpha i}\sim\mathrm{id}_{\mathscr{E}^{i}}\label{mariokart2}\\
\label{mariokart3}\eta_{\beta j}\circ\eta_{j\beta}\sim\mathrm{id}_{\mathscr{E'}^{\beta}}\hspace{1cm}&\text{and}\hspace{1cm}\eta_{j\beta}\circ\eta_{\beta j}\sim\mathrm{id}_{\mathscr{E}^{j}}
\end{align}
on $U_{i}\cap U_{j}\cap V_{\alpha}\cap V_{\beta}$, which can be summarized by the commutativity of the following diagram:
\begin{center}
\begin{tikzcd}[column sep=2cm,row sep=0.8cm]
&\mathscr{E}^{i}\ar[dd,shift left,"\eta_{\alpha i}"]\ar[r,"\gamma_{ji}"]&\mathscr{E}^{j}\ar[dd,shift left,"\eta_{\beta j}"]\\
&&\\
&\ar[uu,shift left,"\eta_{i\alpha}"]\mathscr{E'}^{\alpha}\ar[r,"\gamma'_{\beta\alpha}"]&\ar[uu,shift left,"\eta_{j\beta}"]\mathscr{E'}^{\beta}
\end{tikzcd}
\end{center}
This is an equivalence relation. Also, a careful analysis shows that the union of two equivalent homotopy $NQ$-structures over $M$ is another homotopy $NQ$-structure:

\begin{proposition}\label{equivhomNQ}
Let $\mathfrak{E}=(E^{i},Q_{i})_{U_{i}\in\mathscr{A}}$ and $\mathfrak{E}'=(E'^{\alpha},Q'_{\alpha})_{V_{\alpha}\in\mathscr{B}}$ be two equivalent homotopy $NQ$-manifold structures associated to $M$. Then the union of  the two structures $(F^{\mu},Q_{\mu})_{W_{\mu}\in \mathscr{A}\cup\mathscr{B}}$ is a homotopy $NQ$-manifold structure for $M$, equivalent to both $\mathfrak{E}$ and $\mathfrak{E'}$.
\end{proposition}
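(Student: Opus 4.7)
The plan is to first define the data of the union structure $\mathfrak{F} = (F^\mu, Q_\mu)_{W_\mu \in \mathscr{A}\cup\mathscr{B}}$, then check that it satisfies the two axioms of a homotopy $NQ$-manifold, and finally exhibit the equivalence data relating $\mathfrak{F}$ to both $\mathfrak{E}$ and $\mathfrak{E}'$. The open cover is of course $\mathscr{A}\cup\mathscr{B}$, and to each $W_\mu$ we assign $(E^i, Q_i)$ if $W_\mu = U_i \in \mathscr{A}$ and $(E'^\alpha, Q'_\alpha)$ if $W_\mu = V_\alpha \in \mathscr{B}$ (with a fixed choice in case $W_\mu$ belongs to both covers). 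The transition Lie $\infty$-morphisms are assigned case by case: for two open sets both in $\mathscr{A}$ we take $\gamma_{ij}$; for two in $\mathscr{B}$ we take $\gamma'_{\beta\alpha}$; for a mixed pair $(U_i, V_\alpha)$ with $U_i \cap V_\alpha \neq \varnothing$ we take the equivalence data $\eta_{i\alpha}$ and $\eta_{\alpha i}$, which by hypothesis are defined (up to choosing appropriate $j=i$ and $\beta=\alpha$ in the definition of equivalence).

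Next, I would verify the sheaf condition \eqref{sheaf}: this is immediate when $\mu$ corresponds to an open set lying either in $\mathscr{A}$ or in $\mathscr{B}$, since then $\gamma_{\mu\mu}$ is a transition morphism of $\mathfrak{E}$ or $\mathfrak{E}'$ respectively, and both structures satisfy the axiom by hypothesis. The main work is to check the cocycle condition \eqref{cocycleNQ} for every triple $W_\mu, W_\nu, W_\kappa \in \mathscr{A}\cup\mathscr{B}$ with nonempty triple intersection. Four cases arise depending on how many elements of the triple lie in $\mathscr{A}$ versus $\mathscr{B}$. Cases with all three in $\mathscr{A}$ or all three in $\mathscr{B}$ follow directly from the axioms of $\mathfrak{E}$ and $\mathfrak{E}'$. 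The remaining two cases (two open sets from one cover and one from the other) are the core of the argument: for instance, for $(U_i, U_j, V_\alpha)$ one must show that
\begin{equation}
\eta_{i\alpha}\circ\eta_{\alpha j}\circ\gamma_{ji}\,\sim\,\mathrm{id}_{\mathscr{E}^{i}}
\end{equation}
on the triple intersection. Using \eqref{mariokart1} to commute $\eta_{\alpha j}\circ\gamma_{ji}$ past a suitable $\gamma'$ and then \eqref{mariokart2}--\eqref{mariokart3} to cancel the resulting $\eta$ compositions with their ``inverses'' up to homotopy, together with the fact that homotopy is an equivalence relation compatible with composition, this reduces to a composition of $\gamma$'s which is homotopic to the identity by the cocycle axiom of $\mathfrak{E}$. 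The mixed case with two sets from $\mathscr{B}$ is symmetric.

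Finally, to show that $\mathfrak{F}$ is equivalent to $\mathfrak{E}$ (and similarly to $\mathfrak{E}'$) as homotopy $NQ$-manifolds, I would exhibit the required equivalence data: between an open set $U_i \in \mathscr{A}$ viewed as part of $\mathfrak{F}$ and the same $U_i$ viewed as part of $\mathfrak{E}$, take the identity morphism of $\mathscr{E}^i$; between a set $V_\alpha \in \mathscr{B}$ viewed as part of $\mathfrak{F}$ and an intersecting set $U_j \in \mathscr{A}$ viewed as part of $\mathfrak{E}$, take $\eta_{\alpha j}$ and $\eta_{j\alpha}$. The compatibility equations \eqref{mariokart1}--\eqref{mariokart3} for this new equivalence then reduce, in each case, to either trivial identities, to \eqref{mariokart1}--\eqref{mariokart3} for the original equivalence, or to the cocycle condition of $\mathfrak{E}$ composed with a homotopy equivalence.

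The hard part will be the bookkeeping in the mixed cases of the cocycle condition, where one has to chain together several homotopy equivalences drawn from \eqref{mariokart1}--\eqref{mariokart3} in the correct order; this is routine in spirit, since homotopy is preserved by composition of Lie $\infty$-morphisms, but it requires a clean ordering of the rewrites to avoid getting lost in the diagrams. Once this is laid out, the equivalence of $\mathfrak{F}$ with $\mathfrak{E}$ and $\mathfrak{E}'$ follows by the same kind of diagram chasing.
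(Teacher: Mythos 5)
Your proposal is correct and follows essentially the same route as the paper's (very terse) proof: the mixed transition morphisms are supplied by the equivalence data $\eta_{i\alpha},\eta_{\alpha i}$, and Equations \eqref{mariokart1}--\eqref{mariokart3} yield the cocycle condition \eqref{cocycleNQ}, with the pure cases handled by the axioms of $\mathfrak{E}$ and $\mathfrak{E}'$. One cosmetic remark: in the mixed triple case, taking $\beta=\alpha$ in \eqref{mariokart1} reduces the composition to $\eta_{i\alpha}\circ\gamma'_{\alpha\alpha}\circ\eta_{\alpha i}$, which is killed by the sheaf axiom \eqref{sheaf} together with \eqref{mariokart2}, so it is that axiom rather than the cocycle axiom of $\mathfrak{E}$ that closes the argument; this does not affect its validity.
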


\begin{proof}
Existence of the morphisms between $NQ$-structures over open sets comes by equivalence between $\mathfrak{E}$ and $\mathfrak{E}'$, and equations from \eqref{mariokart1} to \eqref{mariokart3} imply the homotopy equivalence \eqref{cocycleNQ}.
\end{proof}

Given two intersecting open sets $U_{i},U_{j}\in\mathscr{A}$, the $NQ$-manifold structures on $E^{i}$ and $E^{j}$ are isomorphic up to homotopy. It implies in particular that the $Q$-cohomologies induced by $Q_{i}$ and $Q_{j}$ in the sheaves of functions $\funct_{i}|_{U_{i}\cap U_{j}}$ and $\funct_{j}|_{U_{i}\cap U_{j}}$ are isomorphic. This would allow us to patch the local cohomologies into a more intricate structure, even if there is no globally defined homological vector field. However, for this to make sense, the isomorphism between the universal cohomologies associated to $E_{i}$ and $E_{j}$ should not depend on the choice of the morphisms $\gamma_{ij}$. In other words  we have to show that two homotopic maps $\gamma_{ji}:\mathscr{E}^{i}\to\mathscr{E}^{j}$ and $\gamma'_{ji}:\mathscr{E}^{i}\to\mathscr{E}^{j}$ induce the same map at the level of cohomology, that is: the image of cocycles under $\gamma_{ji}$ and $\gamma'_{ji}$ differ by a coboundary term. Indeed for any $\alpha\in\mathscr{E}^{i}$ which is $Q_{i}$-closed we have by Proposition \ref{prop:HomotopyMeansHomotopy}:
\begin{align}
\gamma_{ji}(\alpha)-\gamma'_{ji}(\alpha)=Q_{j}\circ H(\alpha)+H\circ Q_{i}(\alpha)
\end{align}
for some degree $-1$ map $H:\mathscr{E}^{i}_{U_{i}\cap U_{j}}\to\mathscr{E}^{j}_{U_{i}\cap U_{j}}$. Since $Q_{i}(\alpha)=0$, it implies that the left-hand side is $Q_{j}$-exact, and thus that the two homotopic maps $\gamma_{ji}$ and $\gamma'_{ji}$ induce the same isomorphism in cohomology, noted $\widetilde{\gamma}_{ji}$.

We denote by $H_{Q_{i}}$ the cohomology defined by $Q_{i}$ in the sheaf of functions $\funct_{i}$. Two homotopic maps being related by a $[Q,\,.\ ]$-exact term, Equations \eqref{sheaf} and \eqref{cocycleNQ} become strict at the level of cohomology. Hence we are left with the following data:
\begin{enumerate}
\item a sheaf $H_{Q_{i}}$ over each open set $U_{i}$ of the covering $\mathscr{A}$;
\item for any intersecting open sets $U_i,U_j\in\mathscr{A}$, an isomorphism of sheaves:
\begin{equation}
\widetilde{\gamma}_{ji}:H_{Q_{j}}|_{U_{i}\cap U_{j}}\to H_{Q_{i}}|_{U_{i}\cap U_{j}};
\end{equation}
\item for any overlapping open sets $U_i,U_j,U_k\in\mathscr{A}$, a cocycle condition:
\begin{equation}
\widetilde{\gamma}_{ik}\circ\widetilde{\gamma}_{kj}\circ\widetilde{\gamma}_{ji}=\mathrm{id}_{H_{Q_{i}}|_{U_{i}\cap U_{j}\cap U_{k}}}
\end{equation}
\end{enumerate}
We call this collection of data $(H_{Q_{i}},\widetilde{\gamma}_{ij})$ gluing data for sheaves with respect to the covering $\mathscr{A}$. We have the following result \cite{Felder}:

\begin{lemme}\label{isosheaf}
Let $M$ be a manifold, and $\mathscr{A}=\bigcup_{i\in I}U_{i}$ be an open covering of $M$. Let $(F_{i},\psi_{ij})$ be  gluing data for sheaves with respect to the covering $\mathscr{A}$. Then there exists a sheaf $\mathcal{F}$ on $M$, together with isomorphisms of sheaves:
\begin{equation*}
\phi_{i}:\mathcal{F}(U_{i})\longrightarrow F_{i}(U_{i})
\end{equation*}
such that the following diagram is commutative:
\begin{center}
\begin{tikzcd}[column sep=2cm,row sep=0.8cm]
&\mathcal{F}(U_{i}\cap U_{j})\ar[dd,"\mathrm{id}"]\ar[r,"\phi_{i}"]&F_{i}(U_{i}\cap U_{j})\ar[dd,"\psi_{ji}"]\\
&&\\
&\mathcal{F}(U_{i}\cap U_{j})\ar[r,"\phi_{j}"]&F_{j}(U_{i}\cap U_{j})
\end{tikzcd}
\end{center}
\end{lemme}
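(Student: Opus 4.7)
The plan is to construct $\mathcal{F}$ as a subsheaf of the product $\prod_{i \in I} \iota_{i*} F_i$ cut out by the compatibility relations imposed by the $\psi_{ij}$, following the classical sheaf-theoretic gluing construction. For every open $V \subseteq M$ I would set
\[
\mathcal{F}(V) := \Bigl\{ (s_i)_{i\in I} \,\Big|\, s_i \in F_i(V\cap U_i),\ \psi_{ji}\bigl(s_i|_{V\cap U_i\cap U_j}\bigr) = s_j|_{V\cap U_i\cap U_j}\ \forall\, i,j\in I \Bigr\},
\]
with restriction maps defined componentwise. The sheaf axioms for $\mathcal{F}$ follow directly from those of each $F_i$: locality reduces to componentwise locality in each $F_j$, and gluing is obtained by first gluing each component individually using the sheaf structure of $F_j$ on the cover $\{V_\alpha \cap U_j\}_\alpha$ of $V \cap U_j$, the compatibility relations then passing to the glued section by applying locality to each of the (finitely many per point) relations.

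Next, I would define $\phi_i : \mathcal{F}(U_i) \to F_i(U_i)$ as the projection $(s_j)_{j\in I}\mapsto s_i$, and construct its inverse explicitly. Given $t \in F_i(U_i)$, set $s_j := \psi_{ji}(t|_{U_i\cap U_j})\in F_j(U_i\cap U_j)$ for each $j\in I$. To check that $(s_j)_{j\in I}$ lies in $\mathcal{F}(U_i)$, I would use that the cocycle condition $\psi_{ik}\circ\psi_{kj}\circ\psi_{ji}=\mathrm{id}$ inherited from the gluing data, specialized to $j=i$, gives $\psi_{ik}\circ\psi_{ki}\circ\psi_{ii}=\mathrm{id}$. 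Combined with $\psi_{ii}=\mathrm{id}$ (which is forced by the fact that the homotopy relation $\gamma_{ii}\sim\mathrm{id}_{\mathscr{E}^i}$ descends to an honest identity in cohomology, as established in the paragraph preceding the lemma), this yields $\psi_{ik}^{-1}=\psi_{ki}$, so that the cocycle rewrites as $\psi_{kj}\circ\psi_{ji}=\psi_{ki}$ on triple overlaps. The required compatibility $\psi_{kj}(s_j|_{U_i\cap U_j\cap U_k}) = \psi_{ki}(t|_{U_i\cap U_j\cap U_k}) = s_k|_{U_i\cap U_j\cap U_k}$ then follows.

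Finally, the commutativity of the diagram in the statement is immediate from the defining compatibility relation: for any $(s_\ell)_{\ell\in I}\in\mathcal{F}(U_i\cap U_j)$, one has $\psi_{ji}(\phi_i((s_\ell)))=\psi_{ji}(s_i)=s_j=\phi_j((s_\ell))$. The main obstacle, though a mild one, is the careful bookkeeping required to verify that the compatibility conditions are inherited through the gluing axiom of $\mathcal{F}$, together with the extraction of the normalization $\psi_{ii}=\mathrm{id}$ and the inversion relation $\psi_{ij}^{-1}=\psi_{ji}$ from the cocycle data; once these are in hand, the argument is the standard Čech-style construction and $\mathcal{F}$ is in fact determined uniquely up to canonical isomorphism of sheaves by this universal property.
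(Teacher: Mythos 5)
Your proposal is correct and follows essentially the same route as the paper: the same subsheaf-of-the-product construction of $\mathcal{F}(V)$ via the compatibility relations, the same componentwise verification of locality and gluing from the sheaf axioms of the $F_i$, and the same projection maps $\phi_i$ inverted by $t\mapsto\bigl(\psi_{ji}(t|_{U_i\cap U_j})\bigr)_j$ using the cocycle condition. Your only addition is making explicit the normalization $\psi_{ii}=\mathrm{id}$ and the inversion relation $\psi_{ij}^{-1}=\psi_{ji}$, which the paper uses implicitly; this is a harmless (and slightly more careful) elaboration of the same argument.
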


\begin{proof}
The sheaf $\mathcal{F}$ is defined over any open set $U\subset M$ by its set of sections:
\begin{equation}
\mathcal{F}(U)=\Big\{ (s_{i})_{i\in I}\,|\,s_{i}\in F_{i}(U\cap U_{i}), \,\psi_{ji}(s_{i}|_{U\cap U_{i}\cap U_{j}})=s_{j}|_{U\cap U_{i}\cap U_{j}}\Big\}
\end{equation}
The condition on the right-hand side is an equivalence relation. Since the restriction maps of $\mathcal{F}$ are induced by the ones for $F_{i}$, $\mathcal{F}$ is a presheaf. Now let us show that $\mathcal{F}$ is a sheaf, i.e. that is satisfies the locality and the gluing conditions.

The locality condition is automatically satisfied. Let $U$ be an open set admitting an open cover $U=\bigcup_{\alpha}U_{\alpha}$. Let $s,t\in\mathcal{F}(U)$ be two sections which satisfy $s|_{U_{\alpha}}=t|_{U_{\alpha}}$. Let $U_{i}\in\mathscr{A}$ which intersects $U$, then $s_{i}|_{U_{\alpha}\cap U_{i}}=t_{i}|_{U_{\alpha}\cap U_{i}}$. By construction the set $\{U_{\alpha}\cap U_{i}\}_{\alpha}$ covers $U\cap U_{i}$, and since $F_{i}$ is a sheaf, it implies that $s_{i}|_{U\cap U_{i}}=t_{i}|_{U\cap U_{i}}$. Since it is true for every open set $U_{i}$, it proves that $s=t$ by definition of $\mathcal{F}$.

The proof of the gluing condition follows the same lines. Assume that on the open cover $U=\bigcup_{\alpha}U_{\alpha}$, there are sections $s^{\alpha}\in\mathcal{F}(U_{\alpha})$ which coincide when two open sets overlap: $s^{\alpha}=s^{\beta}$ on $U_{\alpha}\cap U_{\beta}$. Let $U_{i}\in\mathscr{A}$ whose intersection with $U$ is not empty, then the equality $s^{\alpha}=s^{\beta}$ is still valid on $U_{\alpha}\cap U_{\beta}\cap U_{i}$. The family $\{U_{\alpha}\cap U_{i}\}_{\alpha}$ forms an open cover of $U\cap U_{i}$, and since $F_{i}$ is a sheaf, we deduce that the sections $\{s^{\alpha}\}_{\alpha}$ can be glued to induce a section $s_{i}$ on $U\cap U_{i}$. Let $U_{j}$ be another open set which intersects $U\cap U_{i}$ and let $s_{j}$ be the section of $F_{j}(U\cap U_{j})$ constructed from the $s^{\alpha}$'s. Then $\psi_{ji}(s_{i})=s_{j}$ because this is true on each restrictions $U_{\alpha}\cap U_{i}\cap U_{j}$. Then $s=(s_{i})_{i\in I}$ defines a section $s$ of $\mathcal{F}(U)$, which restricts to $s^{\alpha}$ on $U_{\alpha}$. From the two conditions, we conclude that $\mathcal{F}$ is a sheaf.

Now we need to show that the projection:
\begin{align}
\phi_{i}:\hspace{0.2cm}\mathcal{F}(U_{i})&\xrightarrow{\hspace*{2cm}} F_{i}(U_{i})\\
(s_{k})_{k\in I}&\xmapsto{\hspace*{2cm}} s_{i}\nonumber
\end{align}
induces an isomorphism. Commutativity of the diagram in Lemma \ref{isosheaf} is guaranteed by definition of $\mathcal{F}$. To define the reverse map, take some section $s_{i}\in F_{i}(U_{i})$, and define $s_{j}$ on the overlap $U_{i}\cap U_{j}$ by $s_{j}=\psi_{ij}^{-1}(s_{i})=\psi_{ji}(s_{i})$ for every open set $U_{j}\in\mathscr{A}$ intersecting $U_{i}$. We then have a collection of maps $(s_{k})_{k\in I}$ which defines a section of $\mathcal{F}(U_{i})$. The compatibility condition $\psi_{ji}(s_{i})=s_{j}$ is obtained by construction. However one needs the additional condition that $\psi_{kj}(s_{j})=s_{k}$ for every $j,k\in I$. This is equivalent to the cocycle condition: $\psi_{kj}\circ\psi_{ji}\circ\psi_{ik}=\mathrm{id}$. Hence the map $\phi_{i}:\mathcal{F}(U_{i})\to F_{i}(U_{i})$ is an isomorphism.
\end{proof}

This lemma enables us to glue the cohomologies $H_{Q_{i}}$ over each open set $U_{i}$ together to construct a sheaf $\mathscr{H}_{\mathscr{A}}$ on $M$, called the \emph{homotopy cohomology of $M$ with respect to the covering $\mathscr{A}$}. Note however that there is no homological vector field globally defined. The following result is crucial:

\begin{proposition}\label{haricotvert}
Two equivalent homotopy $NQ$-structures on $M$ induce isomorphic homotopy cohomologies.
\end{proposition}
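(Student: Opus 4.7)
The strategy is to pass everything to cohomology, where all homotopies collapse to strict equalities (by Proposition~\ref{prop:HomotopyMeansHomotopy} and the argument preceding Lemma~\ref{isosheaf}), and then to organise the resulting data into a sheaf isomorphism via Lemma~\ref{isosheaf}. The key technical point is that the construction of $\mathscr{H}_{\mathscr{A}}$ from Lemma~\ref{isosheaf} behaves well under refinement and under equivalence of homotopy $NQ$-structures.

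First, I would reduce to a common cover. Applying Proposition~\ref{equivhomNQ} to $\mathfrak{E}$ and $\mathfrak{E}'$, I obtain a homotopy $NQ$-structure $\mathfrak{F}$ on the cover $\mathscr{A}\cup\mathscr{B}$ that is equivalent to both. It therefore suffices to prove the following intermediate claim: if $\mathfrak{E}$ is a homotopy $NQ$-structure on a cover $\mathscr{A}$ and $\mathscr{A}'$ is a cover obtained from $\mathscr{A}$ by adjoining further open sets $V_{\alpha}$ on each of which we have an $NQ$-manifold $(E'^{\alpha},Q'_{\alpha})$ together with Lie $\infty$-morphisms $\eta_{\alpha i}, \eta_{i\alpha}$ satisfying \eqref{mariokart1}-\eqref{mariokart3}, then the homotopy cohomologies built on $\mathscr{A}$ and on $\mathscr{A}'$ are canonically isomorphic sheaves on $M$. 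Once this is established, two applications give $\mathscr{H}_{\mathscr{A}}\cong\mathscr{H}_{\mathscr{A}\cup\mathscr{B}}\cong\mathscr{H}_{\mathscr{B}}$.

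For the intermediate claim I would proceed in three steps. Step one: the Lie $\infty$-morphisms $\eta_{\alpha i}$ and $\eta_{i\alpha}$ intertwine the respective homological vector fields, so they pass to maps $\widetilde{\eta}_{\alpha i}: H_{Q_{i}}|_{U_{i}\cap V_{\alpha}}\to H_{Q'_{\alpha}}|_{U_{i}\cap V_{\alpha}}$ and $\widetilde{\eta}_{i\alpha}$ in the opposite direction; by Proposition~\ref{prop:HomotopyMeansHomotopy}, these induced maps depend only on the homotopy class of $\eta_{\alpha i}$ and $\eta_{i\alpha}$, and the relations \eqref{mariokart2}-\eqref{mariokart3} become strict inverses at the cohomology level. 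Step two: the compatibility \eqref{mariokart1}, again by Proposition~\ref{prop:HomotopyMeansHomotopy}, gives the strict identity $\widetilde{\gamma}'_{\beta\alpha}\circ\widetilde{\eta}_{\alpha i}=\widetilde{\eta}_{\beta j}\circ\widetilde{\gamma}_{ji}$ on the relevant intersections. Combining these, the collection $\{H_{Q_{i}}\}_{i}\cup\{H_{Q'_{\alpha}}\}_{\alpha}$, glued by $\widetilde{\gamma}_{ji}$, $\widetilde{\gamma}'_{\beta\alpha}$ and $\widetilde{\eta}_{\alpha i}$, forms gluing data for sheaves on the cover $\mathscr{A}\cup\mathscr{B}$ in the sense of Lemma~\ref{isosheaf}; the cocycle condition on triple intersections follows by combining the various homotopy identities and applying Proposition~\ref{prop:HomotopyMeansHomotopy} once more. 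Step three: by Lemma~\ref{isosheaf} the resulting sheaf $\mathscr{H}_{\mathscr{A}\cup\mathscr{B}}$ is uniquely determined, and the natural projections onto its $\mathscr{A}$-part and $\mathscr{B}$-part respectively exhibit canonical sheaf isomorphisms $\mathscr{H}_{\mathscr{A}\cup\mathscr{B}}\cong\mathscr{H}_{\mathscr{A}}$ and $\mathscr{H}_{\mathscr{A}\cup\mathscr{B}}\cong\mathscr{H}_{\mathscr{B}}$, since each of the sub-gluing-data already determines the full sheaf through the equivalences $\widetilde{\eta}$.

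The main obstacle is bookkeeping: verifying that the cocycle condition required by Lemma~\ref{isosheaf} holds for the combined gluing data on triple intersections of the form $U_{i}\cap U_{j}\cap V_{\alpha}$ and $U_{i}\cap V_{\alpha}\cap V_{\beta}$. This reduces to checking that compositions such as $\widetilde{\eta}_{i\alpha}\circ\widetilde{\gamma}'_{\alpha\beta}\circ\widetilde{\eta}_{\beta j}\circ\widetilde{\gamma}_{ji}$ equal the identity at the level of cohomology, which follows from iterating \eqref{mariokart1} and using \eqref{mariokart2}-\eqref{mariokart3} and the cocycle rule \eqref{cocycleNQ} for $\mathfrak{E}$ and $\mathfrak{E}'$ separately, together with Proposition~\ref{prop:HomotopyMeansHomotopy} to turn each homotopy into a strict equality in cohomology. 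No genuinely new analytic input is needed beyond what Proposition~\ref{prop:HomotopyMeansHomotopy} already provides.
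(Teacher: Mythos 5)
Your plan is correct, but it is organized along a genuinely different route than the paper's proof. The paper never invokes Proposition~\ref{equivhomNQ} inside the proof of Proposition~\ref{haricotvert}: it constructs the isomorphism directly, defining on each $W_{i\alpha}=U_{i}\cap V_{\alpha}$ the local isomorphism $\Xi_{\alpha i}=\phi_{\alpha}^{-1}\circ\widetilde{\eta}_{\alpha i}\circ\phi_{i}$ (with $\phi_{i},\phi_{\alpha}$ the bijections of Lemma~\ref{isosheaf}), checking via the strictified identity $\widetilde{\gamma}'_{\beta\alpha}\circ\widetilde{\eta}_{\alpha i}=\widetilde{\eta}_{\beta j}\circ\widetilde{\gamma}_{ji}$ that these agree on overlaps, and then gluing them section by section (using the sheaf property of the $H_{Q'_{\alpha}}$) into a sheaf morphism $\Xi:\mathscr{H}_{\mathscr{A}}\to\mathscr{H}_{\mathscr{B}}$, with the inverse built symmetrically. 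You instead pass through the union: Proposition~\ref{equivhomNQ} yields a homotopy $NQ$-structure on $\mathscr{A}\cup\mathscr{B}$, the strictified relations \eqref{mariokart1}--\eqref{mariokart3} together with $\widetilde{\gamma}'_{\alpha\alpha}=\mathrm{id}$ turn the combined family $\{H_{Q_{i}}\}\cup\{H_{Q'_{\alpha}}\}$ into honest gluing data (including the mixed cocycle conditions on triples such as $U_{i}\cap U_{j}\cap V_{\alpha}$), Lemma~\ref{isosheaf} produces $\mathscr{H}_{\mathscr{A}\cup\mathscr{B}}$, and the two forgetful projections onto the $\mathscr{A}$- and $\mathscr{B}$-parts are isomorphisms. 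The core ingredients are the same in both arguments --- Proposition~\ref{prop:HomotopyMeansHomotopy} to collapse homotopies to equalities in cohomology, Lemma~\ref{isosheaf}, and sheaf gluing --- and what you call bookkeeping (surjectivity of the forgetful projection, i.e.\ reconstructing the components $t_{\alpha}$ from the $s_{i}$ by gluing $\widetilde{\eta}_{\alpha i}(s_{i})$ over the cover $\{U\cap V_{\alpha}\cap U_{i}\}_{i}$ of $U\cap V_{\alpha}$) is exactly the computation the paper carries out when it constructs the image section $t=(t_{\alpha})$. What your organization buys is modularity: it actually uses Proposition~\ref{equivhomNQ}, which the paper proves but exploits only in the remark following the proposition, and it isolates a reusable statement --- invariance of the homotopy cohomology under enlargement of the cover by compatible charts --- from which the result follows by two applications; in your arrangement the paper's remark becomes the proof rather than a corollary. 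What it costs is explicitness: the isomorphism arises as a composition of one projection with the inverse of another, rather than from the concrete local formula $\phi_{\alpha}^{-1}\circ\widetilde{\eta}_{\alpha i}\circ\phi_{i}$, so anyone wanting the map itself must still unwind the same gluing argument.
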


\begin{proof}
Let $\mathfrak{E}=(E^{i},Q_{i})_{U_{i}\in\mathscr{A}}$ and $\mathfrak{E}'=(E'^{\alpha},Q'_{\alpha})_{V_{\alpha}\in\mathscr{B}}$ be two equivalent homotopy $NQ$-manifold structures on $M$. We show that there is an isomorphism of sheaves between their respective homotopy cohomologies $\mathscr{H}_{\mathscr{A}}$ and $\mathscr{H}_{\mathscr{B}}$. A morphism of sheaves  $\Xi:\mathscr{H}_{\mathscr{A}}\to \mathscr{H}_{\mathscr{B}}$ is a collection of algebra morphisms $\Xi(U):\mathscr{H}_{\mathscr{A}}(U)\to \mathscr{H}_{\mathscr{B}}(U)$ that are compatible with restrictions, where $U$ ranges over all open sets of $M$. An isomorphism of sheaves of algebras is an isomorphism of algebras on each open set $U$.

For any $U_{i}\in\mathscr{A}$ and $V_{\alpha}\in\mathscr{B}$, define the open set $W_{i\alpha}=U_{i}\cap V_{\alpha}$. Since $\mathfrak{E}$ and $\mathfrak{E}'$ are equivalent homotopy $NQ$-structures, there exist maps $\eta_{\alpha i}:\funct^{i}\to\funct'^{\alpha}$ and $\eta_{i\alpha}:\funct'^{\alpha}\to\funct$ covering the identity on $W_{i\alpha}$ which are inverse up to homotopy to one another. Hence at the level of cohomologies, the quotient map $\widetilde{\eta}_{\alpha i}$ becomes an isomorphism between $H_{Q_{i}}(W_{i\alpha})$ and $H_{Q_{\alpha}'}(W_{i\alpha})$, with inverse map $\widetilde{\eta}_{i \alpha}$. The morphism $\widetilde{\eta}_{\alpha i}$ can be lifted to a morphism $\Xi_{\alpha i}$ between $\mathscr{H}_{\mathscr{A}}(W_{i\alpha})$ and $\mathscr{H}_{\mathscr{B}}(W_{i\alpha})$, using the bijectivity of the morphisms described in Lemma \ref{isosheaf}:
\begin{equation}
\Xi_{\alpha i}=\phi_{\alpha}^{-1}\circ\widetilde{\eta}_{\alpha i}\circ\phi_{i}:\mathscr{H}_{\mathscr{A}}(W_{i\alpha})\longrightarrow\mathscr{H}_{\mathscr{B}}(W_{i\alpha})
\end{equation}
We write $\Xi_{i\alpha}$ for its inverse. We want to define a morphism of sheaves $\Xi:\mathscr{H}_{\mathscr{A}}\to\mathscr{H}_{\mathscr{B}}$ which restricts on each open set $W_{i\alpha}$ to $\Xi_{i\alpha}$.

Let us show that if we have two open sets $W_{i\alpha}$ and $W_{j\beta}$ such that $W_{i\alpha}\cap W_{j\beta}=V_{\alpha}\cap V_{\beta}\cap U_{i}\cap U_{j}\neq\varnothing$, the associated isomorphisms $\Xi_{\alpha i}$ and $\Xi_{\beta j}$ coincide on $\mathscr{H}_{\mathscr{A}}(W_{i\alpha}\cap W_{j\beta})$. Showing that they coincide is equivalent to showing that the image of a section $s=(s_{k})_{k\in I}$ by both isomorphisms gives the same section of $\mathscr{H}_{\mathscr{B}}(W_{i\alpha}\cap W_{j\beta})$. By definition of the sections of the sheaves $\mathscr{H}_{\mathscr{A}}$ and $\mathscr{H}_{\mathscr{B}}$, this is equivalent to showing that the following diagram commutes:
\begin{center}
\begin{tikzcd}[column sep=2cm,row sep=0.8cm]
&H_{Q_{i}}(W_{i\alpha}\cap W_{j\beta})\ar[dd,"\widetilde{\eta}_{\alpha i}"]\ar[r,"\widetilde{\gamma}_{ji}"]&H_{Q_{j}}(W_{i\alpha}\cap W_{j\beta})\ar[dd,"\widetilde{\eta}_{\beta j}"]\\
&&\\
&H_{Q_{\alpha}'}(W_{i\alpha}\cap W_{j\beta})\ar[r,"\widetilde{\gamma}'_{\beta\alpha}"]&H_{Q_{\beta}'}(W_{i\alpha}\cap W_{j\beta})
\end{tikzcd}
\end{center}
This is precisely the case since Equation \eqref{mariokart1} becomes exact at the level of cohomologies:
\begin{equation}\label{hautdegamme}
\widetilde{\gamma}'_{\beta\alpha}\circ\widetilde{\eta}_{\alpha i}=\widetilde{\eta}_{\beta j}\circ\widetilde{\gamma}_{ji}
\end{equation}
This shows that $\Xi_{\alpha i}$ and $\Xi_{\beta j}$ coincide on $\mathscr{H}_{\mathscr{A}}(W_{i\alpha}\cap W_{j\beta})$. Now given any open set $U$, and any section $s\in\mathscr{H}_{\mathscr{A}}(U)$, we show that there exists a section $t\in\mathscr{H}_{\mathscr{B}}(U)$, and that it is uniquely defined. On $U\cap U_{i}$ the section $s$ reduces to some $s_{i}\in H_{Q_{i}}(U\cap U_{i})$. Let $V_{\alpha}\in\mathscr{B}$, then the section $s_{i}$ can be restricted to $U\cap U_{i}\cap V_{\alpha}$, and the restriction is denoted by $s_{i}|_{\alpha}$. It is mapped to a unique section $t_{\alpha}|_{i}\in H_{Q_{\alpha}'}(U\cap U_{i}\cap V_{\alpha})$ via the isomorphism $\widetilde{\eta}_{\alpha i}$. By Equation \eqref{hautdegamme}, by the fact that $\mathscr{A}=\bigcup_{\alpha\in I}U_{i}$ is an open cover of $M$, and by the fact that $H_{Q_{\alpha}'}(U\cap U_{\alpha})$ is a sheaf, we deduce that the different sections $t_{\alpha}|_{i}$ defined on various $U_{i}\cap U_{\alpha}$ can be glued. On the open set $U_{\alpha}$, we obtain a section $t_{\alpha}\in H_{Q_{\alpha}'}(U\cap V_{\alpha})$. On some open set $V_{\beta}$ intersecting $U\cap V_{\alpha}$, we obtain another section $t_{\beta}\in H_{Q_{\beta}'}(U\cap V_{\beta})$, but Equation \eqref{hautdegamme} gives us the compatibility condition $\widetilde{\gamma}'_{\beta\alpha}(t_{\alpha})=t_{\beta}$ on the overlap. Hence, by definition, the data $t=(t_{\alpha})_{\alpha\in A}$ forms a section of $\mathscr{H}_{\mathscr{B}}(U)$. Thus, the map $\Xi$ is indeed a morphism of sheaves restricting to $\Xi_{i\alpha}$ on each open set $W_{i\alpha}$.

Now if we had chosen another image $t'=(t'_{\alpha})_{\alpha\in A}$ of $s$, then on any intersection $U\cap U_{i}\cap V_{\alpha}$, the restrictions coincide:
\begin{equation}
t_{\alpha}|_{i}=t'_{\alpha}|_{i}
\end{equation}
because $\widetilde{\eta}_{\alpha i}$ is an isomorphism. However since $H_{Q_{\alpha}'}$ is a sheaf, and since $\mathscr{A}$ is an open cover of $M$, it implies that $t'_{\alpha}=t_{\alpha}$. To conclude, the image of $s$ by $\Xi$ is unique. Following the same lines, we construct the reverse morphism $\Xi^{-1}$ from the local morphisms $\Xi_{i\alpha}:\mathscr{H}_{\mathscr{B}}(W_{i\alpha})\to\mathscr{H}_{\mathscr{A}}(W_{i\alpha})$. Since $\Xi$ is an isomorphism on each open set $U$, we deduce that it is an isomorphism of sheaves, giving the desired result.
\end{proof}

%
%

\begin{remarque}
By Proposition \ref{equivhomNQ}, we know that the union $\mathfrak{F}$ of two equivalent homotopy $NQ$-structures $\mathfrak{E}=(E^{i},Q_{i})_{U_{i}\in\mathscr{A}}$ and $\mathfrak{E}'=(E'^{\alpha},Q'_{\alpha})_{V_{\alpha}\in\mathscr{B}}$ is itself a homotopy $NQ$-structure for $M$, equivalent to both $\mathfrak{E}$ and $\mathfrak{E}'$. 
Hence it defines a homotopy cohomology which is isomorphic to the homotopy cohomologies $\mathscr{H}_{\mathscr{A}}$ and $\mathscr{H}_{\mathscr{B}}$.\end{remarque}

Now we want to relate these notions to universal Lie $\infty$-algebroids and their universal foliated cohomologies, as defined in Section \ref{univsection}. Let $M$ be a real analytic (resp. complex) manifold, and let $\mathcal{D}$ be a real analytic (resp. holomorphic) Hermann foliation on $M$. We know by the first item of Theorem \ref{theo:existe}, that there exists a universal Lie $\infty$-algebroid resolving $\mathcal{D}$ in the neighborhood  of every point of $M$. Then let us take three open sets $U,V,W$ small enough so that Theorem \ref{theo:existe} applies: we are provided with a universal Lie $\infty$-algebroid structure over each set. Then Theorem \ref{theo:onlyOne} does the following:  we know by the second item that over $U\cap V\cap W$ there exist Lie $\infty$-morphisms (invertible up to homotopy) between the universal Lie $\infty$-algebroids associated respectively to $U,V$ and $W$. And by the first item, we deduce that the composition of any three of such morphisms is homotopic to the identity. Since the collection of neighborhoods around every point of $M$ forms an open cover $\mathscr{A}=\bigcup_{i\in I} U_{i}$, this provides $M$ with a homotopy $NQ$-manifold structure $\mathfrak{E}=(E^{i},Q_{i})_{U_{i}\in\mathscr{A}}$, that we call the \emph{universal homotopy $NQ$-manifold structure associated to the foliation $\mathcal{D}$}. The name is justified because any other homotopy $NQ$-manifold structure consisting of universal Lie $\infty$-algebroids is equivalent to $\mathfrak{E}$, by Theorem \ref{theo:onlyOne}. In the smooth case, this construction is not possible in general, but in some cases such structure exists, for example:
\begin{proposition}
Let $M$ be a smooth manifold and let $\mathcal{D}$ be a locally real analytic Hermann foliation on $M$. Then $M$ admits a universal homotopy $NQ$-manifold structure associated to $\mathcal{D}$.
\end{proposition}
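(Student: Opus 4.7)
The plan is to build the universal homotopy $NQ$-manifold structure directly from the local real analytic data, using the existence of local resolutions (Proposition \ref{prop:exres}), the smoothing statement that a real analytic resolution is also a smooth resolution (item 2 of Proposition \ref{prop:exres}), and the existence/uniqueness theorems for universal Lie $\infty$-algebroids (Theorems \ref{theo:existe} and \ref{theo:onlyOne}).

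First, since $\mathcal{D}$ is locally real analytic, every point $x \in M$ admits an open neighborhood $U_x$ carrying a real analytic structure such that $\mathcal{D}|_{U_x}$ is a real analytic Hermann foliation. Shrinking $U_x$ if necessary, Proposition \ref{prop:exres} item 1 provides a real analytic resolution $(E^x,\dd,\rho)$ of $\mathcal{D}|_{U_x}$ by trivial vector bundles of finite length. By item 2 of the same proposition, this is also a smooth resolution of the smooth Hermann foliation $\mathcal{D}|_{U_x}$. Applying Theorem \ref{theo:existe} (in the smooth case, on the basis of an existing resolution), we can equip $E^x$ with a smooth homological vector field $Q_x$ of degree $+1$ whose linear part is the chosen resolution, producing a universal Lie $\infty$-algebroid $(E^x,Q_x)$ resolving $\mathcal{D}|_{U_x}$. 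Collecting a covering $\mathscr{A} = \{U_i\}_{i \in I}$ extracted from $\{U_x\}_{x \in M}$, we thus obtain a family $(E^i,Q_i)_{U_i \in \mathscr{A}}$ of smooth universal Lie $\infty$-algebroids, each resolving $\mathcal{D}$ on its respective domain.

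Next, I construct the transition morphisms $\gamma_{ij}$ and verify their coherence. For any two intersecting open sets $U_i,U_j \in \mathscr{A}$, the restrictions $(E^i|_{U_i \cap U_j},Q_i)$ and $(E^j|_{U_i \cap U_j},Q_j)$ are two universal Lie $\infty$-algebroids resolving the same smooth Hermann foliation $\mathcal{D}|_{U_i \cap U_j}$. The second item of Theorem \ref{theo:onlyOne} then yields a Lie $\infty$-morphism $\gamma_{ji}: \mathscr{E}^i|_{U_i \cap U_j} \to \mathscr{E}^j|_{U_i \cap U_j}$ covering the identity, and any two such morphisms are homotopic; we select one representative. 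For the reflexivity condition \eqref{sheaf}, both $\gamma_{ii}$ and the identity map $\mathrm{id}_{\mathscr{E}^i}$ are Lie $\infty$-morphisms from $(E^i,Q_i)$ to itself, and the first item of Theorem \ref{theo:onlyOne} guarantees that any two such morphisms are homotopic, so $\gamma_{ii} \sim \mathrm{id}_{\mathscr{E}^i}$. For the cocycle rule \eqref{cocycleNQ}, the composition $\gamma_{ik} \circ \gamma_{kj} \circ \gamma_{ji}$ is a Lie $\infty$-morphism from $(E^i,Q_i)$ to itself on $U_i \cap U_j \cap U_k$ (since homotopy is compatible with composition), and hence by the same uniqueness statement it is homotopic to $\mathrm{id}_{\mathscr{E}^i|_{U_i \cap U_j \cap U_k}}$.

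The only subtlety \textemdash\ and what I view as the main point to justify carefully \textemdash\ is ensuring that Theorem \ref{theo:onlyOne} applies to the restricted structures: we need each restriction $(E^i|_{U_i \cap U_j}, Q_i)$ to still be a universal Lie $\infty$-algebroid resolving $\mathcal{D}|_{U_i \cap U_j}$. This is immediate from the definition, since exactness of a resolution at the level of local sections is preserved by restriction to an open subset, and the homological vector field restricts as well. Hence the data $(E^i,Q_i)_{U_i \in \mathscr{A}}$ together with the homotopy classes of the chosen transition morphisms $\gamma_{ij}$ satisfy both consistency conditions of the definition, and define a homotopy $NQ$-manifold structure on $M$ associated to $\mathcal{D}$. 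Universality, i.e.\ the fact that any two such structures obtained by different choices (different covers, different real analytic refinements, different resolutions, different choices of $Q_i$ and $\gamma_{ij}$) are equivalent in the sense of Equations \eqref{mariokart1}--\eqref{mariokart3}, is then again a direct consequence of Theorem \ref{theo:onlyOne} applied locally on the common refinement of the two covers.
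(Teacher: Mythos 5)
Your proposal is correct and follows essentially the same route as the paper: local real analytic resolutions via Proposition \ref{prop:exres} (with item 2 supplying the passage to smooth resolutions), local universal Lie $\infty$-algebroids via Theorem \ref{theo:existe}, and transition morphisms together with the reflexivity and cocycle conditions via the two items of Theorem \ref{theo:onlyOne}. The paper leaves these steps implicit in the discussion preceding the proposition; your write-up makes them explicit, including the (correct) observation that universality and exactness of resolutions are preserved under restriction to open subsets.
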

Then, we deduce that $\mathfrak{E}$ admits a homotopy cohomology $\mathscr{H}_{\mathfrak{U}}(\mathcal{D})$, that we call the \emph{universal homotopy cohomology of the foliation $\mathcal{D}$}. The name is justified by the fact that any other choices of equivalent homotopy $NQ$-manifold structures give isomorphic homotopy cohomologies by Proposition \ref{haricotvert}. The universal homotopy cohomology and the universal foliated cohomology defined in Section \ref{univsection} are related:

\begin{proposition}
Let $M$ be a manifold and let $\mathcal{D}$ be a Hermann foliation resolved by a universal Lie $\infty$-algebroid $(E,Q)$. Assume that $M$ admits a universal homotopy $NQ$-structure $\mathfrak{E}'=(E'_{\alpha},Q_{\alpha}')_{V_{\alpha}\in\mathscr{B}}$ associated to $\mathcal{D}$. Then the universal homotopy cohomology $\mathscr{H}_{\mathfrak{U}}(\mathcal{D})$ and the universal foliated cohomology $H_{\mathfrak{U}}(\mathcal{D})$ are isomorphic as sheaves.
\end{proposition}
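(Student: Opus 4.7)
The plan is to exhibit the universal foliated cohomology as the homotopy cohomology of a specific, very tame homotopy $NQ$-structure built out of the global $(E,Q)$, and then invoke Proposition~\ref{haricotvert} to identify it with $\mathscr{H}_{\mathfrak{U}}(\mathcal{D})$.

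First, fix the open cover $\mathscr{B}=\bigcup_{\alpha}V_{\alpha}$ underlying $\mathfrak{E}'$. For each $\alpha$, restrict the global Lie $\infty$-algebroid $(E,Q)$ to $V_{\alpha}$ to get $(E|_{V_{\alpha}},Q|_{V_{\alpha}})$. Since $(E,Q)$ is a universal Lie $\infty$-algebroid resolving $\mathcal{D}$, each restriction is a universal Lie $\infty$-algebroid resolving $\mathcal{D}|_{V_{\alpha}}$. On each overlap $V_{\alpha}\cap V_{\beta}$, declare the transition morphism $\gamma_{\beta\alpha}^{0}$ to be the identity of $\mathscr{E}|_{V_{\alpha}\cap V_{\beta}}$. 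The homotopy $NQ$-axioms \eqref{sheaf} and \eqref{cocycleNQ} are then satisfied strictly, giving a homotopy $NQ$-structure $\mathfrak{E}_{0}$ on $M$ associated to $\mathcal{D}$.

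Next, I would show that $\mathfrak{E}_{0}$ and $\mathfrak{E}'$ are equivalent homotopy $NQ$-structures. For each $\alpha$, both $(E|_{V_{\alpha}},Q|_{V_{\alpha}})$ and $(E'_{\alpha},Q'_{\alpha})$ are universal Lie $\infty$-algebroids resolving the same foliation $\mathcal{D}|_{V_{\alpha}}$, so Theorem~\ref{theo:onlyOne} supplies Lie $\infty$-morphisms $\eta_{\alpha}:\mathscr{E}|_{V_{\alpha}}\to\mathscr{E}'^{\,\alpha}$ and $\eta^{\alpha}:\mathscr{E}'^{\,\alpha}\to\mathscr{E}|_{V_{\alpha}}$ whose compositions are homotopic to the respective identities, giving \eqref{mariokart2} and \eqref{mariokart3}. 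The remaining compatibility \eqref{mariokart1} reads $\gamma'_{\beta\alpha}\circ\eta_{\alpha}\sim\eta_{\beta}\circ\gamma^{0}_{\beta\alpha}=\eta_{\beta}$ on $V_{\alpha}\cap V_{\beta}$. Both sides are Lie $\infty$-morphisms between the universal Lie $\infty$-algebroids $(E|_{V_{\alpha}\cap V_{\beta}},Q)$ and $(E'^{\,\beta}|_{V_{\alpha}\cap V_{\beta}},Q'_{\beta})$, hence homotopic by the uniqueness clause of Theorem~\ref{theo:onlyOne}; the same argument gives the other half of \eqref{mariokart1}. Therefore $\mathfrak{E}_{0}$ and $\mathfrak{E}'$ are equivalent, and Proposition~\ref{haricotvert} yields an isomorphism of sheaves between their respective homotopy cohomologies.

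Finally, I would identify the homotopy cohomology of $\mathfrak{E}_{0}$ with $H_{\mathscr{U}}(\mathcal{D})$. Since every transition $\gamma^{0}_{\beta\alpha}$ is the identity, the sheafification procedure of Lemma~\ref{isosheaf} applied to the gluing data $(H_{Q|_{V_{\alpha}}},\mathrm{id})$ just reproduces the sheaf $U\mapsto H^{\bullet}\!\big(\mathscr{E}(U),Q\big)$, which is by definition $H_{\mathscr{U}}(\mathcal{D})$. Composing the two isomorphisms provides the desired sheaf isomorphism $H_{\mathscr{U}}(\mathcal{D})\simeq\mathscr{H}_{\mathfrak{U}}(\mathcal{D})$. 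The main subtle step is the verification of the cocycle-compatibility \eqref{mariokart1}, but this reduces cleanly to the uniqueness-up-to-homotopy part of Theorem~\ref{theo:onlyOne} applied to morphisms into $(E'^{\,\beta},Q'_{\beta})$ on triple overlaps.
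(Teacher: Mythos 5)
Your proposal is correct and follows essentially the same route as the paper: restrict the global $(E,Q)$ to an open cover with identity transition morphisms to obtain a homotopy $NQ$-structure whose homotopy cohomology is $H_{\mathscr{U}}(\mathcal{D})$, show it is equivalent to $\mathfrak{E}'$ via the existence and uniqueness-up-to-homotopy clauses of Theorem \ref{theo:onlyOne}, and conclude by Proposition \ref{haricotvert}. Your write-up is in fact more explicit than the paper's (which compresses the verification of the equivalence conditions into ``following the same lines as in the discussion above''), notably in checking condition \eqref{mariokart1} on overlaps and in identifying the glued sheaf of Lemma \ref{isosheaf} with the universal foliated cohomology.
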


\begin{proof}
Let $\mathscr{A}=\bigcup_{i\in I}U_{i}$ be an open cover of $M$. The universal Lie $\infty$-algebroid structure can be restricted to any open set $U_{i}$, providing it with a $NQ$-manifold structure $(E^{i},Q_{i})$. On the overlap $U_{i}\cap U_{j}$ the transition map $\gamma_{ij}:E_{j}\to E_{i}$ is the identity map because the resolution is by trivial vector bundles. Hence the universal Lie $\infty$-algebroid structure provides $M$ with a homotopy $NQ$-manifold structure $\mathfrak{E}=(E_{i},Q_{i})_{U_{i}\in\mathscr{A}}$ adapted to the foliation $\mathcal{D}$, by construction. Then by Theorem \ref{theo:onlyOne}, following the same lines as in the discussion above, the two homotopy $NQ$-manifold structures $\mathfrak{E}$ and $\mathfrak{E'}$ are equivalent. Thus by Proposition \ref{haricotvert} they define isomorphic homotopy cohomologies. The homotopy cohomology of $\mathfrak{E}$ is the universal foliated cohomology $H_{\mathfrak{U}}(\mathcal{D})$, and the homotopy cohomology associated to $\mathfrak{E}'$ is the universal homotopy cohomology $\mathscr{H}_{\mathfrak{U}}(\mathcal{D})$. Hence the result.
\end{proof}


\cleardoublepage
\phantomsection
\addcontentsline{toc}{chapter}{Bibliography}
\bibliographystyle{naturemag}
\bibliography{mabibliographie}

\begin{thebibliography}{10}
\expandafter\ifx\csname url\endcsname\relax
  \def\url#1{\texttt{#1}}\fi
\expandafter\ifx\csname urlprefix\endcsname\relax\def\urlprefix{URL }\fi
\providecommand{\bibinfo}[2]{#2}
\providecommand{\eprint}[2][]{\url{#2}}

\bibitem{AKSZ}
\bibinfo{author}{Alexandrov, M.}, \bibinfo{author}{Kontsevich, M.},
  \bibinfo{author}{Schwartz, A.} \& \bibinfo{author}{Zaboronsky, O.}
\newblock \emph{\bibinfo{title}{{The Geometry of the master equation and
  topological quantum field theory}}}.
\newblock \bibinfo{journal}{Int. J. Mod. Phys.} \textbf{\bibinfo{volume}{A12}},
  \bibinfo{pages}{1405--1430} (\bibinfo{year}{1997}).

\bibitem{AndrouSkandal}
\bibinfo{author}{Androulidakis, I.} \& \bibinfo{author}{Skandalis, G.}
\newblock \emph{\bibinfo{title}{{The holonomy groupoid of a singular
  foliation}}}.
\newblock \bibinfo{journal}{J. Reine Andew. Math.}
  \textbf{\bibinfo{volume}{626}}, \bibinfo{pages}{1--37}
  (\bibinfo{year}{2009}).

\bibitem{AndrouSkandal2}
\bibinfo{author}{{Androulidakis}, I.} \& \bibinfo{author}{{Skandalis}, G.}
\newblock \emph{\bibinfo{title}{{Pseudodifferential calculus on a singular
  foliation}}}.
\newblock \bibinfo{journal}{JNCG} \textbf{\bibinfo{volume}{5}},
  \bibinfo{pages}{125--152} (\bibinfo{year}{2011}).

\bibitem{AndrouSkandal3}
\bibinfo{author}{{Androulidakis}, I.} \& \bibinfo{author}{{Skandalis}, G.}
\newblock \emph{\bibinfo{title}{{The analytic index of elliptic
  pseudodifferential operators on a singular foliation}}}.
\newblock \bibinfo{journal}{J. K-Theory} \textbf{\bibinfo{volume}{8}},
  \bibinfo{pages}{363 -- 385} (\bibinfo{year}{2011}).

\bibitem{AndrouZambis}
\bibinfo{author}{Androulidakis, I.} \& \bibinfo{author}{Zambon, M.}
\newblock \emph{\bibinfo{title}{{Smoothness of holonomy covers for singular
  foliations and essential isotropy}}}.
\newblock \bibinfo{journal}{Mathematische Zeitschrift}
  \textbf{\bibinfo{volume}{275}}, \bibinfo{pages}{921--951}
  (\bibinfo{year}{2013}).

\bibitem{AndrouZamb}
\bibinfo{author}{Androulidakis, I.} \& \bibinfo{author}{Zambon, M.}
\newblock \emph{\bibinfo{title}{{Holonomy transformations for singular
  foliations}}}.
\newblock \bibinfo{journal}{Adv. Math.} \textbf{\bibinfo{volume}{256}},
  \bibinfo{pages}{348--397} (\bibinfo{year}{2014}).

\bibitem{AndrouZamb4}
\bibinfo{author}{Androulidakis, I.} \& \bibinfo{author}{Zambon, M.}
\newblock \emph{\bibinfo{title}{{Stefan-Sussmann singular foliations, singular
  subalgebroids, and their associated sheaves}}}.
\newblock \bibinfo{journal}{\emph{To be published} in Int. J. Geom. Methods
  Mod. Phys.}  (\bibinfo{year}{2016}).

\bibitem{Baez}
\bibinfo{author}{Baez, J.} \& \bibinfo{author}{Crans, A.}
\newblock \emph{\bibinfo{title}{{Higher-Dimensional Algebra VI: Lie
  2-Algebras}}}.
\newblock \bibinfo{journal}{Theor. Appl. Categor.}
  \textbf{\bibinfo{volume}{12}}, \bibinfo{pages}{492--528}
  (\bibinfo{year}{2004}).

\bibitem{LS}
\bibinfo{author}{Barnich, G.}, \bibinfo{author}{Fulp, R.},
  \bibinfo{author}{Lada, T.} \& \bibinfo{author}{Stasheff, J.}
\newblock \emph{\bibinfo{title}{{The sh Lie structure of Poisson brackets in
  field theory}}}.
\newblock \bibinfo{journal}{Commun. Math. Phys.}
  \textbf{\bibinfo{volume}{191}}, \bibinfo{pages}{585--601}
  (\bibinfo{year}{1998}).

\bibitem{Batchelor2}
\bibinfo{author}{Batchelor, M.}
\newblock \emph{\bibinfo{title}{The structure of supermanifolds}}.
\newblock \bibinfo{journal}{Transactions of the American Mathematical Society}
  \textbf{\bibinfo{volume}{253}}, \bibinfo{pages}{329--338}
  (\bibinfo{year}{1979}).

\bibitem{Batchelor}
\bibinfo{author}{Batchelor, M.}
\newblock \emph{\bibinfo{title}{Two approaches to supermanifolds}}.
\newblock \bibinfo{journal}{Trans. Amer. Math. Soc.}
  \textbf{\bibinfo{volume}{258}}, \bibinfo{pages}{257--270}
  (\bibinfo{year}{1980}).

\bibitem{Leites}
\bibinfo{author}{Berezin, F.} \& \bibinfo{author}{Leites, D.}
\newblock \emph{\bibinfo{title}{{Supermanifolds}}}.
\newblock \bibinfo{journal}{Sov. Math. Dokl.} \textbf{\bibinfo{volume}{16}},
  \bibinfo{pages}{1218--1222} (\bibinfo{year}{1975}).

\bibitem{BQZ}
\bibinfo{author}{Bonechi, F.}, \bibinfo{author}{Qiu, J.} \&
  \bibinfo{author}{Zabzine, M.}
\newblock \emph{\bibinfo{title}{{Wilson Lines from Representations of
  NQ-Manifolds}}}.
\newblock \bibinfo{journal}{Int. Math. Res. Not.}
  \textbf{\bibinfo{volume}{2014}}, \bibinfo{pages}{2440--2493}
  (\bibinfo{year}{2014}).

\bibitem{Bullo}
\bibinfo{author}{Bullo, F.} \& \bibinfo{author}{Lewis, A.}
\newblock \emph{\bibinfo{title}{Geometric Control of Mechanical Systems}},
  vol.~\bibinfo{volume}{49} of \emph{\bibinfo{series}{Texts in Applied
  Mathematics}} (\bibinfo{publisher}{Springer-Verlag}, \bibinfo{address}{New
  York}, \bibinfo{year}{2003}).

\bibitem{Clebsch}
\bibinfo{author}{Clebsch, A.}
\newblock \emph{\bibinfo{title}{{Ueber das Pfaffsche Problem}}}.
\newblock \bibinfo{journal}{J. Reine Andew. Math.}
  \textbf{\bibinfo{volume}{60}}, \bibinfo{pages}{193--251}
  (\bibinfo{year}{1861}).

\bibitem{costeweinstein}
\bibinfo{author}{Coste, A.}, \bibinfo{author}{Dazord, P.} \&
  \bibinfo{author}{Weinstein, A.}
\newblock \emph{\bibinfo{title}{Groupoides symplectiques}}.
\newblock \bibinfo{journal}{Publ. Dept. Math. Univ. Claude-Bernard Lyon 1}
  \textbf{\bibinfo{volume}{2A}}, \bibinfo{pages}{1--62} (\bibinfo{year}{1987}).

\bibitem{perturbation}
\bibinfo{author}{{Crainic}, M.}
\newblock \emph{\bibinfo{title}{{On the perturbation lemma, and
  deformations}}}.
\newblock \bibinfo{journal}{Preprint}  (\bibinfo{year}{2004}).
\newblock arXiv:math/0403266.

\bibitem{CrainicFernandes}
\bibinfo{author}{{Crainic}, M.} \& \bibinfo{author}{{Loja Fernandes}, R.}
\newblock \emph{\bibinfo{title}{{Integrability of Poisson brackets}}}.
\newblock \bibinfo{journal}{J. Differential Geom.}
  \textbf{\bibinfo{volume}{66}}, \bibinfo{pages}{71--137}
  (\bibinfo{year}{2004}).

\bibitem{Deahna}
\bibinfo{author}{Deahna, F.}
\newblock \emph{\bibinfo{title}{{Ueber die Bedingungen der Integrabilit\"at
  line\"arer Differentialgleichungen erster Ordnung zwischen einer beliebigen
  Anzahl ver\"anderlicher Gr\"o{\ss}en}}}.
\newblock \bibinfo{journal}{J. Reine Andew. Math.}
  \textbf{\bibinfo{volume}{20}}, \bibinfo{pages}{340--349}
  (\bibinfo{year}{1840}).

\bibitem{Debord}
\bibinfo{author}{Debord, C.}
\newblock \emph{\bibinfo{title}{Holonomy groupoids of singular foliations}}.
\newblock \bibinfo{journal}{J. Differential Geom.}
  \textbf{\bibinfo{volume}{58}}, \bibinfo{pages}{467--500}
  (\bibinfo{year}{2001}).

\bibitem{Debord2}
\bibinfo{author}{Debord, C.}
\newblock \emph{\bibinfo{title}{Longitudinal smoothness of the holonomy
  groupoid}}.
\newblock \bibinfo{journal}{C. R. Math.} \textbf{\bibinfo{volume}{351}},
  \bibinfo{pages}{613 -- 616} (\bibinfo{year}{2013}).

\bibitem{DeWitt}
\bibinfo{author}{Dewitt, B.}
\newblock \emph{\bibinfo{title}{Supermanifolds}}.
\newblock Cambridge monographs on mathematical physics
  (\bibinfo{publisher}{Cambridge University Press},
  \bibinfo{address}{Cambridge, New York}, \bibinfo{year}{1992}).

\bibitem{Texas}
\bibinfo{author}{{Drager}, L.~D.}, \bibinfo{author}{{Lee}, J.~M.},
  \bibinfo{author}{{Park}, E.} \& \bibinfo{author}{{Richardson}, K.}
\newblock \emph{\bibinfo{title}{{Smooth distributions are finitely
  generated}}}.
\newblock \bibinfo{journal}{Ann. Global Anal. Geom.}
  \textbf{\bibinfo{volume}{41}}, \bibinfo{pages}{357--369}
  (\bibinfo{year}{2012}).

\bibitem{Felder}
\bibinfo{author}{{Felder}, G.} \& \bibinfo{author}{{Kazhdan}, D.}
\newblock \bibinfo{title}{{The classical master equation}}.
\newblock In \emph{\bibinfo{booktitle}{Perspectives in Representation Theory}},
  vol. \bibinfo{volume}{610} of \emph{\bibinfo{series}{Contemporary
  Mathematics}}, \bibinfo{pages}{79--138} (\bibinfo{publisher}{American
  Mathematical Society}, \bibinfo{address}{Providence}, \bibinfo{year}{2014}).

\bibitem{Fiorenza}
\bibinfo{author}{{Fiorenza}, D.} \& \bibinfo{author}{{Manetti}, M.}
\newblock \emph{\bibinfo{title}{{L-infinity structures on mapping cones}}}.
\newblock \bibinfo{journal}{Algebra and Number Theory}
  \textbf{\bibinfo{volume}{1}}, \bibinfo{pages}{301--330}
  (\bibinfo{year}{2007}).

\bibitem{Frobenius}
\bibinfo{author}{Frobenius, F.}
\newblock \emph{\bibinfo{title}{{Ueber das Pfaffsche Problem}}}.
\newblock \bibinfo{journal}{J. Reine Andew. Math.}
  \textbf{\bibinfo{volume}{82}}, \bibinfo{pages}{230--315}
  (\bibinfo{year}{1877}).

\bibitem{Miranda}
\bibinfo{author}{{Guillemin}, V.}, \bibinfo{author}{{Miranda}, E.} \&
  \bibinfo{author}{{Pires}, A.~R.}
\newblock \emph{\bibinfo{title}{{Symplectic and Poisson geometry on
  b-manifolds}}}.
\newblock \bibinfo{journal}{Adv. Math.} \textbf{\bibinfo{volume}{264}},
  \bibinfo{pages}{864--896} (\bibinfo{year}{2014}).

\bibitem{gunningrossi}
\bibinfo{author}{Gunning, R.~C.} \& \bibinfo{author}{Rossi, H.}
\newblock \emph{\bibinfo{title}{Analytic Functions of Several Complex
  Variables}}, vol. \bibinfo{volume}{368} of \emph{\bibinfo{series}{AMS Chelsea
  Publishing}} (\bibinfo{publisher}{American Mathematical Society},
  \bibinfo{address}{Providence}, \bibinfo{year}{2009}).

\bibitem{Hawkins}
\bibinfo{author}{Hawkins, T.}
\newblock \emph{\bibinfo{title}{The Mathematics of Frobenius in Context}}
  (\bibinfo{publisher}{Springer-Verlag}, \bibinfo{address}{New York},
  \bibinfo{year}{2013}).

\bibitem{Hector}
\bibinfo{author}{Hector, G.} \& \bibinfo{author}{Hirsch, U.}
\newblock \emph{\bibinfo{title}{Introduction to the Geometry of Foliations,
  Part A}}, vol.~\bibinfo{volume}{1} of \emph{\bibinfo{series}{Aspects of
  Mathematics}} (\bibinfo{publisher}{Vieweg}, \bibinfo{year}{1986}).

\bibitem{Hermann1962}
\bibinfo{author}{Hermann, R.}
\newblock \emph{\bibinfo{title}{The differential geometry of foliations, ii}}.
\newblock \bibinfo{journal}{J. Appl. Math. Mech.}
  \textbf{\bibinfo{volume}{11}}, \bibinfo{pages}{303--315}
  (\bibinfo{year}{1962}).

\bibitem{Hermann1963}
\bibinfo{author}{Hermann, R.}
\newblock \bibinfo{title}{{On the Accessibility Problem in Control Theory}}.
\newblock In \emph{\bibinfo{booktitle}{{International Symposium on Nonlinear
  Differential Equations and Nonlinear Mechanics}}}, \bibinfo{pages}{325--332}
  (\bibinfo{publisher}{Academic Press, New York}, \bibinfo{year}{1963}).

\bibitem{Jurdjevic}
\bibinfo{author}{Jurdjevic, V.}
\newblock \emph{\bibinfo{title}{Geometric Control Theory}},
  vol.~\bibinfo{volume}{52} of \emph{\bibinfo{series}{Cambridge Studies in
  Advanced Mathematics}} (\bibinfo{publisher}{Cambridge University Press},
  \bibinfo{address}{Cambridge, New York}, \bibinfo{year}{1996}).

\bibitem{Klimcik}
\bibinfo{author}{Klimcik, C.} \& \bibinfo{author}{Strobl, T.}
\newblock \emph{\bibinfo{title}{Wzw - poisson manifolds}}.
\newblock \bibinfo{journal}{J. Geom. Phys.} \textbf{\bibinfo{volume}{43}},
  \bibinfo{pages}{341--344} (\bibinfo{year}{2002}).

\bibitem{komatsu}
\bibinfo{author}{Komatsu, H.}
\newblock \emph{\bibinfo{title}{Resolutions by hyperfunctions of sheaves of
  solutions of differential equations with constant coefficients}}.
\newblock \bibinfo{journal}{Math. Ann.} \textbf{\bibinfo{volume}{176}},
  \bibinfo{pages}{77--86} (\bibinfo{year}{1968}).

\bibitem{YKS}
\bibinfo{author}{{Kosmann-Schwarzbach}, Y.}
\newblock \emph{\bibinfo{title}{{Derived Brackets}}}.
\newblock \bibinfo{journal}{Lett. Math. Phys.} \textbf{\bibinfo{volume}{69}},
  \bibinfo{pages}{61--87} (\bibinfo{year}{2004}).

\bibitem{Yvette}
\bibinfo{author}{{Kosmann-Schwarzbach}, Y.}
\newblock \emph{\bibinfo{title}{{Courant Algebroids. A Short History}}}.
\newblock \bibinfo{journal}{SIGMA} \textbf{\bibinfo{volume}{9}}
  (\bibinfo{year}{2013}).

\bibitem{Kostant}
\bibinfo{author}{Kostant, B.}
\newblock \bibinfo{title}{Graded manifolds, graded lie theory, and
  prequantization}.
\newblock In \emph{\bibinfo{booktitle}{Differential Geometrical Methods in
  Mathematical Physics}}, vol. \bibinfo{volume}{570} of
  \emph{\bibinfo{series}{Lecture Notes in Mathematics}},
  \bibinfo{pages}{177--306} (\bibinfo{publisher}{Springer-Verlag, Heidelberg},
  \bibinfo{year}{1977}).

\bibitem{Marklada}
\bibinfo{author}{Lada, T.} \& \bibinfo{author}{Markl, M.}
\newblock \emph{\bibinfo{title}{{Strongly homotopy Lie algebras}}}.
\newblock \bibinfo{journal}{Commun. Algebr.} \textbf{\bibinfo{volume}{23}},
  \bibinfo{pages}{2147--2161} (\bibinfo{year}{1994}).

\bibitem{LadaStasheff}
\bibinfo{author}{Lada, T.} \& \bibinfo{author}{Stasheff, J.}
\newblock \emph{\bibinfo{title}{{Introduction to SH Lie algebras for
  physicists}}}.
\newblock \bibinfo{journal}{Int. J. Theor. Phys.}
  \textbf{\bibinfo{volume}{32}}, \bibinfo{pages}{1087--1104}
  (\bibinfo{year}{1993}).

\bibitem{Lada}
\bibinfo{author}{Lada, T.} \& \bibinfo{author}{Tolley, M.}
\newblock \emph{\bibinfo{title}{{Derivations of homotopy algebras}}}.
\newblock \bibinfo{journal}{Arch. Math.} \textbf{\bibinfo{volume}{49}},
  \bibinfo{pages}{309--315} (\bibinfo{year}{2013}).

\bibitem{LSX1}
\bibinfo{author}{Laurent-Gengoux, C.}, \bibinfo{author}{Stienon, M.} \&
  \bibinfo{author}{Xu, P.}
\newblock \emph{\bibinfo{title}{{Holomorphic Poisson Manifolds and Holomorphic
  Lie Algebroids}}}.
\newblock \bibinfo{journal}{Int. Math. Res. Not.}
  \textbf{\bibinfo{volume}{2008}} (\bibinfo{year}{2008}).

\bibitem{Lee}
\bibinfo{author}{Lee, J.~M.}
\newblock \emph{\bibinfo{title}{Introduction to Smooth Manifolds}}, vol.
  \bibinfo{volume}{218} of \emph{\bibinfo{series}{Graduate Texts in
  Mathematics}} (\bibinfo{publisher}{Springer-Verlag}, \bibinfo{address}{New
  York}, \bibinfo{year}{2002}).

\bibitem{Lobry70}
\bibinfo{author}{Lobry, C.}
\newblock \emph{\bibinfo{title}{Contr\^olabilit\'e des syst\`emes non
  lin\'eaires}}.
\newblock \bibinfo{journal}{SIAM J. Control Optim.}
  \textbf{\bibinfo{volume}{8}}, \bibinfo{pages}{573--605}
  (\bibinfo{year}{1970}).

\bibitem{maclane}
\bibinfo{author}{Mac~Lane, S.} \& \bibinfo{author}{Moerdijk, I.}
\newblock \emph{\bibinfo{title}{Sheaves in Geometry and Logic: A First
  Introduction to Topos Theory}} (\bibinfo{publisher}{Springer-Verlag},
  \bibinfo{address}{New York}, \bibinfo{year}{1992}).

\bibitem{Mehta}
\bibinfo{author}{{Mehta}, R.} \& \bibinfo{author}{{Zambon}, M.}
\newblock \emph{\bibinfo{title}{{L-infinity algebra actions}}}.
\newblock \bibinfo{journal}{Diff. Geom. Appl.} \textbf{\bibinfo{volume}{30}},
  \bibinfo{pages}{576--587} (\bibinfo{year}{2012}).

\bibitem{MoeMrc}
\bibinfo{author}{Moerdijk, I.} \& \bibinfo{author}{Mrcun, J.}
\newblock \emph{\bibinfo{title}{Introduction to Foliations and Lie Groupoids}},
  vol.~\bibinfo{volume}{91} of \emph{\bibinfo{series}{Cambridge Studies in
  Advanced Mathematics}} (\bibinfo{publisher}{Cambridge University Press},
  \bibinfo{address}{Cambridge, New York}, \bibinfo{year}{2005}).

\bibitem{Nagano}
\bibinfo{author}{Nagano, T.}
\newblock \emph{\bibinfo{title}{Linear differential systems with singularities
  and an application to transitive lie algebras}}.
\newblock \bibinfo{journal}{J. Math. Soc. Japan} \textbf{\bibinfo{volume}{18}},
  \bibinfo{pages}{398--404} (\bibinfo{year}{1966}).

\bibitem{Park}
\bibinfo{author}{Park, J.-S.}
\newblock \bibinfo{title}{{Topological open p-branes}}.
\newblock In \emph{\bibinfo{booktitle}{{Symplectic geometry and mirror symmetry
  Proceedings, 4th KIAS Annual International Conference, Seoul, South Korea,
  August 14-18, 2000}}}, \bibinfo{pages}{311--384} (\bibinfo{year}{2000}).

\bibitem{piat}
\bibinfo{author}{Piatkowski, A.}
\newblock \emph{\bibinfo{title}{A stability theorem for foliations with
  singularities}}.
\newblock \bibinfo{journal}{Dissertationes Math.}
  \textbf{\bibinfo{volume}{267}}, \bibinfo{pages}{467--500}
  (\bibinfo{year}{1988}).

\bibitem{Roytenberg2}
\bibinfo{author}{Roytenberg, D.}
\newblock \bibinfo{title}{{On the structure of graded symplectic supermanifolds
  and Courant algebroids}}.
\newblock In \emph{\bibinfo{booktitle}{{Quantization, Poisson Brackets and
  Beyond}}}, vol. \bibinfo{volume}{315} of \emph{\bibinfo{series}{Contemporary
  Mathematics}} (\bibinfo{publisher}{American Mathematical Society,
  Providence}, \bibinfo{year}{2002}).

\bibitem{Roytenberg}
\bibinfo{author}{Roytenberg, D.} \& \bibinfo{author}{Weinstein, A.}
\newblock \emph{\bibinfo{title}{{Courant Algebroids and Strongly Homotopy Lie
  Algebras}}}.
\newblock \bibinfo{journal}{Lett. Math. Phys.} \textbf{\bibinfo{volume}{46}},
  \bibinfo{pages}{81--93} (\bibinfo{year}{1998}).

\bibitem{Schatz}
\bibinfo{author}{{Schaetz}, F.}
\newblock \emph{\bibinfo{title}{Bfv-complex and higher homotopy structures}}.
\newblock \bibinfo{journal}{Communications in Mathematical Physics}
  \textbf{\bibinfo{volume}{286}} (\bibinfo{year}{2008}).

\bibitem{Sertoz2}
\bibinfo{author}{Sertoz, S.}
\newblock \emph{\bibinfo{title}{{Singular Holomorphic Foliations}}}.
\newblock Ph.D. thesis, \bibinfo{school}{The University of British Columbia}
  (\bibinfo{year}{1984}).

\bibitem{Sertoz1}
\bibinfo{author}{Sertoz, S.}
\newblock \emph{\bibinfo{title}{{Residues of Singular Holomorphic
  Foliations}}}.
\newblock \bibinfo{journal}{Compositio Math.} \textbf{\bibinfo{volume}{70}},
  \bibinfo{pages}{227--243} (\bibinfo{year}{1989}).

\bibitem{Severaweinstein}
\bibinfo{author}{Severa, P.} \& \bibinfo{author}{Weinstein, A.}
\newblock \emph{\bibinfo{title}{{Poisson geometry with a 3 form background}}}.
\newblock \bibinfo{journal}{Prog. Theor. Phys. Suppl.}
  \textbf{\bibinfo{volume}{144}}, \bibinfo{pages}{145--154}
  (\bibinfo{year}{2001}).

\bibitem{Shander1}
\bibinfo{author}{Shander, V.}
\newblock \emph{\bibinfo{title}{Vector fields and differential equations on
  supermanifolds}}.
\newblock \bibinfo{journal}{Funktsional. Anal. i Prilozhen.}
  \textbf{\bibinfo{volume}{14}}, \bibinfo{pages}{91--92}
  (\bibinfo{year}{1980}).

\bibitem{Shander2}
\bibinfo{author}{Shander, V.}
\newblock \emph{\bibinfo{title}{Complete integrability of ordinary differential
  equations on supermanifolds}}.
\newblock \bibinfo{journal}{Funktsional. Anal. i Prilozhen.}
  \textbf{\bibinfo{volume}{17}}, \bibinfo{pages}{89--90}
  (\bibinfo{year}{1983}).

\bibitem{Stefanphd}
\bibinfo{author}{{Stefan}, P.}
\newblock \emph{\bibinfo{title}{{Accessibility and singular foliations}}}.
\newblock Ph.D. thesis, \bibinfo{school}{University of Warwick}
  (\bibinfo{year}{1973}).

\bibitem{Stefanofficiel}
\bibinfo{author}{Stefan, P.}
\newblock \emph{\bibinfo{title}{Accessible sets, orbits, and foliations with
  singularities}}.
\newblock \bibinfo{journal}{Proc. London Math. Soc.}
  \textbf{\bibinfo{volume}{s3-29}}, \bibinfo{pages}{699--713}
  (\bibinfo{year}{1974}).

\bibitem{stefan1980}
\bibinfo{author}{Stefan, P.}
\newblock \emph{\bibinfo{title}{Integrability of systems of vectorfields}}.
\newblock \bibinfo{journal}{J. London Math. Soc.}
  \textbf{\bibinfo{volume}{s2-21}}, \bibinfo{pages}{544--556}
  (\bibinfo{year}{1980}).

\bibitem{Sussmannofficiel}
\bibinfo{author}{Sussmann, H.}
\newblock \emph{\bibinfo{title}{Orbits of families of vector fields and
  integrability of distributions}}.
\newblock \bibinfo{journal}{Trans. Amer. Math. Soc.}
  \textbf{\bibinfo{volume}{180}}, \bibinfo{pages}{171--188}
  (\bibinfo{year}{1973}).

\bibitem{Sussmanntrailer}
\bibinfo{author}{Sussmann, H.}
\newblock \emph{\bibinfo{title}{Orbits of families of vector fields and
  integrability of systems with singularities}}.
\newblock \bibinfo{journal}{Bull. Amer. Math. Soc.}
  \textbf{\bibinfo{volume}{79}}, \bibinfo{pages}{197--199}
  (\bibinfo{year}{1973}).

\bibitem{Tolley}
\bibinfo{author}{{Tolley}, M.}
\newblock \emph{\bibinfo{title}{{The connections between $A_{\infty}$ and
  $L_{\infty}$ algebras}}}.
\newblock Ph.D. thesis, \bibinfo{school}{North Carolina State University}
  (\bibinfo{year}{2013}).

\bibitem{Tougeron}
\bibinfo{author}{Tougeron, J.-C.}
\newblock \emph{\bibinfo{title}{Id\'eaux de fonctions diff\'erentiables. i}}.
\newblock \bibinfo{journal}{Annales de l'Institut Fourier}
  \textbf{\bibinfo{volume}{18}}, \bibinfo{pages}{177--240}
  (\bibinfo{year}{1968}).

\bibitem{Turki}
\bibinfo{author}{{Turki}, Y.}
\newblock \emph{\bibinfo{title}{{A Lagrangian for Hamiltonian vector fields on
  singular Poisson manifolds}}}.
\newblock \bibinfo{journal}{J. Geom. Phys.} \textbf{\bibinfo{volume}{90}},
  \bibinfo{pages}{71--87} (\bibinfo{year}{2015}).

\bibitem{Vaintrob}
\bibinfo{author}{Vaintrob, A.}
\newblock \emph{\bibinfo{title}{Lie algebroids and homological vector fields}}.
\newblock \bibinfo{journal}{Uspekhi Mat. Nauk} \textbf{\bibinfo{volume}{52}},
  \bibinfo{pages}{161--162} (\bibinfo{year}{1997}).

\bibitem{voronov2}
\bibinfo{author}{Voronov, T.}
\newblock \emph{\bibinfo{title}{Higher derived brackets and homotopy
  algebras}}.
\newblock \bibinfo{journal}{J. Pure Appl. Algebra}
  \textbf{\bibinfo{volume}{202}}, \bibinfo{pages}{133--153}
  (\bibinfo{year}{2005}).

\bibitem{voronov}
\bibinfo{author}{Voronov, T.}
\newblock \emph{\bibinfo{title}{{$Q$-manifolds and Higher Analogs of Lie
  Algebroids}}}.
\newblock \bibinfo{journal}{AIP Conf. Proc.} \textbf{\bibinfo{volume}{1307}},
  \bibinfo{pages}{191--202} (\bibinfo{year}{2010}).

\bibitem{Weibel}
\bibinfo{author}{Weibel, C.}
\newblock \emph{\bibinfo{title}{An Introduction to Homological Algebra}}
  (\bibinfo{publisher}{Cambridge University Press},
  \bibinfo{address}{Cambridge, New York}, \bibinfo{year}{1994}).

\bibitem{Weinstein1983}
\bibinfo{author}{Weinstein, A.}
\newblock \emph{\bibinfo{title}{The local structure of poisson manifolds}}.
\newblock \bibinfo{journal}{J. Differential Geom.}
  \textbf{\bibinfo{volume}{18}}, \bibinfo{pages}{523--557}
  (\bibinfo{year}{1983}).

\end{thebibliography}

\end{document}